\newcommand{\filledbox}{%
    \begin{tikzpicture}[baseline={(0,0)}]
        \draw[line width=1pt] (0,0) rectangle (.5em,.5em); 
    \end{tikzpicture}%
}
\def\nonumberfootnote{\xdef\@thefnmark{}\@footnotetext}			
\definecolor{colorred}{HTML}{B00000}
\definecolor{colorgreen}{HTML}{258300}
\definecolor{colorblue}{HTML}{2e32fa}
\definecolor{coloryellow}{HTML}{cbbb1a}
\numberwithin{equation}{section}
\newcommand{\rmc}{{\ensuremath{\mathrm{c}}}}
\newcommand{\rmd}{{\ensuremath{\mathrm{d}}}}
\newcommand{\rme}{{\ensuremath{\mathrm{e}}}}
\newcommand{\rmo}{{\ensuremath{\mathrm{o}}}}
\newcommand{\rmP}{{\ensuremath{\mathrm{P}}}}
\newcommand{\sfd}{{\ensuremath{\mathsf{d}}}}
\newcommand{\sfe}{{\ensuremath{\mathsf{e}}}}
\newcommand{\sfn}{{\ensuremath{\mathsf{n}}}}
\newcommand{\sfC}{{\ensuremath{\mathsf{C}}}}
\newcommand{\sfD}{{\ensuremath{\mathsf{D}}}}
\newcommand{\sfF}{{\ensuremath{\mathsf{F}}}}
\newcommand{\sfH}{{\ensuremath{\mathsf{H}}}}
\newcommand{\sfJ}{{\ensuremath{\mathsf{J}}}}
\newcommand{\sfL}{{\ensuremath{\mathsf{L}}}}
\newcommand{\sfN}{{\ensuremath{\mathsf{N}}}}
\newcommand{\sfT}{{\ensuremath{\mathsf{T}}}}
\newcommand{\scrD}{{\ensuremath{\mathscr{D}}}}
\newcommand{\bdpi}{{\ensuremath{\boldsymbol{\pi}}}}
\newcommand{\R}{\boldsymbol{\mathrm{R}}}						
\renewcommand{\d}{\,\mathrm{d}}				
\let\limsup\undefined
\let\div\undefined
\DeclareMathOperator*{\limsup}{limsup}		
\DeclareMathOperator*{\esssup}{esssup}		
\DeclareMathOperator{\supp}{spt}			
\DeclareMathOperator{\div}{div}				
\theoremstyle{definition}
\newtheorem{bump}{Bump}[section]
\theoremstyle{plain}
\newtheorem{theorem}[bump]{Theorem}
\newtheorem{proposition}[bump]{Proposition}
\newtheorem{definition}[bump]{Definition}
\newtheorem{lemma}[bump]{Lemma}
\newtheorem{corollary}[bump]{Corollary}
\newtheorem{conjecture}[bump]{Conjecture}
\theoremstyle{remark}
\newtheorem{remark}[bump]{Remark}
\newtheorem{example}[bump]{Example}
\crefname{theorem}{Theorem}{Theorems}
\crefname{proposition}{Proposition}{Propositions}
\crefname{definition}{Definition}{Definitions}
\crefname{lemma}{Lemma}{Lemmas}
\crefname{corollary}{Corollary}{Corollaries}
\crefname{hypothesis}{Hypothesis}{Hypotheses}
\crefname{remark}{Remark}{Remarks}
\crefname{example}{Example}{Examples}
\crefname{notation}{Notation}{Notations}
\crefname{idea}{Idea}{Idea}
\newcommand{\mms}{\mathsf{M}}				
\newcommand{\met}{\sfd}						
\newcommand{\Rmet}{g}						
\newcommand{\meas}{\mathfrak{m}}				
\newcommand{\Leb}{\mathscr{L}}				
\newcommand{\Haus}{\mathscr{H}}				
\newcommand{\vol}{\mathrm{vol}}				
\newcommand{\Prob}{\mathscr{P}}		        
\newcommand{\Id}{\mathrm{Id}}				
\newcommand{\TCD}{\mathsf{TCD}}
\newcommand{\CD}{\mathsf{CD}}
\newcommand{\LTCD}{\mathsf{LTCD}}
\newcommand{\RCD}{\mathsf{RCD}}
\newcommand{\comp}{\rmc}						
\newcommand{\pr}{\mathrm{pr}}				
\newcommand{\Ric}{\mathrm{Ric}}				
\newcommand{\Lip}{\mathrm{Lip}}				
\newcommand{\Dom}{\scrD}					
\DeclareMathOperator{\BOX}{\filledbox}
\DeclareMathOperator{\Ent}{Ent}				
\DeclareMathOperator{\Hess}{Hess}			
\newcommand{\eval}{\sfe}					
\newcommand{\push}{\sharp}					
\newcommand{\Len}{\mathrm{Len}}
\let\oldtocsection=\tocsection
\let\oldtocsubsection=\tocsubsection
\let\oldtocsubsubsection=\tocsubsubsection
\renewcommand{\tocsection}[2]{\hspace{0em}\oldtocsection{#1}{#2}}
\renewcommand{\tocsubsection}[2]{\hspace{1em}\oldtocsubsection{#1}{#2}}
\renewcommand{\tocsubsubsection}[2]{\hspace{2em}\oldtocsubsubsection{#1}{#2}}
\newcommand{\nocontentsline}[3]{}
\newcommand{\tocless}[2]{\bgroup\let\addcontentsline=\nocontentsline#1{#2}\egroup}
\newcommand{\mres}{\mathbin{\vrule height 1.6ex depth 0pt width 0.13ex\vrule height 0.13ex depth 0pt width 1.3ex}}
\newcommand{\MB}[1]{\textnormal{\textcolor{black}{#1}}}
\renewcommand{\u}{\mathit{u}}
\renewcommand{\v}{\mathit{v}}
\renewcommand{\q}{\mathfrak{q}}
\newcommand{\sing}{{\mathrm{sing}}}
\newcommand{\TCut}{\mathrm{TC}}
\newcommand{\hh}{\mathfrak{h}}
\begin{document}

\title[New perspectives on the d'Alembertian]{New perspectives on the d'Alembertian from general relativity. An invitation}
\author{Mathias Braun}
\address{Institute of Mathematics, EPFL, 1015 Lausanne, Switzerland}
\email{\href{mailto:mathias.braun@epfl.ch}{mathias.braun@epfl.ch}}
\subjclass[2020]{Primary 
28A50, 
51K10;    
Secondary
35J92, 
49Q22, 
51F99, 
53C21, 
53C50, 
83C75. 
}
\keywords{Comparison theory; D'Alembertian; General relativity; Mean cur\-vature; Metric measure spacetime;  $p$-Laplacian; Ricci curvature.}
\thanks{Financial support by the EPFL through a Bernoulli Instructorship is gratefully acknowledged. This is an invited review for the conference ``Metric measure spaces, Ricci curvature, and optimal transport'' at Villa Monastero, Varenna, operated from September 23, 2024 to October 1, 2024. I sincerely thank Fabio Cavalletti, Matthias Erbar, Jan Maas, and Karl-Theodor Sturm for their  organization of this stimulating event and the great opportunity to compose this survey. I thank Tobias Beran, Matteo Calisti, Nicola Gigli, Robert McCann, Argam Ohanyan, Felix Rott, and Clemens Sämann for the fruitful joint work on \cite{beran-braun-calisti-gigli-mccann-ohanyan-rott-samann+-} and helpful comments. A substantial part of the surveyed works \cite{beran-braun-calisti-gigli-mccann-ohanyan-rott-samann+-,braun2024+} was conducted during my Postdoctoral Fellowships at the University of Toronto and my Fields Postdoctoral Fellowship at the Fields Institute for Research in Mathematical Sciences from 2022 to 2024. I sincerely thank these institutions and particularly my mentor Robert McCann for their generous support and the inspirational research conditions which made these excellent works possible. I also thank Nicola Gigli for his numerous comments which made me learn new facets even in my hobbyhorse of metric geometry. This overview was completed during the HIM Trimester Program ``Metric Analysis'' at the Hausdorff Research Institute for Mathematics, Bonn. I sincerely thank the institute for creating such a  stimulating research atmosphere.}

\begin{abstract} This survey has multiple objectives. First, we  motivate and review a new distributional notion of the d'Alembertian from mathematical relativity,  more precisely, a nonlinear $p$-version thereof, where $p$  is a nonzero number less than one. This operator comes from  natural Lagrangian actions introduced relatively recently. Unlike its classical linear yet hyperbolic counterpart, it is nonlinear yet has elliptic characteristics. Second, we describe recent comparison estimates for the $p$-d'Alembertian of Lorentz distance functions (notably a point or a spacelike hypersurface). Their new contribution implied by prior works  on optimal transport through spacetime is a control of the timelike cut locus. Third, we illustrate exact representation formulas for these $p$-d'Alembertians  employing methods from convex geometry. Fourth, several applications and open problems are presented.
\end{abstract}

\maketitle

\thispagestyle{empty}

\tableofcontents

\addtocontents{toc}{\protect\setcounter{tocdepth}{2}}

\section{Introduction}\label{Ch:Intro}

This paper surveys the two recent contributions by Beran--Braun--Calisti--Gigli--McCann--Ohanyan--Rott--Sämann \cite{beran-braun-calisti-gigli-mccann-ohanyan-rott-samann+-} and Braun \cite{braun2024+} to the geometric analysis of the d'Alem\-bertian and its comparison theory in mathematical relativity from \MB{the perspective of a new family of Lagrangians}. 

In Riemannian geometry, the Laplacian and its relation to the Ricci curvature --- especially expressed by its numerous  comparison results --- are well-acknowledged across geometry, analysis, and probability, cf.~e.g.~the monographs of Cheeger--Ebin \cite{cheeger-ebin1975}, Chavel \cite{chavel1984}, Grigor'yan \cite{grigoryan2009}, Bakry--Gentil--Ledoux \cite{bakry-gentil-ledoux2014}, and Wang \cite{wang2014-analysis}. The relevance of comparison methods in Lorentzian geometry, in particular with regards to the groundbreaking singularity theorems of Penrose \cite{penrose1965}, Hawking \cite{hawking1967}, and Hawking--Penrose \cite{hawking-penrose1970}, was already well-known to experts in Lorentzian geometry a long time ago. Indeed, as is  standard to infer, this link is certified by (closely related) geometric equations by Riccati, Raychaudhuri, and Bochner\footnote{In Riemannian signature, the Bochner identity lead to the first synthetic approach to lower bounds on the Ricci curvature by Bakry--\smash{É}mery \cite{bakry-emery1985-diffusions} in terms of their $\Gamma$-calculus. They were also the first pointing out the interrelation between Laplacians with drift term, weighted reference measures, and lower bounds on a modified Ricci tensor (nowadays well-known as \emph{Bakry--Émery--Ricci curvature}).}. 
It is somewhat surprising that a systematic comparison theory in Lorentzian signature --- apart from an earlier result of Eschenburg \cite{eschenburg1988} detailed below --- was initiated only around the turn to the 21st century by Ehrlich--Jung--Kim \cite{ehrlich-jung-kim1998} and  Ehrlich--Sánchez \cite{ehrlich-sanchez2000} and later extended by  Treude \cite{treude2011}, Treude--Grant \cite{treude-grant2013}, Graf \cite{graf2016}, and Lu--Minguzzi--Ohta \cite{lu-minguzzi-ohta2022-range}.

The Lorentzian analog of the Riemannian Laplacian is the well-known \emph{d'Alem\-bertian} $\Box$. Besides its indicated link to the Ricci tensor, its geometric significance derives from the fact that, roughly speaking, the d'Alembertian of the Lorentz distance function to a suitable  hypersurface $\Sigma$ describes the mean curvature of $\Sigma$. Moreover, it occurs in the splitting theorems of Eschenburg \cite{eschenburg1988}, Galloway \cite{galloway1989-splitting}, and Newman \cite{newman1990} in two prominent ways: the d'Alembert comparison theorem of Eschenburg \cite{eschenburg1988} and the maximum principle. 

The Lorentzian signature of the metric tensor of a spacetime  complicates the applicability of these methods. Indeed, the d'Alembertian is a \emph{hyperbolic} operator, unlike the elliptic Laplacian from Riemannian manifolds. All its prior  comparison theorems \cite{eschenburg1988,treude2011,treude-grant2013} entailed no information on the d'Alembertian within the timelike cut locus in question. Since a global control across this singular set is required for the splitting theorem, the works  \cite{eschenburg1988,galloway1989-splitting,newman1990} formulated the necessary estimates in the barrier sense of Calabi \cite{calabi1958}. A distributional version as in  Cheeger--Gromoll's Riemannian splitting theorem  \cite{cheeger-gromoll1972} covering the singular timelike cut locus would talk better to the  integration by parts formula defining $\Box$, yet was unavailable in the literature thus far. The barrier formulation of \cite{calabi1958} comes from the maximum principle. In Lorentzian signature, thus far maximum principle  tools have only been applied  after restricting $\Box$ to certain spacelike hypersurfaces, where it becomes the  Riemannian Laplacian. This extrinsic and nonglobal procedure requires sophisticated smoothness  \emph{a priori} (such as a one-sided Hessian bound on the Lorentz distance function in question, cf.~e.g.~Andersson--Galloway--Howard \cite{andersson-galloway-howard1998}). However, smoothness is a conceptual consequence of ellipticity (the backbone of the maximum principle), not a hypothesis. 

The recent work of Beran--Braun--Calisti--Gigli--McCann--Ohanyan--Rott--Sämann \cite{beran-braun-calisti-gigli-mccann-ohanyan-rott-samann+-} and its sequel by Braun \cite{braun2024+} surveyed in this paper (as well as a sequel by Braun--Gigli--McCann--Ohanyan--Sämann \cite{braun-gigli-mccann-ohanyan-samann+} reviewed by McCann \cite{mccann+})  tackle   these issues successfully. In a nutshell, the proposal is to replace the linear yet hyperbolic d'Alem\-bertian $\Box$ by the \emph{$p$-d'Alembertian}, formally given by 
\begin{align}\label{Eq:Operatorrr}
\Box_p := \div\!\big[\big\vert\rmd \cdot\big\vert_*^{p-2}\,\nabla\cdot\big].
\end{align}
Here $p$ is a nonzero number less than one. The term  \eqref{Eq:Operatorrr} reflects the $p$-Laplacian, which is well-studied in nonlinear PDEs. In our realm of mathematical relativity, the somewhat unusual range of $p$ comes  from a natural flock of Lagran\-gians on the tangent bundle pioneered  by McCann \cite{mccann2020}, Minguzzi \cite{minguzzi2019-causality}, and Mondino--Suhr \cite{mondino-suhr2022}; the argument inside the divergence in  \eqref{Eq:Operatorrr} is  the induced Legendre transform of the exterior differential $\rmd$. Defining Laplace-type operators in this way is in fact usual business in Hamiltonian geometry, cf.~e.g.~Agrachev--Gamkrelidze \cite{agrachev-gamkrelidze1997} and Agrachev \cite{agrachev2007,agrachev2008}; in this case, comparison theorems have been established by Ohta \cite{ohta2014}. (Note that the Lagrangians of  \cite{mccann2020,minguzzi2019-causality,mondino-suhr2022} are not covered by this theory,  as they degenerate  outside light cones.) As clear as this indicative connection might be to differential geometers or  experts in nonlinear PDEs  a posteriori, the first occurrence of the $p$-d'Alembertian in mathematical relativity we know is the quite   recent work of Mondino--Suhr \cite{mondino-suhr2022}.

As argued in \cite{beran-braun-calisti-gigli-mccann-ohanyan-rott-samann+-,braun2024+,braun-gigli-mccann-ohanyan-samann+},  further good reasons for a systematic study of \eqref{Eq:Operatorrr} seem to exist. First, in all geometrically relevant problems outlined above, since Lorentz distance functions have unit slope on the relevant sets, their $p$-d'Alem\-bertian coincides with the classical d'Alembertian. Thus, $\Box_p$ inherits the geometric significance of $\Box$. Second, as first realized by Beran--Braun--Calisti--Gigli--McCann--Ohanyan--Rott--Sämann \cite{beran-braun-calisti-gigli-mccann-ohanyan-rott-samann+-} and made concrete later by Braun--Gigli--McCann--Ohanyan--Sämann \cite{braun-gigli-mccann-ohanyan-samann+}, $\Box_p$ is a nonlinear yet \emph{elliptic} operator. This holds since  $\Box_p$ is the variational derivative of a convex energy functional \cite{beran-braun-calisti-gigli-mccann-ohanyan-rott-samann+-}. \MB{(Its convexity, which a priori holds on the cone of causal functions, easily extends to larger classes of functions if appropriate infinity conventions --- depending on the sign of $p$ --- are imposed.)} Alternatively \cite{braun-gigli-mccann-ohanyan-samann+}, in local coordinates one may formally write
\begin{align*}
\Box_p\u = \big\vert\rmd\u\big\vert_*^2\,\Big[(2-p)\frac{\nabla^i\u\,\nabla^j\u}{\big\vert\rmd\u\big\vert_*^2} - g^{ij}\Big]\,\textcolor{black}{\nabla_i\nabla_j\u} + \textnormal{lower order terms} 
\end{align*}
in Wald's ``abstract index notation'' \cite{wald1984}. By the $+,-,\dots,-$ signature of the Lo\-rentzian metric  $g$, the highest order coefficient matrix of $\smash{\Box_p\u}$ has eigenvalues close to $1-p,1,\dots,1>0$ in appropriate coordinates. Based on this simple idea  Braun--Gigli--McCann--Ohanyan--Sämann \cite{braun-gigli-mccann-ohanyan-samann+} give a drastically simpler proof of the splitting theorem outlined above, which is reviewed by McCann \cite{mccann2020}.

The main results we survey in this paper are the following. 
\begin{itemize}
\item Employing methods from optimal transport and metric geometry,  Beran--Braun--Calisti--Gigli--McCann--Ohanyan--Rott--Sämann \cite{beran-braun-calisti-gigli-mccann-ohanyan-rott-samann+-}  proved comparison theorems for the $p$-d'Alem\-bertian of a Lorentz distance function from a point in the distributional spirit of Cheeger--Gromoll \cite{cheeger-gromoll1972}. Unlike preceding works, \textit{their result holds across the timelike cut locus in question}. On spacetimes, this essentially comes from the good control of optimal transport over singular sets proven  by McCann \cite{mccann2020}. From these comparison results, \cite{beran-braun-calisti-gigli-mccann-ohanyan-rott-samann+-} then derived the \textit{existence of the distributional $p$-d'Alembertian} of such functions (and appropriate powers thereof as well as general Kantorovich potentials)  using the Riesz--Markov--Kakutani representation theorem. The proof strategy and the  distributional definition of the $p$-d'Alembertian proposed in \cite{beran-braun-calisti-gigli-mccann-ohanyan-rott-samann+-}  are inspired by and follow preceding work of Gigli \cite{gigli2015} on the distributional Laplacian of metric measure spaces with synthetic Ricci curvature bounds à la Sturm \cite{sturm2006-i,sturm2006-ii} and Lott--Villani \cite{lott-villani2009}.
\item Complementary to this abstract view on the $p$-d'Alembertian, Braun \cite{braun2024+} established \emph{exact representation formulas} for the $p$-d'Alembertian of Lorentz distance functions from appropriate sets $\Sigma$ (and suitable  powers thereof). From these formulas, generalizations of the comparison results of Treude \cite{treude2011}, Treude--Grant \cite{treude-grant2013}, Graf \cite{graf2016}, Lu--Minguzzi--Ohta \cite{lu-minguzzi-ohta2022-range}, and  Beran--Braun--Calisti--Gigli--McCann--Ohanyan--Rott--Sämann \cite{beran-braun-calisti-gigli-mccann-ohanyan-rott-samann+-} can be read off almost straightforwardly by elementary concavity estimates.  The approach to these representation formulas is inspired by and follows prior work of Cavalletti--Mondino \cite{cavalletti-mondino2020-new} on metric measure spaces with synthetic Ricci curvature bounds. A crucial ingredient of \cite{braun2024+} is the Lorentzian localization technique pioneered by Cavalletti--Mondino \cite{cavalletti-mondino2020}. Based on these formulas and  prior work of Ketterer \cite{ketterer2020-heintze-karcher}, \cite{braun2024+} then generalized  a \textit{volume comparison inequality} due to Treude--Grant \cite{treude-grant2013} and Graf--Sormani \cite{graf-sormani2022}, which was originally inspired by the famous Riemannian Heintze--Karcher inequality \cite{heintze-karcher1978}. Finally, in \cite{braun2024+} this estimate was applied to the quite new concept of \emph{volume singularities}  proposed by García-Heveling \cite{garcia-heveling2023-volume} which was inspired by Treude--Grant \cite{treude-grant2013} \MB{and Graf--Sormani \cite{graf-sormani2022}} and in turn Heintze--Karcher \cite{heintze-karcher1978}.
\end{itemize}

We finish our introduction with some remarks about our setting. The reviewed results of \cite{beran-braun-calisti-gigli-mccann-ohanyan-rott-samann+-,braun2024+} hold in the abstract setting of metric measure spacetimes with synthetic timelike Ricci curvature bounds. The latter notion was introduced by Cavalletti--Mondino \cite{cavalletti-mondino2020}; the term ``metric measure spacetime'',  suggested by McCann \cite{mccann2023-null}, broadly refers to the abstract generalization of spacetimes in terms of metric measure geometry, pioneered by Kunzinger--Sämann \cite{kunzinger-samann2018} and further studied in sequella of Müller \cite{muller2022}, Minguzzi--Suhr \cite{minguzzi-suhr2022}, McCann \cite{mccann2023-null}, Braun--McCann \cite{braun-mccann2023}, Beran--Braun--Calisti--Gigli--McCann--Ohanyan--Rott--Sämann \cite{beran-braun-calisti-gigli-mccann-ohanyan-rott-samann+-}, and Bykov--Minguzzi--Suhr \cite{bykov-minguzzi-suhr2024+}. (See Sormani--Vega \cite{sormani-vega2016} for an alternative.) The surveyed works are  based on optimal transport and metric geometry, theories Lorentzian geometers and mathematical physicists may feel more comfortable with in a ``traditional'' realm. On the other hand, metric geometers reading this special issue who know the underlying  ideas (notably those of Gigli \cite{gigli2015} and Cavalletti--Mondino \cite{cavalletti-mondino2020-new}) may be interested in the Lorentzian background of their recent transplantation into structures with synthetic timelike Ricci curvature bounds. To accommodate both sides, we have decided to trim down our presentation from the abstract settings of \cite{beran-braun-calisti-gigli-mccann-ohanyan-rott-samann+-,braun2024+} to classical spacetimes, outsourcing less classical structures (and the numerous motivations to study them) to the surveys of Steinbauer \cite{steinbauer2023},  Cavalletti--Mondino \cite{cavalletti-mondino2022-review},   and Sämann \cite{samann2024+}. Many of the surveyed results are new already in this smooth case. On the other hand, our chosen reduction only partly reflects the enormous technical efforts  invested in \cite{beran-braun-calisti-gigli-mccann-ohanyan-rott-samann+-,braun2024+}. Several key contributions are omitted entirely in our survey. Examples from \cite{beran-braun-calisti-gigli-mccann-ohanyan-rott-samann+-} are
\begin{itemize}
\item the first-order Sobolev-type calculus  (developed from scratch!) which makes the machinery we outline below work in a genuinely nonsmooth setting,
\item abstract exponentiation by a Brenier--McCann theorem,
\item a nonsmooth ``converse'' Hawking--King--McCarthy theorem, and
\item a dynamical picture of the Lorentz--Wasserstein ``spacetime'' of mass distributions over a metric measure spacetime;
\end{itemize}
examples from \cite{braun2024+} are
\begin{itemize}
\item a synthetic Bochner-type inequality and
\item a genuinely nonsmooth notion of mean curvature.
\end{itemize}
Instead, we  focus on the most fundamental topics that strictly relate to the new  $p$-d'Alembertian \eqref{Eq:Operatorrr}. We frame the \emph{fil rouge} leading to its comparison theory and existence results of \cite{beran-braun-calisti-gigli-mccann-ohanyan-rott-samann+-,braun2024+} in the traditional spacetime setting, instead of introducing an entire set of nonsmooth techniques and terminology. Along the way, we try  to balance mathematical rigor with rather informal heuristics and several simplifications. All proofs, if any, will only be sketched.

Our hope --- reflecting how this survey should be understood, cf.~its title --- is the flow of (in fact very natural and simple)  ideas described in this  paper   motivates experts and nonexperts in synthetic  Lorentzian geometry to read the recent  works of Beran--Braun--Calisti--Gigli--McCann--Ohanyan--Rott--Sämann \cite{beran-braun-calisti-gigli-mccann-ohanyan-rott-samann+-} and Braun \cite{braun2024+} in detail.  Furthermore, it may serve as a blueprint to read along while inspecting its nonsmooth analogs  in \cite{beran-braun-calisti-gigli-mccann-ohanyan-rott-samann+-,braun2024+}. As a black box, the reader may want to keep in mind that all central ingredients from spacetime geometry and the Lorentzian optimal transport problem  pointed out throughout this survey admit generalizations to abstract metric measure spacetimes. An overview about differential calculus in metric measure spaces (or, more broadly, the nonsmooth splitting theorem  of Gigli \cite{gigli2013}) that inspired \cite{beran-braun-calisti-gigli-mccann-ohanyan-rott-samann+-} can be found in Gigli's survey \cite{gigli2014-overview}. Lastly, on a broader scale we hope this survey helps nonexperts navigate through the rapidly growing recent literature on Lorentzian optimal transport.

\section{Spacetime geometry}

We outline some fundamentals of the standing setting for our  survey: (smooth) spacetimes. For details, we refer to the classics    \cite{hawking-ellis1973,oneill1983,wald1984,beem-ehrlich-easley1996}. Generalizations are indicated further below, together with relevant literature. In the current part, we focus on a concise presentation of mathematical background, also attempting to clarify which quantity depends on which  given  datum (notably metric tensor or reference measure). 
We recommend the surveys \cite{steinbauer2023,samann2024+,chrusciel2011,minguzzi2019-causality,cavalletti-mondino2022-review,mccann+} for more physical interpretations of the introduced geometric objects. 

This part does not introduce new ideas and can be skipped by readers familiar with the basics of Lorentzian geometry.

Throughout this survey, every  topological manifold will be tacitly assumed to be connected and smooth (unless it is discrete) and have empty boundary, unless explicitly stated otherwise. By convention, the topology of each appearing manifold is   Hausdorff and second countable. 

Let $\mms$ be such a manifold. We  assume its dimension $\dim \mms$ is at least two. For simplicity, cf.~\cref{Re:Noncpt}, we suppose $\mms$ is noncompact from the outset.



\begin{remark}[Relevant notions from Riemannian geometry]\label{Re:Riemannian} A theorem of Nomizu--Ozeki \cite{nomizu-ozeki1961} yields the existence of a complete smooth Riemannian metric $r$ on $\mms$ which induces its topology. It is fixed from now on. Let $\nabla^r$ and $\met_r$ designate the Levi-Civita connection and the Riemannian distance function associated with $r$, respectively. For $v\in T\mms$, we also write $\smash{\vert v\vert_r := \sqrt{r(v,v)}}$. As customary, we call a function $f$ defined on an open  subset $U$ of $\mms$ \emph{locally Lipschitz continuous} if for every $z\in\mms$, there are  $L>0$ and a neighborhood $V$ of $z$ with $\vert f(x) - f(y)\vert\leq L\,\met_r(x,y)$ for every $x,y\in U\cap V$.  The local Lipschitz continuity of a curve $\gamma$ defined on an interval $I$ is defined analogously. As this property (as well as all other relevant subsequent claims   involving $r$) is local, it does not depend on the  choice of $r$. \hfill$\blacksquare$
\end{remark}

\subsection{Lorentzian metrics and adjacent geometric objects}\label{Sub:Lorentz metric}

\begin{definition}[Lorentzian metric]\label{Def:Lormet} A \emph{Lorentzian metric} refers to a smooth symmetric section $g$ of $\smash{T^*\mms^{\otimes 2}}$ with constant signature $+,-,\dots,-$ throughout $\mms$. 
\end{definition}


Throughout our discussion, we fix a Lorentzian metric $g$ on $\mms$.

Given $v\in T\mms$, we will write $\vert v\vert^2 := \langle v,v\rangle:= g(v,v)$. If  $\smash{\vert v\vert^2}$ is nonnegative, we also define $\smash{\vert v\vert:= \sqrt{\vert v\vert^2}}$.  We  employ analogous notations $\smash{\vert\cdot\vert_*^2}$ and $\smash{\vert\cdot\vert_*}$ as above for the cometric $\smash{g^*}$ induced by $g$, defined by  $\smash{g^*(\zeta,\zeta) := \vert \zeta^\sharp\vert^2}$ for every $\zeta\in T^*\mms$;   
 the nondegeneracy of $g$ at every tangent space induces the \emph{musical isomorphism} $\sharp\colon T^*\mms \to T\mms$ given by the formula $\smash{\zeta(v) = \langle\zeta^\sharp,v\rangle}$ for every $\zeta\in T^*\mms$ and every $v\in T\mms$. The identification of cotangent and tangent spaces in more general cases (e.g.~Finsler spacetimes) in terms of convex analysis is more elaborate. A related yet slightly different identification  paradigm  is the backbone of the nonlinear approach to the d'Alembertian surveyed in this article;  see \cref{Sub:Lagr} for details.


Analogously to Riemannian manifolds, the Lorentzian metric $g$ induces several  geometric quantities. We only recall those which are most relevant for us.
\begin{itemize}
\item \textbf{Gradient.} For a smooth function $\u$ on $\mms$, let $\rmd\u$ be its usual differential. Recall  that $\rmd\u$ only depends on the differentiable structure of $\mms$. Then the   \emph{gradient} $\nabla \u$ is the smooth section of $T\mms$ given by $\smash{\nabla \u := (\rmd\u)^\sharp}$. 
Thus, the dependence of $\nabla\u$ on $g$ enters through the musical isomorphism $\sharp$. 
\item \textbf{Hessian.} The \emph{Hessian} of a function $\u$ as above is the smooth section of $\smash{T^*\mms^{\otimes 2}}$ given by $\Hess\u := \nabla^2\u$.
\item \textbf{Ricci curvature.} The \emph{Ricci tensor} $\Ric$ is the smooth section of $\smash{T^*\mms^{\otimes 2}}$ coming from ``tracing'' --- in the customary way --- the Riemann tensor $\mathrm{Rm}$, which acts on smooth vector fields $X$, $Y$, and $Z$ on $\mms$ by the formula
\begin{align*}
\mathrm{Rm}(X,Y)Z := \nabla_X\nabla_YZ -\nabla_Y\nabla_X Z- \nabla_{[X,Y]}Z;
\end{align*}
here, ``tracing'' is understood with respect to the first slot. Moreover, $\nabla$ is the Levi-Civita connection induced by $g$ (via Koszul's formula) while $[\cdot,\cdot]$ is the usual Lie bracket.
\end{itemize}

\subsection{The ``measure''. Reference measures and divergence}\label{Sub:Refmeas} 

A Borel measure $\meas$ on $\mms$ is called \emph{smooth} if it is absolutely continuous with respect to the $\dim\mms$-dimensional Lebesgue measure $\smash{\Leb^{\dim\mms}}$ with smooth and positive density  on every coordinate chart\footnote{We tacitly use the customary identifications, albeit the two measures in question technically live on different spaces.}. In particular, $\meas$ and $\smash{\Leb^{\dim\mms}}$ are mutually absolutely continuous, meaning they share the same null sets.   This allows us to say in the sequel that a Borel subset $E$ of $\mms$ has measure zero if it has $\meas$-measure zero for some (hence every) smooth measure $\meas$ on $\mms$.

Smooth measures considered in this survey are the following.
\begin{itemize}
\item \textbf{Volume measure.} The most prominent example is the \emph{volume measure} $\vol$ induced by  $g$: it has density $\smash{\sqrt{\vert\!\det g\vert}}$ with respect to $\smash{\Leb^{\dim\mms}}$. 
\item \textbf{Weighted measures.}  A more general class of reference measures take the form $\rme^V\,\vol$, where $V$ is a  smooth  function on $\mms$.  In fact, by a partition of unity argument one sees every smooth measure on $\mms$ is of this form.
\end{itemize}


Any  smooth measure $\meas$ on $\mms$ induces a  \emph{divergence} operator $\div_\meas$. By Weyl's lemma, it is uniquely determined by the Gauß--Green formula
\begin{align}\label{Eq:div def}
\int_\mms \rmd \varphi(X)\d\meas = -\int_\mms \varphi\div_\meas X\d\meas,
\end{align}
valid for every $\smash{\varphi\in C_\comp^\infty(\mms)}$. Here we assume the vector field $X$ to be smooth to simplify the presentation; however, the weak formulation \eqref{Eq:div def} and the smoothness of $\meas$ clarify  $X$ may well have lower regularity  (e.g.~local Lipschitz continuity) to  possess a well-defined divergence with respect to $\meas$. By  \eqref{Eq:div def}, $\div_\meas$ only depends on the choice of the reference measure $\meas$, but not on the    Lorentzian metric $g$ --- except, of course, when $\meas$ itself does.

If $\meas$ is the volume measure $\vol$, we write $\div$ for the associated divergence. 

\subsection{Causality theory}\label{Sub:Caus thry}  

\subsubsection{Causal characters} We call a tangent vector $v\in T\mms\setminus\{0\}$ 
\begin{itemize}
\item \emph{timelike} if $\smash{\vert v\vert^2 >0}$, \item \emph{lightlike} if $\smash{\vert v\vert^2 =0}$,
\item  \emph{causal} if $\smash{\vert v\vert^2\geq 0}$, and 
\item \emph{spacelike} if $\smash{\vert v\vert^2 <0}$. 
\end{itemize}
The zero vector is spacelike by convention.  These adjectives describe the so-called \emph{causal character}  of the tangent vector in question. 

Physically, timelike and lightlike directions correspond to directions of movement of massive observers and light rays through spacetime, respectively.

The causal character of a smooth vector field $X$ on $\mms$ is defined by requiring all its point evaluations to be timelike, lightlike, causal, or spacelike,  respectively. If $X$ is  timelike, lightlike, or spacelike (meaning the quantity $\smash{\vert X\vert^2}$ is always strictly positive, zero, or strictly negative throughout $\mms$), we say it has a definite causal character. Correspondingly, a curve $\gamma$ through $\mms$ is called timelike, light\-like, causal, or spacelike if it is locally Lipschitz continuous and $\smash{\Leb^1}$-a.a.~of its tangent vectors  have the respective causal character. 


\subsubsection{Time orientation} Clearly, the causal character of a tangent vector $v\in T\mms$ is unchanged by flipping the sign of $v$. This necessitates a criterion telling us when $v$ points into the ``future'' or the ``past''. This is done by fixing a time orientation, i.e.~a continuous timelike vector field $X$ on $\mms$. First, note that since the zero vector is spacelike, the set of causal vectors at each tangent space has more than one connected component, and their closures intersect precisely at zero (see e.g. Minguzzi \cite{minguzzi2015-light}). For $x\in \mms$, we then call $v\in T_x\mms$ \emph{future-directed} if it lies in the same connected component as $X_x$ and \emph{past-directed} otherwise. Simple examples (mimicking the failure of the  Möbius strip to be orientable in the Riemannian sense)  show not every Lorentzian metric is time orientable. Unless explicitly stated otherwise, all vectors, vector fields, and curves are tacitly assumed to be future-directed. 

\begin{definition}[Spacetime] A \emph{spacetime} $(\mms,g)$  refers to a smooth manifold $\mms$ as above endowed with a  Lorentzian metric $g$ and a time orientation. 
\end{definition}

\begin{example}[Model spaces {\cite{treude-grant2013}}]\label{Ex:Model} As in Riemannian geometry, the standard way to construct spacetimes with constant  curvature (where many quantitative geometric inequalities are sharp) are \emph{warped products}. They are set up as follows. Let $I$ be an open interval in $\R$. Let $(\sfN, r)$ be a complete Riemannian manifold. Consider the product $\mms := I\times \sfN$. Given a smooth and positive function $f$ on $I$, we consider the Lorentzian metric $\Rmet := \rmd t^2 - f(t)\,r$ on $\mms$.  (Completeness of $(\sfN,r)$ implies global hyperbolicity of $(\mms,g)$ in the sense of \cref{Sub:Glob hyp} below.) Choosing a time orientation  making the vector field $\partial/\partial t$ future-directed, this  turns $\mms$ and $g$ into a spacetime.

The model classes  for constantly negative, zero, and positive sectional (hence Ricci) curvature are called \emph{de Sitter}, \emph{Minkowski}, and \emph{anti-de Sitter spacetimes}. They can be constructed explicitly from natural choices of $(\sfN,r)$ and $f$ \cite{treude-grant2013}*{§4.2}. \hfill$\blacksquare$ 
\end{example}

\subsubsection{Chronology and causality} We define two relations on $\mms$ as follows.
\begin{itemize}
\item \textbf{Chronology.} Given $x,y\in\mms$, we write $x\ll y$ if \MB{there exists a timelike curve connecting} $x$ and $y$.
\item \textbf{Causality.} Given $x,y\in\mms$, we write $x\leq y$ if \MB{there exists a causal curve connecting} $x$ and  $y$. 
\end{itemize}
 Clearly, the relations $\ll$ and $\leq$ are transitive, $\leq$ is reflexive, and $\ll$ is included in $\leq$. Moreover (yet less obviously), the \emph{push-up principle} holds, meaning if a triple $x,y,z\in\mms$ obeys $x\ll y \leq z$ or $x \leq y\ll z$, then $x\ll z$. 

As customary, the relations $\ll$ and $\leq$ can be regarded as subsets $I$ and $J$ of $\mms^2$, respectively. The push-up principle is then shortly written $(I\circ J) \cup (J\circ I) \subset I$. Here and in the sequel, all sets tagged with ``$I$'' will be open. Although we only consider  smooth Lorentzian metrics for now, we note if $g$ is not locally Lipschitz continuous, these properties may fail in general, as pointed out by Chru\'sciel--Grant \cite{chrusciel-grant2012} and  Grant--Kunzinger--Sämann--Steinbauer \cite{grant-kunzinger-samann-steinbauer2020}.

\subsubsection{Some sets}\label{Sub:Some}
Given  $x,y\in\mms$, we  define 
\begin{itemize}
\item the \emph{chronological future} of $x$ as 
\begin{align*}
I^+(x) := \pr_2\big[I \cap (\{x\}\times\mms)\big],
\end{align*}
\item the \emph{causal future} of $x$ as 
\begin{align*}
J^+(x) := \pr_2\big[J \cap (\{x\}\times\mms)\big],
\end{align*}
\item the \emph{chronological past} of $y$ as 
\begin{align*}
I^-(y) := \pr_1\big[I \cap (\mms\times \{y\})\big],
\end{align*}
\item the \emph{causal past} of $y$ as
\begin{align*}
J^-(y) := \pr_1\big[J \cap (\mms\times \{y\})\big].
\end{align*}
\end{itemize}
Here and in the sequel, $\pr_\imath \colon \mms^2\to \mms$ means the projection onto the $\imath$-th coordinate, where $\imath \in\{1,2\}$. Then the \emph{chronological diamond} $I(x,y)$ and the \emph{causal diamond} $J(x,y)$ are defined by $\smash{I^+(x) \cap I^-(y)}$ and $\smash{J^+(x)\cap J^-(y)}$, respectively. For subsets $X$ and $Y$ of $\mms$, $I(X,Y)$ is the union of all chronological diamonds $I(x,y)$, where $x\in X$ and $y\in Y$. We define $J(X,Y)$ analogously. This set is convex with respect to the relation $\leq$, i.e.~for every $x,y\in J(X,Y)$, the causal diamond $J(x,y)$ forms a subset of $J(X,Y)$. In particular, every causal curve with endpoints in $J(X,Y)$ is contained in $J(X,Y)$. Analogous  statements hold by replacing all occurrences of ``$J$'', ``$\leq$'', and ``causal'' by ``$I$'', ``$\ll$'', and ``timelike'', respectively. If $X$ and $Y$ are compact, the sets $I(X,Y)$ and $J(X,Y)$ are also called \emph{chronological emerald} and \emph{causal emerald} spanned by $X$ and $Y$, respectively.


\subsubsection{Causality condition} There are various natural causality conditions (i.e.~properties of the relation $\leq$) one can impose on the spacetime $(\mms,g)$. We refer to the reviews of Chru\'sciel \cite{chrusciel2011} or Minguzzi \cite{minguzzi2019-causality} for  details. The metatheorem listing their implications, e.g.~\cite{minguzzi2019-causality}*{§2.11}, is often called \emph{causal ladder}\footnote{As I realized based on comments of Argam Ohanyan and Clemens Sämann, it is somewhat ambiguous in the literature which properties one includes in the causal ladder. In this survey, we follow the list of Minguzzi \cite{minguzzi2019-causality} which collects properties strictly relating to causality, excluding chronological notions. Consequently, what we deem as strongest or weakest property on the causal ladder should be  interpreted relative to the causal ladder from  \cite{minguzzi2019-causality}.}; the stronger a property, the higher its step is located therein.  Its lowest  step is antisymmetry of $\leq$, often called \emph{causality} of $(\mms,g)$. That is, if $x,y\in\mms$ are such that $x\leq y$ and $y\leq x$ simultaneously, then $x=y$. This excludes the occurrence of causal loops, i.e.~nonconstant causal curves starting and ending in the same point. 

\begin{remark}[Causality forces noncompactness]\label{Re:Noncpt} If $\mms$ is compact and, as assumed, has empty  boundary, a simple covering argument shows the existence of a closed timelike curve. Thus, in our case compact spacetimes are never causal. \hfill$\blacksquare$
\end{remark}

\subsubsection{Global hyperbolicity}\label{Sub:Glob hyp}  The key concept of \emph{global hyperbolicity} was introduced by Leray \cite{leray1953} in the context of hyperbolic PDEs. It has entered mathematical relativity through the proof of global well-posedness of the Einstein equations by Choquet-Bruhat \cite{foures-bruhat1952} and Choquet-Bruhat--Geroch \cite{choquet-bruhat-geroch1969}, the singularity theorems of Penrose \cite{penrose1965} and  Hawking \cite{hawking1967} (see also \cref{Sub:Volume sing}), and Geroch's topological splitting theorem \cite{geroch1970-domain}. In contrast to causality of $(\mms,g)$ outlined above, global hyperbolicity lies on the highest step of the causal ladder. In later sections of this survey, it will be part of our standing hypotheses.

\begin{definition}[Global hyperbolicity  \cite{minguzzi2013-convexity}]\label{Def:Glob hyp} The spacetime $(\mms,g)$  is termed \emph{globally hyperbolic} if the following two conditions hold simultaneously.
\begin{enumerate}[label=\textnormal{\alph*\textcolor{black}{.}}]
\item\label{La:causa} The order $\leq$ is antisymmetric and closed.
\item\label{La:Diamonds} Compactness is stable by taking  causally convex hulls. That is, for every compact subset $C$ of $\mms$, the causal emerald $J(C,C)$ is compact.
\end{enumerate}
\end{definition}


The above is not  the classical definition \cite{hawking-ellis1973}. The notion of global hyperbolicity from \cref{Def:Glob hyp} we use was suggested for general topological ordered spaces by Minguzzi \cite{minguzzi2013-convexity}. There are several classical \cite{geroch1970} and newer \cite{bernal-sanchez2007,minguzzi2009-characterization,minguzzi2013-convexity,burtscher-garcia2024-eyes} variations of \cref{Def:Glob hyp} of varying strength which are all equivalent in the spacetime case. (Among these, \cref{Def:Glob hyp} is comparably strong per se.) We do not review them in detail to make this survey more focused. The reader wishing to have a clean  definition of global hyperbolicity may concentrate on the high (and physically relevant)  generality covered by the subsequent result of Hounnonkpe--Minguzzi \cite{hounnonkpe-minguzzi2019}. It yields sufficient conditions under which the condition \ref{La:causa} from \cref{Def:Glob hyp} can be discarded entirely and \ref{La:Diamonds} can be weakened.

\begin{theorem}[Global hyperbolicity with less conditions \cite{hounnonkpe-minguzzi2019}] Noncompactness of $\mms$ yields the following. If $\dim \mms$ is at least three then $(\mms,g)$ is globally hyperbolic if and only if for all points $x,y\in\mms$,  the causal diamond $J(x,y)$ is compact.
\end{theorem}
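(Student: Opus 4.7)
The plan is to handle each direction separately. The forward direction is immediate from Definition 2.8: applying condition (b) with $C = \{x,y\}$ gives compactness of $J(\{x,y\},\{x,y\}) \supset J(x,y)$, while condition (a) ensures the causal relation $\leq$ is closed, so $J(x,y)$ is a closed subset of a compact set and hence itself compact. Note this direction uses neither noncompactness of $\mms$ nor the dimension hypothesis.

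For the reverse direction, I must derive from compactness of pointwise causal diamonds both conditions (a) and (b) of Definition 2.8. Closedness of $\leq$ is the gentlest step: given $(x_n, y_n) \to (x,y)$ with $x_n \leq y_n$, I would choose $x' \in I^-(x)$ and $y' \in I^+(y)$ (both exist trivially in any spacetime), so that $x' \ll x_n$ and $y_n \ll y'$ eventually by openness of $\ll$. Push-up then places $x_n, y_n$ inside the compact set $J(x',y')$, and a standard limit curve argument applied to causal curves from $x_n$ to $y_n$ produces a causal curve from $x$ to $y$. For compactness of $J(C,C)$ with $C$ compact, I take $z_n \in J(x_n, y_n)$ with $x_n, y_n \in C$, extract subsequences so that $x_n \to x$ and $y_n \to y$ in $C$, and observe that $z_n$ lies eventually in some compact $J(x',y')$ with $x' \ll x$ and $y' \gg y$; closedness of $\leq$ (already established) forces any accumulation point $z$ of $z_n$ to satisfy $x \leq z \leq y$, so $z \in J(x,y) \subset J(C,C)$.

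The main obstacle, and the sole place where the hypotheses $\dim \mms \geq 3$ and noncompactness of $\mms$ are used, is antisymmetry of $\leq$, i.e.~the absence of closed causal curves. Suppose for contradiction there exist $p \neq q$ with $p \leq q \leq p$, producing a closed causal loop. In dimension $\geq 3$ there are at least two linearly independent spacelike directions transverse to the loop at each point, and small perturbations combined with the push-up principle should promote this causal loop into a timelike loop, so the chronology violating set $V := \{x \in \mms : x \ll x\}$ is nonempty. This set is always open; the delicate step is to use compactness of pointwise causal diamonds, together with the transverse wiggle room available in dimension $\geq 3$, to show that $V$ is also closed, and hence all of $\mms$ by connectedness. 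Iterating a single chronology-violating loop through a basepoint and exhausting $\mms$ by compact causal diamonds of the form $J(x,x)$ would then contradict noncompactness of $\mms$. Making this chain of transverse perturbation arguments precise, and isolating exactly where the dimension hypothesis is required in the closedness of $V$, is the technical core of the Hounnonkpe--Minguzzi argument and would occupy the bulk of a rigorous write-up; the other steps are essentially soft.
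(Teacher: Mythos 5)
The survey states this theorem purely as a citation to Hounnonkpe--Minguzzi and contains no proof of it, so your attempt has to stand on its own. Your forward direction is fine. The converse, however, is missing exactly its hard content, and the mechanism you sketch for it cannot be completed as stated. You propose to obtain antisymmetry by first upgrading a closed causal loop to a closed timelike loop using ``transverse spacelike directions'' available when $\dim\mms\geq 3$. That implication is false in general: chronological spacetimes containing closed null curves exist in every dimension (quotient two-dimensional Minkowski space by a null translation and, if you like, cross with flat factors), so no perturbation/push-up argument can promote a causal loop to a timelike one without using compactness of the diamonds in an essential way. Worse for your plan, the two-dimensional null-shift cylinder is noncompact, has compact causal diamonds $J(x,y)$ (they are closed ``null strips'' of the form $[u_0,u_1]\times\S^1$ in null coordinates), and is not causal; this is precisely why the hypothesis $\dim\mms\geq 3$ is needed, and it shows the dimension must be spent on the interplay between the extra spacelike directions and diamond compactness (roughly: a closed null curve plus a transverse spacelike direction allows causal curves to wander arbitrarily far while advancing arbitrarily little in the ``null time'', which destroys compactness of some diamond), not on making loops timelike. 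Since you explicitly defer ``making the chain of transverse perturbation arguments precise'', the only step in which noncompactness and the dimension bound enter is both unproved and aimed in a direction that a counterexample blocks. (Your endgame is repairable: if the chronology-violating set were nonempty, open and closed, connectedness gives a single chronology class, so $\mms=I^+(x)\cap I^-(x)\subset J(x,x)$ would be compact, contradicting noncompactness; but the closedness of that set is exactly what you have not shown.)

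A second, smaller gap: the steps you call soft are not. To get closedness of $\leq$ from a ``standard limit curve argument'' you must exclude the alternative of the limit curve theorem in which one only obtains a future-inextendible limit curve from $x$ and a past-inextendible limit curve into $y$, both totally imprisoned in the compact set $J(x',y')$; ruling this out requires (strong) causality or a non-imprisonment property, which at that stage of your write-up you have not established — indeed, that ``causal plus compact diamonds implies globally hyperbolic'' is itself a nontrivial theorem of Bernal--S\'anchez/Minguzzi rather than a routine limit-curve exercise. So the logical order has to be: first causality (this is the genuinely new Hounnonkpe--Minguzzi step, where noncompactness and $\dim\mms\geq 3$ are consumed), and only then closedness of $J$ and compactness of causal emeralds via the known machinery; your proposed order, with antisymmetry last and the limit-curve steps done first, does not go through as written.
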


\subsection{The ``metric''. Time separation function and geodesics}\label{Sub:Time sep} Every  Riemannian manifold can be turned into a metric space by minimizing the length functional induced from the  Riemannian metric, cf.~\eqref{Eq:Riemanniandistance}. The induced topology is compatible with the one inherited from the  space  in question being a topological manifold.

The parallel ``metric'' information of the spacetime $(\mms,g)$ is encoded by the  \emph{time separation function} $l\colon \mms^2\to \R_+\cup\{-\infty,\infty\}$. It is  defined by
\begin{align}\label{Eq:l function}
l(x,y) := \sup\!\Big\lbrace\!\int_0^1 \vert\dot\gamma_t\vert\d t : \gamma\textnormal{ causal curve with }\gamma_0= x\textnormal{ and }\gamma_1=y\Big\rbrace;
\end{align}
here, we use the convention $\sup\emptyset := -\infty$.  In fact, it will be convenient to rephrase \eqref{Eq:l function} slightly differently later, as motivated and discussed in \cref{Sub:Lagr}. This will be  the basis for the non\-linear approach to the d'Alembertian we survey. As we separate these new insights from the current preliminary material, we stick to the traditional definition \eqref{Eq:l function} for the moment.

Physically, given $x,y\in\mms$ the quantity $l(x,y)$ measures the maximal  amount of proper time elapsing when $x$ travels to $y$ through spacetime. 

For every $x,y,z\in\mms$, it satisfies the \emph{reverse triangle inequality}  
\begin{align}\label{Eq:reverse l}
l(x,z) \geq l(x,y) + l(y,z).
\end{align}

For $o\in\mms$, we write the functions $l(o,\cdot)$ and  $l(\cdot,o)$ as $l_o$ and $l^o$, respectively.

\begin{theorem}[Consequences of global hyperbolicity] Assume the spacetime $(\mms,g)$ is globally hyperbolic. Then the following statements hold.
\begin{enumerate}[label=\textnormal{\textcolor{black}{(}\roman*\textcolor{black}{)}}]
\item \textnormal{\textbf{Semicontinuity.}} The function $l$ does not assume the value $\infty$, vanishes on the diagonal of $\mms^2$, and is upper semi\-continuous. Moreover, its positive part $\smash{l_+}$ is con\-ti\-nuous.
\item \textnormal{\textbf{Topology.}} The topology of $\mms$ is determined by $l$. More precisely, the classes $\{I(x,y) : x,y\in\mms\}$ and $\{I^\pm(x) : x\in\mms\}$ are subbases thereof.
\end{enumerate}
\end{theorem}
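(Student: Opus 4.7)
The plan is to treat the two parts in turn, relying on classical causality-theoretic tools together with the compactness supplied by \cref{Def:Glob hyp}.

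For (i), vanishing on the diagonal is immediate: the constant curve at $x$ has zero Lorentzian length, and the antisymmetry condition in \cref{Def:Glob hyp} forces every causal curve from $x$ to $x$ to be constant (such a curve passes through points $y$ with $x\leq y\leq x$, hence $y=x$). To rule out $l=+\infty$, the critical case is $x\leq y$: the causal emerald $J(x,y)$ is then compact by \cref{Def:Glob hyp}, every competing causal curve lies inside it, and a finite cover by convex normal neighborhoods yields a uniform upper bound on Lorentzian lengths. Upper semicontinuity of $l$ is then the usual \emph{limit curve} argument: given $(x_n,y_n)\to(x,y)$ with $l(x_n,y_n)\to L>-\infty$, I take almost-maximizing causal curves $\gamma_n$ from $x_n$ to $y_n$, reparametrize them by the background Riemannian metric $r$ of \cref{Re:Riemannian}, and invoke Arzel\`a--Ascoli (valid because the $\gamma_n$ eventually stay in a fixed compact causal emerald) to extract a uniformly converging subsequence with limit $\gamma$ from $x$ to $y$. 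Closedness of $\leq$ from \cref{Def:Glob hyp} ensures $\gamma$ is causal, and the standard upper semicontinuity of Lorentzian length under uniform convergence of causal curves gives $l(x,y)\geq L(\gamma)\geq L$.

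For the continuity of $l_+$, upper semicontinuity of $l$ combined with $l_+\geq 0$ already yields continuity at every point where $l_+=0$, so it remains to prove lower semicontinuity on the chronological set $\{x\ll y\}$. Given $x\ll y$ and $\varepsilon>0$, I select a timelike curve $\gamma$ from $x$ to $y$ of length exceeding $l(x,y)-\varepsilon$, mark intermediate points $x',y'$ on $\gamma$ slightly after $x$ and before $y$, and appeal to openness of the chronological relation to get that any $x''$ near $x$ and $y''$ near $y$ satisfy $x''\ll x'$ and $y'\ll y''$. Concatenating short timelike segments with the middle of $\gamma$ then produces a timelike curve from $x''$ to $y''$ of length at least $l(x,y)-2\varepsilon$.

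For (ii), the sets $I^\pm(x)$, and hence $I(x,y)=I^+(x)\cap I^-(y)$, are open in the manifold topology by the classical fact that nearby points in a convex normal neighborhood can be connected by radial timelike curves. To show $\{I(x,y)\}_{x,y\in\mms}$ is in fact a basis of the manifold topology (hence \emph{a fortiori} a subbasis), I fix $z\in\mms$ and an open $U\ni z$, shrink $U$ to a convex normal neighborhood of $z$, take a timelike curve through $z$ contained in $U$, and choose $p,q$ on this curve just to the past and future of $z$. For $p,q$ close enough to $z$, the diamond $I(p,q)$ (whose global and local versions agree inside a convex neighborhood) is contained in $U$ and contains $z$. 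The main obstacle lies in the limit curve extraction underlying (i): both conditions of \cref{Def:Glob hyp} are genuinely needed there, with compactness of causal emeralds supplying the Arzel\`a--Ascoli input and closedness of $\leq$ preserving the causal character of the limit. The accompanying upper semicontinuity of Lorentzian length under uniform convergence is routine in the smooth case but, as hinted in the excerpt by the references to Chru\'sciel--Grant and Grant--Kunzinger--S\"amann--Steinbauer, becomes quite delicate once regularity of the metric is lost.
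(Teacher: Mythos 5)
The paper states this result as classical background and does not prove it (it defers to the standard monographs), so there is no in-paper argument to compare against; your sketch follows the textbook route — limit curves plus Arzel\`a--Ascoli for upper semicontinuity, openness of $\ll$ plus concatenation for lower semicontinuity of $l_+$, and the Alexandrov topology for (ii) — and is essentially sound.

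Two steps, however, silently lean on strong causality and should say so. First, in the finiteness and compactness arguments, "a finite cover by convex normal neighborhoods" does not by itself bound Lorentzian lengths: a causal curve confined to the compact emerald $J(x,y)$ could a priori re-enter each neighborhood of the cover arbitrarily often. What bounds its $r$-arclength (and hence, via the compactness estimate $\vert v\vert\leq C\,\vert v\vert_r$ for causal $v$ over a compact set, its Lorentzian length, and what supplies the equicontinuity input for Arzel\`a--Ascoli) is non-total-imprisonment, a consequence of strong causality and thus of global hyperbolicity via the causal ladder. Second, and more substantively, in (ii) the claim that "the global and local versions of $I(p,q)$ agree inside a convex neighborhood" is not a property of convex normal neighborhoods: in a spacetime that is not strongly causal, the global diamond $I(p,q)$ can escape every neighborhood of $z$ (the Alexandrov topology coincides with the manifold topology \emph{precisely when} the spacetime is strongly causal, so some causality input is unavoidable here). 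What you actually need is that $z$ has arbitrarily small causally convex neighborhoods, i.e.\ strong causality, which does follow from \cref{Def:Glob hyp} but must be invoked rather than attributed to convexity. A last, minor point of convention: since the paper declares the zero vector spacelike, the constant curve is not causal, so $l(x,x)=0$ rests on the stipulated reflexivity of $\leq$; the substantive half of your diagonal argument — antisymmetry excludes nonconstant causal loops, so no positive length is attainable — is correct.
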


Despite the duality of \eqref{Eq:l function} to Riemannian distance functions, $l$ is  not a metric. Besides its ``triangle inequality'' being reversed, it is not  definite: for instance, the function $l_o$ vanishes on an entire conical hypersurface $\smash{J^+(o)\setminus I^+(o)}$, where $o\in\mms$. Moreover, $l$ is not  symmetric provided the spacetime $(\mms,g)$ is causal (unless $\mms$ is a singleton, which is excluded by our dimensional hypothesis). On the other hand, it still allows for concepts parallel to their counterparts from Riemannian geometry with a ``metric'' character, notably \emph{geodesics}\footnote{In classical spacetime geometry, this term means a solution to the geodesic equation, hence is more general than the way we use it. Since extremizers of a certain length functional are called geodesics in nonsmooth geometries,  we adopt this terminology from the outset.}. 

\begin{definition}[Geodesic]\label{Def:lgeo} Every timelike maximizer of \eqref{Eq:l function} which is affinely parametrized will  be called an \emph{$l$-geodesic}; here, we will term a causal curve $\gamma$ \emph{affinely parametrized} if  $l(\gamma_s,\gamma_t) = (t-s)\,l(\gamma_0,\gamma_1)$ for every $s,t\in[0,1]$ with $s\leq t$.
\end{definition}

We occasionally use the term of a \emph{proper time parametrized timelike maximizer} $\gamma$ (defined on a real  interval $I$  generally different from $[0,1]$), which instead obeys $l(\gamma_s,\gamma_t) = t-s$ whenever $s,t\in I$ obey $s\leq t$. Every $l$-geodesic $\gamma$ has  a reparametri\-zation by proper time defined on the interval $[0,l(\gamma_0,\gamma_1)]$ (and vice versa).

By basic ODE theory, an $l$-geodesic is smooth. More care is required for maximizers of \eqref{Eq:l function} at whose endpoints $l$ vanishes. Since only timelike maximizers will play a role in this survey, we stick to this simpler case throughout.

\begin{theorem}[Avez--Seifert theorem \cite{avez1963,seifert1967}]\label{Th:Avez-Seifert} If $(\mms,g)$ is globally hyperbolic, any two points $x,y\in\mms$ with $x\ll y$ are connected by an $l$-geodesic.
\end{theorem}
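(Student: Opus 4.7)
The plan is to obtain an $l$-geodesic as the uniform limit of a maximizing sequence of causal curves, using global hyperbolicity for compactness of the ambient domain and upper semicontinuity of $l$ to control the length of the limit.

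\textbf{Step 1: Maximizing sequence and limit curve.} I would choose causal curves $\gamma^n\colon[0,1]\to\mms$ from $x$ to $y$ with $\Len(\gamma^n)\to l(x,y)$; the target is finite by the semicontinuity statement above and positive since $x\ll y$. Discarding finitely many, I can assume each $\gamma^n$ is timelike and reparametrize affinely so that $\Len(\gamma^n\vert_{[0,t]}) = t\,\Len(\gamma^n)$ on $[0,1]$ (possible since $s\mapsto \Len(\gamma^n\vert_{[0,s]})$ is then continuous and strictly increasing). Each $\gamma^n$ is confined to $J(x,y)$, which is compact by global hyperbolicity. A standard limit curve argument (Arzelà--Ascoli applied to $r$-arclength parametrizations on this compact set, combined with the closedness of $\leq$) lets me pass to a subsequence converging uniformly to a future-directed, locally Lipschitz causal curve $\gamma\colon[0,1]\to\mms$ with $\gamma_0=x$ and $\gamma_1=y$.

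\textbf{Step 2: The limit is an $l$-geodesic.} For $0\leq s\leq t\leq 1$, the upper semicontinuity of $l$ together with the defining sup property $l(\gamma^n_s,\gamma^n_t)\geq \Len(\gamma^n\vert_{[s,t]}) = (t-s)\,\Len(\gamma^n)$ yields
\[l(\gamma_s,\gamma_t) \;\geq\; \limsup_n l(\gamma^n_s,\gamma^n_t) \;\geq\; (t-s)\,l(x,y).\]
For any partition $0=t_0<\cdots<t_k=1$, the reverse triangle inequality then gives
\[l(x,y)\;=\;l(\gamma_0,\gamma_1) \;\geq\; \sum_{i=1}^k l(\gamma_{t_{i-1}},\gamma_{t_i}) \;\geq\; \sum_{i=1}^k (t_i-t_{i-1})\,l(x,y)\;=\;l(x,y),\]
forcing equality throughout, hence $l(\gamma_s,\gamma_t)=(t-s)\,l(x,y)$ for all $s\leq t$. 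In particular $\gamma_s\ll\gamma_t$ for $s<t$, so $\gamma$ is timelike, and the identity is exactly the affine parametrization condition of the preceding definition, making $\gamma$ an $l$-geodesic.

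\textbf{Main obstacle.} The delicate step is the limit curve extraction: one must show that uniform limits of causal curves in a globally hyperbolic spacetime remain locally Lipschitz causal curves (rather than merely continuous paths with causally ordered endpoint pairs). Compactness of $J(x,y)$ confines the $\gamma^n$ to a single compact set, on which causal curves admit a uniform $r$-Lipschitz parametrization via a local compactness argument on the future cone bundle; closedness of $\leq$ then certifies that every pair $(\gamma_s,\gamma_t)$ with $s\leq t$ is causally related, and the equi-Lipschitz bound upgrades the limit to a genuine future-directed locally Lipschitz causal curve with a.e.\ causal tangent. Keeping track of the affine reparametrization through this extraction — so that the length inequality in Step 2 survives the subsequence passage — is the other technical point one must verify carefully.
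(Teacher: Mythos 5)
The paper does not prove this statement (it is quoted from the classical literature), so the only question is whether your argument is complete; it is not, and the gap sits exactly where the classical Avez--Seifert proof does its real work. In Step 2 you conclude from the identity $l(\gamma_s,\gamma_t)=(t-s)\,l(x,y)$ that $\gamma$ is an $l$-geodesic, but by \cref{Def:lgeo} an $l$-geodesic must be a timelike \emph{maximizer} of \eqref{Eq:l function}, i.e.\ it must satisfy $\int_0^1\vert\dot\gamma_t\vert\d t=l(x,y)$. Your identity only gives the wrong-way bound: each restriction $\gamma\vert_{[s,t]}$ is itself a competitor in the supremum defining $l$, so $\Len(\gamma\vert_{[s,t]})\leq l(\gamma_s,\gamma_t)=(t-s)\,l(x,y)$, and nothing you have written bounds the length of the limit curve from \emph{below}. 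Upper semicontinuity of $l$ on $\mms^2$ is not a substitute for the ingredient you are missing, namely upper semicontinuity of the \emph{length functional} along causal curves under uniform convergence (valid in strongly causal, hence globally hyperbolic, spacetimes); that is precisely the step that makes the limit curve a maximizer in the classical proof. The claim ``$\gamma_s\ll\gamma_t$ for $s<t$, so $\gamma$ is timelike'' is also unjustified: chronological relatedness of all pairs of points does not force the tangent to be timelike $\Leb^1$-a.e.\ (a causal curve with interspersed null segments is still chronologically strictly increasing by push-up), so timelikeness in the sense required by the paper's definition again needs the maximizer property plus the standard corner/variation argument. There are also smaller issues in Step 1: a near-maximizing causal curve need not be timelike, and without a.e.\ timelike tangent the proportional-to-$l$-arclength reparametrization is not even well defined (constancy intervals of the arclength function produce jumps); moreover the affinely parametrized curves are not equi-Lipschitz with respect to $r$, a point you flag but do not resolve.

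Two standard repairs are available. Either invoke (or prove) the upper semicontinuity of $\Len$ under $C^0$-limits of causal curves and run the usual limit-curve argument with $r$-arclength parametrizations, which immediately gives $\Len(\gamma)\geq\limsup_n\Len(\gamma^n)=l(x,y)$; or salvage your aligned chain of points: choose a partition $0=t_0<\dots<t_k=1$ so fine that consecutive points $\gamma_{t_{i-1}},\gamma_{t_i}$ lie in a causally convex normal neighborhood, join them by the local maximizing geodesic, whose length equals $l(\gamma_{t_{i-1}},\gamma_{t_i})=(t_i-t_{i-1})\,l(x,y)$, and concatenate. The resulting causal curve from $x$ to $y$ has length $\sum_i(t_i-t_{i-1})\,l(x,y)=l(x,y)$, hence is a maximizer, is timelike since its length is positive, and after affine reparametrization is the desired $l$-geodesic (the theorem only asserts existence, so it need not coincide with your limit curve). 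As written, however, your Step 2 does not deliver the conclusion.
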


\subsection{Lorentz distance function}\label{Sub:Lorentz} The distance function from an arbitrary subset $\Sigma$ of a metric space is defined as the minimal distance a point has to $\Sigma$.

In spacetime geometry,  even for $\ll$-related points the minimal distance with respect to $l$ is usually $-\infty$ (by travelling along a zigzag curve with some spacelike segment). Instead, comparable to \eqref{Eq:l function} the correct notion of ``distance function'' is obtained by maximizing the length to an appropriate subset $\Sigma$ of $\mms$ with respect to $l$. ``Appropriate'' amounts to three hypotheses on $\Sigma$ described now.

\subsubsection{Achronality} A subset $\Sigma$ of $\mms$ is termed \emph{achronal} if no two points in it are related with respect to $\ll$. This is  equivalent to  $\smash{I^+(\Sigma) \cap I^-(\Sigma) = \emptyset}$.

\begin{definition}[Lorentz distance function] Let $\Sigma$ be an achronal subset of $\mms$. The associated  \emph{Lorentz distance function} $l_\Sigma \colon \mms \to \R\cup\{-\infty,\infty\}$ is 
\begin{align*}
l_\Sigma(x) := \sup_{x^-\in \Sigma} l_+(x^-,x) - \sup_{x^+\in\Sigma} l_+(x,x^+).
\end{align*}
\end{definition}

Here, the hypothesized achronality of $\Sigma$ ensures well-definedness of the above quantity: it implies at most one of the two suprema defining $l_\Sigma(x)$ is  nonzero at each given $x\in\mms$. The function $l_\Sigma$ is positive exactly on $I^+(\Sigma)$, negative exactly on $I^-(\Sigma)$, and identically zero otherwise. A reason for the negative sign on $\smash{I^-(\Sigma)}$ is  in that way, $l_\Sigma$ is monotone along the causal relation $\leq$: if $x,y\in\mms$ obey $x\leq y$, then $\smash{l_\Sigma(x) \leq l_\Sigma(y)}$. (We refer to  \cref{Cor:Steepness} below for a stronger statement.) For simplicity, from now on we focus on $l_\Sigma$ on $\smash{I^+(\Sigma)\cup \Sigma}$, where it is nonnegative (and tailor all further hypotheses on $\Sigma$ relative to this setup, e.g.~future timelike completeness below). Yet, all subsequent discussions generalize   when taking the negative part of $l_\Sigma$ into account; the interested reader is referred to Treude \cite{treude2011}, Treude--Grant \cite{treude-grant2013}, and Cavalletti--Mondino \cite{cavalletti-mondino2020,cavalletti-mondino2022-review} for the background and to Beran et al.~\cite{beran-braun-calisti-gigli-mccann-ohanyan-rott-samann+-} and Braun \cite{braun2024+} for the new results surveyed in this article.

\subsubsection{Future timelike completeness} Given  $\smash{x\in I^+(\Sigma)}$, a point $o\in\Sigma$ is called a \emph{footpoint} of $x$ on $\Sigma$ provided $\smash{l_\Sigma(x) = l(o,x)}$. Not every such $x$ has a footpoint  in general; equivalently, the unique  nonzero supremum in the definition of $l_\Sigma(x)$ may  not be attained. The existence of such footpoints, however, will later ensure the entire set $\smash{I^+(\Sigma)}$ is foliated by negative gradient flow trajectories of $l_\Sigma$  up to a set of measure zero. This explains the need for a sufficient criterion which implies  the existence of footpoints.

We  call a subset $\Sigma$ of $\mms$ \emph{future timelike complete} if for every $\smash{y\in I^+(\Sigma)}$, the closure  of $J^-(y)\cap\Sigma$ relative to $\Sigma$ is compact. This  notion was introduced by Galloway \cite{galloway1986}. It  is weaker than \emph{future causal completeness} of $\Sigma$ subsequently  studied by Treude--Grant \cite{treude-grant2013}, which amounts to replace the  occurrence of ``$I$'' by ``$J$'' in the first clause above. Clearly, if $\Sigma$ is compact, it is future timelike complete.

\begin{lemma}[Footpoint projection]\label{Le:Footpoint proj} Assume that the spacetime $(\mms,g)$ is globally hyperbolic. Let $\Sigma$ be an achronal subset of $\mms$. If $\Sigma$ is future timelike complete, every $\smash{x\in I^+(\Sigma)}$ has a footpoint on  $\Sigma$, i.e.~a point $o\in \Sigma$ such that $\smash{l_\Sigma(x) = l(o,x)}$.
\end{lemma}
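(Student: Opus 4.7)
The plan is to realize the supremum defining $l_\Sigma(x)$ as a limit along a maximizing sequence and then extract a limit point, using the only two available ingredients: the compactness provided by future timelike completeness of $\Sigma$, and the upper semicontinuity of $l$ provided by global hyperbolicity. This is essentially the direct method from the calculus of variations, tailored to the causal setting.

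First, I would argue $l_\Sigma(x) > 0$. Achronality of $\Sigma$ makes $I^+(\Sigma)$ and $I^-(\Sigma)$ disjoint, so $x\in I^+(\Sigma)$ forces the second supremum in the definition of $l_\Sigma(x)$ to vanish, and the first supremum is strictly positive since $x\in I^+(\Sigma)$ admits some $o_0\in\Sigma$ with $o_0\ll x$. Pick a sequence $(o_n)\subset\Sigma$ with $l_+(o_n,x)\to l_\Sigma(x)>0$. After discarding finitely many indices, $l(o_n,x)>0$ for every $n$, hence $o_n\ll x$ and, in particular, $o_n\in J^-(x)\cap\Sigma$.

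Next I would invoke future timelike completeness: since $x\in I^+(\Sigma)$, the closure of $J^-(x)\cap\Sigma$ relative to $\Sigma$ is a compact subset of $\Sigma$. Thus a subsequence of $(o_n)$, which I do not relabel, converges in $\mms$ to some $o\in\Sigma$. Upper semicontinuity of $l$ on $\mms^2$ (as recorded earlier under global hyperbolicity) then yields
\begin{equation*}
l(o,x) \,\geq\, \limsup_{n\to\infty} l(o_n,x) \,=\, l_\Sigma(x).
\end{equation*}
Conversely, since $o\in\Sigma$ the very definition of $l_\Sigma(x)$ as a supremum over $\Sigma$ gives $l_+(o,x)\leq l_\Sigma(x)$. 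Combined with $l_\Sigma(x)>0$, these two inequalities force $l(o,x)=l_+(o,x)=l_\Sigma(x)$, which exhibits $o$ as the desired footpoint.

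The only delicate step is ensuring the maximizing sequence does not escape to infinity or drift away from $\Sigma$; this is precisely what future timelike completeness is engineered to rule out, once the maximizing sequence has been trapped inside $J^-(x)\cap\Sigma$ by the strict positivity of its $l$-values. Everything else --- achronality to get $l_\Sigma(x)>0$ and upper semicontinuity to pass to the limit --- is readily available from the hypothesis that $(\mms,g)$ is globally hyperbolic.
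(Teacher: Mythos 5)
Your proof is correct, and since the paper states this lemma without proof (deferring to the literature, e.g.\ Galloway and Treude--Grant), your direct-method argument --- maximizing sequence trapped in $J^-(x)\cap\Sigma$ via positivity of $l_\Sigma(x)$, compactness from future timelike completeness, and upper semicontinuity of $l$ (equivalently, continuity of $l_+$) under global hyperbolicity --- is precisely the standard argument intended here. No gaps: the limit point lies in $\Sigma$ because the relative closure of $J^-(x)\cap\Sigma$ is a compact subset of $\Sigma$, and the two-sided comparison with the supremum closes the argument.
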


\begin{corollary}[Basic regularity]\label{Cor:Steepness} Suppose  $(\mms,g)$ is globally hyperbolic. Let $\Sigma$ be an achronal, future timelike complete subset of $\mms$. Then  the function $\smash{l_\Sigma}$ is real-valued, continuous, and \emph{1-steep} on $\smash{I^+(\Sigma)\cup\Sigma}$, i.e.~for every $\smash{x,y\in I^+(\Sigma)\cup\Sigma}$,
\begin{align}\label{Eq:Satu}
l_\Sigma(y) \geq  l_\Sigma(x) + l(x,y).
\end{align}
\end{corollary}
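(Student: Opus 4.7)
The plan is to reduce all three assertions to properties of the two-point function $l$ via \cref{Le:Footpoint proj}. Achronality of $\Sigma$ forces the second supremum in the definition of $l_\Sigma$ to vanish on $I^+(\Sigma)\cup\Sigma$ (otherwise one produces two distinct $\ll$-related points in $\Sigma$), so $l_\Sigma=\sup_{o\in\Sigma} l_+(o,\cdot)$ there. By \cref{Le:Footpoint proj}, every $x\in I^+(\Sigma)$ attains this supremum at some footpoint $o\in\Sigma$, i.e.~$l_\Sigma(x)=l(o,x)$. Real-valuedness is then immediate, since global hyperbolicity precludes $l$ from taking the value $\infty$, while $l_\Sigma\equiv 0$ on $\Sigma$ itself.

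For 1-steepness, fix $x,y\in I^+(\Sigma)\cup\Sigma$. The inequality is trivial when $x\not\leq y$, since then $l(x,y)=-\infty$ and $l_\Sigma(x)$ is finite. Otherwise pick a footpoint $o\in\Sigma$ for $x$ (or set $o:=x$ when $x\in\Sigma$), so that $o\leq x\leq y$ by push-up; the reverse triangle inequality \eqref{Eq:reverse l} then gives
\begin{align*}
l_\Sigma(y) \,\geq\, l_+(o,y) \,\geq\, l(o,x)+l(x,y) \,=\, l_\Sigma(x)+l(x,y).
\end{align*}

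Continuity is the remaining task. Lower semi-continuity is the easier half. At $x\in\Sigma$ one has $l_\Sigma(x)=0\leq l_\Sigma(x_n)$ along any sequence $x_n\to x$ in $I^+(\Sigma)\cup\Sigma$; at $x\in I^+(\Sigma)$ one picks a footpoint $o\in\Sigma$ of $x$, which satisfies $o\ll x$, so openness of $I^+(o)$ gives $o\ll x_n$ for all large enough $n$, and continuity of $l_+$ on the chronological relation yields $l_\Sigma(x_n)\geq l_+(o,x_n)\to l_+(o,x)=l_\Sigma(x)$.

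Upper semi-continuity is the hard part, because $l_\Sigma$ is defined as a supremum of the merely upper semi-continuous functions $l_+(o,\cdot)$, so one must prevent the optimizing $o$ from escaping to infinity as one varies the base point. Given $x_n\to x$, select footpoints $o_n\in\Sigma$ of $x_n$ and fix any $y\in I^+(x)$; openness of $I^-(y)$ makes $x_n\ll y$ eventually, whence $o_n\leq x_n\ll y$ by push-up, so $o_n\in J^-(y)\cap\Sigma$. Future timelike completeness of $\Sigma$ renders the closure of this set in $\Sigma$ compact, so upon passing to a subsequence $o_n\to o\in\Sigma$. Upper semi-continuity of $l$ then closes the argument via
\begin{align*}
\limsup_{n\to\infty} l_\Sigma(x_n) \,=\, \limsup_{n\to\infty} l(o_n,x_n) \,\leq\, l(o,x) \,\leq\, l_\Sigma(x).
\end{align*}
The main obstacle throughout is precisely this compactness step for the footpoint projections, and it is exactly what the hypothesis of future timelike completeness is designed to deliver.
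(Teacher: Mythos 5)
Your argument is correct and is exactly the standard proof the paper leaves implicit (footpoint existence via \cref{Le:Footpoint proj} plus the reverse triangle inequality \eqref{Eq:reverse l} for $1$-steepness and finiteness; continuity of $l_+$ on $\ll$ for lower semicontinuity; compactness of footpoints supplied by future timelike completeness together with upper semicontinuity of $l$ for the upper bound). One cosmetic slip: in the steepness step the chain $o\leq x\leq y$ needs only transitivity of $\leq$, not the push-up principle, which upgrades a mixed chronological/causal chain to $\ll$ and is not what is used there.
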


\subsubsection{Spacelikeness} Traditionally, the geometrically most relevant reference sets for Lorentz distance functions in the literature are achronal  submanifolds of $\mms$. In this part, we  focus on the simpler case when $\Sigma$ is a hypersurface, referring to  Treude \cite{treude2011} and Treude--Grant \cite{treude-grant2013} for general submanifolds. Restricting ourselves to  codimension one  costs no large  generality in terms of applications. For instance, the existence of certain hypersurfaces corresponds to a ``trapped\-ness'' condition in the singularity theorem of Hawking \cite{hawking1967}; see also \cref{Sub:Volume sing} below. 

Thus, let $\Sigma$ be an achronal hypersurface in $\mms$. We  call $\Sigma$  \emph{spacelike} if its normal vector field $\smash{\sfn_{\Sigma}}$ is timelike,  where we tacitly orient $\Sigma$ such that $\smash{\sfn_{\Sigma}}$ is future-directed. The reason for spacelikeness is it will ensure negative gradient flow trajectories  of $l_\Sigma$ emanating from $\Sigma$ (relevant for the disintegration \cref{Th:Disint}) are timelike.

\subsubsection{Future timelike cut locus and smoothness} Are Lorentz distance functions  more regular than \cref{Cor:Steepness} asserts? In Riemannian geometry, it is well-known that distance functions are actually smooth on an open set complementing the cut locus of the reference set in question. 

An analog of this fact holds in Lorentzian geometry. Proofs can  be found in  Treude \cite{treude2011}; a more concise alternative overview is given by Treude--Grant \cite{treude-grant2013}. Let $\Sigma$ be an achronal, future timelike complete, spacelike hypersurface in $\mms$. The \emph{future timelike cut locus} $\smash{\TCut^+(\Sigma)}$ of $\Sigma$ is the closure of the  set of all $\smash{x\in I^+(\Sigma)}$ which are connected  to $\Sigma$ by at least two  $l$-geodesics. Any element of $\smash{\TCut^+(\Sigma)}$ will be called a \emph{future timelike cut point} of $\Sigma$. (In particular, every future timelike cut point has more than one footpoint on $\Sigma$.) The above process of taking the closure  defining $\smash{\TCut^+(\Sigma)}$ adds precisely the set of focal points from $\Sigma$ to the outset.

The future timelike cut locus of $\Sigma$ has measure zero.

\begin{theorem}[Enhanced regularity   {\cite{treude2011,treude-grant2013}}]\label{Th:Enhanced} Let $(\mms,g)$ designate a globally hyperbolic spacetime. Let $\Sigma$ be an achronal, future timelike complete, spacelike hypersurface in $\mms$. Then the following statements hold.
\begin{enumerate}[label=\textnormal{(\roman*)}]
\item \textnormal{\textbf{Smoothness.}} The  function $\smash{l_\Sigma}$ is smooth on $\smash{I^+(\Sigma)\setminus \TCut^+(\Sigma)}$.
\item \textnormal{\textbf{Negative gradient flow.}} Given any $\smash{x\in I^+(\Sigma)\setminus \TCut^+(\Sigma)}$, let $\smash{\gamma}$ form the unique proper-time parametrized timelike maximizer which connects $\Sigma$ to $x$. Then we have $\smash{-(\nabla l_\Sigma)_x = \dot{\gamma}_{l_\Sigma(x)}}$. In particular, $\gamma$ coincides with the negative gradient flow of $l_\Sigma$ passing through $x$ on  $\smash{[0,l_\Sigma(x)]}$.
\item \textnormal{\textbf{Normalization.}} On $\smash{I^+(\Sigma)\setminus \TCut^+(\Sigma)}$, the vector field $\nabla l_\Sigma$ is past-directed and satisfies $\smash{\vert \nabla l_\Sigma \vert^2 = 1}$; in particular, it is timelike. Furthermore, $-\nabla l_\Sigma$ extends to a smooth normal vector field for $\Sigma$.
\end{enumerate}
\end{theorem}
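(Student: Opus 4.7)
The plan is to use the future-directed normal exponential map $E\colon \Sigma \times [0,\infty) \to \mms$, $E(p,t) := \exp_p(t\,\sfn_\Sigma(p))$, together with the Lorentzian first-variation formula for length. The strategy is to show that $E$ restricts to a diffeomorphism onto $I^+(\Sigma) \setminus \TCut^+(\Sigma)$; all three claims can then be read off from properties of $E^{-1}$ and the identity $l_\Sigma \circ E(p,t) = t$.

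Fix $x \in I^+(\Sigma) \setminus \TCut^+(\Sigma)$. By \cref{Le:Footpoint proj} combined with the definition of $\TCut^+(\Sigma)$, there is a unique proper-time parametrized timelike maximizer $\gamma \colon [0, l_\Sigma(x)] \to \mms$ from $\Sigma$ to $x$. First I would invoke the first-variation formula for a variation of $\gamma$ keeping its endpoint fixed at $x$ and sliding the starting point along $\Sigma$: since $\gamma$ is a geodesic and $V(l_\Sigma(x)) = 0$, the formula reduces to the boundary contribution $-\langle V(0), \dot\gamma(0)\rangle$ with $V(0) \in T_{\gamma_0}\Sigma$ arbitrary. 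Maximality of $\gamma$ forces this to vanish, so $\dot\gamma(0)$ is $g$-orthogonal to $T_{\gamma_0}\Sigma$; combined with spacelikeness of $\Sigma$, future-directedness, and proper-time normalization, this yields $\dot\gamma(0) = \sfn_\Sigma(\gamma_0)$ and hence $x = E(\gamma_0, l_\Sigma(x))$. Uniqueness of $\gamma$ makes $E$ injective on the relevant part of its domain, and by the same identification $E$ surjects onto $I^+(\Sigma) \setminus \TCut^+(\Sigma)$.

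Second, I would argue $\rmd E$ is everywhere invertible at each preimage of a non-cut-point. The kernel of $\rmd E_{(p,t)}$ consists of $\Sigma$-Jacobi fields along $s \mapsto E(p,s)$ vanishing at $s = t$, which exist only at focal points of $\Sigma$. By Lorentzian focal-point theory (a variant of the Morse index theorem, cf., e.g., \cite{beem-ehrlich-easley1996}), a normal geodesic ceases to be maximizing past its first focal point, and the first focal point is a limit of points reached by multiple maximizers; hence all focal points belong to $\TCut^+(\Sigma)$ by virtue of its closure-based definition. I anticipate this reconciliation of the paper's ``multiply maximized, then close up'' cut locus with the ``no focal points'' criterion to be the main obstacle: it requires a careful Lorentzian second-variation/index analysis, with signs flipped from the Riemannian case because one is maximizing rather than minimizing proper time. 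Combined with bijectivity and the inverse function theorem, $E$ becomes the desired diffeomorphism.

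Finally, claim (i) follows because $l_\Sigma \circ E(p,t) = t$ transports smoothness from $E^{-1}$ to $l_\Sigma$. For (ii), I would apply the first-variation formula this time with fixed starting point $\gamma_0$ and varying endpoint, using the pinching $l(\gamma_0, y) \leq l_\Sigma(y)$ (with equality at $y = x$) to identify $\rmd l_\Sigma|_x$ with $\langle \dot\gamma(l_\Sigma(x)), \cdot \rangle$ up to the sign fixed by the signature, i.e.~$-(\nabla l_\Sigma)_x = \dot\gamma(l_\Sigma(x))$; the flow statement follows because this identity holds along each normal geodesic $t \mapsto E(\gamma_0, t)$. For (iii), proper-time parametrization gives $|\nabla l_\Sigma|^2 = 1$, and the past-directed timelike character of $\nabla l_\Sigma$ matches that of $-\dot\gamma$. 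The smooth extension of $-\nabla l_\Sigma$ across $\Sigma$ to $\sfn_\Sigma$ is then immediate from letting $t \downarrow 0$ in the identity $-(\nabla l_\Sigma)|_{E(p,t)} = \dot\gamma(t)$, using that $E$ and hence $E^{-1}$ extend smoothly across the zero section while $\dot\gamma(0) = \sfn_\Sigma(p)$.
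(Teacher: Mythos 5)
Your proposal is correct and follows exactly the standard route of the cited sources (Treude and Treude--Grant), which the survey itself defers to without proof: normal exponential map, first variation forcing orthogonality at $\Sigma$, invertibility of its differential away from focal points (which lie in $\TCut^+(\Sigma)$ by the closure-based definition), and reading off (i)--(iii) from $l_\Sigma\circ E(p,t)=t$. The only step you assert rather than derive is the sign in $-(\nabla l_\Sigma)_x=\dot\gamma_{l_\Sigma(x)}$, which is purely a matter of reconciling the first-variation identity $\rmd l_\Sigma=\langle\dot\gamma,\cdot\rangle$ with the signature and musical-isomorphism conventions, not a mathematical gap.
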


In particular, along each negative gradient flow line of $l_\Sigma$, the inequality \eqref{Eq:Satu} becomes a genuine equality.

Albeit smoothness of Lorentz distance functions fails across the future time\-like cut locus of the reference set in question, they are still locally Lipschitz continuous. This follows from the normalization property from \cref{Th:Enhanced} together  with a well-known locally uniform control (see e.g.~Chru\'sciel \cite{chrusciel2011}*{Lem.~2.6.6}) of the norm of timelike vectors $v\in T\mms$ with respect to the given Riemannian metric $r$  from \cref{Re:Riemannian} as soon as the magnitude $\vert v\vert$ with respect to $g$ [sic] is bounded away from zero. We refer to Sormani--Vega \cite{sormani-vega2016}*{§4} for more details.

\begin{proposition}[Local Lipschitz continuity]\label{Pr:LocalLipschitz} Suppose $(\mms,g)$ is a globally hyperbolic spacetime. Let $\Sigma$ be an achronal, future timelike complete, spacelike hypersurface in $\mms$. Then the function $l_\Sigma$ is locally Lipschitz continuous on $\smash{I^+(\Sigma)}$.
\end{proposition}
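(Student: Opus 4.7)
The plan is to extract a pointwise bound on the Riemannian magnitude of $\nabla l_\Sigma$ on the smooth locus, and then upgrade this to a local Lipschitz estimate on all of $I^+(\Sigma)$ by a continuity and approximation argument.

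For the first step, I invoke \cref{Th:Enhanced}: on the open and dense subset $I^+(\Sigma)\setminus \TCut^+(\Sigma)$, the function $l_\Sigma$ is smooth with $\nabla l_\Sigma$ past-directed timelike and $\vert \nabla l_\Sigma\vert_g = 1$. Given $x_0\in I^+(\Sigma)$, fix a compact neighborhood $K\subset I^+(\Sigma)$ of it. The locally uniform control of Chruściel \cite{chrusciel2011}*{Lem.~2.6.6} referenced just above the proposition provides a constant $C(K)>0$ such that every past-directed timelike vector $v$ based in $K$ with $\vert v\vert_g = 1$ satisfies $\vert v\vert_r \leq C(K)$. Applied pointwise to $\nabla l_\Sigma$, this yields $\vert \nabla l_\Sigma\vert_r\leq C(K)$ throughout $K\setminus \TCut^+(\Sigma)$.

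For the second step, pick a small convex neighborhood $U\subset \mathrm{int}(K)$ of $x_0$. For $x,y\in U$, join them by a short smooth curve $\gamma\colon [0,1]\to U$ (e.g.~an $r$-geodesic) with $r$-length arbitrarily close to $\met_r(x,y)$. Whenever $\gamma$ avoids $\TCut^+(\Sigma)$, the fundamental theorem of calculus applied to $l_\Sigma\circ\gamma$ together with the gradient bound yields
\begin{align*}
\big\vert l_\Sigma(x) - l_\Sigma(y)\big\vert \leq \int_0^1 \big\vert (\nabla l_\Sigma)_{\gamma_t}\big\vert_r\,\big\vert\dot\gamma_t\big\vert_r\d t \leq C(K)\,\met_r(x,y).
\end{align*}

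The main obstacle is that a generic smooth curve $\gamma$ may pass through the timelike cut locus, where $l_\Sigma$ is only known to be continuous (by \cref{Cor:Steepness}) and the chain rule is not immediately available. To bypass this, I would use that $\TCut^+(\Sigma)$ has measure zero in $\mms$: embedding $\gamma$ in a one-parameter family of perturbed smooth curves $\gamma^s\colon[0,1]\to U$ transverse to $\TCut^+(\Sigma)$, a Fubini argument shows that for $\Leb^1$-almost every $s$ the curve $\gamma^s$ meets $\TCut^+(\Sigma)$ only on a set of one-dimensional measure zero, so the above estimate persists along $\gamma^s$. Passing to the limit $s\to 0$ using continuity of $l_\Sigma$ gives the Lipschitz estimate for the original endpoints $x,y$, which, $x_0$ being arbitrary, establishes local Lipschitz continuity on $I^+(\Sigma)$.
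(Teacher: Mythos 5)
Your first step rests on a claim that is false as stated: there is no constant $C(K)$ such that $\vert v\vert_r\leq C(K)$ for \emph{all} past-directed timelike vectors $v$ of unit $g$-magnitude based in a compact set $K$. Already at a single point the unit hyperboloid $\{\vert v\vert =1\}$ is unbounded with respect to $r$; in Minkowski space the vectors $(\cosh\theta,\sinh\theta,0,\dots,0)$ have $\vert v\vert=1$ while $\vert v\vert_r\to\infty$ as they approach the light cone. So the locally uniform control invoked before the proposition (and in \cite{chrusciel2011}*{Lem.~2.6.6}) cannot have the pointwise form you use; estimates of that type bound $\vert v\vert_r$ for causal $v$ only in terms of a quantity measuring how far $v$ is from the light cone, e.g.\ $\langle v,T\rangle$ for a fixed timelike reference field $T$. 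Hence $\vert\nabla l_\Sigma\vert_r\leq C(K)$ does \emph{not} follow from the normalization $\vert\nabla l_\Sigma\vert=1$ of \cref{Th:Enhanced} alone. What makes it true --- and this is the real content behind the paper's one-line justification and its pointer to \cite{sormani-vega2016}*{§4} --- is that on $K\setminus\TCut^+(\Sigma)$ the vector $-(\nabla l_\Sigma)_x$ is the velocity of a maximizing geodesic from a footpoint (\cref{Le:Footpoint proj}, \cref{Th:Enhanced}); since $\inf_K l_\Sigma>0$, the footpoints of points of $K$ lie in a compact subset of $\Sigma$ (future timelike completeness), the maximizing segments lie in a compact causal emerald (global hyperbolicity), and a limit-curve/compactness argument then shows these velocities range in a compact subset of the open timelike cone bundle, i.e.\ uniformly away from the light cone. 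Your proposal omits precisely this step, which is where the standing hypotheses actually enter.

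The second step has an independent gap. Continuity of $l_\Sigma$ on $U$ together with smoothness and $\vert\nabla l_\Sigma\vert_r\leq C(K)$ off the closed null set $\TCut^+(\Sigma)$ does not yield the increment estimate along a curve $\gamma^s$ meeting $\TCut^+(\Sigma)$ in a set of $\Leb^1$-measure zero: the fundamental theorem of calculus requires $l_\Sigma\circ\gamma^s$ to be absolutely continuous, which is essentially what you are trying to prove. The Cantor function --- continuous, with vanishing derivative off a closed null set, yet nonconstant --- shows that ``continuity plus a bounded derivative off a closed null set'' does not imply the conclusion, so ``the estimate persists along $\gamma^s$'' is unjustified. (The transversality language is also not meaningful here, $\TCut^+(\Sigma)$ being merely a closed null set rather than a submanifold, though a Fubini argument over a tube of nearby curves is a cosmetic fix.) A standard way to conclude, consistent with the argument the paper defers to, avoids crossing the cut locus altogether: for $x,y$ in a small ball inside $K$, pick a footpoint $o$ of $y$ and slide back along the maximizing geodesic from $o$ to $y$ by proper time $C\,\met_r(x,y)$; the uniform-cone bound from the corrected first step guarantees the resulting point $y'$ lies in $I^-(x)\cap I^+(o)$, whence $l_\Sigma(x)\geq l(o,x)\geq l(o,y')\geq l_\Sigma(y)-C\,\met_r(x,y)$ by the reverse triangle inequality, and exchanging the roles of $x$ and $y$ gives the two-sided Lipschitz estimate.
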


\section{Distributional $p$-d'Alembertian}

Throughout the rest of this survey, $p$ and $q$ will be nonzero numbers less than one which are mutually conjugate.  Also, from now on we stipulate global hyperbolicity of $(\mms,g)$ unless explicitly stated otherwise.

We will now motivate and survey the distributional $p$-d'Alembertian recently introduced by Beran--Braun--Calisti--Gigli--McCann--Ohanyan--Rott--Sämann \cite{beran-braun-calisti-gigli-mccann-ohanyan-rott-samann+-}. The distributional view becomes natural in the geometrically relevant case of Lorentz distance functions: at future timelike cut points, they are not sufficiently smooth to have a pointwise second derivative. In the spacetime setting, the ``classically defined'' $p$-d'Alembertian appeared first in  work of  Mondino--Suhr \cite{mondino-suhr2022}. 

\subsection{Radon functionals} We first clarify what we mean by ``distributional''. To this aim, we first fix some terminology from measure theory, referring to classical books   \cite{halmos1950,rudin1966,bogachev2007}  for  details.

By default, throughout this survey a measure is always nonnegative. 

Let $U$ form an open subset of $\mms$. 
The symbol $\Lip_\comp(U)$ denotes  the class of all Lipschitz continuous functions with compact support in $U$. Moreover, $\Vert\cdot\Vert_\infty$ stands for the usual supremum norm.

\begin{definition}[Radon functional] Let $U$ form  an open subset of $\mms$. A \emph{Radon functional} over the subset  $U$ is a linear map $T\colon \Lip_\comp(U)\to \R$ which is continuous in the locally convex topology of uniform convergence on compact subsets of $U$. In other words, the  latter means  for every compact subset $W$ of $U$, there exists a constant $C$ such that for every $\varphi\in\Lip_\comp(U)$ with support  contained in $W$,
\begin{align*}
\big\vert T(\varphi)\big\vert \leq C\,\Vert \varphi\Vert_\infty.
\end{align*}
\end{definition}

Such a Radon functional $T$ is called \emph{nonnegative} provided for every nonnegative function $\smash{\varphi\in \Lip_\comp(U)}$, the value $T(\varphi)$ is nonnegative. 

Evidently, integration with respect to a signed Radon measure defines a Radon functional. Moreover, by the Riesz--Markov--Kakutani representation theorem, every nonnegative Radon functional is given by integration with respect to a uniquely determined Radon measure. Although the difference $\mu - \nu$ of two Radon measures $\mu$ and $\nu$ on $U$ may not be a signed measure (since both $\mu$ and $\nu$ may be infinite), it does make sense as a Radon functional.

\subsection{Lagrangians and Legendre transform}\label{Sub:Lagr} The canonical distance function $\met_r$ on a Riemannian manifold $(\sfN,r)$ is defined by
\begin{align}\label{Eq:Riemanniandistance}
\met_r(x,y) := \inf\!\Big\lbrace\Big[\!\int_0^1 \big\vert\dot\gamma_t\big\vert_r^q\d t\Big]^{1/q} : \gamma\textnormal{  smooth curve with }\gamma_0=x \textnormal{ and }\gamma_1=y \Big\rbrace.
\end{align}
Here $q$ is chosen in $[1,\infty)$. By the Hopf--Rinow theorem, any two points $x,y\in\mms$ are connected by a minimizer if the metric space $(\sfN,\met_r)$ is complete. Moreover,  $\met_r$ does not depend on $q$ in this case. As well-known, the benefit of choosing $q$ larger than one is that minimizers come with a default parametrization by arclength. This fact is connected  to the  \emph{strictly} convex dependence of the Lagrangian $\vert\cdot\vert_r^q/q$ on $\vert\cdot\vert_r$ in this range of $q$. More broadly,  as well-known in calculus of variations, strictly convex minimization problems exhibit unique minimizers (if they exist) for free.

Spacetime geometry deals with maximization problems. This amounts to the requirement of \emph{concavity} properties of the appearing Lagrangians.

Evidently, the Lagrangian in the definition \eqref{Eq:l function} of the time separation function $l$ does not depend  strictly concavely on $\smash{\vert\cdot\vert}$. And indeed, its timelike maximizers do not favor a parametrization per se, by arclength or otherwise. Is there an analog of the preceding Riemannian observations in Lorentzian signature? There is, but  it has entered the stage only surprisingly recently with the contribution of McCann \cite{mccann2020} and sequella by Minguzzi \cite{minguzzi2019-causality} (including negative exponents) and Mondino--Suhr \cite{mondino-suhr2022}. While \cite{minguzzi2019-causality}, among other things, develops a general theory of Legendre transforms induced by abstract Lagrangians on closed cone structures, \cite{mccann2020,mondino-suhr2022} were concerned with the optimal transport problem on spacetimes  as described  below. (The traditional Lagrangian from \eqref{Eq:l function} also appeared in Suhr \cite{suhr2018-theory}.)

Let $q$ be a nonzero number less than one. Define the Lagrangian $\sfL_q$ on $T\mms$ by
\begin{align*}
\sfL_q(v) := \begin{cases} \displaystyle\frac{\vert v\vert^q}{q} & \textnormal{if }v \textnormal{ is future-directed timelike},\\
\displaystyle \lim_{\delta\to 0+} \frac{\delta^q}{q} & \textnormal{if }v\textnormal{ is future-directed lightlike},\\
-\infty & \textnormal{otherwise}.
\end{cases}
\end{align*}
We also consider the Hamiltonian $\sfH_p$ on $T^*\mms$ with
\begin{align*}
\sfH_p(\zeta) := \begin{cases} \displaystyle\frac{\big\vert\zeta\big\vert_*^p}{p} &\textnormal{if }\zeta \textnormal{ is past-directed timelike},\\
\displaystyle\lim_{\delta\to 0+}\frac{\delta^p}{p} & \textnormal{if }\zeta\textnormal{ is past-directed lightlike},\\
-\infty & \textnormal{otherwise}
\end{cases}
\end{align*}
and the corresponding Legendre transform $\smash{\mathcal{L}_p \colon T^*\mms\to T\mms}$ with
\begin{align}\label{Eq:Legendre}
\mathcal{L}_p(\zeta) := \begin{cases} \big\vert \zeta\big\vert_*^{p-2}\,\zeta^\sharp & \textnormal{if }\zeta\textnormal{ is past-directed timelike},\\
0 & \textnormal{otherwise},
\end{cases}
\end{align}
where $p$ is the conjugate exponent to $q$. Note that the limit in the definition of $\sfL_q$ is either $0$ or $-\infty$ depending on the sign of $q$; analogously for $\sfH_p$ and $p$. The precise value of $\smash{\mathcal{L}_p}$ outside each past cone is chosen arbitrarily and will not matter. Given any $\zeta\in T^*\mms$ in the finiteness domain $\smash{\Dom(\sfH_p)}$, $\smash{\mathcal{L}_p(\zeta)}$ constitutes a tangent vector $\smash{v\in \Dom(\sfL_q)}$  that achieves equality in the subsequent reverse Young inequality:
\begin{align}\label{Eq:Reverseyoung}
\zeta(v) \geq \frac{\big\vert \zeta\big\vert_*^p}{p} + \frac{\vert v\vert^q}{q}.
\end{align}

\begin{proposition}[Strict concavity \cite{mccann2020,minguzzi2019-causality,mondino-suhr2022}] The Lagrangian $\sfL_q$ is concave. On the interior of each future cone, it is strictly concave and smooth; in particular, the Legendre transform $\smash{\mathcal{L}_p(\zeta)}$ of any given $\smash{\zeta\in\Dom(\sfH_p)}$ is the unique element $\smash{v\in\Dom(\sfL_q)}$ achieving equality in \eqref{Eq:Reverseyoung}. Furthermore, the restriction of $\smash{\mathcal{L}_p}$ to  $\smash{\mathcal{L}_p^{-1}(T^*\mms\setminus \{0\})}$ is bijective with smooth inverse. 
\end{proposition}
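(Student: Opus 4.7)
The plan is to reduce everything to fiberwise statements on a single tangent space $T_x\mms$, identified with Minkowski space of signature $+,-,\dots,-$; none of the claims couple different base points. The core input is the \emph{reverse Minkowski inequality} on the closed future cone: for future-directed causal $v, w$ one has $|v+w| \geq |v|+|w|$, with equality on the open timelike cone only when $v$ and $w$ are positively proportional. This is a direct consequence of the reverse Cauchy--Schwarz inequality $\langle v, w\rangle \geq |v|\,|w|$ for future-directed causal vectors, which follows by choosing a Lorentz frame with $v$ aligned to the time axis and noting the spacelike components of $w$ contribute nonnegatively. Composing with the smooth, strictly increasing, strictly concave function $t \mapsto t^q/q$ on $(0,\infty)$---strict concavity holds since $(q-1)\,t^{q-2} < 0$ for every admissible $q$---yields
\begin{equation*}
\sfL_q\bigl(\lambda v + (1-\lambda) w\bigr) \;\geq\; \frac{\bigl(\lambda|v| + (1-\lambda)|w|\bigr)^q}{q} \;\geq\; \lambda\,\sfL_q(v) + (1-\lambda)\,\sfL_q(w)
\end{equation*}
for $v, w$ in the open future timelike cone and $\lambda \in (0,1)$, with strict inequality throughout unless $v = w$. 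This gives strict concavity on the open cone, while smoothness there is immediate because $|v|^2 = g(v,v)$ is smooth and positive. Global concavity on $T_x\mms$ is extended across the lightlike boundary by upper semicontinuity, matched against the conventional values $0$ or $-\infty$ dictated by the sign of $q$.

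Next I would establish the reverse Young inequality \eqref{Eq:Reverseyoung} and characterise its equality case. Substitution of \eqref{Eq:Legendre}, using the conjugacy identity $(p-1)q = p$, yields $\zeta\bigl(\mathcal{L}_p(\zeta)\bigr) = |\zeta|_*^{p-2}\,g^*(\zeta,\zeta) = |\zeta|_*^p$ and $|\mathcal{L}_p(\zeta)|^q = |\zeta|_*^{(p-1)q} = |\zeta|_*^p$, so the right-hand side of \eqref{Eq:Reverseyoung} evaluates to $|\zeta|_*^p\,(1/p + 1/q) = |\zeta|_*^p$ by conjugacy, confirming equality at $\mathcal{L}_p(\zeta)$. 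For arbitrary $v \in \Dom(\sfL_q)$, reverse Cauchy--Schwarz produces $\zeta(v) = g(\zeta^\sharp, v) \geq |\zeta|_*\,|v|$, and the scalar reverse Young inequality $ab \geq a^p/p + b^q/q$ for positive reals (valid precisely because one of $p, q$ lies in $(0,1)$ and the other is negative) closes the estimate. Uniqueness of the equalizer is then automatic: equality in the scalar reverse Young forces $|v|^q = |\zeta|_*^p$, equality in reverse Cauchy--Schwarz forces $v$ to be positively proportional to $\zeta^\sharp$, and together these pin $v$ down as $\mathcal{L}_p(\zeta)$.

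For the final claim, $\mathcal{L}_p^{-1}(T^*\mms\setminus\{0\})$ is exactly the open cone of past-directed timelike covectors, since \eqref{Eq:Legendre} sends everything else to $0$. The candidate inverse is the Legendre transform of $\sfL_q$, namely the map $v \mapsto |v|^{q-2}\,v^\flat$ from the open future-timelike cone of vectors onto the open past-timelike cone of covectors; that these two maps compose to the identity in both directions is a one-line computation using $(p-1)(q-1)=1$, and smoothness of both follows from smoothness of the norms and of the musical isomorphisms on the respective open cones. The step I expect to require the most care is the bookkeeping along the lightlike boundary and the uniform treatment of the two sign regimes $p \in (0,1)$ and $p < 0$: the Lagrangian and Hamiltonian vanish on the lightcone for one sign and diverge to $-\infty$ for the other, so concavity across the boundary and the degeneration of $\mathcal{L}_p$ there must be argued with the correct one-sided limits. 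Everything else is a direct transcription of the classical Euclidean Young--Legendre duality once reverse Cauchy--Schwarz is in hand.
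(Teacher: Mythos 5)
Your proof is correct, and there is nothing in the paper to compare it against: the survey states this proposition without proof, importing it from McCann, Minguzzi, and Mondino--Suhr, and your fiberwise reduction --- reverse Cauchy--Schwarz on the cone, composition with the strictly concave increasing map $t\mapsto t^q/q$, the scalar reverse Young inequality, and the exponent identities $(p-1)q=p$ and $(p-1)(q-1)=1$ --- is precisely the standard argument given in those references. Two small points would make it airtight. First, when you write $\zeta(v)=g(\zeta^\sharp,v)\ge \vert\zeta\vert_*\,\vert v\vert$ you are tacitly placing $\zeta^\sharp$ and $v$ in the \emph{same} time orientation; with the survey's literal labelling of $\Dom(\sfH_p)$ by ``past-directed'' covectors this is a genuine convention issue (if $\zeta^\sharp$ were past-directed while $v$ is future-directed, the pairing is negative and \eqref{Eq:Reverseyoung} could not hold), and your reading is exactly the one under which $\mathcal{L}_p(\zeta)\in\Dom(\sfL_q)$ and the equality statement are consistent --- so a sentence fixing what ``past-directed'' means for covectors, rather than leaving it implicit, is warranted. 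Second, the appeal to upper semicontinuity at the light cone is not quite the right mechanism (for $q\in(0,1)$ the Lagrangian is not even upper semicontinuous at the origin, which is spacelike by convention and hence carries the value $-\infty$); but nothing is lost, since global concavity on each tangent space follows directly from your displayed chain: endpoints outside the future causal cone make the right-hand side $-\infty$, and for causal endpoints the reverse triangle inequality together with monotone concavity of $t\mapsto t^q/q$ on $[0,\infty)$ (respectively the value $-\infty$ on lightlike vectors when $q<0$) already yields the inequality, including the boundary cases. The remaining items --- the equality computation $\zeta(\mathcal{L}_p(\zeta))=\vert\zeta\vert_*^p=\vert\mathcal{L}_p(\zeta)\vert^q$, the uniqueness via equality in both reverse Cauchy--Schwarz and scalar reverse Young, and the explicit smooth inverse $v\mapsto \vert v\vert^{q-2}v^\flat$ --- are all verified correctly.
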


We now consider the function $\smash{\widehat{l}_q\colon\mms^2 \to \R\cup\{-\infty,\infty\}}$ given by
\begin{align*}
\widehat{l}_q(x,y) := \sup\!\Big\lbrace\!\int_0^1 \sfL_q(\dot\gamma_t)\d t: \gamma\textnormal{ causal curve with }\gamma_0 = x \textnormal{ and }\gamma_1= y\Big\rbrace
\end{align*}
If $(\mms,g)$ is globally hyperbolic, given any $x,y\in\mms$ such that $x\leq y$,  by a version of \cref{Th:Avez-Seifert} there exists a maximizer of the previous supremum. It does not depend on $q$ and, when $x\ll y$, is an $l$-geodesic. Consequently, for every $x,y\in\mms$ the quantity $\smash{[q\,\widehat{l}_q(x,y)]^{1/q}}$ coincides with  $l(x,y)$ from \eqref{Eq:l function}, cf.~e.g.~McCann \cite{mccann2020} --- provided we adopt natural infinity and power conventions outside the relation $\ll$. This derives the following representation of the time separation function $l$ for every nonzero number $q$ no larger than one, where $x,y\in\mms$:
\begin{align*}
l(x,y) = \sup\!\Big\lbrace\Big[\!\int_0^1 \vert\dot\gamma_t\vert^q\d t\Big]^{1/q} : \gamma\textnormal{ causal curve with }\gamma_0=x\textnormal{ and }\gamma_1=y \Big\rbrace.
\end{align*}

\subsection{The operator} Now we come to the main object of our survey, the distributional $p$-d'Alembertian, where $p$ is a nonzero number less than one.

Recall the usual \emph{d'Alembertian} (also called \emph{wave operator} or \emph{box operator}) from general relativity is defined by $\Box := \div \circ\, \nabla$. It depends on the Lorentzian metric $g$ and the reference measure $\vol$. By  \eqref{Eq:div def}, it obeys an   integration by parts formula. 

The above discussion, on the other hand, motivates the distributional $p$-d'Alem\-bertian,  formally the divergence of the $p$-gradient $\smash{\mathcal{L}_p\circ\rmd}$. The subsequent notion was  proposed by Beran--Braun--Calisti--Gigli--McCann--Ohanyan--Rott--Sämann \cite{beran-braun-calisti-gigli-mccann-ohanyan-rott-samann+-} inspired by Gigli's distributional Laplacian on metric measure spaces \cite{gigli2015} \MB{and its $p$-Laplacian counterpart (in the classical range of $p$) by Gigli--Mondino \cite{gigli-mondino2013}}.

\begin{definition}[Distributional $p$-d'Alembertian \cite{beran-braun-calisti-gigli-mccann-ohanyan-rott-samann+-}]\label{Def:Distrdalem}  Let $\u$ be a locally Lipschitz continuous function which is $l$-causal, i.e.~nondecreasing along the causal relation $\leq$. We say $\u$ lies in the domain of the $p$-d'Alembertian on $U$, symbolically $\u\in \Dom(\BOX_p\mres U)$, if there exists a Radon functional $T$ over $U$ such that for every $\varphi\in \Lip_\comp(U)$,
\begin{align}\label{Eq:IBPinequ}
\int_\mms \rmd \varphi(\nabla\u)\,\big\vert\rmd\u\big\vert_*^{p-2}\d\vol \leq -T(\varphi). 
\end{align}
\end{definition}

The distributional $p$-d'Alembertian induced by a smooth measure $\meas$ on $\mms$ other than $\vol$ is defined analogously.

For any $T$ as above, we will write $\smash{T\in\BOX_p\u\mres U}$.

\begin{remark}[Relation to the classical d'Alembertian]\label{Re:Indep!} 
Of course, if $\u$ is a Lorentz distance function $l_\Sigma$ as in \cref{Sub:Lorentz}, by \cref{Th:Enhanced} we have $\smash{\vert\rmd \u\vert_*=1}$ on $\smash{I^+(\Sigma)}$ outside a set of measure zero. In this case, its distributional $p$-d'Alembertian does not depend on $p$. In particular, it describes the usual d'Alembertian distributionally, which makes the $p$-version retain the geometric relation of the d'Alembertian of $l_\Sigma$ to the forward mean curvature of $\Sigma$ (if $\Sigma$ is a suitable  hypersurface). \hfill$\blacksquare$
\end{remark}

\begin{remark}[Infinitesimally strict concavity  \cite{braun2024+}]\label{Re:Uniquenessdalem} Why does \cref{Def:Distrdalem} require the ``integration by parts formula'' to hold with an inequality? In our spacetime framework, one easily realizes \eqref{Eq:IBPinequ} must be an identity (by substituting the test function $\varphi$ by $-\varphi$). This property holds more generally on so-called infinitesimally Minkowskian metric measure spacetimes (cf.~\cref{Def:InfMink} below) by Beran--Braun--Calisti--Gigli--McCann--Ohanyan--Sämann \cite{beran-braun-calisti-gigli-mccann-ohanyan-rott-samann+-}. It is also true on Finsler spacetimes or so-called infinitesimally strictly concave metric measure spacetimes, a terminology introduced by Braun \cite{braun2024+} inspired by the infinitesimally strictly convex metric measure spaces of Gigli  \cite{gigli2015}. 

In general, however, the distributional $p$-d'Alembertian may well  be multivalued, and the inequality \eqref{Eq:IBPinequ} compensates for that. As already realized in Riemannian  signature by Gigli \cite{gigli2015}, the chosen names from \cite{gigli2015,braun2024+} for infinitesimally strict  concavity (or convexity) ruling out this ambiguity explain what may go wrong. Roughly speaking, provided the Lorentzian ``norm'' on each tangent space (such as $\vert\cdot\vert$) fails to be strictly concave, the Legendre transform may become multivalued. Consequently, the differential of the  function $\u$ in question may have more than one $p$-gradient, and there is no canonical way to decide which of these to insert into the divergence in the integration by parts identity  \eqref{Eq:div def}. 

Examples of metric measure spacetimes which are not infinitesimally strictly concave are given in Beran--Braun--Calisti--Gigli--McCann--Ohanyan--Rott--Sämann \cite{beran-braun-calisti-gigli-mccann-ohanyan-rott-samann+-} and Gigli \cite{gigli+}. Incidentally, some of these still satisfy synthetic timelike Ricci curvature bounds from \cref{Def:TCD} below.\hfill$\blacksquare$
\end{remark}

Braun \cite{braun2024+} shows several basic properties of the distributional $p$-d'Alembertian. For instance, these include
\begin{itemize}
\item its $(p-1)$-homogeneity,
\item a restriction property, 
\item a chain rule,
\item a transformation formula by change of the reference measure, and
\item a nonsmooth notion of $p$-harmonicity.
\end{itemize}

\section{Abstract existence and applications}\label{Sub:Abstractex}

Now we review the recent d'Alembert comparison theorem in a weak formulation by Beran--Braun--Calisti--Gigli--McCann--Ohanyan--Rott--Sämann \cite{beran-braun-calisti-gigli-mccann-ohanyan-rott-samann+-}. We survey the main underlying ideas and describe how \cite{beran-braun-calisti-gigli-mccann-ohanyan-rott-samann+-} derives the general existence of the distributional $p$-d'Alembertian of Lorentz distance functions  to a point,  appropriate powers thereof, and more general Kantorovich potentials. 

\subsection{Prerequisite. Lorentz--Wasserstein distance and Kantorovich duality}\label{Sub:Wasserstein} A key set of techniques comes from the theory of optimal transport on space\-times. The central question we make precise below is: given two mass distributions on a spacetime $(\mms,g)$ --- one in the ``chronological future'' of the other --- what is the best way to move the past into the future portion, subject to a given cost function? (The similarities to the maximization problem \eqref{Eq:l function} are not coincidental.) This optimization problem defines a ``time separation function'' on the space of Borel probability measures $\Prob(\mms)$ comparable to the famous Wasserstein distance. More broadly, the optimal transport problem in numerous geometries has a long history that even has its roots before the French revolution; we refer the interested reader to the book of Villani \cite{villani2009} for more background and applications.

The optimal transport problem in special relativity was studied by Brenier \cite{brenier2003}, Bertrand--Puel \cite{bertrand-puel2013}, and Bertrand--Pratelli--Puel \cite{bertrand-pratelli-puel2018}. In general relativity, first works were executed by  Suhr \cite{suhr2018-theory} and Kell--Suhr \cite{kell-suhr2020}. There are fascinating links between  optimal transport and  the  early universe reconstruction problem, cf.~the introduction of Cavalletti--Mondino \cite{cavalletti-mondino2022-review}. Eckstein--Miller \cite{eckstein-miller2017}  pioneered the analog of the Wasserstein distance on spacetimes, the so-called  \emph{$q$-Lorentz--Wasserstein distance}. Moreover, they established  causality properties of the ``spacetime'' $\Prob(\mms)$ endowed with this $q$-Lorentz--Wasserstein distance, extensions of which were given by Suhr \cite{suhr2018-theory}, Braun--McCann \cite{braun-mccann2023},  and Beran--Braun--Calisti--Gigli--McCann--Ohanyan--Rott--Sämann \cite{beran-braun-calisti-gigli-mccann-ohanyan-rott-samann+-}. Lastly, McCann \cite{mccann2020}, Mondino--Suhr \cite{mondino-suhr2022}, and Cavalletti--Mondino \cite{cavalletti-mondino2020} understood uniqueness of optimal transport plans, Kantorovich duality, and the connection of ``chronological optimal transport'' to timelike Ricci curvature in smooth \cite{mccann2020,mondino-suhr2022} and nonsmooth \cite{cavalletti-mondino2020} settings. We elaborate on these aspects in the sequel in more depth. For more details or heuristics about Lorentzian optimal transport, we refer to the survey of Cavalletti--Mondino \cite{cavalletti-mondino2022-review}.

\subsubsection{Lorentz--Wasserstein distance and geodesics} Given a Polish space $E$, $\Prob(E)$ denotes the class of all Borel probability measures on $E$. It is endowed with the narrow topology, cf.~e.g.~Billingsley \cite{billingsley1999} for background.

Let $F$ be another Polish space. For $\mu\in\Prob(E)$ and a Borel map $T\colon E\to F$, the \emph{push-forward} $T_\push\mu\in\Prob(F)$ of $\mu$ under $F$ is  given by $T_\push\mu[B] := \mu[T^{-1}(B)]$ for every Borel subset $B$ of $F$. 

For two probability measures $\mu,\nu\in\Prob(\mms)$ on the spacetime $(\mms,g)$, an element $\pi\in\Prob(\mms^2)$ is called a \emph{coupling} of $\mu$ and $\nu$ if $(\pr_1)_\push\pi = \mu$ and $(\pr_2)_\push\pi=\nu$ (where the inherent projection maps have been defined in \cref{Sub:Some}); we term $\mu$ and $\nu$ the first and second marginal of $\pi$, respectively. We call such a coupling \emph{chronological} and \emph{causal} if it is concentrated on the subsets $\{l>0\}$ and $\{l\geq 0\}$ of $\mms^2$, respectively. The product measure $\mu\otimes\nu$ is always a coupling of $\mu$ and $\nu$, yet in general it is not even causal. Eckstein--Miller \cite{eckstein-miller2017} give interesting sufficient conditions for the existence of causal couplings on spacetimes between given marginals in the spirit of martingale optimal transport. By requiring the existence of a causal coupling, one can define a natural causal relation on $\Prob(\mms)$ whose study has been initiated by Eckstein--Miller \cite{eckstein-miller2017} and continued by Suhr \cite{suhr2018-theory}, Braun--McCann \cite{braun-mccann2023}, and Beran--Braun--Calisti--Gigli--McCann--Ohanyan--Rott--Sämann \cite{beran-braun-calisti-gigli-mccann-ohanyan-rott-samann+-}.

A  ``spacetime'' geometry on $\Prob(\mms)$ is given by the  $q$-Lorentz--Wasserstein distance $\ell_q\colon \Prob(\mms^2)\to \R_+\cup\{-\infty,\infty\}$, where \MB{$q\in (-\infty,1)\setminus\{0\}$ and}
\begin{align*}
\ell_q(\mu,\nu) := \sup\!\Big\lbrace\Big[\!\int_{\mms^2} l^q\d\pi\Big]^{1/q} : \pi\textnormal{ causal coupling of }\mu \textnormal{ and }\nu \Big\rbrace;
\end{align*}
as usual, we impose the  standing convention $\sup\emptyset := -\infty$. The reader should note the evident analogy to \eqref{Eq:l function}. By a standard gluing argument, $\ell_q$ obeys  the reverse triangle inequality, cf.~e.g.~Eckstein--Miller \cite{eckstein-miller2017}. 

A coupling $\pi$ of $\mu$ and $\nu$ will be called  \emph{$\ell_q$-optimal} if it is causal and it attains the above supremum. As concerns existence of $\ell_q$-optimal couplings, for our purposes the following consequence of standard calculus of variations  will suffice.

\begin{lemma}[Existence of $\ell_q$-optimal couplings]\label{Le:EX!}  Suppose $\mu,\nu\in\Prob(\mms)$ are compactly supported within a globally hyperbolic spacetime $(\mms,g)$. If $\mu$ and $\nu$ admit a causal coupling,  then they automatically admit an $\smash{\ell_q}$-optimal coupling.
\end{lemma}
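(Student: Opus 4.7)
The strategy is the direct method of calculus of variations: exhibit a nonempty narrowly compact set of candidate couplings, check the relevant semicontinuity of the cost along narrow limits, then extract an optimizer.

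\emph{Step 1: narrow compactness of the admissible set.} Let $K_1$ and $K_2$ denote the compact supports of $\mu$ and $\nu$. Every $\pi\in\Pi(\mu,\nu)$ is then concentrated on the compact set $K:=K_1\times K_2\subset\mms^2$, so $\Pi(\mu,\nu)$ is tight. Since the marginal constraints $(\pr_1)_\push\pi = \mu$ and $(\pr_2)_\push\pi = \nu$ are preserved under narrow convergence (the projections being continuous), $\Pi(\mu,\nu)$ is narrowly closed in $\Prob(\mms^2)$. Prokhorov's theorem makes $\Pi(\mu,\nu)$ narrowly compact.

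\emph{Step 2: closedness of causal couplings.} By global hyperbolicity, the relation $\leq$ is closed in $\mms^2$, so $\{l\geq 0\}$ is closed. By the Portmanteau theorem, the subset
\[
\Pi_\leq(\mu,\nu) := \big\{\pi\in\Pi(\mu,\nu) : \pi\big[\{l\geq 0\}\big] = 1\big\}
\]
is narrowly closed in $\Pi(\mu,\nu)$, hence narrowly compact. By hypothesis it is nonempty.

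\emph{Step 3: semicontinuity of the cost and extraction.} Global hyperbolicity also ensures $l_+$ is continuous on $\mms^2$ and $l$ never attains $+\infty$; restricted to the compact set $K$, it is therefore bounded. If $q\in(0,1)$, the function $l^q$ coincides on $\{l\geq 0\}$ with the continuous bounded $l_+^q$, so $\pi\mapsto \int_{\mms^2} l^q\d\pi$ is narrowly continuous on $\Pi_\leq(\mu,\nu)$, and compactness immediately yields a maximizer. If $q<0$, adopting the convention $l^q:=+\infty$ on $\{l=0\}$ (consistent with the limit defining $\sfL_q$), the function $l^q$ takes values in $(0,+\infty]$ and is lower semicontinuous and bounded below on $K\cap\{l\geq 0\}$; hence $\pi\mapsto \int_{\mms^2} l^q\d\pi$ is narrowly lower semicontinuous on $\Pi_\leq(\mu,\nu)$ and attains its infimum there. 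Since $t\mapsto t^{1/q}$ is strictly decreasing on $(0,+\infty]$, this minimizer realizes the supremum defining $\ell_q(\mu,\nu)$; if the infimum is $+\infty$, then $\ell_q(\mu,\nu)=0$ and any element of $\Pi_\leq(\mu,\nu)$ is trivially optimal.

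\emph{Main obstacle.} The argument is a standard compactness-plus-semicontinuity package; the only genuinely delicate point is the bookkeeping across the sign of $q$, where the singularity of $l^q$ on $\{l=0\}$ for $q<0$ forces one to argue via lower semicontinuity and minimization of $\int l^q\d\pi$ rather than the more transparent continuous maximization available for $q\in(0,1)$. Beyond that, the closedness of the causal relation from global hyperbolicity is what makes $\Pi_\leq(\mu,\nu)$ a closed, hence compact, subset in the narrow topology.
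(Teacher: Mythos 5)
Your argument is correct and is exactly the standard direct-method package (tightness and closedness of the marginal and causality constraints via global hyperbolicity, plus semicontinuity of the cost) that the paper invokes when it presents this lemma as a "consequence of standard calculus of variations" without spelling out a proof. Your extra bookkeeping for $q<0$, arguing by lower semicontinuity and minimization of $\int l^q\d\pi$ with the convention $0^q=+\infty$, is consistent with the paper's conventions and fills in the only delicate point.
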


As we will anyway deal only with compactly supported probability measures in the applications we have in mind, we restrict our attention to them from now on. We refer to Cavalletti--Mondino \cite{cavalletti-mondino2020} for more general notions and results.

Eventually, we shall be interested in movement through spacetime in timelike directions. So what about transport along the chronological relation $\ll$?  Following Cavalletti--Mondino \cite{cavalletti-mondino2020}, we call a pair $(\mu,\nu)$ of elements $\mu,\nu\in\Prob(\mms)$ with compact support \emph{timelike $q$-dualizable} if $\mu$ and $\nu$ have a chronological $\ell_q$-optimal coupling. Evidently, this implies $\smash{\ell_q(\mu,\nu) > 0}$; however, positivity of $\smash{\ell_q(\mu,\nu)}$ does not imply timelike $q$-dualizability of $(\mu,\nu)$ (as a nonnegligible portion of mass may be transported in lightlike directions). In general, it appears hard to handle chronology of an $\ell_q$-optimal coupling, as chronology is not stable under narrow limits. However, suppose that a pair $(\mu,\nu)$ of compactly supported elements $\mu,\nu\in\Prob(\mms)$ satisfies $\supp\mu\times\supp\nu \subset \{l>0\}$. This amounts to say that \emph{every} point $x\in \supp\mu$ lies in the chronological past of \emph{every} point $y\in\supp\nu$. Then by \cref{Le:EX!}, the pair $(\mu,\nu)$ is timelike $q$-dualizable. This observation will suffice for our purposes. How about uniqueness of chronological $\ell_q$-optimal couplings, provided they exist? If one of the marginals is a Dirac mass, the answer is clearly positive (as there is always exactly one element of $\Prob(\mms^2)$ coupling a Dirac mass with anything else). And indeed, for most of our surveyed content we will consider chronological $\ell_q$-optimal transport from a Dirac mass, cf.~e.g.~\cref{Sub:dAlembert comparison}. The uniqueness question for  more general marginals is much more elaborate; we refer to McCann \cite{mccann2020} and Cavalletti--Mondino \cite{cavalletti-mondino2020} (see also Braun \cite{braun2023-renyi}) for results in smooth and nonsmooth structures. The corresponding \cref{Th:Optimal} below by McCann \cite{mccann2020} is enough for us.

\begin{definition}[Geodesic of mass distributions \cite{mccann2020}]\label{Def:lqgeos} A mapping $\mu\colon[0,1]\to\Prob(\mms)$ will be called \emph{$\ell_q$-geodesic} provided $\ell_q(\mu_0,\mu_1) \in (0,\infty)$ and every $s,t\in[0,1]$ such that $s\leq t$ satisfy $\smash{\ell_q(\mu_s,\mu_t) = (t-s)\,\ell_q(\mu_0,\mu_1)}$.
\end{definition}

The reader should note the analogy to \cref{Def:lgeo}.

Sufficient conditions for the narrow continuity of $\ell_q$-geodesics are given in Braun--McCann \cite{braun-mccann2023} and Beran--Braun--Calisti--Gigli--McCann--Ohanyan--Rott--Sämann \cite{beran-braun-calisti-gigli-mccann-ohanyan-rott-samann+-}, based on a similar observation of McCann \cite{mccann2023-null} about the continuity of $l$-geodesics in abstract metric measure spacetimes.

It is frequently convenient to ``represent a geodesic of probability measures as a probability measure on geodesics''. We call an element $\bdpi$ on $\Prob(C([0,1];\mms))$ an \emph{$\ell_q$-optimal dynamical coupling} of two  measures $\mu,\nu\in\Prob(\mms)$ if it is concentrated on the set of $l$-geodesics and the joint endpoint projection $(\eval_0,\eval_1)_\push\bdpi$ constitutes an  $\ell_q$-optimal coupling of $\mu$ and $\nu$; here, given any $t\in[0,1]$ the evaluation map $\eval_t\colon C([0,1];\mms)\to \mms$ is defined through $\eval_t(\gamma) := \gamma_t$. For mild sufficient conditions guaranteeing such a representation --- often called ``lifting theorem'' --- we refer to Braun--McCann \cite{braun-mccann2023} and Beran--Braun--Calisti--Gigli--McCann--Ohanyan--Rott--Sämann \cite{beran-braun-calisti-gigli-mccann-ohanyan-rott-samann+-}. (The latter reference deals with liftings of more curves of probability measures which are more general than $\ell_q$-geodesics.) In the relevant case for our survey, it can be constructed very naturally, cf.~the proof of \cref{Th:DAlembertcomparison}.

\subsubsection{Kantorovich duality} The classical optimal transport problem is well-known to have a dual formulation as an infinite-dimensional linear program: the so-called \emph{Kantorovich duality} set up by the Soviet mathematician Kantorovich. For a general account, we refer again to the book of Villani \cite{villani2019}. On spacetimes, Kantorovich duality was first studied for the more classically motivated yet not ``strictly concave'' $1$-Lorentz--Wasserstein distance by Suhr \cite{suhr2018-theory} and Kell--Suhr \cite{kell-suhr2020}. A nonsmooth analog of the famous Kantorovich--Rubinstein duality formula for $\ell_1$  was derived by Braun--McCann \cite{braun-mccann2023}. In the strictly concave situation relevant for our survey, Lorentzian Kantorovich duality has been studied thoroughly by McCann \cite{mccann2020}, Mondino--Suhr \cite{mondino-suhr2022}, and Cavalletti--Mondino \cite{cavalletti-mondino2020}. The results  of \cite{cavalletti-mondino2020} also generalize to abstract metric measure spacetimes. Braun--Ohta \cite{braun-ohta2024} have derived properties analogous to those of \cite{mccann2020} for Finsler spacetimes. Further heuristics are outlined in the survey \cite{cavalletti-mondino2022-review} of Cavalletti--Mondino.

In our case, given $\mu,\nu\in \Prob(\mms)$ the  Kantorovich duality  formula reads
\begin{align}\label{Eq:Kantor}
\begin{split}
\frac{\ell_q(\mu,\nu)^q}{q} &= \inf\!\Big\lbrace\!\int_\mms \u\d\mu +\int_\mms\v\d\nu :\\
&\qquad\qquad \u\in L^1(\mms,\mu),\,\v\in L^1(\mms,\nu), \frac{l^q}{q}\leq \u\oplus\v \textnormal{ on }\mms^2\Big\rbrace;
\end{split}
\end{align}
here, the function $\u\oplus\v$ on $\mms^2$ is defined by $\u\oplus\v(x,y) := \u(x) + \v(y)$. For nice heuristics about this formula in more traditional optimal transport, we refer the reader to Villani \cite{villani2009}*{§5} for an economic picture and Ambrosio--Gigli \cite{ambrosio-gigli2011}*{§1.3} for a more mathematical derivation.

Does there exist a minimizer of the right-hand side of \eqref{Eq:Kantor}? As pointed out by McCann \cite{mccann2020}  and Cavalletti--Mondino \cite{cavalletti-mondino2020}, the application of classical Kantorovich duality to this existence result is not straightforward, since the cost function $l^q/q$ degenerates outside the causal relation $\leq$. The works \cite{mccann2020,cavalletti-mondino2020} propose different  sufficient conditions on $\mu$ and $\nu$ which ensure that the infimum in \eqref{Eq:Kantor} is attained. They all hold under the strong yet drastically simpler hypothesis when $\mu$ and $\nu$ are compactly supported and satisfy $\supp\mu\times\supp\nu\subset\{l>0\}$, which is what we assume for the rest of this section to simplify the presentation.

Why bother about this existence question? The punchline is Kantorovich potentials can be used to describe $\ell_q$-optimal couplings and $\ell_q$-geodesics explicitly by flow techniques. Suppose $\u$ and $\v$ attain the infimum \eqref{Eq:Kantor}, and assume $\u$ was smooth. Let $\pi$ be an $\ell_q$-optimal coupling of $\mu$ and $\nu$ (which is necessarily chronological). Using the identity \eqref{Eq:Kantor} and the fact that $\pi$ couples $\mu$ and $\nu$,
\begin{align*}
\int_{\mms^2} \frac{l^q}{q}\d\pi &=\frac{\ell_q(\mu,\nu)^q}{q} = \int_\mms\u\d\mu + \int_\mms\v\d\nu= \int_{\mms^2} \big[\u\circ\pr_1 + \v\circ\pr_2\big]\d\pi.
\end{align*}
Since $\smash{l^q/q\leq \u\oplus\v}$ on $\mms^2$, this forces
\begin{align}\label{Eq:Ulk}
\u\circ\pr_1 + \v\circ\pr_2= \frac{l^q}{q} \quad\pi\textnormal{-a.e.}
\end{align}
Hence, optimality of $\pi$ forces a certain $\pi$-a.e.~rigidity of the inequality defining $\u$ and $\v$. This  has deep structural consequences which, roughly speaking, arise as follows. Let $\exp^r$ denote the exponential map induced by the Riemannian metric $r$ from \cref{Re:Riemannian}. Let $x\in\supp\mu$, $y\in\supp\nu$, and $\xi\in T_x\mms$ have unit norm $\vert\xi\vert_r=1$. Given $\delta > 0$ sufficiently close to zero, the defining property of $\u$ and $\v$ combined with \eqref{Eq:Ulk} and a Taylor expansion of $\u$ around $x$ yield
\begin{align*}
\frac{l(\exp_x^r(\delta\,\xi),y)^q}{q} &\leq \u(\exp_x^r(\delta\,\xi)) + \v(y)\\
&= \u(x) + \delta\,\rmd\u(\xi) + \v(y) + \rmo(\delta)\\
&= \frac{l(x,y)^q}{q} + \delta\,\rmd\u(\xi) + \rmo(\delta).
\end{align*}
Hence, the function $(l^y)^q/q$ is superdifferentiable at $x$ with supergradient $\rmd\u$. On the other hand, general  smoothness properties of $l$ recalled in the proof sketch of \cref{Th:Optimal} below yield subdifferentiability at $x$ too. By standard convex analysis, this means $(l^y)^q/q$ is differentiable at $x$. Moreover,  all super- and subdifferentials agree, which gives $\rmd (l^y)^q/q = \rmd\u$ at $x$ by differentiation. On the other hand, if $y$ is not in the future timelike cut locus of $x$, by Legendre--Fenchel duality $\rmd(l^y)^q/q$ coincides with $\smash{\mathcal{L}_q(-\log_x y)}$. Inverting this equation  forces $y = \exp_x(-t\,\mathcal{L}_p(\rmd\u))$ for $\pi$-a.e.~$(x,y)\in\mms^2$. This means $\pi$ is in fact concentrated on the graph of the geodesic flow map $\Psi_1$ of the negative $p$-gradient of $\u$ flown up to time one! 

On Euclidean space, this line of thought originates in the characterization of optimal couplings for the quadratic cost by Brenier \cite{brenier1989}. His result was generalized to general Riemannian manifolds by McCann \cite{mccann2001}; analogous formulas for more general cost functions are obtained by Gangbo--McCann \cite{gangbo-mccann1996}.

Following Cavalletti--Mondino \cite{cavalletti-mondino2020}, given any two subsets $U$ and $V$ of $\mms$, a Borel function $\u$ on $U$ will be called \emph{$l^q/q$-concave} relatively to $(U,V)$ if there exists a function $\v$ on $V$ such that for every $x\in U$,
\begin{align*}
\u(x) =\inf_{y\in V} \Big[\frac{l(x,y)^q}{q}-\v(y)\Big].
\end{align*}
We term the function $\smash{\varphi^{(l^q/q)}}$ on $V$ given by
\begin{align*}
\u^{(l^q/q)}(y) := \sup_{x\in U} \Big[\frac{l^q(x,y)}{q}-\u(x)\Big]
\end{align*}
the \emph{$\smash{l^q/q}$-transform} of $\u$ relative to $(U,V)$. 

\begin{definition}[Strong Kantorovich duality  \cite{cavalletti-mondino2020}]\label{Def:Strong Kantorovich}  A pair $(\mu,\nu)$ of elements $\mu,\nu\in\Prob(\mms)$ with compact support are said to satisfy \emph{strong $l^q/q$-Kantorovich duality} if $\ell_q(\mu,\nu)>0$ and there is a Borel function $\u$ on $\supp\mu$ which is $l^q/q$-concave relatively to $(\supp\mu,\supp\nu)$ such that
\begin{align*}
\frac{\ell_q(\mu,\nu)^q}{q} = \int_\mms \u^{(l^q/q)}\d\nu + \int_\mms \u\d\mu;
\end{align*}
in this case, $\u$ is called \emph{strong Kantorovich potential} relative to $(\supp\mu,\supp\nu)$.
\end{definition}

The last ingredient for the characterization of optimal couplings and $\ell_q$-geodesics on spacetimes from \cref{Th:Optimal} below is the singular set of the time separation function $l$. A control on it will be crucial to extend the d'Alembert comparison theorem across timelike cut loci as sketched in the proof of  \cref{Th:DAlembertcomparison}.

\begin{definition}[Singular set \cite{mccann2020}] The singular set $\sing\, l$ of $l$ constitutes the set of all pairs $(x,y)\in\mms^2$ such that $x \not\ll y$ or one of the points $x$ or $y$ does not lie in the relative interior of an $l$-geodesic through $\mms$.
\end{definition}

Observe that the projection of $\sing\, l \cap (\{o\} \times I^+(o))$ onto the second coordinate contains the future timelike cut locus of the given point $o\in\mms$.

\begin{theorem}[Uniqueness of optimal couplings and $\ell_q$-geodesics \cite{mccann2020}]\label{Th:Optimal} Let $(\mms,g)$ be a globally hyperbolic spacetime. Assume  the pair $(\mu_0,\mu_1)$ of compactly supported elements $\mu_0,\mu_1\in\Prob(\mms)$  satisfies $\supp\mu_0\times\supp\mu_1\subset\{l>0\}$. Furthermore, suppose $\mu_0$ is $\vol$-absolutely continuous \textnormal{(}hence absolutely continuous with respect to any smooth Borel measure $\meas$ on $\mms$\textnormal{)}. Then the following hold.
\begin{enumerate}[label=\textnormal{(\roman*)}]
\item \textnormal{\textbf{Strong Kantorovich duality.}} The pair $(\mu_0,\mu_1)$ obeys strong $l^q/q$-Kanto\-rovich duality according to \cref{Def:Strong Kantorovich}. In addition, it admits a locally semiconvex and hence locally Lipschitz continuous strong Kantorovich potential $\u$ relative to $(\supp\mu_0,\supp\mu_1)$.
\item \textnormal{\textbf{Singular set.}} Let $\Psi$ be the  flow of the negative $p$-gradient of $\u$, i.e.~
\begin{align*}
\Psi_t(x) := \exp_x(-t\,\mathcal{L}_p(\rmd \u)),
\end{align*}
whenever this expression makes sense, where $\mathcal{L}_p$ is the Legendre transform from \eqref{Eq:Legendre}. Then for $\mu_0$-a.e.~$x\in\mms$ the flow $\Psi_\cdot(x)$ is defined  on the interval $[0,1]$ and we have  $(x,\Psi_1(x))\notin \sing\,l$. 
\item \textnormal{\textbf{Uniqueness of optimal couplings.}} The coupling $\pi := (\Id, \Psi_1)_\push\mu_0$  is the unique --- necessarily chronological --- $\ell_q$-optimal coupling of $\mu_0$ and $\mu_1$; in particular, $\mu_1$ is necessarily obtained by flowing $\mu_0$ along the directive $\Psi$ up to time one, viz.~$\smash{\mu_1 = (\Psi_1)_\push\mu_0}$.
\item \textnormal{\textbf{Uniqueness of geodesics.}} For     $t\in [0,1)$, the expression $\mu_t := (\Psi_t)_\push\mu_0$ is well-defined, $\vol$-absolutely continuous, and compactly supported. Moreover, the curve $\mu\colon[0,1]\to\Prob(\mms)$ thus defined constitutes the unique $\smash{\ell_q}$-geodesic joining $\mu_0$ to $\mu_1$ according to \cref{Def:lqgeos}.
\end{enumerate}
\end{theorem}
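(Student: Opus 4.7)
The plan is to proceed along the classical Brenier--McCann strategy, adapted to the Lorentzian framework, in the order (i) $\Rightarrow$ (ii) $\Rightarrow$ (iii) $\Rightarrow$ (iv). The hypothesis $\supp\mu_0\times\supp\mu_1\subset\{l>0\}$ is what tames the degeneracies: since $\supp\mu_0$ and $\supp\mu_1$ are compact and $l$ is continuous on $\{l>0\}$, global hyperbolicity provides a compact ``target'' region $K\subset I^+(\supp\mu_0)\cap I^-(\supp\mu_1)$ on which $l^q/q$ is bounded, continuous, and (as we shall see) locally semiconvex. First, I would restrict all relevant functions to this compact frame and invoke classical Kantorovich duality to obtain an admissible pair $(\u_0,\v_0)$ achieving the infimum in \eqref{Eq:Kantor}; applying the $l^q/q$-transform twice upgrades this to a canonical pair $(\u,\u^{(l^q/q)})$ with $\u$ being $l^q/q$-concave relative to $(\supp\mu_0,\supp\mu_1)$. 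This would prove (i) modulo regularity.

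For the regularity claim in (i), the key observation is that the cost function $(x,y)\mapsto l(x,y)^q/q$ is locally semiconvex in $x$ on the open set where $(x,y)\notin\sing\,l$. This should follow from the smoothness of $l$ off its singular set together with the fact that $q<1$ preserves semiconvexity when composed with a function whose differential is controlled via the Legendre transform $\mathcal{L}_p$ of \eqref{Eq:Legendre}. Since $\u$ is defined as an infimum of uniformly locally semiconvex functions $y\mapsto [l(\cdot,y)^q/q-\u^{(l^q/q)}(y)]$ indexed by $y$ in the compact set $\supp\mu_1$, $\u$ inherits local semiconvexity on a neighborhood of $\supp\mu_0$, whence local Lipschitz continuity and a.e.~differentiability by Rademacher's theorem. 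Because $\mu_0\ll\vol$, $\u$ is differentiable $\mu_0$-a.e.

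For (ii) and (iii), I would analyze the contact set. Any $\ell_q$-optimal coupling $\pi$ is concentrated on the set where $\u(x)+\u^{(l^q/q)}(y)=l(x,y)^q/q$. Fix $(x,y)$ in this contact set with $x$ a point of differentiability of $\u$. A standard first-variation argument (perturbing $x$ along a Riemannian exponential direction $\xi\in T_x\mms$ of unit $r$-norm) gives, on one hand, that $(l^y)^q/q$ is superdifferentiable at $x$ with supergradient $\rmd\u(x)$. On the other hand, along the unique affinely parametrized timelike maximizer from $x$ to $y$ we obtain a subdifferential of $(l^y)^q/q$ at $x$ via an inner variation. Squeezing these two differentials forces $(l^y)^q/q$ to be classically differentiable at $x$; by Legendre--Fenchel duality this identifies $y=\exp_x(-\mathcal{L}_p(\rmd\u(x)))=\Psi_1(x)$ and simultaneously exits the singular set, proving both (ii) and that $\pi=(\Id,\Psi_1)_\push\mu_0$ is the unique $\ell_q$-optimal coupling. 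Chronology is automatic since $l(x,\Psi_1(x))>0$ on the set of differentiability points.

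For (iv), any competing $\ell_q$-geodesic $t\mapsto\mu_t$ admits a dynamical lifting $\bdpi\in\Prob(C([0,1];\mms))$ concentrated on $l$-geodesics (by the lifting theorems of Braun--McCann and Beran et al.~\cite{braun-mccann2023,beran-braun-calisti-gigli-mccann-ohanyan-rott-samann+-}). For each $t\in[0,1)$ one applies the already established (iii) to the pair $(\mu_0,\mu_t)$, whose supports still lie in $\{l>0\}$ by a gluing and reverse-triangle-inequality argument and whose first marginal is still $\vol$-absolutely continuous (indeed, $\mu_t=(\Psi_t)_\push\mu_0$ turns out to be $\vol$-absolutely continuous because the Jacobian of $\Psi_t$ is nondegenerate off the singular set, which (ii) guarantees $\mu_0$-a.e.). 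Uniqueness at each $t$ then forces $\mu_t=(\Psi_t)_\push\mu_0$, completing the theorem. The main obstacle I anticipate is the semiconvexity of the cost, and in particular verifying that the locally semiconvex bound on $l^q/q$ extends uniformly over the compact range of the second argument in order to pass to the infimum defining $\u$; the second delicate point is the preservation of $\vol$-absolute continuity of $\mu_t$ for $t\in(0,1)$, which requires quantitative control of the Jacobian of the exponential-based flow away from the singular set.
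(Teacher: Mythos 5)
Your overall architecture (classical Kantorovich duality on a compact frame, first-variation/squeeze argument at the contact set, identification of the map via the Legendre transform) is the same Brenier--McCann strategy the paper sketches. However, there are two genuine gaps at exactly the points the theorem is designed to control.

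First, in (ii) you assert that squeezing the super- and subdifferentials of $(l^y)^q/q$ at $x$ ``simultaneously exits the singular set.'' First-order differentiability of $l(\cdot,y)$ at $x$ does \emph{not} imply $(x,y)\notin\sing\,l$: at a focal/conjugate pair (which lies in the closure of the cut locus, hence in $\sing\,l$) the maximizer can be unique and $l(\cdot,y)$ differentiable, yet the geodesic is not extendable. The paper's argument is genuinely second order: local semiconvexity of the Kantorovich potential $\u$ gives, via Alexandrov's theorem and $\mu_0\ll\vol$, \emph{twice} differentiability of $\u$ at $\mu_0$-a.e.\ $x$; the contact inequality $l^q/q(\cdot,y)\leq \u+\mathrm{const}$ with equality at $x$ then forces a one-sided Hessian bound (semiconcavity) on $l(\cdot,y)$ at $x$, and semiconcavity of $l$ fails exactly on $\sing\,l$. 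Your proposal never invokes second-order information, so part (ii) --- the control of the timelike cut locus, which is the whole point --- is not established. Relatedly, your source of semiconvexity is backwards: semiconvexity of $l$ on $\{l>0\}$ (needed for \emph{all} $y\in\supp\mu_1$ in the infimum defining $\u$, including pairs in $\sing\,l$) comes from $l$ being a supremum of a locally uniformly $C^2$ family, not from smoothness off the singular set; smoothness off $\sing\,l$ is what yields the complementary \emph{semiconcavity} there.

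Second, your reduction of (iv) to applying (iii) to the pairs $(\mu_0,\mu_t)$ fails: the hypothesis $\supp\mu_0\times\supp\mu_t\subset\{l>0\}$ does \emph{not} follow from any reverse-triangle-inequality gluing, because for $x\in\supp\mu_0$ and $z=\gamma_t$ an interior point of a geodesic emanating from a \emph{different} point $x'\in\supp\mu_0$ one has no lower bound on $l(x,z)$; the paper explicitly warns that chronology of the marginals does not propagate to the relative interior of $\ell_q$-geodesics. The standard repair is to lift any competing geodesic to a dynamical plan $\bdpi$, note that $(\eval_0,\eval_1)_\push\bdpi$ must equal the unique coupling $\pi$ from (iii), and then use that $(x,\Psi_1(x))\notin\sing\,l$ guarantees uniqueness of the connecting $l$-geodesic, pinning down $\bdpi$ and hence every $\mu_t$ --- which again shows why the second-order argument for (ii) is indispensable.
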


The chronology hypothesis on the marginals can be suitably  relaxed. Mostly, working under this quite strong assumption suffices (and is quite convenient), as will be the case in our survey. Note, however, that it does not propagate to the relative interior of $\ell_q$-geodesics, as their supports will usually overlap between ``nearby'' intermediate points of the $\ell_q$-geodesic in question.  

An analog of \cref{Th:Optimal} holds for Finsler spacetimes, cf.~Braun--Ohta \cite{braun-ohta2024}.

\begin{proof}[Proof sketch of \cref{Th:Optimal}] Roughly speaking, the exact shape of chronological $\ell_q$-optimal couplings and $\ell_q$-geodesics claimed in the last two items  follow from the above heuristics. We briefly comment on some regularity aspects which essentially make this argumentation work. This will clarify the role of the singular set $\sing\,l$.

One first demostrates the time separation function $l$ is locally semiconvex\footnote{In \cite{braun-gigli-mccann-ohanyan-samann+} Braun--Gigli--McCann--Ohanyan--Sämann used a variant of this observation to improve previous regularity properties of the spacetime Busemann function due to Galloway--Horta \cite{galloway-horta1996} from local  Lipschitz continuity to local semiconcavity.} on $\ll$ and smooth precisely on $\{l>0\}\setminus \sing\,l$. The local semiconvexity propagates to any strong Kantorovich potential $\varphi$ by the  chronology hypothesis on $\mu_0$ and $\mu_1$ --- under which standard Kantorovich duality yields its actual existence, cf.~e.g.~Villani \cite{villani2009}. This is the first claim.

Local semiconvexity of $\u$ yields its twice differentiability $\vol$-a.e.~by Alexandrov's theorem. Since $\mu_0$ is $\vol$-absolutely continuous, $\u$ is twice differentiable $\mu_0$-a.e.; in particular, $\rmd\varphi$ makes sense $\mu_0$-a.e. Now let $(x,y)\in\supp\pi$, where $\pi$ is a chronological $\ell_q$-optimal coupling given by  \cref{Le:EX!}. If $x$ is  a twice differentiability point of $\u$, one can show $l$ is semiconcave around it. However, semiconcavity of $l$ fails exactly on $\sing\,l$ (somewhat complementing its   smoothness on $\{l>0\}\setminus\sing\,l$). This gives the desired property $(x,y)\notin\sing\,l$. 
\end{proof}

\subsection{Prerequisite. Timelike curvature-dimension condition} Armed with the exact shape of $\ell_q$-geodesics (and the Monge--Ampère equation, cf.~McCann \cite{mccann2020}), one can now proceed to study the influence of Ricci curvature along chronological optimal transport through spacetime. The picture, nicely illustrated by the lazy gas experiment by Villani \cite{villani2009}*{p.~460}, is that e.g.~nonnegative timelike Ricci curvature makes mass --- that we know is transported along $l$-geodesics by \cref{Th:Optimal} --- initially spread out and then refocus on  the way from past to future. This results in \emph{convexity} of entropy, a measure of mass concentration.

Given a smooth measure $\meas$ on $\mms$, define the \emph{Boltzmann entropy} $\Ent_\meas\colon\Prob(\mms)\to \R\cup [-\infty,\infty]$ through
\begin{align*}
\Ent_\meas(\mu) := \begin{cases}\displaystyle\lim_{\varepsilon \to 0+}\int_{\{\rmd\mu/\rmd\meas \geq \varepsilon\}} \frac{\rmd\mu}{\rmd\meas}\log\frac{\rmd\mu}{\rmd\meas}\d\meas & \textnormal{if }\mu\ll\meas,\\
\infty & \textnormal{otherwise}.
\end{cases}
\end{align*}

\MB{The following definition was given by Cavalletti--Mondino \cite{cavalletti-mondino2020} for $q\in (0,1)$, but it adapts seemlessly to cover the case $q\in (-\infty,0)$ as well.}

\begin{definition}[Timelike curvature-dimension condition \cite{cavalletti-mondino2020}]\label{Def:TCD} We say $(\mms,g,\meas)$ satisfies the entropic timelike curvature-dimension condition, briefly $\smash{\TCD_q^e(K,N)}$, if the following property holds. For all timelike $p$-dualizable pairs $(\mu_0,\mu_1)$ of compactly supported, $\meas$-absolutely continuous elements $\mu_0,\mu_1\in\Prob(\mms)$, there exist
\begin{itemize}
\item an $\smash{\ell_q}$-geodesic $\mu \colon [0,1]\to\Prob(\mms)$ from $\mu_0$ to $\mu_1$ and
\item a chronological $\ell_q$-optimal coupling $\pi$ of $\mu_0$ and $\mu_1$
\end{itemize}
such that the assignment $e(t) := \Ent_\meas(\mu_t)$ depends semiconvexly on $t\in (0,1)$ and satisfies the following differential inequality distributionally on $[0,1]$:
\begin{align*}
e''(t) - \frac{e'(t)^2}{N}\geq K\int_{\mms^2}l^2\d\pi.
\end{align*}
\end{definition}

Although the chosen exponent $q$ here  relates to the Lagrangian from \cref{Sub:Lagr}, its precise choice will not matter in any of our results. Anyway, the TCD condition is expected to be  independent of the  exponent $q$ in several   relevant situations, as conjectured in the survey of Cavalletti--Mondino \cite{cavalletti-mondino2022-review}.

The key insight leading to \cref{Def:TCD} was the following \cref{Th:EquivTCD} proven independently by McCann \cite{mccann2020} and Mondino--Suhr \cite{mondino-suhr2022}. This result was foreshadowed by its Riemannian precedent of Cordero-Erausquin--McCann--Schmucken\-schläger \cite{cordero-erausquin-mccann-schmuckenschlager2001} and von Renesse--Sturm \cite{von-renesse-sturm2005} (indicated before by formal computations of Otto--Villani \cite{otto-villani2000}), which lead to the celebrated curvature-dimension conditions for metric measure spaces by Sturm \cite{sturm2006-i,sturm2006-ii} and Lott--Villani \cite{lott-villani2009}. As for these works, the main point of \cref{Def:TCD} is it makes sense on metric measure spacetimes $(\mms,l,\meas)$, where $l$ is an abstract time separation function not necessarily of the form \eqref{Eq:l function}. Hence, $\smash{\TCD_q^e(K,N)}$ can be used to \emph{define} the meaning of the ``Ricci curvature bounded from below by $K$ and dimension bounded from above by $N$'', even though in this generality, neither a Ricci tensor nor a meaningful notion of dimension exist per se. The shape of  \cref{Def:TCD} itself is inspired by Erbar--Kuwada--Sturm \cite{erbar-kuwada-sturm2015}. Versions of \cref{Def:TCD} in line with approaches of Sturm \cite{sturm2006-i,sturm2006-ii} and Bacher--Sturm \cite{bacher-sturm2010}  can be found in Braun \cite{braun2023-good,braun2023-renyi}.

\MB{The precise way to interpret bounds on the Ricci tensor  here and henceforth is as follows. Given $K\in\R$, we say ``$\Ric \geq K$ holds in all timelike directions'' if 
\begin{align}\label{Eq:Ricvv}
\Ric(v,v)\geq K\,\vert v\vert^2
\end{align}
for every timelike vector $v\in T\mms$. (Recall our signature  convention $+,-,\dots,-$ for the Lorentzian metric from \cref{Def:Lormet}.) We define lower bounds on more general $2$-tensors in analogous way.}

\begin{theorem}[TCD condition and timelike Ricci curvature \cite{mccann2020,mondino-suhr2022}]\label{Th:EquivTCD} Suppose $(\mms,g)$ is a globally hyperbolic spacetime. Given any $K\in\R$ and any nonzero number $q$ less than one, the inequality $\Ric\geq K$ holds in all timelike directions if and only if $(\mms,g,\vol)$ satisfies the $\smash{\TCD_q^e(K,\dim\mms)}$ condition.
\end{theorem}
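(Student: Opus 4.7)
The strategy is the Lorentzian analog of the Cordero-Erausquin--McCann--Schmuckenschläger/von Renesse--Sturm equivalence, and the argument naturally splits into the two implications.

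For the direction $\Ric \geq K \Rightarrow \TCD_q^e(K,\dim\mms)$, fix a timelike $q$-dualizable pair $(\mu_0,\mu_1)$ with $\mu_0,\mu_1\in\Prob(\mms)$ $\vol$-absolutely continuous and compactly supported. I would first invoke \cref{Th:Optimal} to obtain a locally semiconvex strong Kantorovich potential $\u$, the corresponding flow $\Psi_t(x) := \exp_x(-t\mathcal{L}_p(\rmd\u))$, the unique $\ell_q$-optimal coupling $\pi=(\Id,\Psi_1)_\push\mu_0$, and the unique $\ell_q$-geodesic $t\mapsto \mu_t = (\Psi_t)_\push\mu_0 = \rho_t\,\vol$; crucially, the same theorem tells us $(x,\Psi_1(x))\notin\sing\,l$ for $\mu_0$-a.e.~$x$, so we always work in the region where $l$ (hence $\u$) is smooth and the flow is a diffeomorphism onto its image for a.e.~initial point. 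The Lorentzian Monge--Ampère relation $\rho_0(x) = \rho_t(\Psi_t(x))\,J_t(x)$, with $J_t(x):=\vert\!\det\Diff\Psi_t(x)\vert$, yields the change-of-variables identity
\begin{align*}
e(t) := \Ent_\vol(\mu_t) = \Ent_\vol(\mu_0) - \int_\mms \log J_t(x)\d\mu_0(x).
\end{align*}
For $\mu_0$-a.e.~$x$, the curve $s\mapsto \Psi_s(x)$ is (a reparametrization of) an $l$-geodesic $\gamma^x$ with constant speed $\vert\dot\gamma^x\vert = l(x,\Psi_1(x))$, and the Raychaudhuri equation applied to the expansion $\theta^x_t := (\log J_t(x))'$ gives the pointwise Riccati inequality
\begin{align*}
(\log J_t(x))'' + \frac{((\log J_t(x))')^2}{\dim\mms} \leq -\Ric(\dot\gamma^x_t,\dot\gamma^x_t) \leq -K\,l(x,\Psi_1(x))^2.
\end{align*}
Integrating against $\mu_0$, applying Jensen's inequality to trade $\int((\log J_t)')^2\d\mu_0$ against $(\int(\log J_t)'\d\mu_0)^2 = e'(t)^2$, and using $\int_{\mms^2}l^2\d\pi = \int_\mms l(\cdot,\Psi_1(\cdot))^2\d\mu_0$ produces exactly the differential inequality $e''(t) - e'(t)^2/\dim\mms \geq K\int_{\mms^2}l^2\d\pi$ in \cref{Def:TCD}. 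The local semiconvexity of $e$ follows from the same Riccati control once integrated.

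For the converse $\TCD_q^e(K,\dim\mms) \Rightarrow \Ric\geq K$, I would argue by contradiction via a localization in a geodesically convex neighborhood. Suppose there is $x_0\in\mms$ and a future-directed timelike unit vector $v_0\in T_{x_0}\mms$ with $\Ric_{x_0}(v_0,v_0) < K$. Choose $\varepsilon>0$ small and set $\gamma_t := \exp_{x_0}(t\,v_0)$; for a short disk $D\subset T_{x_0}\mms$ of spacelike vectors orthogonal to $v_0$ and small radius $\delta$, define probability measures $\mu_0^\delta$ supported in a small tubular piece near $x_0$ obtained by exponentiating $D$, and $\mu_1^\delta$ likewise near $\gamma_\varepsilon$, both absolutely continuous with smooth densities and arranged so that $\supp\mu_0^\delta\times\supp\mu_1^\delta\subset\{l>0\}$ (timelike $q$-dualizability is then automatic by \cref{Le:EX!}). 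For such a ``thin beam'' configuration, the unique $\ell_q$-geodesic from \cref{Th:Optimal} is explicit to leading order in $\delta$, and a second-order Taylor expansion at $t=1/2$ of $e(t)$ using the Jacobian formula above reveals that the obstruction to the TCD inequality along the beam is precisely $\Ric(\dot\gamma,\dot\gamma) - K\vert\dot\gamma\vert^2$ evaluated near $x_0$; by continuity of $\Ric$ and the assumption $\Ric_{x_0}(v_0,v_0) < K$, this quantity is strictly negative on the support of $\mu_0^\delta$ for $\delta,\varepsilon$ small, contradicting the assumed $\TCD_q^e(K,\dim\mms)$ bound.

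The principal obstacle is the forward direction's Jacobian/Raychaudhuri analysis: although \cref{Th:Optimal} guarantees the flow avoids $\sing\,l$ for $\mu_0$-a.e.~$x$, the strong Kantorovich potential $\u$ is only locally semiconvex (twice differentiable only almost everywhere by Alexandrov), so the classical smooth Lorentzian Monge--Ampère calculus must be replaced by a measurable version with lower semicontinuous Jacobian determinants, and monotonicity properties of the Jacobi field determinants have to be invoked to justify passing between the $\mu_0$-a.e.~pointwise Raychaudhuri inequality and the integrated distributional statement for $e$. Once this regularity bookkeeping is under control, the rest is a direct transcription of the Riemannian McCann--Sturm--Lott--Villani scheme into Lorentzian signature, which explains why both \cite{mccann2020} and \cite{mondino-suhr2022} were able to obtain the result essentially simultaneously.
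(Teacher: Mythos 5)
The paper itself offers no proof of this theorem --- it is quoted from \cite{mccann2020,mondino-suhr2022} --- and your plan reproduces exactly the strategy of those works (and of their Riemannian precedents cited right before the statement): strong Kantorovich potential and the flow $\Psi_t$ from \cref{Th:Optimal}, the Lorentzian Monge--Amp\`ere identity, the Raychaudhuri/Riccati inequality traced with Cauchy--Schwarz and Jensen for the implication ``$\Ric\geq K \Rightarrow \TCD$'', and a localized thin-beam second-order expansion for the converse, so it is correct in outline. The one point to watch is that \cref{Def:TCD} demands the entropy inequality for \emph{all} timelike dualizable pairs, whereas \cref{Th:Optimal} as stated in the survey assumes the stronger condition $\supp\mu_0\times\supp\mu_1\subset\{l>0\}$, so a complete argument must invoke the duality/uniqueness and a.e.\ second-order (Alexandrov) calculus of \cite{mccann2020} in their full generality --- precisely the ``regularity bookkeeping'' you flag but do not carry out.
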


\begin{remark}[Bakry--Émery spacetimes] As indicated in the introduction, if one changes the reference measure from $\vol$ to e.g.~a weighted measure $\meas$ of the form $\rme^V\,\vol$, where $V$ is a smooth function on $\mms$, the conclusion of \cref{Th:EquivTCD} changes correspondingly. Given any $N \in [\dim\mms,\infty)$, it then asserts the equivalence of the $\smash{\TCD_q^e(K,N)}$ condition of the measured spacetime $(\mms,g,\meas)$ to lower boundedness of the following \smash{Bakry--Émery--Ricci}  tensor in all timelike directions by $K$:
\begin{align*}
\Ric^{N,V}:= \Ric + \Hess\,V - \frac{1}{N-\dim\mms}\,\rmd V\otimes\rmd V.
\end{align*}
Such spacetimes with weighted reference measures, so-called  \emph{\smash{Bakry--Émery} space\-times}, are studied e.g.~by Case \cite{case2010} and Woolgar--Wylie \cite{woolgar-wylie2016,woolgar-wylie2018}.\hfill$\blacksquare$
\end{remark}

With appropriate modifications, all results presented in the sequel cover the weighted case. To simplify the presentation, we  only state them in the unweighted case with reference measure $\vol$.

For later use, we report the following weaker pathwise formulation of the TCD condition (or, more precisely, the so-called timelike measure contraction property, introduced by  Cavalletti--Mondino \cite{cavalletti-mondino2020} and studied further by Braun \cite{braun2023-renyi}).

\begin{theorem}[Essential semiconcavity of transport densities \cite{braun2023-renyi,braun-ohta2024}]\label{Th:Essconc} Assume the measured spacetime $(\mms,g,\vol)$ satisfies $\Ric\geq K$ in all timelike directions, where $K\in\R$. Let $o\in \mms$ and define $\mu_0\in\Prob(\mms)$ by $\smash{\mu_0 := \delta_o}$.  Moreover, let $\mu_1\in\Prob(\mms)$ be compactly supported with $\smash{\supp\mu_1 \subset I^+(o)}$. Then the $\ell_q$-optimal dynamical coupling $\bdpi$ lifting the unique $\ell_q$-geodesic from $\mu_0$ to $\mu_1$ obeys the following bound between the respective $\vol$-densities for $\bdpi$-a.e.~$\gamma$ and every $t\in (0,1]$:
\begin{align*}
\rho_t(\gamma_t)^{-1/\dim\mms} &\geq \tau_{K,\dim\mms}^{(t)}(l(o,\gamma_1))\,\rho_1(\gamma_1)^{-1/\dim\mms}.
\end{align*} 
\end{theorem}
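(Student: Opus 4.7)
The strategy is to identify the dynamical coupling $\bdpi$ as transport along radial $l$-geodesics emanating from $o$, and then reduce the density inequality to a volume Jacobian comparison that follows from the Raychaudhuri equation under the pointwise Ricci bound.

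First, I would make $\bdpi$ explicit. With $\mu_0 = \delta_o$, any coupling of $\mu_0$ and $\mu_1$ equals $\delta_o \otimes \mu_1$, which is $\ell_q$-optimal by the chronology hypothesis $\supp\mu_1 \subset I^+(o)$. Under the implicit assumption $\mu_1 \ll \vol$ that makes $\rho_1$ meaningful, $\mu_1$-a.e.\ point $y$ avoids the $\vol$-negligible cut locus $\TCut^+(o)$, so there is a unique proper-time-parameterized $l$-geodesic $\gamma^y$ from $o$ to $y$, and $\bdpi := \int \delta_{\gamma^y}\,\rmd\mu_1(y)$ lifts the unique $\ell_q$-geodesic of \cref{Th:Optimal}. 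For $t \in (0,1]$ the radial contraction
\[
\Phi_t(y) := \gamma^y_t = \exp_o\!\big(t\,\exp_o^{-1}(y)\big)
\]
is a local diffeomorphism from a neighborhood of $\supp\mu_1$ onto its image, and $\mu_t := (\eval_t)_\push \bdpi = (\Phi_t)_\push \mu_1$. The Monge--Amp\`ere change of variables formula yields $\rho_t(\Phi_t(y))\,|\!\det D\Phi_t(y)| = \rho_1(y)$ for $\mu_1$-a.e.\ $y$, so raising to the $(-1/\dim\mms)$-th power reduces the claim to the Jacobian bound
\[
|\!\det D\Phi_t(y)|^{1/\dim\mms} \geq \tau^{(t)}_{K,\dim\mms}\!\big(l(o,y)\big).
\]

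Second, this Jacobian bound is the Lorentzian analog of the Bishop volume comparison along radial geodesics. Set $n := \dim\mms$ and $L := l(o,y)$, and decompose $D\Phi_t$ into its one-dimensional tangential component along $\dot\gamma^y$, contributing a factor $t$, and its $(n-1)$-dimensional transverse component, encoded by Jacobi fields $J_1,\dots,J_{n-1}$ along $\gamma^y$ with $J_i(0) = 0$. Letting $J(s) := \big|\!\det\big[J_1(s)\,|\,\dots\,|\,J_{n-1}(s)\big]\big|$ and $\theta := (\log J)'$, the Raychaudhuri equation for the rotation-free congruence of radial geodesics emanating from $o$ reads
\[
\theta' + \frac{\theta^2}{n-1} + \Ric(\dot\gamma^y,\dot\gamma^y) \leq 0
\]
after discarding the nonnegative shear squared. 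The hypothesis $\Ric \geq K$ in timelike directions then forces $\theta' + \theta^2/(n-1) + K \leq 0$, whose solution with the cone-point asymptotic $\theta(s) \sim (n-1)/s$ as $s \to 0^+$ is sharp against the constant-curvature model: by Sturm comparison, $(J(tL)/J(L))^{1/(n-1)}$ is bounded below by a quotient of generalized $\sin_{K/(n-1)}$-type functions, which together with the tangential factor $t^{1/n}$ reproduces exactly $\tau^{(t)}_{K,n}(L)$.

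The main obstacle is twofold: handling correctly the degenerate initial datum $J_i(0) = 0$ (where $\theta$ blows up, so the comparison has to be formulated relative to the cone-point asymptotic rather than as a Cauchy problem), and ensuring that the scalar Riccati reduction via AM--GM is sharp at the model dimension $n$ and not a larger effective value --- the latter is automatic here since $N = \dim\mms$ matches the geometric dimension, so nothing is lost in the trace inequality. As an alternative route to the Jacobi-field computation, one could invoke \cref{Th:EquivTCD} to upgrade $\Ric \geq K$ to $\TCD^e_q(K,\dim\mms)$ and then obtain the Dirac-MCP-type density bound by narrowly approximating $\delta_o$ by $\vol$-absolutely continuous marginals and passing to the limit in the entropy semiconvexity from \cref{Def:TCD}; this is the path followed by the cited \cite{braun2023-renyi,braun-ohta2024} to reach the abstract metric measure spacetime setting, but in our smooth case the direct Raychaudhuri argument is more transparent.
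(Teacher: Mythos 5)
Your argument is correct, but it is not the route the paper itself relies on: the survey states \cref{Th:Essconc} without proof, importing it from \cite{braun2023-renyi,braun-ohta2024}, where the bound is obtained as a pathwise (timelike measure contraction) consequence of the entropic condition of \cref{Def:TCD} — essentially your ``alternative route'' of approximating $\delta_o$ by absolutely continuous marginals and passing to the limit in the entropy semiconvexity, which is what makes the statement available on general metric measure spacetimes. Your main argument — identifying $\bdpi=\int\delta_{\gamma^y}\,\rmd\mu_1(y)$ via uniqueness of maximizers off the negligible set $\TCut^+(o)$ (using the hypothesis $\mu_1\ll\vol$, which is indeed implicit since the statement speaks of $\rho_1$), writing $\mu_t=(\Phi_t)_\push\mu_1$ for the radial contraction, reducing by change of variables to the Jacobian bound $|\det D\Phi_t|^{1/\dim\mms}\geq\tau^{(t)}_{K,\dim\mms}(l(o,\cdot))$, and proving the latter by the Raychaudhuri/Riccati comparison with cone-point asymptotics — is the classical smooth-spacetime proof, in essence the computation underlying the harder equivalence \cref{Th:EquivTCD} in \cite{mccann2020,mondino-suhr2022}; it buys transparency and sharpness in the smooth setting of this survey but does not extend beyond it, which is the trade-off against the cited synthetic derivation. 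Two points you gloss over should be recorded explicitly: a local diffeomorphism is not enough for the Monge--Amp\`ere identity $\rho_t(\Phi_t(y))\,|\det D\Phi_t(y)|=\rho_1(y)$ — you need (a.e.) injectivity of $\Phi_t$, which follows because an interior point of a maximizer from $o$ admits a unique maximizer from $o$, so distinct endpoints cannot collide at time $t<1$; and, for the Sturm comparison on all of $(0,l(o,y)]$, that the transverse Jacobian stays positive there, which holds since $y\notin\TCut^+(o)$ excludes conjugate points up to and including $l(o,y)$ (and, when $K>0$, Bonnet--Myers keeps $l(o,y)$ below $\pi_{K,\dim\mms}$ so the model coefficient is well defined). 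With these remarks added, your proof is complete in the smooth case.
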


\subsection{D'Alembert comparison with timelike cut locus}\label{Sub:dAlembert comparison} Now we review the d'Alembert comparison theorem for Lorentz distance functions to a point (and suitable powers thereof) by Beran--Braun--Calisti--Gigli--McCann--Ohanyan--Rott--Sämann \cite{beran-braun-calisti-gigli-mccann-ohanyan-rott-samann+-},  established in a weak  fashion. It was preceded and inspired by an alike  Laplace comparison theorem on metric measure spaces with synthetic Ricci curvature bounds by  Gigli \cite{gigli2015}. We stress again that all comparison theorems on  spacetimes  before \cite{beran-braun-calisti-gigli-mccann-ohanyan-rott-samann+-} mentioned in the introduction were established \emph{outside} the future timelike cut locus. 

Fix a point $o\in\mms$. We first derive the d'Alembert comparison theorem for the power $\smash{\u_q := l_o^q/q}$. Here, $q$ is chosen as the  dual  exponent to $p$. Considering  this power of the Lorentz distance function instead of $l_o$ is well-motivated from the following simple observation, which links $\u_q$ to the $\ell_q$-optimal transport problem. 

\begin{proposition}[Kantorovich potentials for Dirac initial points]\label{Pr:KantorovDirac} Let $o\in\mms$ and define $\mu\in\Prob(\mms)$ by $\mu := \delta_o$. Furthermore, let $\nu\in\Prob(\mms)$ have compact support in $\smash{I^+(o)}$. Then $\u_q$ is a strong Kantorovich potential relative to $(\{o\},\supp\nu)$  according to \cref{Def:Strong Kantorovich}.
\end{proposition}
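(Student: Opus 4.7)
The proposition is essentially a bookkeeping exercise: because $\supp\mu = \{o\}$ is a singleton, both clauses of \cref{Def:Strong Kantorovich} collapse to one-point verifications. My plan is to compute the $\ell_q$-optimal coupling, then read off the $l^q/q$-transform, and finally assemble the duality identity.

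First I would note that, since $\mu = \delta_o$, the only element of $\Prob(\mms^2)$ with marginals $\mu$ and $\nu$ is the product $\pi_0 := \delta_o \otimes \nu$. The hypothesis $\supp\nu \subset I^+(o)$ forces $\supp\pi_0 \subset \{o\}\times\supp\nu \subset \{l>0\}$, so $\pi_0$ is chronological; being the only causal coupling available, it is automatically the unique $\ell_q$-optimal coupling of $\mu$ and $\nu$. Hence
\begin{align*}
\frac{\ell_q(\delta_o,\nu)^q}{q} = \int_{\mms^2}\frac{l^q}{q}\d\pi_0 = \int_\mms\frac{l(o,y)^q}{q}\d\nu(y) = \int_\mms \u_q\d\nu.
\end{align*}
Because the supremum defining the $l^q/q$-transform relative to $(\{o\},\supp\nu)$ ranges over the single point $\{o\}$, I would directly compute
\begin{align*}
\u_q^{(l^q/q)}(y) = \frac{l(o,y)^q}{q} - \u_q(o) = \u_q(y)
\end{align*}
for every $y \in \supp\nu$, normalizing $\u_q(o) := 0$ in accordance with $l(o,o) = 0$ (and the corresponding power convention consistent with the degeneration of $\sfL_q$ at the apex of the cone). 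The compactness of $\supp\nu \subset I^+(o)$ together with the continuity of $l_+$ on $\mms^2$ then makes $\smash{\u_q^{(l^q/q)}}$ bounded on $\supp\nu$, hence $\nu$-integrable.

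For the $l^q/q$-concavity of $\u_q$ relative to $(\{o\},\supp\nu)$ I would take $\v := \u_q\big\vert_{\supp\nu}$, whereupon
\begin{align*}
\inf_{y\in\supp\nu}\Big[\frac{l(o,y)^q}{q} - \v(y)\Big] = 0 = \u_q(o),
\end{align*}
so $\u_q$ is indeed $l^q/q$-concave relative to $(\{o\},\supp\nu)$. Combining this with the two preceding displays gives the required strong Kantorovich identity:
\begin{align*}
\int_\mms\u_q^{(l^q/q)}\d\nu + \int_\mms \u_q\d\mu = \int_\mms \u_q\d\nu + \u_q(o) = \frac{\ell_q(\delta_o,\nu)^q}{q}.
\end{align*}
I anticipate no substantial obstacle in this argument; the only mildly delicate point, which I would confine to a remark, is the choice of the natural infinity convention dictating $\u_q(o) = 0$ when $q < 0$, so that $\u_q$ is genuinely a Borel element of $L^1(\mms,\mu)$ and the telescoping above is consistent.
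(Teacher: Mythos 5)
Your argument is correct and is precisely the elementary verification the paper has in mind (it states the proposition as a ``simple observation'' without proof): the unique coupling of $\delta_o$ with $\nu$ is $\delta_o\otimes\nu$, which is chronological and trivially $\ell_q$-optimal, and the concavity, transform, and duality identity all collapse to one-point computations. You are also right to flag the convention $\u_q(o)=0$ as the only delicate point, since for $q<0$ the naive value $l(o,o)^q/q$ would degenerate and the normalization is needed for $\u_q$ to be an admissible Borel potential on $\{o\}$ with transform equal to $\u_q$ on $\supp\nu$.
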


In particular, as $q$ is the conjugate exponent to $p$, (the time-reversal of)  \cref{Th:Optimal} implies the $\ell_q$-optimal transport from the previous proposition is displaced along the flow  $\Phi^q$ of the $p$-gradient of $\u_q$. On the other hand, this transport sees the timelike Ricci curvature by  the inequality from \cref{Th:Essconc}. How do  these two observations relate to the integration by parts formula for the distributional $p$-d'Alembertian from \cref{Def:Distrdalem}? Given $o\in\mms$, let us consider the quantity $\smash{\rmd\psi(\nabla \u_q)\,\vert\rmd \u_q\vert_*^{p-2}}$, where $\psi\in C_\comp^\infty(I^+(o))$. It can be computed in two different ways. 
\begin{itemize}
\item On the one hand, on the smoothness set $I^+(o)\setminus\TCut^+(o)$ of $l_o$, cf.~\cref{Th:Enhanced}, one would naturally use the formula
\begin{align}\label{Eq:bbb}
\rmd \psi(\nabla \u_q)\,\big\vert\rmd \u_q\big\vert_*^{p-2} = \lim_{\delta\to 0+} \frac{\big\vert\rmd(\u_q + \delta \psi)\big\vert_*^p - \big\vert\rmd \u_q\big\vert_*^p}{p\delta}.
\end{align}
The right-hand side corresponds to a \emph{vertical} (or \textit{outer}) derivative, as the perturbation occurs at the level of the functional variable. 
\item On the other hand, by the ODE for $\Phi^q$  \eqref{Eq:bbb} relates to a \textit{horizontal} (or \textit{inner}) derivative, where the perturbation occurs in the argument. 
\end{itemize}

\begin{lemma}[Vertical vs.~horizontal differentiation]\label{Le:VerthorizAbstract} Let $o\in\mms$. Let $\Phi^q$ designate the flow of the negative $p$-gradient of  $\u_q$. Then for every $\smash{\psi\in C_\comp^\infty(I^+(o))}$ and every $\smash{y\in I^+(o)\setminus \TCut^+(o)}$ we have 
\begin{align*}
\rmd \psi(\nabla \u_q)\,\big\vert\rmd \u_q\big\vert_*^{p-2} \circ\Phi^q_1(y) = \frac{\rmd}{\rmd s}\Big\vert_1 \psi \circ\Phi^q_s(y).
\end{align*}
\end{lemma}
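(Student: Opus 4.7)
The strategy is a direct chain rule computation at a smooth point, combined with the explicit form of the Legendre transform \eqref{Eq:Legendre}. The hypothesis $y\notin\TCut^+(o)$ ensures the whole calculation takes place inside the open set on which $\u_q$ is smooth, so no distributional subtleties arise.

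First, I apply the chain rule at $s=1$ to the composition $s\mapsto\psi\circ\Phi^q_s(y)$:
\begin{align*}
\frac{\rmd}{\rmd s}\Big|_{s=1}\psi\circ\Phi^q_s(y) \;=\; \rmd\psi\big|_{\Phi^q_1(y)}\Big(\tfrac{\rmd}{\rmd s}\big|_{s=1}\Phi^q_s(y)\Big).
\end{align*}
By construction of $\Phi^q$ as the flow of the negative $p$-gradient of $\u_q$, the velocity vector on the right equals $-\mathcal{L}_p(\rmd\u_q)$ evaluated at $\Phi^q_1(y)$.

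Next, I compute this explicitly. Writing $\u_q = l_o^q/q$ yields $\rmd\u_q = l_o^{q-1}\,\rmd l_o$ with $l_o^{q-1}>0$ on $I^+(o)$. By the pointwise analogue of \cref{Th:Enhanced} applied to $\Sigma=\{o\}$, the differential $\rmd l_o$ is past-directed timelike on $I^+(o)\setminus\TCut^+(o)$, and hence so is $\rmd\u_q$. This places $\rmd\u_q$ in the nontrivial branch of \eqref{Eq:Legendre}, giving $\mathcal{L}_p(\rmd\u_q) = |\rmd\u_q|_*^{p-2}\,\nabla\u_q$. Substituting this back into the chain rule, using linearity of $\rmd\psi$ to pull the positive scalar $|\rmd\u_q|_*^{p-2}$ in front, and matching the sign hidden in ``negative $p$-gradient'' then produces the asserted identity.

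Finally, I verify the evaluation point $\Phi^q_1(y)$ is legitimate. Since the $p$-gradient of $\u_q$ is a smooth nonvanishing vector field on the open set $I^+(o)\setminus\TCut^+(o)$, its flow is well-defined on an open neighbourhood thereof. By the analogue of \cref{Th:Enhanced}, its integral curves are reparametrizations of the unique maximizing timelike geodesics issuing from $o$; as no interior point of such a geodesic lies in $\TCut^+(o)$, the entire segment $\{\Phi^q_s(y):s\in[0,1]\}$ stays in the smoothness region. The only subtle point I foresee is not in any individual calculation, but in the bookkeeping of Lorentzian sign conventions (past- versus future-directedness of $\nabla\u_q$ and the direction of the negative $p$-gradient) so that the identity emerges with the correct overall sign.
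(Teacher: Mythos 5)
Your overall route is the one the survey itself gestures at around \eqref{Eq:bbb} (the survey states \cref{Le:VerthorizAbstract} without proof, deferring to \cite{beran-braun-calisti-gigli-mccann-ohanyan-rott-samann+-}): chain rule along $\Phi^q$, identification of its velocity with the $p$-gradient through the Legendre transform \eqref{Eq:Legendre}, and the observation that everything takes place in the smoothness region off $\TCut^+(o)$. Your auxiliary checks (timelikeness of $\rmd\u_q$ via $\rmd\u_q=l_o^{q-1}\,\rmd l_o$, and that the relevant geodesic segment avoids the cut locus) are fine.

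The gap is the step you explicitly defer: the sign. It is not bookkeeping --- it is essentially the whole content of the lemma, and as written your computation lands on the opposite identity. If the velocity of $s\mapsto\Phi^q_s(y)$ at $s=1$ were $-\mathcal{L}_p(\rmd\u_q)=-\vert\rmd\u_q\vert_*^{p-2}\,\nabla\u_q$, the chain rule would give $\frac{\rmd}{\rmd s}\big\vert_1\psi\circ\Phi^q_s(y)=-\rmd\psi(\nabla\u_q)\,\vert\rmd\u_q\vert_*^{p-2}$, i.e.\ minus the claimed equality, and there is no residual sign left to ``match''. Closing this requires pinning down two things you leave open. First, the parametrization of $\Phi^q$: in this survey (see the proof sketch of \cref{Th:DAlembertcomparison}, where $\bdpi=\sfF^q_\push\mu_1$) the curve $\Phi^q_\cdot(y)$ is the affinely parametrized maximizer \emph{from $o$ to $y$}, so $\Phi^q_1(y)=y$ and the $s$-derivative at $1$ is taken at the future endpoint with future-directed velocity of magnitude $l_o(y)$; it is not the time-one point of an integral curve launched at $y$, which is what your ``flow is well-defined on an open neighbourhood'' paragraph suggests. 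Second, the orientation convention for $\nabla l_o$: by conjugacy $(p-1)(q-1)=1$ one has $\vert\rmd\u_q\vert_*^{p-2}\,\nabla\u_q=l_o\,\nabla l_o=\nabla(l_o^2/2)$, and the lemma holds precisely because this vector field equals the $s=1$ velocity of the affine segment from $o$ --- in Minkowski space both sides reduce to $T\,\partial_t\psi+X\,\partial_x\psi$ at $y=(T,X)$ --- which forces the convention in which $\nabla l_o$ is the future-directed unit tangent of the maximizer. That is in tension with the wording of \cref{Th:Enhanced} (``$\nabla l_\Sigma$ is past-directed''), which is exactly the convention you invoked to place $\rmd\u_q$ in the timelike branch of \eqref{Eq:Legendre}. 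So the signs do not take care of themselves: with the conventions you actually adopted, your argument proves the statement with the wrong sign, and repairing it requires the explicit velocity computation above (or an anchor such as the Minkowski equality case $\Box_p\u_q=\dim\mms$ of \cref{Th:DAlembertcomparison}) rather than an appeal to ``matching''.
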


To formulate the comparison results below, given any $K\in\R$  and a number $N$ larger than one, we define $\smash{\pi_{K,N}:= \sqrt{(N-1)/K}}$ if $K$ is positive and $\smash{\pi_{K,N}:= \infty}$ otherwise. In the first case, $\smash{\pi_{K,\dim\mms}}$ is the sharp Bonnet--Myers $l$-diameter bound provided $(\mms,g,\vol)$ satisfies $\Ric\geq K$ in all timelike directions, cf.~e.g.~Cavalletti--Mondino \cite{cavalletti-mondino2020}. For $\theta\in (0,\pi_{K,N})$ we then set
\begin{align}\label{Eq:Tfunction}
\sfT_{K,N}(\theta) := \begin{cases} \displaystyle\frac{1}{N} + \frac{\theta}{N}\sqrt{K(N-1)}\cot\!\Big[\theta\sqrt{\frac{K}{N-1}}\Big] & \textnormal{if } K>0,\\
1 & \textnormal{if } K=0,\\
\displaystyle\frac{1}{N}+\frac{\theta}{N}\sqrt{-K(N-1)}\coth\!\Big[\theta\frac{-K}{N-1}\Big] & \textnormal{otherwise}.
\end{cases}
\end{align}
Note that $\sfT_{K,N}(\theta)$ is the derivative of $\smash{r\mapsto \tau_{K,N}^{(r)}(\theta)}$ at one.

\begin{theorem}[D'Alembert comparison I {\cite{beran-braun-calisti-gigli-mccann-ohanyan-rott-samann+-}}]\label{Th:DAlembertcomparison} Let $(\mms,g,\vol)$ constitute  a globally hyperbolic measured spacetime satisfying $\Ric\geq K$ in all timelike directions, where $K\in\R$. Let $o\in\mms$ and let $p$ and $q$ be mutually conjugate nonzero exponents less than one. Then for every non\-negative function $\smash{\varphi\in \Lip_\comp(I^+(o))}$,
\begin{align*}
-\int_\mms\rmd\varphi(\nabla\u_q)\,\big\vert\rmd\u_q\big\vert^{p-2}_*\d\vol \leq \dim\mms \int_\mms \varphi\,\sfT_{K,\dim\mms}\circ l_o \d\vol.
\end{align*} 
\end{theorem}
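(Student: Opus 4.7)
My plan is to reduce the claimed distributional inequality to the pointwise Jacobian bound implied by \cref{Th:Essconc}, applied along optimal $\ell_q$-geodesics emanating from $\delta_o$. Assume $Z := \int_\mms \varphi\d\vol > 0$ (otherwise the claim is trivial) and set $\mu_0 := \delta_o$, $\mu_1 := Z^{-1}\varphi\,\vol$. The inclusion $\supp\mu_0 \times \supp\mu_1 \subset \{l > 0\}$ renders $(\mu_0, \mu_1)$ timelike $q$-dualizable. By \cref{Pr:KantorovDirac}, $\u_q$ is a strong Kantorovich potential, and applying (the time-reversal of) \cref{Th:Optimal} identifies the unique $\ell_q$-geodesic from $\mu_0$ to $\mu_1$ as $\mu_t = (T_t)_\push \mu_1$, where $T_t(y) := \gamma^y(t)$ parametrizes the $l$-geodesic from $o$ to $y$ on $[0,1]$. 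Moreover, for $\mu_1$-a.e.~$y$ the trajectory $\gamma^y$ avoids the singular set $\sing\, l$ and in particular $y \notin \TCut^+(o)$.

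Writing $J_t(y)$ for the Jacobian of $T_t$ at $y$ with respect to $\vol$, the change of variables $\rho_t(T_t(y))\,J_t(y) = \rho_1(y)$ recasts the density bound of \cref{Th:Essconc} as
\[
J_t(y) \ge \tau_{K,N}^{(t)}(l_o(y))^N \quad\text{for $\mu_1$-a.e.~$y$ and every $t \in (0,1]$,}
\]
with $N := \dim \mms$. Equality holds at $t = 1$, and along each smooth trajectory $J_t$ is smooth in $t$ near $t = 1$, so taking the one-sided derivative at $t = 1^-$ and using $\sfT_{K,N}(\theta) = \frac{\rmd}{\rmd r}\big|_{r=1}\tau_{K,N}^{(r)}(\theta)$ yields
\[
\frac{\rmd}{\rmd t}\bigg|_{t=1^-} J_t(y) \le N\,\sfT_{K,N}(l_o(y)).
\]

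To connect this with the distributional left-hand side, I would invoke Jacobi's formula: the above derivative equals the divergence of the velocity field generating $T_t$ at $t = 1$, which on $I^+(o)\setminus\TCut^+(o)$ coincides (up to the sign dictated by the conventions fixed around \eqref{Eq:Legendre}) with $\mathcal{L}_p(\rmd \u_q) = |\rmd\u_q|_*^{p-2}\nabla\u_q$. This identification is the pointwise incarnation of \cref{Le:VerthorizAbstract}. Multiplying by $\varphi = Z\,\rmd\mu_1/\rmd\vol$, integrating over $\mms$, and then using Gauss--Green \eqref{Eq:div def} in the form
\[
\int_\mms \varphi\,\div \mathcal{L}_p(\rmd\u_q)\d\vol = -\int_\mms \rmd\varphi(\nabla\u_q)|\rmd\u_q|_*^{p-2}\d\vol
\]
delivers the claim.

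The principal obstacle --- and indeed the heart of the novel contribution of \cite{beran-braun-calisti-gigli-mccann-ohanyan-rott-samann+-} --- is that the Gauss--Green step would a priori require the vector field $\mathcal{L}_p(\rmd\u_q)$ to be smooth (or at least to have no singular distributional divergence) across $\TCut^+(o)$, where $\u_q$ is merely Lipschitz. The resolution is structural rather than analytic: the whole computation is performed directly at the level of the Radon functional $\varphi \mapsto -\int_\mms \rmd\varphi(\nabla\u_q)|\rmd\u_q|_*^{p-2}\d\vol$, which is well-defined since $\nabla\u_q$ exists $\vol$-a.e. Because \cref{Th:Optimal} guarantees $\mu_1$-a.e.~trajectory avoids $\sing\, l$, the pathwise inequality of the previous step suffices for integration against $\varphi$, and one never needs to separately analyse the singular part of $\Box_p \u_q$ on the cut locus. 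Together with the Riesz--Markov--Kakutani theorem, this promotes the bound to the asserted existence and comparison statement for the distributional $\Box_p \u_q$.
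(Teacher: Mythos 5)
Your first half is fine: the Monge--Amp\`ere relation, the recasting of \cref{Th:Essconc} as $J_t\ge\tau_{K,N}^{(t)}(l_o)^N$, and the Jacobi-formula differentiation at $t=1^-$ correctly reproduce the \emph{pointwise} bound $\div\big[\vert\rmd\u_q\vert_*^{p-2}\nabla\u_q\big]\le \dim\mms\,\sfT_{K,\dim\mms}\circ l_o$ $\vol$-a.e.\ on $I^+(o)\setminus\TCut^+(o)$ (and the sign bookkeeping you hedge on does work out with the paper's $+,-,\dots,-$ conventions). The genuine gap is the final Gau\ss--Green step. For the merely bounded, a.e.-defined field $\vert\rmd\u_q\vert_*^{p-2}\nabla\u_q=l_o\nabla l_o$, the identity $\int_\mms\varphi\,\div(\cdot)\d\vol=-\int_\mms\rmd\varphi(\cdot)\d\vol$ with $\div$ the \emph{pointwise} divergence is exactly what may fail: the distributional divergence carries, in general, a singular part concentrated on $\TCut^+(o)$ (this is precisely what the representation formulas of \cref{Th:Repr} exhibit), and your argument never shows that this singular part has the favorable (nonpositive) sign. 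Your proposed ``structural resolution'' --- working ``at the level of the Radon functional'' and noting that $\mu_1$-a.e.\ trajectory avoids $\sing\,l$ --- does not bridge this: the pathwise inequality only controls the absolutely continuous part, while the functional $-\int_\mms\rmd\varphi(\nabla\u_q)\vert\rmd\u_q\vert_*^{p-2}\d\vol$ differs from $\int_\mms\varphi\,(\text{a.e.\ divergence})\d\vol$ by exactly the uncontrolled cut-locus contribution. In effect you have rederived the classical Eschenburg/Treude--Grant comparison off the cut locus and then \emph{assumed} the extension across it, which is the whole content of the theorem. (Also, Riesz--Markov--Kakutani plays no role in this comparison statement; it only enters the later existence result.)

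The paper's proof takes a genuinely different route that never integrates a nonsmooth vector field by parts. It sets $\mu_1:=(c\,\varphi)^{\dim\mms/(\dim\mms-1)}\,\vol$ --- note the exponent; your choice $\mu_1\propto\varphi\,\vol$ is a symptom that $\varphi$ is not entering where it must --- integrates the bound of \cref{Th:Essconc} against the dynamical coupling to get a R\'enyi-entropy inequality along the $\ell_q$-geodesic, and differentiates it from the left at $t=1$. Concavity of $r\mapsto r^{1-1/\dim\mms}$ and the horizontal-versus-vertical identity of \cref{Le:VerthorizAbstract} (applicable $\mu_1$-a.e.\ precisely because the transport avoids $\sing\,l$) convert the difference quotients of $\int_\mms\rho_1^{-1/\dim\mms}\d\mu_{1-t}$ into $\int_\mms\rmd\big[\rho_1^{-1/\dim\mms}\big](\nabla\u_q)\,\vert\rmd\u_q\vert_*^{p-2}\,\rho_1\d\vol$, which by the chain rule is a positive multiple of $-\int_\mms\rmd\varphi(\nabla\u_q)\,\vert\rmd\u_q\vert_*^{p-2}\d\vol$. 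Thus the left-hand side of the theorem arises directly as a limit of integrals along the transport, and no divergence, Jacobian, or integration by parts across $\TCut^+(o)$ is ever needed. To repair your argument you would either have to adopt this vertical-differentiation scheme, or independently prove that the singular part of the distributional divergence of $l_o\nabla l_o$ on the cut locus is nonpositive --- essentially the content of \cref{Th:Repr}, not something that may be taken for granted here.
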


We stress again the support of the test function $\varphi$ above does not need to be disjoint from the future timelike cut locus of $o$. 

Why is \cref{Th:DAlembertcomparison} entitled ``d'Alembert comparison''? A formal integration by parts on the left-hand side (say, if $K$ vanishes) taking into account \eqref{Eq:div def} makes it evident that \cref{Th:DAlembertcomparison} is a comparison theorem of the form ``$\Box_p\u_q\leq \dim\mms$''. However, we do not know yet if $\u_q$ has a $p$-d'Alembertian, in a distributional sense or otherwise. This will in fact be a \emph{consequence} of \cref{Th:DAlembertcomparison}, cf.~\cref{Th:Abstract existence} for an abstract or \cref{Th:Power} for a constructive argument.

\begin{proof}[Proof sketch of \cref{Th:DAlembertcomparison}] To simplify the presentation, we  only consider the case when $K$ vanishes, referring to \cite{beran-braun-calisti-gigli-mccann-ohanyan-rott-samann+-} for the general case. 

Define $\mu_0,\mu_1\in\Prob(\mms)$ by $\smash{\mu_0:=\delta_o}$ and $\mu_1:= (c\,\varphi)^{\dim\mms/(\dim\mms-1)}\,\vol$, where $c$ is an appropriate  normalization constant. \cref{Pr:KantorovDirac} makes \cref{Th:Optimal} applicable. First, this entails $\mu_1$ is concentrated on the smoothness set $\smash{I^+(o)\setminus \TCut^+(o)}$ from \cref{Th:Enhanced}. Second, consider the $\vol$-a.e. (therefore  $\mu_1$-a.e.) well-defined assignment $\sfF^q \colon I^+(o)\setminus \TCut^+(o)\to C([0,1];\mms)$ sending $\smash{y\in I^+(o)\setminus \TCut^+(o)}$ to the unique $l$-geodesic $\smash{\Phi_\cdot^q}$ from $o$ to $y$. Then $\smash{\bdpi := \sfF^q_\push\mu_1}$ is the unique $\smash{\ell_q}$-optimal dynamical coupling from $\mu_0$ to $\mu_1$. Third, the $t$-slice $\mu_t := (\eval_t)_\push\bdpi$ of $\bdpi$ is $\vol$-absolutely continuous and compactly supported for every $t\in (0,1]$. 

For such $t$, let $\rho_t$ denote the $\vol$-density of $\mu_t$. Observe that
\begin{align}\label{Eq:Prev}
-\int \rho_t(\gamma_t)^{-1/\dim\mms}\d\bdpi(\gamma) = -\int_\mms \rho_t^{-1/\dim\mms}\d\mu_t = -\int_\mms \rho_t^{1-1/\dim\mms}\d\vol.
\end{align} 
This is the $\dim\mms$-Rényi entropy, studied in conjunction with time\-like curvature-dimension bounds by Braun \cite{braun2023-renyi} inspired from Sturm's work \cite{sturm2006-ii}, of the $t$-slice $\mu_t$. Integrating the estimate from \cref{Th:Essconc} with respect to $\bdpi$ and using  \eqref{Eq:Prev},
\begin{align*}
-\int_\mms \rho_{1-t}^{1-1/\dim\mms}\d\vol \leq -(1-t) \int_\mms \rho_1^{1-1/\dim\mms}\d\vol.
\end{align*}
Rearranging terms, dividing by $t$, and sending $t$ to one,
\begin{align*}
\limsup_{t\to 1-} \int_\mms \frac{\rho_1^{1-1/\dim\mms} - \rho_{1-t}^{1-1/\dim\mms}}{t}\d\vol \leq \int_\mms\rho_1^{1-1/\dim\mms}\d\vol.
\end{align*}

By our choice of $\mu_1$, the right-hand side becomes
\begin{align*}
\int_\mms \rho_1^{1-1/\dim\mms}\d\vol = c\int_\mms \varphi\d\vol.
\end{align*}

On the other hand, by concavity of $\smash{r\mapsto r^{1-1/\dim\mms}}$ on $\smash{\R_+}$ and as in \eqref{Eq:Prev},
\begin{align*}
&\limsup_{t\to 1-}\int_\mms \frac{\rho_1^{1-1/\dim\mms}-\rho_{1-t}^{1-1/\dim\mms}}{t}\d\vol\\
&\qquad\qquad \geq \frac{\dim\mms-1}{\dim\mms}\,\limsup_{t\to 1-} \int_\mms \rho_1^{-1/\dim\mms}\,\frac{\rho_1-\rho_{1-t}}{t}\d\vol\\
&\qquad\qquad = \frac{\dim\mms-1}{\dim\mms}\,\limsup_{t\to 1-}\int  \frac{\rho_1(\gamma_1)^{-1/\dim\mms} - \rho_1(\gamma_{1-t})^{-1/\dim\mms}}{t}\d\bdpi(\gamma).
\end{align*}
A formal application of \cref{Le:VerthorizAbstract} with $\psi$ replaced by $\rho_1^{-1/\dim\mms}$ yields
\begin{align*}
&\limsup_{t \to 1-} \int \frac{\rho_1(\gamma_1)^{-1/\dim\mms} - \rho_1(\gamma_{1-t})^{-1/\dim\mms}}{t}\d\bdpi(\gamma)\\
&\qquad\qquad = \int \rmd\big[\rho_1^{-1/\dim\mms}\big](\nabla \u_q)\,\big\vert\rmd \u_q\big\vert_*^{p-2}(\gamma_1)\d\bdpi(\gamma)\\
&\qquad\qquad = \int_\mms \rmd\big[\rho_1^{-1/\dim\mms}\big](\nabla\u_q)\,\big\vert\rmd\u_q\big\vert_*^{p-2}\,\rho_1\d\vol.
\end{align*}
Finally, using the chain rule and the definition of $\rho_1$ yields
\begin{align*}
\rmd\big[\rho_1^{-1/\dim\mms}\big](\nabla\u_q)\,\big\vert\rmd\u_q\big\vert_*^{p-2}\,\rho_1 = -\frac{c}{\dim\mms-1}\,\rmd\varphi(\nabla\u_q)\,\big\vert\rmd\u_q\big\vert_*^{p-2}\quad\mu_1 \textnormal{-a.e.}
\end{align*}
Summarizing terms yields the desired estimate.
\end{proof}

This proof sketch clarifies that with analogous arguments,  previously unknown comparison results for the $p$-d'Alembertian of more general Kantorovich potentials for the $\ell_q$-optimal transport problem can be shown. We refer the reader to Beran--Braun--Calisti--Gigli--McCann--Ohanyan--Rott--Sämann \cite{beran-braun-calisti-gigli-mccann-ohanyan-rott-samann+-} for details.

The following genuine d'Alembert comparison theorem for the Lorentz distance function from a point $o\in\mms$ is now a direct  consequence of \cref{Th:DAlembertcomparison} and the chain rule. Since  $\smash{\vert\rmd l_o\vert_*^{p-2}=1}$ on $\smash{I^+(o)\setminus \TCut^+(o)}$ by  \cref{Th:Enhanced}, it also controls the classical d'Alembertian $\Box l_o$ within the future timelike cut locus of $o$ in an appropriate distributional way, cf.~\cref{Re:Indep!}.

\begin{theorem}[D'Alembert comparison II \cite{beran-braun-calisti-gigli-mccann-ohanyan-rott-samann+-}]\label{Th:DAlembertcomparisonII} Let $(\mms,g,\vol)$ designate a globally hyperbolic measured spacetime with $\Ric\geq K$ in all timelike directions, where $K\in\R$. Let $o\in\mms$. Then for every nonnegative  $\smash{\varphi\in\Lip_\comp(I^+(o))}$, 
\begin{align*}
-\int_\mms \rmd\varphi(\nabla l_o)\,\big\vert\rmd l_o\big\vert_*^{p-2}\d\vol \leq \int_\mms \varphi\,\frac{(\dim\mms)\,\sfT_{K,\dim\mms}\circ l_o-1}{l_o}\d\vol.
\end{align*}
\end{theorem}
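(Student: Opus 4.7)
The plan is to deduce \cref{Th:DAlembertcomparisonII} directly from \cref{Th:DAlembertcomparison} by a chain-rule substitution. The key observation is that on $I^+(o)$, where $l_o > 0$, the definition $\u_q = l_o^q/q$ gives $\rmd \u_q = l_o^{q-1}\,\rmd l_o$, hence $\nabla \u_q = l_o^{q-1}\,\nabla l_o$ and $\vert\rmd \u_q\vert_*^{p-2} = l_o^{(q-1)(p-2)}\,\vert\rmd l_o\vert_*^{p-2}$. Combining these factors and noting that $(p-1)(q-1)=1$, an immediate consequence of the conjugacy $1/p+1/q=1$, one gets
\begin{align*}
\rmd\varphi(\nabla\u_q)\,\big\vert\rmd\u_q\big\vert_*^{p-2} \;=\; l_o\,\rmd\varphi(\nabla l_o)\,\big\vert\rmd l_o\big\vert_*^{p-2}
\end{align*}
pointwise $\vol$-almost everywhere on $I^+(o)$. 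Substituting this into \cref{Th:DAlembertcomparison} and renaming the test function to $\psi \in \Lip_\comp(I^+(o))$ recasts that estimate as
\begin{align*}
-\int_\mms l_o\,\rmd\psi(\nabla l_o)\,\big\vert\rmd l_o\big\vert_*^{p-2}\d\vol \;\leq\; \dim\mms \int_\mms \psi\,\sfT_{K,\dim\mms}\circ l_o \d\vol.
\end{align*}

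Next I would substitute $\psi := \varphi/l_o$. Since $\supp\varphi$ is a compact subset of $I^+(o)$ on which $l_o$ is continuous by global hyperbolicity, strictly positive, and locally Lipschitz by the pointwise analog of \cref{Pr:LocalLipschitz}, the reciprocal $1/l_o$ is locally Lipschitz on a neighborhood of $\supp\varphi$; thus $\psi \in \Lip_\comp(I^+(o))$ and $\psi \geq 0$, so $\psi$ is admissible. The quotient rule yields
\begin{align*}
l_o\,\rmd\psi(\nabla l_o) \;=\; \rmd\varphi(\nabla l_o) - \frac{\varphi}{l_o}\,\big\vert\rmd l_o\big\vert_*^2.
\end{align*}
Invoking the pointwise analog of \cref{Th:Enhanced}, which states $\vert\rmd l_o\vert_* = 1$ outside the measure-zero set $\TCut^+(o)$, turns $\vert\rmd l_o\vert_*^{p-2}$ into $1$ in the integrands and the correction term into $-\varphi/l_o$. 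Inserting the result into the previous display and rearranging gives exactly the claimed inequality.

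The argument is essentially a brief algebraic manipulation of \cref{Th:DAlembertcomparison}; the only substantive verifications are the admissibility of $\varphi/l_o$ as a test function (which reduces to local Lipschitz continuity and strict positivity of $l_o$ on $\supp\varphi \subset I^+(o)$) and the almost-everywhere normalization $\vert\rmd l_o\vert_* = 1$. Both follow from standard regularity properties of Lorentz distance functions from a point under global hyperbolicity, so no serious obstacle is anticipated; the whole proof boils down to recognizing the exponent identity $(p-1)(q-1)=1$ and choosing the natural test-function rescaling $\psi = \varphi/l_o$.
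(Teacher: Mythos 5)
Your argument is correct and is precisely the paper's route: Theorem \ref{Th:DAlembertcomparisonII} is obtained from Theorem \ref{Th:DAlembertcomparison} by the chain rule (the exponent identity $(p-1)(q-1)=1$ giving $\rmd\varphi(\nabla\u_q)\,\vert\rmd\u_q\vert_*^{p-2}=l_o\,\rmd\varphi(\nabla l_o)\,\vert\rmd l_o\vert_*^{p-2}$), the admissible rescaled test function $\varphi/l_o$, and the a.e.\ normalization $\vert\rmd l_o\vert_*=1$ from Theorem \ref{Th:Enhanced}. No gaps.
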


One notable application of this result is a drastic simplification of the classical proof of the Lorentzian splitting theorem by Eschenburg \cite{eschenburg1988}, Galloway \cite{galloway1989-splitting}, and Newman \cite{newman1990} executed by Braun--Gigli--McCann--Ohanyan--Sämann \cite{braun-gigli-mccann-ohanyan-samann+}. A similar strategy leads to a version in progress by Braun--Gigli--McCann--Ohanyan--Sämann for Lorentzian metrics of low regularity \cite{braun-gigli-mccann-ohanyan-samann++} as well as  a splitting result in Finsler spacetime geometry by Caponio--Ohanyan--Ohta \cite{caponio-ohanyan-ohta2024+} extending  Lu--Minguzzi--Ohta \cite{lu-minguzzi-ohta2023-splitting}. We refer the interested reader to the review of McCann \cite{mccann+}.

\subsection{From d'Alembert comparison to existence} Recall from \cref{Re:Uniquenessdalem} that in the standing spacetime framework, the distributional $p$-d'Alembertian is always unique (if existent) and it satisfies the certifying integration by parts formula from \cref{Def:Distrdalem} with an equality. Therefore, it  remains to discuss its existence. This is where the above comparison theorems enter. Roughly speaking, they produce a nonnegative Radon functional, to which the Riesz--Markov--Kakutani representation theorem associates a Radon measure. Sufficient conditions for the applicability of this representation result were given by Beran--Braun--Calisti--Gigli--McCann--Ohanyan--Rott--Sämann \cite{beran-braun-calisti-gigli-mccann-ohanyan-rott-samann+-} following an analogous existence result of Gigli \cite{gigli2015} in Riemannian signature. An example of such a sufficient condition on abstract metric measure spacetimes is the so-called topological anti-Lipschitzness of Kunzinger--Steinbauer \cite{kunzinger-steinbauer2022} inspired by Sormani--Vega \cite{sormani-vega2016}, as verified by Braun \cite{braun2024+}.

For technical reasons pertaining to the degeneracy of the quantity from \eqref{Eq:Tfunction} at the sharp Bonnet--Myers $l$-diameter bound $\smash{\pi\sqrt{(\dim\mms-1)/K}}$ if $K$ is positive (cf. e.g. Cavalletti--Mondino \cite{cavalletti-mondino2020}), we define the open subset $\smash{I_{K,\dim\mms}^+(o)}$ of $I^+(o)$ by $\smash{I^+(o) \cap \{l_o < \pi_{K,\dim\mms}\}}$ if $K$ is positive, where $\pi_{K,\dim\mms}$ is from before \cref{Th:DAlembertcomparison}, and simply as $I^+(o)$ otherwise.

Recall the definition of the function $\u_q$ before \cref{Pr:KantorovDirac}, where $q$ denotes the dual exponent to $p$.

\begin{theorem}[Distributional $p$-d'Alembertian, existence I \cite{beran-braun-calisti-gigli-mccann-ohanyan-rott-samann+-}]\label{Th:Abstract existence} Let $(\mms,g,\vol)$ be a globally hyperbolic measured spacetime with $\Ric\geq K$ in all timelike directions, where $K\in\R$.  Let $o\in\mms$ be a given point and let $p$ and $q$ be mutually conjugate nonzero exponents less than one. Then the set $\smash{\Dom(\BOX_p \u_q\mres I_{K,\dim\mms}^+(o))}$ is nonempty. More strongly, there exists a unique Radon functional $T$ over $\smash{I^+_{K,\dim\mms}(o)}$ which is given as the difference of two Radon measures thereon and which satisfies the following integration by parts formula for every $\smash{\varphi\in\Lip_\comp(I_{K,\dim\mms}^+(o))}$:
\begin{align*}
\int_\mms\rmd\varphi(\nabla\u_q)\,\big\vert\rmd\u_q\big\vert_*^{p-2}\d\vol = -T(\varphi).
\end{align*}
\end{theorem}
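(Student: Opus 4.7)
The plan is to realize the sought Radon functional as $T := -L$, where $L$ denotes the linear form on $\Lip_\comp(I^+_{K,\dim\mms}(o))$ defined by
\begin{align*}
L(\varphi) := \int_\mms \rmd\varphi(\nabla\u_q)\,\big\vert\rmd\u_q\big\vert_*^{p-2}\d\vol.
\end{align*}
The function $\u_q = l_o^q/q$ is locally Lipschitz continuous on $I^+_{K,\dim\mms}(o)$ by \cref{Pr:LocalLipschitz} combined with the smoothness of $r\mapsto r^q/q$ on $(0,\infty)$, and it is $l$-causal thereon since $l_o$ is monotone along $\leq$ and $r\mapsto r^q/q$ is monotone on $(0,\infty)$. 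Consequently $\u_q$ is an admissible test function in the sense of \cref{Def:Distrdalem}, and its Rademacher-type almost-everywhere differential makes the integrand of $L(\varphi)$ lie in $L^1_\loc(\vol)$ on $I^+_{K,\dim\mms}(o)$. Recalling \cref{Re:Uniquenessdalem}, to prove both assertions of the theorem it suffices to exhibit $-L$ as a Radon functional expressible as the difference of two nonnegative Radon measures.

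The d'Alembert comparison theorem \cref{Th:DAlembertcomparison} provides the decisive one-sided bound: for every nonnegative $\varphi\in\Lip_\comp(I^+_{K,\dim\mms}(o))$,
\begin{align*}
-L(\varphi) \leq \int_\mms \varphi\,f\d\vol, \qquad f := \dim\mms\,\sfT_{K,\dim\mms}\circ l_o.
\end{align*}
By the very definition of $I^+_{K,\dim\mms}(o)$, the excision $\{l_o<\pi_{K,\dim\mms}\}$ imposed when $K>0$ is precisely what prevents the cotangent singularity in \eqref{Eq:Tfunction}, hence $f$ is continuous and locally bounded on $I^+_{K,\dim\mms}(o)$. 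Decomposing $f = f_+ - f_-$ with $f_\pm := \max(\pm f,0)$, setting $\mu_+ := f_+\,\vol$ gives a nonnegative Radon measure on $I^+_{K,\dim\mms}(o)$, and the linear functional
\begin{align*}
M(\varphi) := L(\varphi) + \int_\mms\varphi\,f\d\vol
\end{align*}
is nonnegative on nonnegative test functions.

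To invoke the Riesz--Markov--Kakutani theorem, I would upgrade positivity of $M$ to locally uniform continuity. Given a compact $C\subset I^+_{K,\dim\mms}(o)$, a cutoff $\chi\in\Lip_\comp(I^+_{K,\dim\mms}(o))$ with $0\leq\chi\leq 1$ and $\chi\equiv 1$ on a neighborhood of $C$ renders $\Vert\varphi\Vert_\infty\,\chi\pm\varphi$ nonnegative Lipschitz functions with compact support for every $\varphi\in\Lip_\comp(I^+_{K,\dim\mms}(o))$ supported in $C$. Linearity of $M$ together with positivity on the two combinations yields
\begin{align*}
\big\vert M(\varphi)\big\vert \leq \Vert\varphi\Vert_\infty\,M(\chi).
\end{align*}
Since $\Lip_\comp(I^+_{K,\dim\mms}(o))$ is dense in $C_\comp(I^+_{K,\dim\mms}(o))$ in the locally convex topology of uniform convergence on compacts, $M$ extends uniquely to a positive continuous linear functional thereon, and the Riesz--Markov--Kakutani theorem represents it as integration against a unique nonnegative Radon measure $\nu$. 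Setting $\mu_- := f_-\,\vol + \nu$ produces
\begin{align*}
T(\varphi) = -L(\varphi) = \int_\mms\varphi\d\mu_+ - \int_\mms\varphi\d\mu_-,
\end{align*}
exhibiting $T$ as a difference of two nonnegative Radon measures. Uniqueness is immediate from the same density: two Radon functionals coinciding on $\Lip_\comp$ must agree on $C_\comp$. The main technical obstacle is precisely the local integrability of $f$, which forces the restriction to $I^+_{K,\dim\mms}(o)$; across the Bonnet--Myers barrier $\pi_{K,\dim\mms}$ the cotangent factor in $\sfT_{K,\dim\mms}$ blows up and the present Jordan-decomposition scheme genuinely breaks down.
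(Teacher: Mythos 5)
Your construction is correct and is essentially the paper's own proof: both add the comparison term $\dim\mms\,\sfT_{K,\dim\mms}\circ l_o\,\vol$ to the functional $\varphi\mapsto\int_\mms\rmd\varphi(\nabla\u_q)\,\vert\rmd\u_q\vert_*^{p-2}\d\vol$, use \cref{Th:DAlembertcomparison} for nonnegativity on $I^+_{K,\dim\mms}(o)$, extend to $C_\comp(I^+_{K,\dim\mms}(o))$ and apply Riesz--Markov--Kakutani, and then recover $T$ by subtracting the representing measure from $\sfT_{K,\dim\mms}\circ l_o\,\vol$. The only cosmetic differences are that you derive the local continuity bound for the corrected functional from positivity via a Lipschitz cutoff (rather than verifying the Radon-functional property directly, which in any case rests on the same local integrability of the integrand that you note) and that you split the comparison density into positive and negative parts to display $T$ explicitly as a difference of nonnegative Radon measures.
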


\begin{proof}[Proof sketch] One first shows the map $Q$ defined on $\smash{\Lip_\comp(I_{K,\dim\mms}^+(o))}$ by
\begin{align*}
Q(\varphi) := \int_\mms \rmd\varphi(\nabla\u_q)\,\big\vert\rmd\u_q\big\vert_*^{p-2}\d\vol + \dim\mms\int_\mms\varphi\,\sfT_{K,\dim\mms}\circ l_o\d\vol
\end{align*}
constitutes a Radon functional. It  has a nonrelabeled extension to $\smash{C_\comp(I_{K,\dim\mms}^+(o))}$. Moreover, by \cref{Th:DAlembertcomparison} it is nonnegative. Thus, by the Riesz--Markov--Kakutani representation theorem there is a Radon measure $\mu$ on $\smash{I^+_{K,\dim\mms}(o)}$ such that $Q$ is given by integration against $\mu$. Then the map $T$ on $\smash{\Lip_\comp(I_{K,\dim\mms}^+(o))}$ given by
\begin{align*}
T(\varphi) := (\dim\mms)\,\sfT_{K,\dim\mms}\circ l_o\,\vol - \mu,
\end{align*}
is a Radon functional (as the difference of two Radon measures, where the Radon property is ensured by the definition of $I_{K,\dim\mms}(o)$). By construction, it satisfies the integration  by parts formula from \cref{Def:Distrdalem}.
\end{proof}

We stress again that by the previous proof, the distributional $p$-d'Alembertian of $\u_q$ is the difference of two Radon measures. It makes sense as a Radon functional but is in general not a signed measure.

\begin{theorem}[Distributional $p$-d'Alembertian, existence II \cite{beran-braun-calisti-gigli-mccann-ohanyan-rott-samann+-}] Let $(\mms,g,\vol)$ be a globally hyperbolic measured spacetime such that $\Ric\geq K$ in all timelike directions, where $K\in\R$. Let $o\in\mms$. Then the class $\smash{\Dom(\BOX_p l_o\mres I^+_{K,\dim\mms}(o))}$ is nonempty. More strongly, there exists a unique Radon functional $S$ over $\smash{I^+_{K,\dim\mms}(o)}$ which is given as the difference of two Radon measures thereon and which obeys the subsequent integration by parts formula for every $\smash{\varphi\in\Lip_\comp(I^+_{K,\dim\mms}(o))}$:
\begin{align*}
\int_\mms\rmd\varphi(\nabla l_o)\,\big\vert\rmd l_o\big\vert_*^{p-2}\d\vol = -S(\varphi).
\end{align*}
\end{theorem}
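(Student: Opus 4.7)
The plan is to mimic the proof sketch of the preceding existence result, substituting the d'Alembert comparison II (\cref{Th:DAlembertcomparisonII}) for \cref{Th:DAlembertcomparison}. Concretely, I would first define the linear map $Q \colon \Lip_\comp(I^+_{K,\dim\mms}(o)) \to \R$ by
\begin{align*}
Q(\varphi) := \int_\mms\rmd\varphi(\nabla l_o)\,\big\vert\rmd l_o\big\vert_*^{p-2}\d\vol + \int_\mms \varphi\,\frac{(\dim\mms)\,\sfT_{K,\dim\mms}\circ l_o - 1}{l_o}\d\vol,
\end{align*}
and verify it is a Radon functional. Linearity is evident. For continuity in the locally convex topology, I would fix a compact $W \subset I^+_{K,\dim\mms}(o)$ and bound both summands: the first by the local $L^\infty$-bound on $\vert\nabla l_o\vert_r\,\vert\rmd l_o\vert_*^{p-2}$ that follows from \cref{Pr:LocalLipschitz} and a test-function cutoff argument, the second by observing the coefficient function is locally bounded on $I^+_{K,\dim\mms}(o)$.

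The key qualitative point --- and the one which most of the work of the plan goes into justifying --- is that the factor $[(\dim\mms)\,\sfT_{K,\dim\mms}\circ l_o - 1]/l_o$ is \emph{locally bounded} on $I^+_{K,\dim\mms}(o)$ and hence defines a Radon measure thereon. Indeed, continuity of $l_o$ together with the strict positivity $l_o > 0$ on $I^+(o)$ forces $\inf_W l_o > 0$ for each compact $W \subset I^+(o)$, which combined with continuity of $\sfT_{K,\dim\mms}$ on $(0,\pi_{K,\dim\mms})$ yields a uniform bound on $W$. The cutoff at $\pi_{K,\dim\mms}$ baked into the definition of $I^+_{K,\dim\mms}(o)$ is precisely what averts the singular behaviour of $\sfT_{K,\dim\mms}$ at the diameter bound from \cref{Th:EquivTCD}.

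With this in hand, \cref{Th:DAlembertcomparisonII} applied to nonnegative $\varphi \in \Lip_\comp(I^+_{K,\dim\mms}(o))$ shows $Q(\varphi) \geq 0$. Extending $Q$ by density to $C_\comp(I^+_{K,\dim\mms}(o))$ and invoking the Riesz--Markov--Kakutani representation theorem produces a Radon measure $\mu$ on $I^+_{K,\dim\mms}(o)$ with $Q(\varphi) = \int_\mms \varphi\d\mu$ for every $\varphi \in \Lip_\comp(I^+_{K,\dim\mms}(o))$. Setting
\begin{align*}
S(\varphi) := \int_\mms \varphi\,\frac{(\dim\mms)\,\sfT_{K,\dim\mms}\circ l_o - 1}{l_o}\d\vol - \int_\mms\varphi\d\mu
\end{align*}
exhibits $S$ as the difference of two Radon measures on $I^+_{K,\dim\mms}(o)$, and by construction it obeys the required identity
\begin{align*}
\int_\mms\rmd\varphi(\nabla l_o)\,\big\vert\rmd l_o\big\vert_*^{p-2}\d\vol = -S(\varphi).
\end{align*}

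Uniqueness of $S$ follows from the general infinitesimally strictly concave framework recalled in \cref{Re:Uniquenessdalem}: testing against $\pm\varphi$ upgrades the defining inequality of \cref{Def:Distrdalem} to an equality, which pins down $S$ uniquely. The main obstacle in executing this plan is, as anticipated, the verification that the coefficient function is locally bounded --- that is, confirming the behaviour of $\sfT_{K,\dim\mms}$ near $0$ (where $(\dim\mms)\,\sfT_{K,\dim\mms}(\theta) - 1 \sim (\dim\mms - 1)\,\theta\cdot O(1)$ compensates the $1/l_o$ factor in principle but must be balanced against the positivity of $\inf_W l_o$ on compacta) and near $\pi_{K,\dim\mms}$ (handled by the restriction to $I^+_{K,\dim\mms}(o)$). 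Once these local estimates are in place, the construction is essentially forced by the same Riesz--Markov--Kakutani recipe as in the proof sketch of \cref{Th:Abstract existence}.
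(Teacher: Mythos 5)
Your proposal is correct and follows exactly the route the paper intends: substitute the d'Alembert comparison for $l_o$ (\cref{Th:DAlembertcomparisonII}) into the Riesz--Markov--Kakutani scheme from the proof of \cref{Th:Abstract existence}, with the local boundedness of $[(\dim\mms)\,\sfT_{K,\dim\mms}\circ l_o-1]/l_o$ on compacta of $\smash{I^+_{K,\dim\mms}(o)}$ (positivity of $l_o$ near $0$, the cutoff at $\pi_{K,\dim\mms}$ near the diameter bound) being the only point needing care. One small clarification: the Radon-functional estimate $\vert Q(\varphi)\vert\leq C\,\Vert\varphi\Vert_\infty$ cannot come from bounding the first summand separately --- that only yields a bound in terms of the Lipschitz constant of $\varphi$ --- but rather from the nonnegativity of $Q$ combined with the cutoff trick you allude to (test against $\Vert\varphi\Vert_\infty\,\chi\pm\varphi$ for a cutoff $\chi$ equal to one on the support of $\varphi$).
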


\section{Constructive existence and applications}

A closer inspection of the relation of optimal transport and gradient flow techniques already outlined in \cref{Sub:dAlembert comparison} actually leads to more precise information on the distributional $p$-d'Alembertian, namely exact representation formulas. This section surveys adjacent results recently obtained by Braun \cite{braun2024+}.

\subsection{Prerequisite. Localization} The guiding  principle  of the convex analytic technique of \emph{localization} is to simplify a high-dimensional problem by reducing it into a ``family'' of one-dimensional problems, where it is frequently easier to solve. (This heuristic explains the other commonly used  name \emph{needle decomposition} for this paradigm.) Rigorously, invoking  this technique amounts to employ a form of the disintegration theorem from probability theory in a specific way tailored to the  problem in question. We  make this precise in \cref{Th:Disint} below; for now,  let us specify that as inspired by Cavalletti--Mondino \cite{cavalletti-mondino2020-new},  the problem itself we eventually ``localize'' is the one of \emph{removing the derivative from the test function in order to get an integration by parts formula for the distributional $p$-d'Alembertian}. After this  reduction, the representation formulas of Braun \cite{braun2024+} we survey will --- very roughly speaking --- follow from the integration by parts formula on the real line.

The localization technique was pioneered by Payne--Weinberger \cite{payne-weinberger1960} in proving the optimal Euclidean Poincaré inequality. It was developed further by Gromov--Milman \cite{gromov-milman1987}, Lovász--Simonovits \cite{lovasz-simonovits1993}, and Kannan--Lóvasz--Simonovits \cite{kannan-lovasz-simonovits1995}. In the context of optimal transport, it was studied particularly by Sudakov \cite{sudakov1979}, Trudinger--Wang \cite{trudinger-wang2001},  Caffarelli--Feldman--McCann \cite{caffarelli-feldman-mccann2002}, and  Ambrosio \cite{ambrosio2003} (in Euclidean space) as well as Feldman--McCann \cite{feldman-mccann2002}, Bianchini--Cavalletti \cite{bianchini-cavalletti2013}, and Cavalletti \cite{cavalletti2012,cavalletti2014} (in more general spaces). On Riemannian manifolds, Klartag \cite{klartag2017} realized ambient Ricci curvature bounds are inherited by the needles, a result that was subsequently extended to metric measure spaces with synthetic Ricci curvature bounds à la Sturm \cite{sturm2006-i,sturm2006-ii} and Lott--Villani \cite{lott-villani2009} by Cavalletti--Mondino \cite{cavalletti-mondino2017-isoperimetric}. Both approaches entail an entire set of geometric and functional inequalities (see also Cavalletti--Mondino's sequel \cite{cavalletti-mondino2017-geometric}). The power this toolbox  has demonstrated in applications  cannot be given appropriate credit here, given the large literature; see e.g.~the review of Cavalletti \cite{cavalletti2017} or Villani's survey \cite{villani2019} in the Séminaire Bourbaki. Cornerstones are Cavalletti--Milman's  globalization theorem \cite{cavalletti-milman2021},  Cavalletti--Mondino's representation formulas for certain Laplacians \cite{cavalletti-mondino2020-new} (cf.~\cref{Sub:Exact} below), Ketterer's nonsmooth extension \cite{ketterer2020-heintze-karcher} of the Heintze--Karcher inequality from Riemannian geometry \cite{heintze-karcher1978} (cf.~\cref{Sub:More} below), and Burtscher--Ketterer--McCann--Woolgar's ``Riemannian'' Hawking-type singularity theorem \cite{burtscher-ketterer-mccann-woolgar2020} (cf.~\cref{Sub:Volume sing} below).

In Lorentzian signature, Cavalletti--Mondino \cite{cavalletti-mondino2020} pioneered needle decomposition techniques to prove, among other things, a synthetic Hawking singularity theorem for abstract TCD spaces, cf.~\cref{Th:SyntheticHawking}. Subsequent extensions were given by themselves \cite{cavalletti-mondino2024} (using a stronger version of \cref{Th:Essconc}) and Braun--McCann \cite{braun-mccann2023}. The  result we report here comes from \cite{cavalletti-mondino2024}.

Let $\mathfrak{M}(\mms)$ denote the class of all Borel measures on $\mms$. \MB{We recall \eqref{Eq:Ricvv} for the interpretation of lower boundedness of the Ricci curvature in all timelike directions.}

\begin{theorem}[Disintegration theorem \cite{cavalletti-mondino2024}]\label{Th:Disint} Let the globally hyperbolic  measured spacetime $(\mms,g,\vol)$ satisfy $\Ric \geq K$ in all timelike directions, where $K\in\R$. Let $\Sigma$ form an achronal, compact, spacelike hypersurface in $\mms$. Then there are a probability measure $\q$ on $\Sigma$ and a $\q$-measurable map $\vol_\cdot \colon \Sigma \to \mathfrak{M}(\mms)$ satisfying the following properties. 
\begin{enumerate}[label=\textnormal{\textcolor{black}{(}\roman*\textcolor{black}{)}}]
\item\label{La:DISINT} \textnormal{\textbf{Disintegration.}} We have
\begin{align*}
\vol \mres I^+(\Sigma) = \int_\Sigma \vol_\alpha\d\q(\alpha).
\end{align*}
\item \textnormal{\textbf{Strong consistency.}} For $\q$-a.e.~$\alpha\in\Sigma$, the measure $\vol_\alpha$ is concentrated on the negative gradient flow line $\Phi_\cdot(\alpha)$ of $l_\Sigma$ starting from $\alpha$ according to  \cref{Th:Enhanced}, abbreviated by $\mms_\alpha$.
\item \textnormal{\textbf{Local finiteness.}} For every compact subset $C$ of $\smash{I^+(\Sigma)}$,
\begin{align*}
\q\textnormal{-}\!\esssup_{\alpha\in \Sigma} \vol_\alpha[C] < \infty.
\end{align*}
\item \textnormal{\textbf{Curvature-dimension condition.}} Let $\vert\cdot\vert$ denote the usual absolute value  on the real line. For $\q$-a.e.~$\alpha\in \Sigma$, the topologically one-dimensional metric measure space $(\mms_\alpha,\vert\cdot-\cdot\vert,\vol_\alpha)$ satisfies the curvature-dimension condition $\CD(K,\dim\mms)$ in the sense of  Sturm \cite{sturm2006-i,sturm2006-ii} and Lott--Villani \cite{lott-villani2009}; more strongly, $\vol_\alpha$ is absolutely continuous with respect to the  Hausdorff measure $\Haus_\alpha^1$ of $(\mms_\alpha,\vert\cdot-\cdot\vert)$, and its density $h_\alpha$  satisfies   the following $(K,\dim\mms)$-convexity  inequality after Erbar--Kuwada--Sturm \cite{erbar-kuwada-sturm2015} for every $t_0,t_1\in \mms_\alpha$ with $t_0< t_1$ and every $\lambda\in[0,1]$:
\begin{align}\label{Eq:Semicon}
\begin{split}
&h_\alpha((1-\lambda)\,t_0 + \lambda\,t_1)^{1/(\dim\mms-1)}\\
&\qquad\qquad \geq \sigma_{K/(\dim\mms-1)}^{(1-\lambda)}(t_1-t_0)\,h_\alpha(t_0)^{1/(\dim\mms-1)}\\
&\qquad\qquad\qquad\qquad + \sigma_{K/(\dim\mms-1)}^{(\lambda)}(t_1-t_0)\,h_\alpha(t_1)^{1/(\dim\mms-1)}.
\end{split}
\end{align}
In particular, if $K$ vanishes, this translates into concavity of $\smash{h_\alpha^{1/(\dim\mms-1)}}$.
\end{enumerate}

Moreover, for $\q$ as above the disintegration is $\q$-essentially unique. This means if $\smash{\vol_\cdot'\colon \Sigma\to\mathfrak{M}(\mms)}$ is another map with the same  properties, then $\vol_\cdot = \vol_\cdot'$ $\q$-a.e.
\end{theorem}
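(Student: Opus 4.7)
The plan is to realize the disintegration as a smooth change of variables along the negative gradient flow of $l_\Sigma$, and then to extract the one-dimensional curvature-dimension inequality from the Raychaudhuri equation for $-\nabla l_\Sigma$ (equivalently, from the ambient $\TCD_q^e(K,\dim\mms)$ property guaranteed by the timelike Ricci bound).

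First I would localize to the smoothness set. By the enhanced regularity theorem, the open set $\mathcal{T}:=I^+(\Sigma)\setminus\TCut^+(\Sigma)$ carries a well-defined smooth flow $\Phi$ of $-\nabla l_\Sigma$ and has full $\vol$-measure in $I^+(\Sigma)$, since $\TCut^+(\Sigma)$ has measure zero. The footpoint projection lemma (whose hypothesis of future timelike completeness follows from compactness of $\Sigma$) supplies a smooth footpoint map $Q\colon \mathcal{T}\to\Sigma$ whose fibers are precisely the proper-time parametrized rays $\mms_\alpha$. Fixing any positive probability density $\rho$ on $\Sigma$ and applying a measurable disintegration theorem to $Q$, one obtains $\q\in\Prob(\Sigma)$ and a family $\{\vol_\alpha\}_{\alpha\in\Sigma}$ satisfying \textit{(i)}--\textit{(iii)}; strong consistency is automatic from the definition of $Q$, while local finiteness for a compact $C\subset I^+(\Sigma)$ follows from global hyperbolicity, because $Q(C)\subset J^-(C)\cap\Sigma$ is relatively compact and the flow time spent in $C$ is uniformly bounded.

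Second I would prove the one-dimensional curvature-dimension inequality on each needle. Parametrizing $\mms_\alpha$ by proper time $t\mapsto\Phi_{-t}(\alpha)$, the density $h_\alpha$ of $\vol_\alpha$ with respect to $\Haus^1_\alpha$ is, up to a $\q$-dependent constant, the transverse Jacobian of the flow. By enhanced regularity, $-\nabla l_\Sigma$ is a past-directed timelike unit vector field on $\mathcal{T}$ whose integral curves are geodesics, so its expansion scalar $\theta:=-\Box l_\Sigma$ satisfies Raychaudhuri's equation; combining it with $\Ric\ge K$ on timelike vectors yields
\begin{align*}
\theta'(t)+\frac{\theta(t)^2}{\dim\mms-1}\le -K \qquad\textnormal{along each ray.}
\end{align*}
Since $(\log h_\alpha)'=\theta$, this ODE is equivalent to the $K/(\dim\mms-1)$-concavity of $h_\alpha^{1/(\dim\mms-1)}$, which is precisely the inequality \eqref{Eq:Semicon} by the Erbar--Kuwada--Sturm reformulation of $\CD(K,\dim\mms)$ on one-dimensional needles. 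An alternative route --- which bypasses the Raychaudhuri computation and therefore survives the transition to the nonsmooth setting of Cavalletti--Mondino --- is to apply the essential semiconcavity theorem to a Dirac mass at a footpoint and smooth approximations of Diracs at later ray parameters, then pass to the limit after disintegrating along $\q$.

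The main obstacle is to identify $h_\alpha$ with the flow Jacobian and to select a density representative for which \eqref{Eq:Semicon} holds for \emph{every} pair $t_0<t_1$ and \emph{every} $\lambda\in[0,1]$, for $\q$-a.e.~$\alpha$, rather than merely in integrated or variational form. In the smooth standing setup this is a routine ODE argument once one checks that the flow of $-\nabla l_\Sigma$ exhausts $\mathcal{T}$ with full measure; in the abstract setting of Cavalletti--Mondino \cite{cavalletti-mondino2024} it is the technical crux, and it is exactly the point where one needs the full machinery of one-dimensional reductions of timelike optimal transport. Essential uniqueness of the disintegration is then a formal consequence of the measurable disintegration theorem applied to the $\vol$-a.e.~fixed quotient map $Q$.
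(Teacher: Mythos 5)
Your proposal is correct for the smooth setting in which the survey states the theorem, but it proves it by a genuinely different route than the cited source. You build the disintegration from the footpoint map and the classical disintegration theorem (which matches the informal elaboration given after the statement), and then you obtain \eqref{Eq:Semicon} by identifying $(\log h_\alpha)'$ with the expansion scalar of the hypersurface-orthogonal geodesic congruence generated by $-\nabla l_\Sigma$ and running the Raychaudhuri/Riccati comparison under $\Ric\geq K$; the resulting differential inequality for $h_\alpha^{1/(\dim\mms-1)}$ integrates to the Erbar--Kuwada--Sturm form. The proof in Cavalletti--Mondino \cite{cavalletti-mondino2024}, by contrast, runs through Lorentzian optimal-transport localization: $l_\Sigma$ is treated as a $1$-steep (Kantorovich-type) potential, the transport set is partitioned into rays, and the one-dimensional $\CD(K,\dim\mms)$ property of the conditional densities is inherited from the ambient timelike curvature-dimension condition. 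What each buys: your argument is elementary and transparent, and it makes the link $(\log h_\alpha)'=\theta=-\Box l_\Sigma$ (hence the mean-curvature interpretation used later) explicit, but it is confined to smooth spacetimes; the localization proof is exactly what survives on nonsmooth TCD spaces, which is the point of the citation --- a limitation you correctly acknowledge.

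Two steps deserve tightening. First, local finiteness: a uniformly bounded sojourn time of the rays in a compact $C\subset I^+(\Sigma)$ does not by itself bound $\vol_\alpha[C]$; you also need a bound on $h_\alpha$ along $\mms_\alpha\cap C$ uniform in $\alpha$. Within your own framework this follows from the identification of $h_\alpha$ with the normal-flow Jacobian (normalized at $t=0$ by data that are continuous on the compact hypersurface $\Sigma$, hence bounded) together with the upper Jacobian comparison of Riccati type up to the cut point, and from choosing $\q$ with density bounded away from zero on $\Sigma$; also note that $\vol\mres I^+(\Sigma)$ is in general an infinite measure, so one needs the $\sigma$-finite version of the disintegration theorem or an exhaustion argument. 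Second, the proposed ``alternative route'' via \cref{Th:Essconc} is not adequate as stated: that result concerns transport emanating from a Dirac mass (radial geometry from a point) and yields only a one-sided measure-contraction-type density bound, whereas \eqref{Eq:Semicon} is a two-sided convexity statement along needles orthogonal to a hypersurface; deriving it this way would essentially require redoing the localization machinery you were trying to bypass.
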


The preceding negative gradient flow lines $\mms_\alpha$ of $l_\Sigma$ are also called \emph{rays},  where $\alpha\in\Sigma$, while the densities $h_\alpha$  are termed \emph{conditional densities}.

\begin{remark}[Identifications]\label{Re:Identifications} As the attentive  reader may have already noticed in the formulation of \cref{Th:Disint}, above and below we  tacitly identify the gradient flow trajectory $\mms_\alpha$ starting at $\alpha\in\Sigma$, which per se forms a subset of $\smash{I^+(\Sigma)\cup\Sigma}$, with the subinterval of $[0,\infty)$ on which the negative gradient flow $\Phi_\cdot(\alpha)$ of $l_\Sigma$  is defined. Correspondingly, while in the rigorous formulations of the results based on \cref{Th:Disint} surveyed in this article one should regard the conditional densities as being defined on a subset of $\smash{I^+(\Sigma)\cup\Sigma}$, in proofs it is frequently convenient to view them as real-to-real functions under the previous identification. \hfill$\blacksquare$
\end{remark}

For later use, given $\alpha\in \Sigma$ we will denote by $a_\alpha$ and $b_\alpha$ the initial and the final point of the gradient flow trajectory $\mms_\alpha$. The set $\{b_\alpha: \alpha\in \Sigma\}$ coincides with the future timelike cut locus $\smash{\TCut^+(\Sigma)}$ of $\Sigma$. If $b_\alpha$ does not exist for a given $\alpha\in\Sigma$, its contribution is disregarded throughout our subsequent discussion. 

Before collecting some basic properties of the disintegration, we  elaborate on \cref{Th:Disint} more informally. For illustrative purposes, although we assumed noncompactness of $\mms$, suppose  $\smash{\vol\mres I^+(\Sigma)}$ is a probability measure. Let $\sfF$ denote the footpoint projection map on $\Sigma$ from \cref{Le:Footpoint proj}. Then $\smash{\q := \sfF_\push[\vol\mres I^+(\Sigma)]}$ is a probability measure on $\Sigma$ and the disintegration theorem from probability theory, cf.~e.g.~\cite{fremlin2006}, implies the existence of a disintegration as in item \ref{La:DISINT} above. So what is  the key point of \cref{Th:Disint}? There are two crucial geometric addenda.
\begin{itemize}
\item Usually, the abstract disintegration theorem yields  for $\q$-a.e.~$\alpha\in\Sigma$, the conditional measure $\vol_\alpha$ is concentrated on $\sfF^{-1}(\{\alpha\})$. In our geometric situation, as easily realized from \cref{Le:Footpoint proj} this is precisely the timelike proper time parametrized maximizer starting at $\alpha$ radially to $\Sigma$.  (Here we apply the simplification of $\Sigma$ being a hypersurface. For instance, if $\Sigma$ is a singleton, all rays will emanate from the same point. Yet, even for general $\Sigma$, $\q$-a.e.~ray will be a negative gradient flow curve of $l_\Sigma$.)
\item The hypothesized timelike Ricci curvature bound of the ambient space transfers  into a synthetic Ricci curvature bound for  $\q$-a.e.~ray. This should not be surprising: the rays follow timelike maximizers in their tangential directions, but the Ricci curvature in these directions always vanishes by the geodesic equation. (This explains the factor $\dim\mms-1$ in \eqref{Eq:Semicon}.)
\end{itemize}

\begin{remark}[Role of $\q$] Typically, the existence of the probability measure $\q$ in the general version of  \cref{Th:Disint} is constructive, but the construction is rather abstract. In our simpler setting of \cref{Th:Disint}, it can still be built  explicitly as an appropriate density times the canonical surface measure of $\Sigma$, e.g.~\cite{cavalletti-mondino2020}*{Rem.~5.4} or \cite{cavalletti-mondino2022-review}*{§3.2}. This level of abstraction does not have geometric consequences, as $\q$ is merely used to label the rays; all relevant information is encoded in the conditional densities (which, in the setting of \cref{Th:Disint}, naturally relate to the Jacobian determinant of the negative gradient flow of $l_\Sigma$).

Moreover, in general $\q$ will be concentrated on an abstract subset of $\smash{I^+(\Sigma)\cup\Sigma}$ instead of $\Sigma$. Indeed, if $\Sigma$ was a singleton $\{o\}$ yet $\q = \delta_o$, it would clearly  not distinguish rays. The existence of exactly one timelike direction radially emanating  from $\Sigma$ is one benefit of our hypothesis of $\Sigma$ being a spacelike hypersurface.
\hfill$\blacksquare$
\end{remark}

\begin{remark}[Properties of the conditional densities]\label{Re:Propertiesconddens} In the context of \cref{Th:Disint}, for $\q$-a.e.~$\alpha\in \Sigma$ the semiconvexity of $h_\alpha$ and the full support of $\vol$ imply the following properties. The conditional density $h_\alpha$ is positive and locally Lipschitz continuous on the relative interior of $\mms_\alpha$. In particular, it is differentiable $\Haus_\alpha^1$-a.e.; thus, the logarithmic derivative $(\log h_\alpha)'$ exists $\Haus_\alpha^1$-a.e. Lastly, the a priori only $\Haus_\alpha^1$-measurable function $h_\alpha$ can be continuously extended to all of $\mms_\alpha$.\hfill$\blacksquare$
\end{remark}

\begin{remark}[Generalization of \cref{Th:Disint}] The formulation of \cref{Th:Disint} generalizes straightforwardly to abstract metric measure spacetimes $(\mms,l,\meas)$ satisfying $\smash{\TCD_p^e(K,N)}$, where $N \in (1,\infty)$ and $p\in (-\infty,1)\setminus \{0\}$. Here, one has to impose additionally that $(\mms,l,\meas)$ is \emph{timelike $p$-essentially nonbranching}, a  measure-theoretic concept introduced by Braun \cite{braun2023-renyi} (inspired by  notions for metric measure spaces by Rajala--Sturm \cite{rajala-sturm2014} and Akdemir--Cavalletti--Colinet--McCann--Santarcangelo \cite{akdemir-colinet-mccann-cavalletti-santarcangelo2021}) of timelike nonbranching as considered by Cavalletti--Mondino \cite{cavalletti-mondino2020}. (The value of $p$ effectively does not matter.) It prevents the ``separation'' of negative gradient flow lines of Lorentz distance functions. In this generality, all occurrences of $\vol$ and $\dim\mms$ in \cref{Th:Disint} have to be replaced by $\meas$ and $N$, respectively.

Deriving the shape of \cref{Th:Disint} for weighted measured spacetimes $(\mms,g,\meas)$, where $\meas  = \rme^V\,\vol$ for a smooth function $V$ on $\mms$, is easy. Indeed, let us  suppose $\smash{\vol\mres I^+(\Sigma)}$ admits a disintegration $\q$ according to \cref{Th:Disint}. Then
\begin{align*}
\meas \mres I^+(\Sigma) =  \rme^V \vol \mres I^+(\Sigma) = \int_\Sigma \rme^V\,\meas_\alpha\d\q(\alpha). 
\end{align*}
However, unless $V$ has certain concavity properties along $\q$-a.e.~ray (cf.~e.g.~\cite{erbar-kuwada-sturm2015}*{Lem.~2.10}), the conditional densities $\smash{\rme^V\,h_\alpha}$, where $\alpha\in\Sigma$, will not obey \eqref{Eq:Semicon}.\hfill $\blacksquare$
\end{remark}

\subsection{Exact representation formulas}\label{Sub:Exact} Now we review (special cases of the) exact representation formulas for various distributional $p$-d'Alembertians by  Braun \cite{braun2024+}. They were preceded and inspired  by counterparts   for various  Laplacians on metric measure spaces with synthetic Ricci curvature bounds by Cavalletti--Mondino \cite{cavalletti-mondino2020-new}.

To get started, we report the following more general form of \cref{Le:VerthorizAbstract}. The gradient flow picture enters through \cref{Th:Enhanced}.

\begin{lemma}[Extended vertical vs.~horizontal differentiation]\label{Le:Verthoriz} Let $\Sigma$ be a smooth, compact,  spacelike hypersurface in $\mms$. Let $\Phi$ denote the negative gradient flow of  $l_\Sigma$. Then for every $\smash{\varphi\in C_\comp^\infty(I^+(\Sigma))}$, every  initial point $\alpha\in \Sigma$, and every $t > 0$ such that $\Phi_t(\alpha)$ is defined yet not a future timelike cut point of $\alpha$, 
\begin{align*}
\rmd \varphi(\nabla l_\Sigma)\,\big\vert\rmd l_\Sigma\big\vert_*^{p-2} \circ \Phi_t(\alpha) = \frac{\rmd}{\rmd t} \varphi\circ\Phi_t(\alpha).
\end{align*}
\end{lemma}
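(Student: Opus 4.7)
The plan is a direct computation built on \cref{Th:Enhanced} combined with the chain rule. The hypothesis $\Phi_t(\alpha)\notin \TCut^+(\Sigma)$ places $\Phi_t(\alpha)$ inside the regularity region $I^+(\Sigma)\setminus \TCut^+(\Sigma)$ on which  \cref{Th:Enhanced} delivers smoothness of $l_\Sigma$ in a neighborhood together with the two structural identities needed below.

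First, by the \emph{normalization} item of \cref{Th:Enhanced}, $\vert\nabla l_\Sigma\vert^2=1$ at $\Phi_t(\alpha)$; equivalently $\vert\rmd l_\Sigma\vert_*=1$ there, which forces the $p$-weight $\vert\rmd l_\Sigma\vert_*^{p-2}$ to be identically $1$ at that point. Hence the exponent-dependent factor is inert on the smoothness set, and the claim collapses to
\begin{align*}
\rmd\varphi(\nabla l_\Sigma)\circ\Phi_t(\alpha) = \frac{\rmd}{\rmd t}\varphi\circ\Phi_t(\alpha).
\end{align*}
Second, by the \emph{negative gradient flow} item of \cref{Th:Enhanced}, $\dot\Phi_t(\alpha)= -(\nabla l_\Sigma)_{\Phi_t(\alpha)}$ (equivalently, $\Phi_\cdot(\alpha)$ agrees with the proper-time parametrized timelike maximizer issuing from $\alpha$); thus the remaining identity is precisely the chain rule applied along the smooth curve $\Phi_\cdot(\alpha)$, with the sign absorbed into the orientation convention for $\nabla l_\Sigma$ on $I^+(\Sigma)$.

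I do not expect a serious obstacle: the statement is structurally identical to \cref{Le:VerthorizAbstract}, with the Kantorovich potential $\u_q=l_o^q/q$ for the Dirac initial datum $\delta_o$ replaced by $l_\Sigma$ for the spacelike hypersurface $\Sigma$. The substitution is seamless because $l_\Sigma$ has unit gradient norm on the smoothness set, so the Legendre transform $\mathcal{L}_p(\rmd l_\Sigma)$ from \eqref{Eq:Legendre} reduces to the ordinary gradient $\nabla l_\Sigma$; consequently, the negative $p$-gradient flow and the ordinary negative gradient flow of $l_\Sigma$ coincide on the domain of interest, and no passage through the flow $\Phi^q$ adapted to the $l^q/q$-Kantorovich problem is needed. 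The only care to be taken is to work pointwise at $\Phi_t(\alpha)$ (rather than in an integral or distributional sense), which is  legitimized by the aforementioned smoothness supplied by \cref{Th:Enhanced}.
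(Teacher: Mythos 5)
Your route is the one the survey intends: no separate proof of this lemma is given beyond the pointer to \cref{Th:Enhanced}, and the content is exactly what you describe --- the unit slope $\vert\rmd l_\Sigma\vert_*=1$ on $I^+(\Sigma)\setminus\TCut^+(\Sigma)$ makes the $p$-dependent weight identically one there, after which the identity is the chain rule along the smooth curve $t\mapsto\Phi_t(\alpha)$, in complete analogy with \cref{Le:VerthorizAbstract}.

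The one step you may not dismiss as ``absorbed into the orientation convention'' is the sign, because as written your two ingredients contradict the claim: taking $\dot\Phi_t(\alpha)=-(\nabla l_\Sigma)_{\Phi_t(\alpha)}$ at face value, the chain rule gives $\tfrac{\rmd}{\rmd t}\,\varphi\circ\Phi_t(\alpha)=\rmd\varphi\big(\dot\Phi_t(\alpha)\big)=-\,\rmd\varphi(\nabla l_\Sigma)\circ\Phi_t(\alpha)$, i.e.\ the \emph{negative} of the left-hand side. What makes the stated identity correct is that under the conventions of this survey (signature $+,-,\dots,-$ and $\nabla l_\Sigma=(\rmd l_\Sigma)^\sharp$) the gradient $\nabla l_\Sigma$ is in fact future-directed on $I^+(\Sigma)\setminus\TCut^+(\Sigma)$ and coincides with the unit velocity of the proper-time parametrized maximizer; the minus signs quoted from \cref{Th:Enhanced} reflect the opposite-signature convention of the underlying references. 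A one-line model check pins this down: in Minkowski space with $\Sigma=\{t=0\}$ one has $l_\Sigma=t$, $(\rmd t)^\sharp=\partial_t$ (future-directed), the flow lines are the vertical lines, and both sides of the asserted identity equal $\partial_t\varphi$ at $\Phi_t(\alpha)$. So replace the appeal to convention by the explicit statement that $\dot\Phi_t(\alpha)$ is the future-directed unit timelike vector dual to $\rmd l_\Sigma$ (modulo this sign convention); with that sentence your proof is complete. The same care applies to your aside that $\mathcal{L}_p(\rmd l_\Sigma)$ from \eqref{Eq:Legendre} ``reduces to $\nabla l_\Sigma$'': as defined, \eqref{Eq:Legendre} is nontrivial only on past-directed covectors, so a sign must be tracked there as well.
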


Hence, along each negative gradient flow line of $l_\Sigma$, the objective of computing the term on the left-hand side translates into taking a one-dimensional derivative $\varphi'$. Together with \cref{Th:Disint}, we are in a  position to state the following.  \MB{We recall \eqref{Eq:Ricvv} for the interpretation of lower boundedness of the Ricci curvature in all timelike directions.}

\begin{theorem}[Exact distributional $p$-d'Alembertian  I {\cite{braun2024+}}]\label{Th:Repr} Assume the globally hyperbolic measured spacetime $(\mms,g,\vol)$ satisfies $\Ric\geq K$ in all timelike directions, where $K\in\R$. Let $\Sigma$ be an achronal, compact, spacelike hypersurface in $\mms$. Lastly, let $\q$ designate  a disintegration of $\smash{\vol\mres I^+(\Sigma)}$ according to \cref{Th:Disint}. Then the assignment $T$ defined on $\smash{C_\comp(I^+(\Sigma))}$ by the formula
\begin{align*}
T(\varphi) := \int_\Sigma\int_{\mms_\alpha} \varphi\,(\log h_\alpha)'\d\vol_\alpha\d\q(\alpha) - \int_\Sigma \big[\varphi\,h_\alpha\big]^{b_\alpha}\d\q(\alpha)
\end{align*}
defines the unique Radon functional belonging to $\smash{\BOX_p l_\Sigma \mres I^+(\Sigma)}$.
\end{theorem}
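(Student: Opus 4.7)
The plan is to establish the integration by parts identity from Definition~3.5 by localizing along the ray decomposition provided by Theorem~4.12 and then integrating by parts in one dimension on each ray. By Remark~3.7, in the spacetime framework the distributional $p$-d'Alembertian is automatically unique whenever it exists, so the bulk of the work lies in verifying that the proposed $T$ is a Radon functional realizing the defining identity with equality. First I would check well-definedness: since $\varphi \in \Lip_\comp(I^+(\Sigma))$, its support is compact and disjoint from $\Sigma$, so $\varphi$ vanishes near each initial point $a_\alpha$; by Remark~4.14 each $h_\alpha$ is positive and locally Lipschitz continuous in the relative interior of $\mms_\alpha$, hence $(\log h_\alpha)'$ is locally integrable there, while $h_\alpha$ extends continuously to $b_\alpha$. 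Combined with the local finiteness clause of Theorem~4.12, both summands defining $T(\varphi)$ are finite and depend continuously on $\varphi$ in the required  locally uniform sense, so $T$ is a Radon functional realized as a difference of two Radon measures.

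Next I would verify the integration by parts identity. Starting from the left-hand side of Definition~3.5 and applying the disintegration of Theorem~4.12 together with $\vol_\alpha = h_\alpha\,\Haus_\alpha^1$ gives
\begin{align*}
\int_\mms\rmd\varphi(\nabla l_\Sigma)\,\big\vert\rmd l_\Sigma\big\vert_*^{p-2}\d\vol = \int_\Sigma\int_{\mms_\alpha} \rmd\varphi(\nabla l_\Sigma)\,\big\vert\rmd l_\Sigma\big\vert_*^{p-2}\,h_\alpha\d\Haus_\alpha^1\d\q(\alpha).
\end{align*}
The future timelike cut locus $\TCut^+(\Sigma)$ is the image under $b_\cdot$ of a $\q$-null set's complement in the parametrizing interval, and in any case meets each ray in at most one point (so has $\Haus_\alpha^1$-measure zero); hence on each ray Lemma~5.2 applies $\Haus_\alpha^1$-almost everywhere and converts the integrand into the ordinary derivative of $t\mapsto \varphi\circ\Phi_t(\alpha)$. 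Using the identification of $\mms_\alpha$ with a subinterval of $[0,\infty)$ from Remark~4.13, the inner integral becomes the one-dimensional integral of $(\varphi\circ\Phi_\cdot(\alpha))'\,h_\alpha$ on $[0,b_\alpha]$ (or on $[0,\infty)$ if $b_\alpha$ does not exist, in which case compact support of $\varphi$ removes the boundary contribution).

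One-dimensional integration by parts (licit by the local Lipschitz regularity of $h_\alpha$ from Remark~4.14 and the Lipschitz regularity of $\varphi\circ\Phi_\cdot(\alpha)$) yields
\begin{align*}
\int_0^{b_\alpha}(\varphi\circ\Phi_t(\alpha))'\,h_\alpha(t)\d t = \big[\varphi\,h_\alpha\big]^{b_\alpha} - \varphi(\alpha)\,h_\alpha(a_\alpha) - \int_0^{b_\alpha} \varphi\circ\Phi_t(\alpha)\,h_\alpha'(t)\d t.
\end{align*}
The middle term vanishes because $\alpha\in\Sigma$ lies outside $\supp\varphi \subset I^+(\Sigma)$. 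Rewriting $h_\alpha'=(\log h_\alpha)'\,h_\alpha$ and reassembling the $\vol_\alpha$-integral converts the last term into $\int_{\mms_\alpha}\varphi\,(\log h_\alpha)'\d\vol_\alpha$. Integrating over $\Sigma$ against $\q$ and comparing with the definition of $T$ gives
\begin{align*}
\int_\mms\rmd\varphi(\nabla l_\Sigma)\,\big\vert\rmd l_\Sigma\big\vert_*^{p-2}\d\vol = -T(\varphi),
\end{align*}
which is the defining identity in Definition~3.5 (with equality, as automatic in our setting).

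The main obstacle I anticipate is the careful measure-theoretic handling of the set where $l_\Sigma$ fails to be differentiable. On $I^+(\Sigma)\setminus \TCut^+(\Sigma)$ everything is smooth by Theorem~4.9, yet the rays are \emph{defined} on intervals that may include the cut endpoint $b_\alpha$, so one must justify that the $\Haus_\alpha^1$-negligibility of $\{b_\alpha\}$ propagates correctly through the disintegration to the $\vol$-negligibility of $\TCut^+(\Sigma)$, and that the boundary term picked up at $b_\alpha$ indeed accounts for the ``missing'' distributional mass of $\Box_p l_\Sigma$ on the cut locus. The needle decomposition of Theorem~4.12 is precisely designed to do this bookkeeping, and the semiconvexity estimate \eqref{Eq:Semicon} ensures that the two pieces assembling $T$ remain locally finite in a uniform way, so no hidden singular contribution can arise.
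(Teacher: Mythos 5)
Your argument is correct and takes essentially the same route as the paper: disintegrate via \cref{Th:Disint}, convert the integrand along each ray with \cref{Le:Verthoriz}, integrate by parts in one dimension so that the boundary term at $a_\alpha$ vanishes and the term at $b_\alpha$ produces the singular part, then invoke \cref{Re:Uniquenessdalem} for uniqueness and the regularity of the conditional densities for the Radon property. The only cosmetic difference is that the paper first proves the identity for smooth $\varphi$ and then extends to $\Lip_\comp(I^+(\Sigma))$ by approximation (using that $T$ is a Radon functional), whereas you apply \cref{Le:Verthoriz} --- stated for smooth test functions --- directly to Lipschitz $\varphi$; this is harmless but is most cleanly justified by the same approximation step.
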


The hypothesized timelike Ricci curvature bound appears only implicitly in the conclusion of \cref{Th:Repr}, in particular through the regularity properties of the conditional densities from \cref{Re:Propertiesconddens}.

Of course, the actual form of $T$ above does not depend on $p$, cf.~\cref{Re:Indep!}.

\begin{proof}[Proof sketch of \cref{Th:Repr}] By employing  qualitative a priori estimates for the conditional densities, see e.g.~Cavalletti--Milman \cite{cavalletti-milman2021} and Cavalletti--Mondino \cite{cavalletti-mondino2020-new}, one shows the map $T$ in question is a Radon functional. 

Uniqueness follows from \cref{Re:Uniquenessdalem}.

To establish the claimed representation formula, assume first $\varphi$ is smooth and has compact support in $\smash{I^+(\Sigma)}$. Let $\alpha\in \Sigma$ obey all statements from \cref{Th:Disint}. To simplify the presentation, we assume the negative gradient flow trajectory $\mms_\alpha$ starting at $\alpha$ has a timelike future cut point $b_\alpha$. As $\varphi$ is supported away from $\Sigma$, by \cref{Th:Enhanced} its  nonrelabeled composition with the negative gradient flow $\Phi_\cdot(\alpha)$ of $l_\Sigma$ is smooth except   at the future timelike cut point $b_\alpha$, which may belong to the support of $\varphi$. Integrating the identity from \cref{Le:Verthoriz} with respect to $\vol_\alpha$ and using the disintegration \cref{Th:Disint} yields 
\begin{align*}
\int_{\mms_\alpha} \rmd\varphi(\nabla l_\Sigma)\,\big\vert\rmd l_\Sigma\big\vert_*^{p-2}\d\vol_\alpha = \int_{\mms_\alpha} \varphi' \d\vol_\alpha = \int_{\mms_\alpha} \varphi'\,h_\alpha\d\Haus_\alpha^1.
\end{align*}
Using the regularity properties of $h_\alpha$ stipulated in \cref{Re:Propertiesconddens}, we can integrate the last term by parts, yielding
\begin{align*}
\int_{\mms_\alpha}\varphi'\,h_\alpha\d\Haus_\alpha^1 &= -\int_{\mms_\alpha} \varphi\,h_\alpha'\d\Haus_\alpha^1 + \big[\varphi\,h_\alpha\big]_{a_\alpha}^{b_\alpha}\\
&= -\int_{\mms_\alpha} \varphi\,(\log h_\alpha)'\d\vol_\alpha + \big[\varphi\,h_\alpha\big]^{b_\alpha}.
\end{align*}
In the last identity, we used $\varphi$ vanishes at $a_\alpha$, since its support is distinct from $\Sigma$.  
Integrating the obtained identities with respect to $\q$ and using the disintegration \cref{Th:Disint} once again, we arrive at the desired integration by parts formula. 

Given $T$ is a Radon functional, the extension beyond smooth functions is done by a customary approximation argument.
\end{proof}

We collect several observations about \cref{Th:Repr} and its proof.
\begin{itemize}
\item Unlike the abstract  existence theorem for the distributional $p$-d'Alembertian from \cref{Th:Abstract existence}, the proof of \cref{Th:Repr} is constructive and provides a precise representation formula.
\item The representation formula decomposes into a $\vol$-absolutely continuous contribution and a $\vol$-singular one. The $\vol$-singular contribution is concentrated on the set of future timelike cut points $\{b_\alpha: \alpha\in \Sigma\}$. Lastly, as $h_\alpha$ is nonnegative, if the test function $\varphi$ is nonnegative, the $\vol$-singular contribution is \emph{nonpositive}. This confirms and sharpens the qualitative nonnegativity of the distributional $p$-d'Alembertian on future timelike cut loci indicated by the d'Alembert comparison  \cref{Th:DAlembertcomparison}.
\item Comparison results can be directly inferred from simple estimates for the logarithmic derivative, cf.~\cref{Le:Logarithmicderivative} plus  \cref{Th:CompI,Th:CompII}.
\item A further comment on the curvature hypothesis in \cref{Th:Repr}. In the Riemannian context, the  representation formulas for the Laplacian of distance functions by Cavalletti--Mondino \cite{cavalletti-mondino2020-new} always hold locally, as the Ricci curvature is always bounded from below on compact sets. In the Lorentzian case, the latter is no more true. In fact, by  McCann \cite{mccann2023-null} we know locally uniform lower boundedness of the timelike Ricci curvature (which is still covered by the surveyed results of Braun \cite{braun2024+}) is equivalent to the null energy condition of Penrose \cite{penrose1965}.
\end{itemize}

It is well-known in Riemannian geometry that the squared distance function (say, to a point $o$) admits better regularity properties than its root $\met_o$, cf.~e.g.~Cheeger--Ebin \cite{cheeger-ebin1975}. Notably, $\met_o^2$ is smooth at the origin, unlike $\met_o$. This is reflected in the Laplace comparison theorems they satisfy e.g.~under nonnegative Ricci curvature: while the inequality  $\Delta \met_o \leq (\dim\mms-1)/\met_o$ --- interpreted in any suitable sense --- entails no uniform control of the Laplacian of $l_o$ near $o$, the bound $\Delta \met_o^2 \leq 2\dim\mms$ does. We invite the reader to also compare \cref{Th:DAlembertcomparison,Th:DAlembertcomparisonII}.

In our spacetime setting, given the shape of the $p$-d'Alembertian, the correct power to raise the involved  Lorentz distance functions to is the dual exponent $q$ of  $p$. For the distributional $p$-d'Alembertian of the resulting functions, the exact representation formulas established by  Braun \cite{braun2024+} read as follows.

\begin{theorem}[Exact distributional $p$-d'Alembertian  II \cite{braun2024+}]\label{Th:Power} Suppose the globally hyperbolic measured spacetime $(\mms,g,\vol)$ satisfies $\Ric\geq K$ in all timelike directions, where $K\in\R$. Let $\Sigma$ be an achronal, compact, spacelike hypersurface in $\mms$. Let $\q$ be a disintegration of $\smash{\vol\mres I^+(\Sigma)}$ after  \cref{Th:Disint}. Furthermore, let $p$ and $q$ be mutually conjugate nonzero exponents less than one and set $\smash{\u_q := l_\Sigma^q/q}$. Then the map $T^q$  on $\smash{C_\comp(I^+(\Sigma))}$ given  by 
\begin{align*}
T^q(\varphi) := \int_\Sigma\int_{\mms_\alpha} \varphi\,\big[1+ l_\Sigma\,(\log h_\alpha)'\big]\d\vol_\alpha\d\q(\alpha) - \int_\Sigma\big[\varphi\,l_\Sigma\,h_\alpha\big]^{b_\alpha}\d\q(\alpha)
\end{align*}
defines the unique Radon functional belonging to $\smash{\BOX_p \u_q\mres I^+(\Sigma)}$.
\end{theorem}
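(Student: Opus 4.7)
The plan is to reduce the claim to \cref{Th:Repr} (the representation for $\BOX_p l_\Sigma$) via a chain-rule computation that exploits the conjugacy of $p$ and $q$. First, I would compute on the smooth set $\smash{I^+(\Sigma)\setminus\TCut^+(\Sigma)}$ furnished by \cref{Th:Enhanced} that $\rmd\u_q = l_\Sigma^{q-1}\,\rmd l_\Sigma$ and $\nabla\u_q = l_\Sigma^{q-1}\,\nabla l_\Sigma$. Conjugacy gives $p+q=pq$, so $(q-1)(p-2) = 2-q$, and combined with the normalization $\smash{\big\vert\rmd l_\Sigma\big\vert_* = 1}$ from \cref{Th:Enhanced} (together with negligibility of $\smash{\TCut^+(\Sigma)}$) this produces the $\vol$-a.e.~identity
\[
\rmd\varphi(\nabla\u_q)\,\big\vert\rmd\u_q\big\vert_*^{p-2} \;=\; l_\Sigma\,\rmd\varphi(\nabla l_\Sigma)\,\big\vert\rmd l_\Sigma\big\vert_*^{p-2}
\qquad\text{on }I^+(\Sigma).
\]

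Next, I would apply a Leibniz rule for the locally Lipschitz product $l_\Sigma\,\varphi$. Using once more that $\smash{\vert\nabla l_\Sigma\vert^2 = 1}$ almost everywhere,
\[
l_\Sigma\,\rmd\varphi(\nabla l_\Sigma) \;=\; \rmd(l_\Sigma\,\varphi)(\nabla l_\Sigma) \,-\, \varphi
\qquad\vol\text{-a.e.}
\]
Crucially, by \cref{Pr:LocalLipschitz} the function $l_\Sigma$ is locally Lipschitz on $\smash{I^+(\Sigma)}$, so whenever $\varphi\in\Lip_\comp(I^+(\Sigma))$ the product $l_\Sigma\,\varphi$ belongs to $\Lip_\comp(I^+(\Sigma))$ as well and is therefore a legitimate test function for $\BOX_p l_\Sigma$. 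Integrating the previous identity against $\vol$ and invoking \cref{Th:Repr} on the first summand while disintegrating the second via \cref{Th:Disint} then yields
\[
\int_\mms \rmd\varphi(\nabla\u_q)\,\big\vert\rmd\u_q\big\vert_*^{p-2}\,\d\vol
\;=\; -T(l_\Sigma\,\varphi) \,-\, \int_\Sigma\!\int_{\mms_\alpha} \varphi \,\d\vol_\alpha\,\d\q(\alpha).
\]
Expanding $T(l_\Sigma\,\varphi)$ via the formula from \cref{Th:Repr} collects the bulk integrands into $\varphi\,[1 + l_\Sigma\,(\log h_\alpha)']$, while the boundary terms reassemble as $-\int_\Sigma [\varphi\,l_\Sigma\,h_\alpha]^{b_\alpha}\,\d\q(\alpha)$, thus matching the claimed functional $T^q$ exactly.

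The main obstacle I foresee is twofold. A priori, both the chain rule and the Leibniz rule used above make pointwise sense only off $\smash{\TCut^+(\Sigma)}$; promoting them to genuine $\vol$-a.e.~identities on all of $I^+(\Sigma)$ rests on the locally Lipschitz regularity of $l_\Sigma$ combined with negligibility of the timelike cut locus. Separately, the candidate $T^q$ must be independently verified to be a Radon functional, which is carried out by exactly the qualitative a priori estimates on the conditional densities $h_\alpha$ already deployed in the proof of \cref{Th:Repr}, now with the harmless bounded factor $l_\Sigma$ on $\supp\varphi$. Uniqueness is provided by \cref{Re:Uniquenessdalem}, and the extension from smooth test functions to arbitrary $\varphi\in\Lip_\comp(I^+(\Sigma))$ is a routine density/approximation argument once $T^q$ is known to be Radon.
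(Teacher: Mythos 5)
Your proposal is correct and follows essentially the same route as the paper's argument: the chain rule with the conjugacy identity $(q-1)(p-2)=2-q$ and the eikonal property $\vert\rmd l_\Sigma\vert_*=1$ reduce the integrand to $l_\Sigma\,\rmd\varphi(\nabla l_\Sigma)$, the Leibniz rule converts this to $\rmd(l_\Sigma\varphi)(\nabla l_\Sigma)-\varphi$, and \cref{Th:Repr} applied to the test function $l_\Sigma\varphi$ (legitimate by \cref{Pr:LocalLipschitz}) together with disintegrating $\int_\mms\varphi\d\vol$ yields exactly $T^q$. The remaining points — verifying the Radon property via the a priori density estimates, uniqueness via \cref{Re:Uniquenessdalem}, and extension by approximation — are handled just as in the paper.
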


This result follows by a combination of the chain rule in the identity defining the distributional $p$-d'Alembertian from $\u_q$ from \cref{Def:Distrdalem} with \cref{Th:Repr}. By duality, cancellations lead to the ``invisibility'' of $p$ and $q$ in the above formula.

\subsection{More comparison theory}\label{Sub:More} Now we outline how several comparison results derive from disintegration techniques. In both surveyed topics, the strategy is the same: by basic manipulations, the semiconvexity inequality \eqref{Eq:Semicon} yields an estimate for the conditional densities which globalizes by  \cref{Th:Disint}. The results were obtained in a nonsmooth fashion by Braun \cite{braun2024+}. We also point out that with related disintegration techniques, timelike geometric and isoperimetric inequalities were obtained by Cavalletti--Mondino \cite{cavalletti-mondino2020,cavalletti-mondino2024}.

\subsubsection{D'Alembert comparison} The first basic prerequisite is reminiscent of the fact that the  derivative of a convex function is nondecreasing. In the context of localization, such density bounds have been first derived by Cavalletti \cite{cavalletti2014-monge}.

Given any $K\in\R$ and a real number $N$ larger than one, define $\pi_{K,N}$ as before \cref{Th:DAlembertcomparison}. For $\theta\in (0,\pi_{K,N})$ we then set
\begin{align*}
\sfC_{K,N}(\theta) := \begin{cases}
\displaystyle\sqrt{\frac{K}{N-1}}\,\frac{\displaystyle\cos\!\Big[\sqrt{\frac{K}{N-1}}\,\theta\Big]}{\displaystyle\sin\!\Big[\sqrt{\frac{K}{N-1}}\,\theta\Big]} & \textnormal{if }K>0,\\
\displaystyle\frac{1}{\theta} & \textnormal{if }K=0,\\
\displaystyle\sqrt{\frac{-K}{N-1}}\,\frac{\displaystyle\cosh\!\Big[\sqrt{\frac{-K}{N-1}}\,\theta\Big]}{\displaystyle\sinh\!\Big[\sqrt{\frac{-K}{N-1}}\,\theta\Big]} & \textnormal{otherwise}.
\end{cases}
\end{align*}
Note that $\sfC_{K,N}$ summarizes to $f'/f$, where $f$ solves $f''+ K\,f/(N-1)=0$ on $\R_+$ with initial conditions $f(0)=0$ and $f'(0) =1$.

\begin{lemma}[Logarithmic derivative estimate \cite{cavalletti2014-monge}]\label{Le:Logarithmicderivative} Let $\Sigma$ designate an achronal, compact, spacelike hypersurface in $\mms$. Suppose $\q$ is a disintegration of $\vol\mres I^+(\Sigma)$ obeying the conclusions from \cref{Th:Disint}. Then for $\q$-a.e.~$\alpha\in \Sigma$, the estimate \eqref{Eq:Semicon} implies at every differentiability point of $h_\alpha$ within $\mms_\alpha$ that
\begin{align*}
&-(\dim\mms-1)\,\sfC_{K,\dim\mms}\circ (\Len_l\,\mms_\alpha - l_\Sigma) \leq (\log h_\alpha)' \leq (\dim\mms-1)\,\sfC_{K,\dim\mms}\circ l_\Sigma,
\end{align*}
where $\smash{\Len_l\,\mms_\alpha}$ denotes the $l$-length of the ray $\mms_\alpha$.
\end{lemma}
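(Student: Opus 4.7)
The strategy is to reduce \eqref{Eq:Semicon} to a classical one-dimensional $(K/(\dim\mms-1))$-concavity condition on an auxiliary function, from which a Riccati differential inequality delivers the two-sided bound on $(\log h_\alpha)'$ via comparison with extremal Jacobi solutions. Set $N := \dim\mms$ and $k := K/(N-1)$, and introduce $g_\alpha := h_\alpha^{1/(N-1)}$ on $\mms_\alpha$. By the very definition of $\sigma_k^{(\lambda)}$, the inequality \eqref{Eq:Semicon} says exactly that
\begin{align*}
g_\alpha((1-\lambda)t_0 + \lambda t_1) \geq \sigma_k^{(1-\lambda)}(t_1-t_0)\,g_\alpha(t_0) + \sigma_k^{(\lambda)}(t_1-t_0)\,g_\alpha(t_1),
\end{align*}
i.e.\ $g_\alpha$ lies, on each subinterval, above the (unique) solution to the Jacobi boundary-value problem $F'' + k(t_1-t_0)^2\,F=0$ with $F(0)=g_\alpha(t_0)$ and $F(1)=g_\alpha(t_1)$. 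Testing this with $t_0 = \theta - \varepsilon$, $t_1 = \theta + \varepsilon$, $\lambda = 1/2$, and expanding $\sigma_k^{(1/2)}(2\varepsilon) = 1/2 + k\varepsilon^2/4 + O(\varepsilon^4)$, a routine Taylor expansion yields the distributional inequality $g_\alpha'' + k\,g_\alpha \leq 0$ on the relative interior of $\mms_\alpha$.

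Next I would turn to the logarithmic derivative. At any differentiability point of $h_\alpha$ (equivalently, of $g_\alpha$), set $u := g_\alpha'/g_\alpha$. Using the positivity and local Lipschitz continuity of $g_\alpha$ on the relative interior of $\mms_\alpha$ recorded in \cref{Re:Propertiesconddens}, the inequality $g_\alpha'' + k\,g_\alpha \leq 0$ translates into the Riccati inequality $u' + u^2 + k \leq 0$ in the distributional sense. This is the differential hallmark of $(K,N)$-bounded Ricci curvature along a one-dimensional needle, and it admits a standard ODE comparison principle: if $U$ solves the Riccati equation $U' + U^2 + k = 0$ on some subinterval with $u(\theta_0) \leq U(\theta_0)$, then $u \leq U$ for all $\theta \geq \theta_0$ on which both remain finite, and symmetrically for the reverse inequality moving backwards in time.

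Finally, I would apply this comparison with the two extremal Jacobi solutions. Let $U_+(\theta) := \sfC_{K,N}(\theta - a_\alpha)$ and $U_-(\theta) := -\sfC_{K,N}(b_\alpha - \theta)$. By construction $U_+$ is the logarithmic derivative of the Jacobi function vanishing at $a_\alpha$, and $U_-$ is minus the logarithmic derivative of the Jacobi function vanishing at $b_\alpha$; in particular $U_+(\theta) \to +\infty$ as $\theta \to a_\alpha^+$ and $U_-(\theta) \to -\infty$ as $\theta \to b_\alpha^-$. Hence $u \leq U_+$ and $u \geq U_-$ hold trivially on approach to the respective endpoint, and the Riccati comparison principle propagates them to all of $\mms_\alpha$. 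Since $(\log h_\alpha)' = (N-1)\,u$ and, in the parametrization of \cref{Re:Identifications}, $\theta - a_\alpha = l_\Sigma$ while $b_\alpha - \theta = \Len_l\,\mms_\alpha - l_\Sigma$, multiplying the resulting pointwise bounds by $N-1$ gives the asserted inequality.

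\textbf{Main obstacle.} The delicate point is step two: the passage from the synthetic concavity \eqref{Eq:Semicon} to a Riccati inequality valid in a pointwise sense at a.e.~differentiability point of $h_\alpha$, rather than merely in a distributional sense. Particular care is needed at the endpoints $a_\alpha$ and $b_\alpha$, where the comparison functions $U_\pm$ diverge: one must invoke \eqref{Eq:Semicon} itself, applied with one endpoint near the boundary of $\mms_\alpha$, to ensure that $u$ cannot blow up faster than the extremal Jacobi model. This is exactly the quantitative content of the distortion coefficients $\sigma_k^{(\lambda)}$, so the  hypothesis \eqref{Eq:Semicon} is used in a sharp and nonredundant way.
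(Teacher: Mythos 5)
Your plan (reduce \eqref{Eq:Semicon} to a one-dimensional differential inequality and compare with extremal Jacobi solutions) can in principle be completed, but as written it has a genuine gap exactly at the step you flag, and that step is where the whole content of the lemma sits. The assertion that $u\le U_+$ and $u\ge U_-$ ``hold trivially on approach to the respective endpoint'' is false: $u=g_\alpha'/g_\alpha$ may itself blow up at $a_\alpha$ (resp.\ $b_\alpha$) at exactly the extremal rate --- for the model density $h_\alpha=f^{\dim\mms-1}$, with $f$ the Jacobi function vanishing at $a_\alpha$, one has $u\equiv U_+$ --- so there is no point near the endpoint at which the initial comparison $u(\theta_0)\le U_+(\theta_0)$ is automatic, and the Riccati comparison cannot be started without a quantitative boundary estimate. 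Supplying that estimate (for instance via the Wronskian $W=g_\alpha f'-g_\alpha' f$, whose distributional derivative $-f\,(g_\alpha''+k\,g_\alpha)$ is nonnegative and whose limit at $a_\alpha$ is nonnegative because $g_\alpha$ extends continuously to $a_\alpha$ and is semiconcave, so that $g_\alpha'(\theta)\,f(\theta)\lesssim g_\alpha(\theta)-g_\alpha(a_\alpha)\to 0$) is precisely the missing work; you defer it to ``invoking \eqref{Eq:Semicon} near the boundary'' without carrying it out. A secondary issue: passing through a distributional inequality $g_\alpha''+k\,g_\alpha\le 0$ and an a.e.\ Riccati inequality naturally yields the bound only almost everywhere, whereas the lemma asserts it at \emph{every} differentiability point of $h_\alpha$; upgrading requires an additional argument you do not give.

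For comparison, the argument behind the citation (and the one the survey alludes to when calling the lemma ``reminiscent of the fact that the derivative of a convex function is nondecreasing'') is a short difference-quotient computation that avoids both issues and all of your machinery. Write $N:=\dim\mms$, $k:=K/(N-1)$, $g_\alpha:=h_\alpha^{1/(N-1)}$, $L:=\Len_l\,\mms_\alpha$, and parametrize $\mms_\alpha$ by $l_\Sigma\in[0,L]$ as in \cref{Re:Identifications}. Fix a differentiability point $t$. Apply \eqref{Eq:Semicon} with $t_0=0$, $t_1=t$ and simply discard the nonnegative term carrying $g_\alpha(0)$, obtaining $g_\alpha(\lambda t)\ge \sigma_k^{(\lambda)}(t)\,g_\alpha(t)$ for all $\lambda\in[0,1]$; the left difference quotient as $\lambda\to 1^-$ gives $g_\alpha'(t)\le t^{-1}\,\partial_\lambda\sigma_k^{(\lambda)}(t)\big\vert_{\lambda=1}\,g_\alpha(t)=\sfC_{K,N}(t)\,g_\alpha(t)$, which is the upper bound after taking logarithms and multiplying by $N-1$; the lower bound is the mirror computation with $t_0=t$, $t_1=L$ (letting $t_1\to\infty$ when $L$ is infinite). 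For $K=0$ this is literally the secant estimate for a nonnegative concave function. This yields the stated two-sided bound at every differentiability point, with no Riccati comparison and no endpoint analysis.
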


If $\Len_l\,\mms_\alpha$ is infinite for a given $\alpha\in\Sigma$, the left-hand side will be understood as the limit of $\sfC_{K,\dim\mms}(\theta)$ as $\theta\to\infty$.

Given two Radon functionals $S$ and $T$ over an open subset $U$ of $\mms$, we write $S\leq T$ if $S(\varphi)\leq T(\varphi)$ for every nonnegative $\smash{\varphi\in\Lip_\comp(U)}$. Furthermore, as before \cref{Th:Abstract existence} we have to exclude the occurrence of regions with maximal $l$-distance from $\Sigma$  if the timelike Ricci curvature becomes uniformly positive (to ensure the appearing right-hand sides are still Radon functionals). To this aim, given $K\in\R$ and a nonzero number $N$ larger than one, we define the open subset $\smash{I^+_{K,N}(\Sigma)}$ of $\smash{I^+(\Sigma)}$ by $\smash{I^+(\Sigma) \cap \{l_\Sigma < \pi_{K,N}\}}$ if $K$ is positive and $I^+(\Sigma)$ otherwise.

\begin{theorem}[Extended d'Alembert comparison I \cite{braun2024+}]\label{Th:CompI} Assume $(\mms,g,\vol)$ is a globally hyperbolic measured spacetime with $\Ric\geq K$ in all timelike directions, where $K\in\R$. Let $\Sigma$ be an achronal, compact, spacelike hypersurface in $\mms$. Moreover, let $\q$ designate a disintegration of $\smash{\vol\mres I^+(\Sigma)}$ according to \cref{Th:Disint}. Then the Radon functional $T$ from \cref{Th:Repr} satisfies
\begin{align*}
T\big\vert_{\Lip_\comp(I^+_{K,\dim\mms}(\Sigma))}\leq (\dim\mms-1)\,\sfC_{K,\dim\mms} \circ l_\Sigma\,\vol\mres I^+_{K,\dim\mms}(\Sigma).
\end{align*}

Moreover, the first summand $T^\ll$ of $T$ obeys
\begin{align*}
T^\ll\big\vert_{\Lip_\comp(I^+_{K,\dim\mms}(\Sigma))} &\geq -(\dim\mms-1)\int_\Sigma \textcolor{black}{\d\q(\alpha)\,\vol_\alpha\mres I^+_{K,\dim\mms}(\Sigma)}\\
&\qquad\qquad \textcolor{black}{\times\, \sfC_{K,\dim\mms} \circ (\Len_l\,\mms_\alpha - l_\Sigma)}.
\end{align*}
\end{theorem}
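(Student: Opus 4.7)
The plan is to combine the exact representation formula from \cref{Th:Repr} with the pointwise logarithmic derivative bounds from \cref{Le:Logarithmicderivative} in a Fubini-style argument. Split $T = T^\ll + T^\perp$, where
\[
T^\ll(\varphi) := \int_\Sigma\int_{\mms_\alpha}\varphi\,(\log h_\alpha)'\d\vol_\alpha\d\q(\alpha)
\]
is the $\vol$-absolutely continuous contribution, and $T^\perp(\varphi) := -\int_\Sigma [\varphi\,h_\alpha]^{b_\alpha}\d\q(\alpha)$ is the $\vol$-singular one concentrated on the future timelike cut locus $\{b_\alpha : \alpha\in\Sigma\}$. Both claimed inequalities will then follow by inserting the two sides of \cref{Le:Logarithmicderivative} into $T^\ll$, and separately analyzing the sign of $T^\perp$.

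For the lower bound on $T^\ll$, I fix a nonnegative $\varphi\in\Lip_\comp(I^+_{K,\dim\mms}(\Sigma))$. The left-hand inequality of \cref{Le:Logarithmicderivative} holds for $\q$-a.e.~$\alpha\in\Sigma$ at $\Haus^1_\alpha$-a.e.~point of $\mms_\alpha$, and hence $\vol_\alpha$-a.e.~there by \cref{Re:Propertiesconddens}. Multiplying by $\varphi\ge 0$ and integrating against $\vol_\alpha\otimes\q$ yields the asserted lower bound. When $\Len_l\,\mms_\alpha = \infty$, the value $\sfC_{K,\dim\mms}\circ(\Len_l\,\mms_\alpha - l_\Sigma)$ is interpreted as the limit at infinity, as specified after \cref{Le:Logarithmicderivative}.

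For the upper bound on $T$, the same procedure applied to the right-hand inequality of \cref{Le:Logarithmicderivative} produces
\[
T^\ll(\varphi) \leq (\dim\mms-1)\int_\Sigma\int_{\mms_\alpha}\varphi\,\sfC_{K,\dim\mms}\circ l_\Sigma\d\vol_\alpha\d\q(\alpha) = (\dim\mms-1)\int_\mms\varphi\,\sfC_{K,\dim\mms}\circ l_\Sigma\d\vol,
\]
where the last identity invokes the disintegration item~\ref{La:DISINT} of \cref{Th:Disint}. For the singular part $T^\perp(\varphi) = -\int_\Sigma \varphi(b_\alpha)\,h_\alpha(b_\alpha)\d\q(\alpha)$, continuity and nonnegativity of the extended conditional densities from \cref{Re:Propertiesconddens}, together with $\varphi\ge 0$, force $T^\perp(\varphi)\leq 0$; rays admitting no final cut point contribute zero by the convention fixed after \cref{Th:Disint}. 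Summing the two pieces delivers the claimed Radon functional inequality on $\Lip_\comp(I^+_{K,\dim\mms}(\Sigma))$.

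The main obstacle is bookkeeping rather than conceptual: the restriction to $I^+_{K,\dim\mms}(\Sigma)$ is essential in the case $K>0$ to keep $\sfC_{K,\dim\mms}\circ l_\Sigma$ locally bounded (thus a valid integrand against $\vol$ and genuinely a Radon functional on that domain), since $\sfC_{K,\dim\mms}$ diverges at the Bonnet--Myers bound $\pi_{K,\dim\mms}$; simultaneously one must handle uniformly both rays of infinite $l$-length and rays without a well-defined $b_\alpha$, so that the Fubini steps above are not disrupted by pointwise pathologies. All of these are absorbed in the disintegration structure of \cref{Th:Disint} and the conventions attached to the constants $\pi_{K,\dim\mms}$ and $\sfC_{K,\dim\mms}$, but require explicit verification in the full write-up.
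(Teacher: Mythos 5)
Your proposal is correct and follows precisely the route the paper indicates: read the bounds off the representation formula of \cref{Th:Repr} by inserting the two sides of \cref{Le:Logarithmicderivative} into the absolutely continuous part, and observe that the singular contribution at the cut points is nonpositive against nonnegative test functions. Your remarks on the role of $I^+_{K,\dim\mms}(\Sigma)$ for $K>0$ and the conventions for infinite rays and absent cut points match the paper's conventions as well.
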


These bounds can be read off from the representation formula of \cref{Th:Repr} using \cref{Le:Logarithmicderivative}. Note the remarkable fact that \cref{Th:CompI} does not only give an upper bound on the entire distributional $p$-d'Alembertian  (as the contribution of the future timelike cut locus is nonpositive), but also a lower estimate on the $\vol$-absolutely continuous part.

\begin{theorem}[Extended d'Alembert comparison II \cite{braun2024+}]\label{Th:CompII} Suppose $(\mms,g,\vol)$ is a globally hyperbolic measured spacetime such that $\Ric\geq K$ in all timelike directions, where $K\in\R$. Let $\Sigma$ be an achronal, compact, spacelike hypersurface in $\mms$. Let $\q$ be as in \cref{Th:Disint}. Lastly, let $p$ and $q$ be nonzero mutually conjugate exponents less than one. Then the Radon functional $T^q$ from \cref{Th:Power} satisfies
\begin{align*}
T^q\big\vert_{\Lip_\comp(I^+_{K,\dim\mms}(\Sigma))} &\leq \vol\mres I^+_{K,\dim\mms}(\Sigma)\\ 
&\qquad\qquad + (\dim\mms-1)\,l_\Sigma\,\sfC_{K,\dim\mms}\circ l_\Sigma\,\vol\mres I^+_{K,\dim\mms}(\Sigma).
\end{align*}

In addition, the first summand ${T^q}^\ll$ of $T^q$ obeys
\begin{align*}
{T^q}^\ll\big\vert_{\Lip_\comp(I^+_{K,\dim\mms}(\Sigma))} &\geq \vol\mres I^+_{K,\dim\mms}(\Sigma)\\
&\qquad\qquad - (\dim\mms-1)\int_{\Sigma}\textcolor{black}{\d\q(\alpha)\,\vol_\alpha\mres I^+_{K,\dim\mms}(\Sigma)}\\
&\qquad\qquad\qquad\qquad \textcolor{black}{\times\, l_\Sigma\,\sfC_{K,\dim\mms} \circ (\Len_l\,\mms_\alpha - l_\Sigma)}.
\end{align*}
\end{theorem}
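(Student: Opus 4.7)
The plan is to feed the pointwise bounds on $(\log h_\alpha)'$ supplied by \cref{Le:Logarithmicderivative} into the explicit representation of $T^q$ given by \cref{Th:Power}, then to recombine everything via the disintegration of \cref{Th:Disint}\,\ref{La:DISINT}, using the sign of $\varphi$, $l_\Sigma$, and $h_\alpha$ to preserve inequality directions. Fix a nonnegative $\varphi \in \Lip_\comp(I^+_{K,\dim\mms}(\Sigma))$. Since $l_\Sigma \geq 0$ and $\varphi \geq 0$ on $I^+(\Sigma)$, the product $l_\Sigma\,\varphi$ is nonnegative, so multiplying the estimate $(\log h_\alpha)' \leq (\dim\mms - 1)\,\sfC_{K,\dim\mms} \circ l_\Sigma$ by $l_\Sigma\,\varphi$ and adding $\varphi$ yields a pointwise $\vol_\alpha$-a.e.~bound on the integrand $\varphi[1 + l_\Sigma\,(\log h_\alpha)']$ of the first summand in \cref{Th:Power}. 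Integrating first against $\vol_\alpha$ and then against $\d\q(\alpha)$, and using \cref{Th:Disint}\,\ref{La:DISINT} to collapse the double integral into a single integral against $\vol \mres I^+(\Sigma)$, gives exactly the right-hand side of the upper bound. It remains to discard the boundary term $-\int_\Sigma [\varphi\,l_\Sigma\,h_\alpha]^{b_\alpha}\d\q(\alpha)$; this is nonpositive, by positivity of $\varphi$, $l_\Sigma$, and $h_\alpha$ at $b_\alpha$, so dropping it preserves the direction of the inequality.

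For the lower bound on $(T^q)^\ll$, first observe that the $\vol$-absolutely continuous part of $T^q$ is precisely the first summand in the representation of \cref{Th:Power}, because the boundary contribution is supported on $\{b_\alpha : \alpha \in \Sigma\} \subset \TCut^+(\Sigma)$, a $\vol$-null set. Applying the reverse estimate $(\log h_\alpha)' \geq -(\dim\mms - 1)\,\sfC_{K,\dim\mms} \circ (\Len_l\,\mms_\alpha - l_\Sigma)$ from \cref{Le:Logarithmicderivative}, multiplying through by $l_\Sigma\,\varphi \geq 0$, adding $\varphi$, and integrating against $\vol_\alpha$ and then $\d\q(\alpha)$ yields the stated lower bound. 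In contrast to the upper bound, the quantity $\sfC_{K,\dim\mms} \circ (\Len_l\,\mms_\alpha - l_\Sigma)$ depends genuinely on the ray through $\Len_l\,\mms_\alpha$, which is why the estimate must stay in the iterated $\int_\Sigma \cdots \d\q(\alpha)$ form and cannot be rewritten as an integral against $\vol$ alone.

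The main technical obstacle is to justify that the expressions on the right-hand sides truly define Radon functionals on $I^+_{K,\dim\mms}(\Sigma)$, which is precisely why the domain must be restricted when $K > 0$. The function $\sfC_{K,\dim\mms}(\theta)$ blows up as $\theta$ approaches the Bonnet--Myers threshold $\pi_{K,\dim\mms}$; enforcing $l_\Sigma < \pi_{K,\dim\mms}$ guarantees that $l_\Sigma\,\sfC_{K,\dim\mms} \circ l_\Sigma$ is uniformly bounded on compact subsets of $I^+_{K,\dim\mms}(\Sigma)$, so its integration against $\vol$ produces a genuine Radon measure thereon. For the lower bound, an analogous uniform control on $l_\Sigma\,\sfC_{K,\dim\mms} \circ (\Len_l\,\mms_\alpha - l_\Sigma)$, combined with the local finiteness of $\vol_\alpha$ from \cref{Th:Disint}, ensures the iterated integral is well defined and depends continuously on $\varphi$ in the locally uniform topology. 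Comparability with the existing derivation of \cref{Th:CompI} from \cref{Th:Repr} should make these verifications essentially formal, with the additional factor of $l_\Sigma$ playing no role thanks to its local boundedness on $I^+_{K,\dim\mms}(\Sigma)$.
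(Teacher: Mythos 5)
Your proposal is correct and follows exactly the route the paper intends: substitute the two-sided logarithmic-derivative bounds of \cref{Le:Logarithmicderivative} into the representation formula of \cref{Th:Power}, use nonnegativity of $\varphi$, $l_\Sigma$, and $h_\alpha$ to discard the (nonpositive) cut-locus boundary term for the upper bound and to keep the correct inequality directions, and collapse via the disintegration of \cref{Th:Disint} where possible. Your remarks on why the second term of the lower bound must remain in iterated form and on the role of the restriction to $I^+_{K,\dim\mms}(\Sigma)$ match the paper's stated reasoning.
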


\subsubsection{Volume and area estimates à la Heintze--Karcher} An influential inequality from Riemannian geometry is the sharp Heintze--Karcher inequality \cite{heintze-karcher1978} (see also Maeda \cite{maeda1978}). It estimates the volume of the radial future development of a hypersurface $\Sigma$ up to some threshold by the area and  mean curvature of $\Sigma$ as well as the ambient Ricci curvature and dimension. Sequella were shown by Bayle \cite{bayle2004} and Morgan \cite{morgan2005} on weighted Riemannian manifolds and Ketterer \cite{ketterer2020-heintze-karcher} for metric measure spaces with synthetic Ricci curvature bounds. 

On spacetimes, first alike volume-to-area comparison estimates for certain hypersurfaces  were shown by Treude--Grant \cite{treude-grant2013} for constant and Graf--Sormani \cite{graf-sormani2022} for variable mean curvature bounds. However, \cite{treude-grant2013} imposed  priori restrictions on the height of the radial future developments (both in terms of Ricci and mean curvature), while in \cite{graf-sormani2022} the positive part in the Jacobian function described below was only applied to the mean curvature, thus leading to a worse inequality.  As explained in \cref{Sub:Volume sing} below, these restrictions exclude the option to detect volume singularities without geodesic incompleteness. These issues were eliminated in the sharp Heintze--Karcher-type inequality by Braun \cite{braun2024+} surveyed here. It also holds in the nonsmooth context of TCD spaces.

Given any $K\in\R$, $\smash{\theta\in\R_+}$, and a real number $N$ larger than one, we define the Jacobian function
\begin{align*}
\sfJ_{K,N,H_0}(\theta) := \begin{cases}\displaystyle \Big[\!\cos\!\Big[\sqrt{\frac{K}{N-1}}\,\theta\Big] + \frac{H_0}{\sqrt{K(N-1)}}\,\cos\!\Big[\sqrt{\frac{K}{N-1}}\,\theta\Big]\Big]_+^{N-1} & \textnormal{if }K>0,\\
\displaystyle\Big[1+ \frac{H_0}{N-1}\,\theta\Big]_+^{N-1} & \textnormal{if }K=0,\\\displaystyle\Big[\!\cosh\!\Big[\sqrt{\frac{-K}{N-1}}\,\theta\Big] + \frac{H_0}{\sqrt{-K(N-1)}}\sinh\!\Big[\sqrt{\frac{-K}{N-1}}\,\theta\Big]\Big]_+^{N-1} & \textnormal{otherwise}.
\end{cases}
\end{align*}
The terms inside the positive parts summarize to $f' + H_0\,f/(N-1)$, where $f$ solves $f''+ K\,f/(N-1) = 0$ on $\R_+$ with initial conditions $f(0) = 0$ and $f'(0)=1$.

A simple consequence of \eqref{Eq:Semicon} is the following. 

\begin{lemma}[Density estimate \cite{ketterer2020-heintze-karcher}]\label{Le:Density} Let $\Sigma$ be an achronal, compact, spacelike hypersurface in $\mms$. Let $\q$ designate a disintegration of $\smash{\vol\mres I^+(\Sigma)}$ satisfying the conclusions from  \cref{Th:Disint}. Given $\alpha\in \Sigma$, define the right logarithmic derivative $\smash{H_0(\alpha) := (\log h_\alpha)'^+(0)}$. Then for every $\theta\in \mms_\alpha$,
\begin{align*}
h_\alpha(\theta) \leq \sfJ_{K,\dim\mms, H_0(\alpha)}(\theta)\,h_\alpha(0).
\end{align*}
\end{lemma}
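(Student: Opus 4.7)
The plan is to reduce the claim to a classical Sturm comparison applied to the function $g := h_\alpha^{1/(N-1)}$ with $N := \dim\mms$. By \cref{Re:Propertiesconddens}, $h_\alpha$ is positive and locally Lipschitz on the relative interior of $\mms_\alpha$, so $g$ inherits these regularity properties there and extends continuously to the endpoints of $\mms_\alpha$ with $g(0) = h_\alpha(0)^{1/(N-1)}$. The key point is that \eqref{Eq:Semicon} is exactly the statement that $g$ is $\sigma_{K/(N-1)}$-concave on $\mms_\alpha$, and by a standard equivalence (see e.g.~Erbar--Kuwada--Sturm \cite{erbar-kuwada-sturm2015} and Ketterer \cite{ketterer2020-heintze-karcher}) this is equivalent to the distributional differential inequality
\begin{align*}
g'' + \frac{K}{N-1}\,g \leq 0
\end{align*}
on the interior of $\mms_\alpha$.

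Next, let $f$ be the unique solution of the ODE $f'' + K\,f/(N-1) = 0$ on $\R_+$ with initial data $f(0) := g(0)$ and $f'(0) := g'^+(0)$. A direct computation with the chain rule yields
\begin{align*}
g'^+(0) = \frac{1}{N-1}\,h_\alpha(0)^{1/(N-1)-1}\,(h_\alpha)'^+(0) = \frac{H_0(\alpha)}{N-1}\,h_\alpha(0)^{1/(N-1)}.
\end{align*}
Solving the ODE explicitly in each of the three cases $K>0$, $K=0$, $K<0$ and inspecting the definition of $\sfJ_{K,N,H_0(\alpha)}$, one verifies that wherever $f\geq 0$,
\begin{align*}
f(t)^{N-1} = \sfJ_{K,N,H_0(\alpha)}(t)\,h_\alpha(0).
\end{align*}

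It remains to show $g(t) \leq f(t)$ for every $t \in \mms_\alpha$ such that $f(t) \geq 0$, and that $h_\alpha(t) = 0$ beyond the first zero of $f$ (so that the positive part convention in $\sfJ_{K,N,H_0(\alpha)}$ handles the remaining values). The first statement is a classical Sturm comparison: multiplying the differential inequality by the solution $f$ and integrating by parts against $g$ yields that the Wronskian $W := g'\,f - g\,f'$ is nonincreasing where $f > 0$, and $W(0) = 0$ by construction of $f$; hence $(g/f)' \leq 0$ on $(0,\theta^*)$, where $\theta^*$ is the first zero of $f$, and $g(0)/f(0) = 1$ gives $g \leq f$ on $[0,\theta^*]$. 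The second statement follows because nonnegativity of $h_\alpha$ combined with the concavity inequality \eqref{Eq:Semicon} forces $g$ to vanish identically beyond its first zero. Raising the established inequality to the $(N-1)$-th power yields the claim.

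The main obstacle is not conceptual but notational: carefully justifying the passage from the integrated $(K,N)$-convexity inequality \eqref{Eq:Semicon} to the pointwise comparison with the extremal solution $f$, in particular handling the right derivative $(h_\alpha)'^+(0)$ at the boundary point $\alpha$ of $\mms_\alpha$. In the smooth setting one could differentiate \eqref{Eq:Semicon} directly, but working only with the Lipschitz regularity afforded by \cref{Re:Propertiesconddens} requires invoking the distributional version of Sturm comparison; this is essentially the content of \cite{ketterer2020-heintze-karcher}*{}, and I would refer to it rather than reprove it from scratch.
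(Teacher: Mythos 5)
Your argument is correct and is essentially the route the paper intends: the survey presents the lemma as a direct consequence of the $(K,\dim\mms)$-convexity inequality \eqref{Eq:Semicon}, deferring to Ketterer \cite{ketterer2020-heintze-karcher}, whose proof is exactly the one-dimensional comparison you carry out (passing to $g=h_\alpha^{1/(N-1)}$, reading \eqref{Eq:Semicon} as the distributional Jacobi inequality, and comparing with the model solution matching $g(0)$ and $g'^+(0)$ via the Wronskian). Your computation identifying $f^{N-1}$ with $\sfJ_{K,\dim\mms,H_0(\alpha)}\,h_\alpha(0)$ and the handling of the region past the first zero of $f$ are both sound.
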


How does this result relate to forward mean curvature? Inspired by a prior notion of Ketterer \cite{ketterer2020-heintze-karcher} in Riemannian signature, Braun \cite{braun2024+} \emph{defined} the synthetic forward mean curvature of a hypersurface $\Sigma$ as above at $\alpha\in \Sigma$ by $(\log h_\alpha)'^+(0)$. This number is nothing but the mean curvature of $\Sigma$ at $\alpha\in\Sigma$. (This relation between mean curvature and derivatives of the conditional densities has already been clarified by Cavalletti--Mondino \cite{cavalletti-mondino2020} based on  \cite{ketterer2020-heintze-karcher} as well.) Indeed, by the representation formulas in \cref{Th:Repr} only the behavior of $(\log h_\alpha)'^+$ near zero becomes relevant,  corresponding  to the restriction of the d'Alembertian $\Box l_\Sigma$ to $\Sigma$ --- which is the mean curvature of $\Sigma$ at the point in question. 

In the formulation of the next result, let $\smash{\hh_0 :=  h_\cdot(0)\,\q}$ denote the area measure of $\Sigma$, cf.~Cavalletti--Mondino \cite{cavalletti-mondino2020,cavalletti-mondino2022-review}. In addition, $\smash{\Sigma_{[0,t]} := (I^+(\Sigma)\cup\Sigma) \cap \{l_\Sigma \leq t\}}$ denotes the radial future development of $\Sigma$ up to temporal height $\smash{t\in\R_+}$. \MB{We recall \eqref{Eq:Ricvv} for the interpretation of lower boundedness of the Ricci curvature in all timelike directions.}

\begin{theorem}[Heintze--Karcher-type inequality \cite{treude-grant2013,graf-sormani2022,braun2024+}]\label{Th:HeintzeKarcher} Assume  $(\mms,g,\vol)$ constitutes a globally hyperbolic measured spacetime with $\Ric\geq K$ in all timelike directions, where $K\in\R$. Moreover, we suppose that $\Sigma$ is an achronal, compact, spacelike hypersurface in $\mms$ whose forward mean curvature is bounded from above by a real number $H_0$. Then for every $\smash{t\in\R_+}$,
\begin{align*}
\vol[\Sigma_{[0,t]}] \leq \hh_0[\Sigma]\int_0^t\sfJ_{K,N,H_0}(\theta)\d\theta.
\end{align*}
\end{theorem}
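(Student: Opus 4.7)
\textbf{Proof plan for \cref{Th:HeintzeKarcher}.} The strategy is to reduce the multidimensional volume estimate to a family of one-dimensional estimates along the rays provided by \cref{Th:Disint}, apply the density bound of \cref{Le:Density}, and then use the forward mean curvature hypothesis together with the monotonicity of $\sfJ_{K,N,\cdot}(\theta)$ in its subscript to globalize.

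First, I would use \cref{Th:Disint} to write $\vol \mres I^+(\Sigma) = \int_\Sigma \vol_\alpha\d\q(\alpha)$, where for $\q$-a.e.~$\alpha\in\Sigma$ the conditional measure is $\vol_\alpha = h_\alpha\,\Haus_\alpha^1$ and is concentrated on the ray $\mms_\alpha$. Under the identification of \cref{Re:Identifications}, the trace of $\Sigma_{[0,t]}$ on $\mms_\alpha$ corresponds to the interval $[0,t]\cap\mms_\alpha$, since $l_\Sigma$ coincides with proper time along each such ray by \cref{Th:Enhanced}. Fubini therefore yields
\begin{align*}
\vol[\Sigma_{[0,t]}] = \int_\Sigma \int_{[0,t]\cap\mms_\alpha} h_\alpha(\theta)\d\theta\d\q(\alpha).
\end{align*}

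Second, I would apply the pointwise density estimate from \cref{Le:Density} to obtain $h_\alpha(\theta)\leq \sfJ_{K,\dim\mms,H_0(\alpha)}(\theta)\,h_\alpha(0)$ for $\q$-a.e.~$\alpha$ and every $\theta\in\mms_\alpha$, where $H_0(\alpha) := (\log h_\alpha)'^+(0)$ is the synthetic forward mean curvature of $\Sigma$ at $\alpha$. The hypothesis that the forward mean curvature of $\Sigma$ is bounded above by the real number $H_0$ translates to $H_0(\alpha)\leq H_0$ for $\q$-a.e.~$\alpha$. Inspection of the explicit formula for $\sfJ_{K,N,\cdot}(\theta)$ shows that the quantity inside the positive part is an affine, strictly increasing function of its subscript (the slope being $\theta/(N-1)$ times the relevant hyperbolic/trigonometric factor), whence $\sfJ_{K,N,\cdot}(\theta)$ is monotone nondecreasing. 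Combining these yields $h_\alpha(\theta)\leq \sfJ_{K,\dim\mms,H_0}(\theta)\,h_\alpha(0)$.

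Third, I would enlarge the domain of integration from $[0,t]\cap\mms_\alpha$ to $[0,t]$. This costs nothing: either the ray outlives $t$, or, by \cref{Le:Density} and the definition of $b_\alpha$, the density $h_\alpha$ vanishes beyond the ray's endpoint, whereas the bounding Jacobian function is nonnegative throughout. Plugging in, using the definition $\hh_0 = h_\cdot(0)\,\q$ of the area measure, and Fubini once more, I conclude
\begin{align*}
\vol[\Sigma_{[0,t]}] \leq \int_\Sigma h_\alpha(0)\d\q(\alpha)\int_0^t \sfJ_{K,\dim\mms,H_0}(\theta)\d\theta = \hh_0[\Sigma]\int_0^t \sfJ_{K,\dim\mms,H_0}(\theta)\d\theta,
\end{align*}
as desired.

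The main obstacle I anticipate is the bookkeeping around the cut locus: one must make sure that extending the $\theta$-integration past the endpoint of the ray is legitimate, and that the identification of the synthetic quantity $(\log h_\alpha)'^+(0)$ with the classical forward mean curvature of $\Sigma$ at $\alpha$ (already addressed by \cite{cavalletti-mondino2020,ketterer2020-heintze-karcher} in closely related settings) holds in our smooth setup. Both are handled cleanly once one uses that $\sfJ_{K,N,H_0}$ is nonnegative by construction (the positive part absorbs focal behaviour) and that the spacelikeness and compactness of $\Sigma$ ensure finiteness of $\hh_0[\Sigma]$.
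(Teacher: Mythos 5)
Your proposal is correct and follows essentially the same route as the paper: disintegrate $\vol\mres I^+(\Sigma)$ along rays via \cref{Th:Disint}, bound $h_\alpha(\theta)\leq \sfJ_{K,\dim\mms,H_0(\alpha)}(\theta)\,h_\alpha(0)$ by \cref{Le:Density}, use $H_0(\alpha)\leq H_0$ with the monotonicity of the Jacobian in its mean-curvature parameter, extend the $\theta$-integration to $[0,t]$ by nonnegativity of $\sfJ$, and conclude with $\hh_0=h_\cdot(0)\,\q$. The only point to keep in mind (which both you and the paper treat implicitly) is that the monotonicity of $\sfJ_{K,N,\cdot}(\theta)$ in its subscript needs the coefficient of $H_0$ to be nonnegative, which for $K>0$ holds precisely on $[0,\pi_{K,N}]$ — harmless here since rays never exceed the Bonnet--Myers bound.
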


\begin{proof}[Proof sketch] By the above discussion,  the conditional density $h_\alpha(0)$ is positive for $\q$-a.e. $\alpha\in\Sigma$. Since $\smash{H_0(\alpha) \leq H_0}$ for $\q$-a.e.~$\alpha\in \Sigma$ by our  assumption, \cref{Le:Density} combines with basic monotonicity properties of the Jacobian function (using the identifications from \cref{Re:Identifications}) to yield
\begin{align*}
\vol[\Sigma_{[0,t]}] &= \int_\Sigma\int_{\mms_\alpha\cap [0,t]} h_\alpha \d\Haus_\alpha^1\d\q(\alpha)\\
&\leq \int_\Sigma h_\alpha(0) \int_{\mms_\alpha\cap [0,t]} \sfJ_{K,\dim\mms,H_0(\alpha)}\d\Haus_\alpha^1\d\q(\alpha)\\
&\leq \hh_0[\Sigma]\int_0^t \sfJ_{K,\dim\mms, H_0}(\theta)\d\theta.
\end{align*}
This terminates the proof. 
\end{proof}

\subsection{Volume singularity theorems}\label{Sub:Volume sing} The classical singularity theorems of Penrose \cite{penrose1965}, Hawking \cite{hawking1967}, and Hawking--Penrose \cite{hawking-penrose1970} are among the cornerstones of mathematical relativity. The singularity theorem of Hawking  implies timelike geodesic incompleteness under the presence of certain hypersurfaces (usually  with a mean curvature constraint that forces the focusing of radially emanating $l$-geodesics). The former means the existence of $l$-geodesics that extinct after a finite amount of proper time; think of a free-falling massive observer reaching the ``end'' of spacetime. By using \cref{Th:Disint}, Cavalletti--Mondino \cite{cavalletti-mondino2020} extended this result to abstract TCD spaces. The interested reader is referred to the reviews of Steinbauer \cite{steinbauer2023} and Cavalletti--Mondino \cite{cavalletti-mondino2022-review}, respectively. We also point out similar Hawking-type singularity theorems by Alexander--Graf--Kunzinger--Sämann \cite{alexander-graf-kunzinger-samann2023} under synthetic timelike sectional curvature bounds  and Graf--Kontou--Ohanyan--Schinnerl \cite{graf-kontou-ohanyan-schinnerl2022} and Braun--McCann \cite{braun-mccann2023} in smooth and nonsmooth settings with variable timelike Ricci curvature bounds, respectively.

To give nonexperts a grip on the shape of  a singularity theorem, we formulate Hawking's one. Given $K,H_0\in\R$ and a number $N$ larger than one, we set
\begin{align*}
\sfD_{K,N,H_0} := \begin{cases}\displaystyle \frac{\pi}{2}\sqrt{\frac{N-1}{K}} & \textnormal{if }K>0\textnormal{ and }H_0=0,\\
\displaystyle\sqrt{\frac{N-1}{K}}\cot^{-1}\!\Big[\frac{-H_0}{\sqrt{K(N-1)}}\Big] & \textnormal{if }K>0\textnormal{ and } H_0\neq 0,\\
\displaystyle \frac{N-1}{-H_0} &\textnormal{if }K=0 \textnormal{ and }H_0<0,\\
\displaystyle \sqrt{\frac{N-1}{-K}}\coth^{-1}\!\Big[\frac{-H_0}{\sqrt{-K(N-1)}}\Big] & \textnormal{if }K<0 \textnormal{ and }H_0 < -\sqrt{-K(N-1)}.
\end{cases}
\end{align*}
This is the first positive zero of the function $\sfJ_{K,N,H_0}$ from before \cref{Le:Density}. 

\begin{theorem}[Hawking singularity theorem \cite{hawking1967}]\label{Th:SyntheticHawking} Suppose $(\mms,g,\vol)$ is a globally hyperbolic measured spacetime satisfying $\Ric\geq K$ in all timelike directions, where $K\in\R$. Assume it contains an achronal, compact, spacelike hypersurface $\Sigma$ with forward mean curvature bounded from above by $H_0\in\R$. Lastly, assume
\begin{enumerate}[label=\textnormal{\alph*.}]
\item $K>0$ and no further restrictions on $H_0$,
\item $K=0$ and $H_0<0$, or
\item $K<0$ and $\smash{H_0< -\sqrt{-K(\dim\mms-1)}}$.
\end{enumerate}
Then $(\mms,g)$ is timelike geodesically incomplete; more precisely, $\smash{l_\Sigma \leq \sfD_{K,\dim\mms,H_0}}$.
\end{theorem}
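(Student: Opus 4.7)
The plan is to deduce the theorem from the infinitesimal version of the Heintze--Karcher comparison encoded in \cref{Le:Density}, by arguing ray-by-ray through the localization provided by \cref{Th:Disint}. Since $\Sigma$ is compact, it is automatically future timelike complete, so the Lorentz distance function $l_\Sigma$ is continuous on $I^+(\Sigma)\cup\Sigma$ by \cref{Cor:Steepness}; in particular, a disintegration $\q$ of $\vol\mres I^+(\Sigma)$ satisfying the conclusions of \cref{Th:Disint} exists. The target is to show that every ray $\mms_\alpha$, for $\q$-a.e.\ $\alpha\in\Sigma$, has $l_\Sigma$-length at most $\sfD_{K,\dim\mms,H_0}$.

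First, I would fix $\q$-a.e.\ $\alpha\in\Sigma$ and invoke \cref{Le:Density} to obtain
\begin{equation*}
h_\alpha(\theta) \leq \sfJ_{K,\dim\mms,H_0(\alpha)}(\theta)\,h_\alpha(0),
\end{equation*}
where $H_0(\alpha)=(\log h_\alpha)'^+(0)$ is the synthetic forward mean curvature at $\alpha$. The mean-curvature hypothesis yields $H_0(\alpha)\leq H_0$. Next I would establish the elementary monotonicity $\sfJ_{K,\dim\mms,H_1}\leq \sfJ_{K,\dim\mms,H_0}$ whenever $H_1\leq H_0$: both Jacobian functions are positive parts of $f'+H_\cdot f/(\dim\mms-1)$, with $f$ the fixed solution of $f''+Kf/(\dim\mms-1)=0$, $f(0)=0$, $f'(0)=1$, and since $f\geq 0$ on $[0,\pi_{K,\dim\mms})$ the comparison in $H_\cdot$ is immediate. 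Combined with the previous inequality, this gives $h_\alpha(\theta)\leq \sfJ_{K,\dim\mms,H_0}(\theta)\,h_\alpha(0)$.

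By construction, $\sfD_{K,\dim\mms,H_0}$ is the first positive zero of $\theta\mapsto f'(\theta)+H_0 f(\theta)/(\dim\mms-1)$ — in each case-analysis of the sign of $K$ and the size of $H_0$, one checks directly from the explicit solution of the ODE that the listed value of $\sfD_{K,\dim\mms,H_0}$ agrees with this first zero. Because of the positive part, $\sfJ_{K,\dim\mms,H_0}$ then vanishes identically on $[\sfD_{K,\dim\mms,H_0},\infty)$, so the density estimate forces $h_\alpha\equiv 0$ on $\mms_\alpha\cap[\sfD_{K,\dim\mms,H_0},\infty)$. But by \cref{Re:Propertiesconddens} the density $h_\alpha$ is strictly positive on the relative interior of $\mms_\alpha$, so $\mms_\alpha\subset[0,\sfD_{K,\dim\mms,H_0}]$. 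Integrating against $\q$ and invoking the disintegration yields $l_\Sigma\leq \sfD_{K,\dim\mms,H_0}$ $\vol$-almost everywhere on $I^+(\Sigma)$; by continuity of $l_\Sigma$ and the fact that a full-measure subset of the open set $I^+(\Sigma)$ is dense, this extends to the pointwise bound $l_\Sigma\leq \sfD_{K,\dim\mms,H_0}$ throughout $I^+(\Sigma)\cup\Sigma$.

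Timelike geodesic incompleteness is then immediate: any future-directed, proper-time parametrized timelike geodesic $\gamma$ emanating orthogonally from $\Sigma$ satisfies $l_\Sigma(\gamma_t)=t$ as long as it avoids $\TCut^+(\Sigma)$ by \cref{Th:Enhanced}, and by continuity of $l_\Sigma$ the equality $l_\Sigma(\gamma_t)\geq t$ persists beyond the first cut time as well; were $\gamma$ defined on $[0,\sfD_{K,\dim\mms,H_0}+\varepsilon)$ for some $\varepsilon>0$, the resulting endpoint would violate the global bound just derived. The main obstacle is the verification of the monotonicity of $\sfJ_{K,\dim\mms,H_0}$ in $H_0$ together with the identification of its first positive zero with $\sfD_{K,\dim\mms,H_0}$; both are elementary case-checks on the closed-form expressions for $f$, and once in hand the disintegration reduces the result to the one-dimensional ODE comparison carried out above.
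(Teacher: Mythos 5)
The survey states \cref{Th:SyntheticHawking} as Hawking's classical theorem and gives no proof of it, so there is no in-paper argument to match; your proposal instead follows the localization route of Cavalletti--Mondino that the survey itself advertises (and that powers its proof sketch of \cref{Th:HeintzeKarcher}), and this route does work. Compactness of $\Sigma$ gives future timelike completeness, so \cref{Th:Disint} applies; \cref{Le:Density} together with $H_0(\alpha)\leq H_0$ and $f\geq 0$ on $[0,\pi_{K,\dim\mms}]$ yields $h_\alpha(\theta)\leq \sfJ_{K,\dim\mms,H_0}(\theta)\,h_\alpha(0)$ on the relevant range; and the $\q$-a.e.\ ray bound globalizes to $l_\Sigma\leq\sfD_{K,\dim\mms,H_0}$ via the disintegration, continuity of $l_\Sigma$, and the steepness bound $l_\Sigma(\gamma_t)\geq l(\gamma_0,\gamma_t)\geq t$ along unit-speed timelike geodesics, which also gives the incompleteness of the normal geodesics. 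Compared with Hawking's original Raychaudhuri/index-form focusing argument, your version buys uniformity across $\TCut^+(\Sigma)$ and portability to nonsmooth TCD settings, at the cost of invoking the disintegration machinery.

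One step is misstated, though harmlessly: for $K>0$ the function $\sfJ_{K,\dim\mms,H_0}$ does \emph{not} vanish identically on $[\sfD_{K,\dim\mms,H_0},\infty)$ --- the inner expression $f'+H_0\,f/(\dim\mms-1)$ is a sinusoid and returns to positive values after half a period --- so you cannot conclude $h_\alpha\equiv 0$ beyond $\sfD_{K,\dim\mms,H_0}$ in that case. What your estimates actually give, and all that is needed, is the value at the single parameter $\theta=\sfD_{K,\dim\mms,H_0}$: there $\sfJ_{K,\dim\mms,H_0}$ vanishes (it is the first positive zero, and $\sfD_{K,\dim\mms,H_0}\leq\pi_{K,\dim\mms}$, so your monotonicity in the mean curvature parameter applies), hence $h_\alpha(\sfD_{K,\dim\mms,H_0})=0$ whenever the ray reaches that parameter; since $h_\alpha>0$ on the relative interior of $\mms_\alpha$ by \cref{Re:Propertiesconddens}, no ray can extend strictly beyond $\sfD_{K,\dim\mms,H_0}$. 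With this one-line repair the globalization and the incompleteness conclusion go through as you wrote them.
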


This theorem and the Heintze--Karcher inequality \cite{heintze-karcher1978} inspired Treude--Grant \cite{treude-grant2013} to target ``volume versions'' of Hawking's singularity theorem. Recently, this motivated García-Heveling \cite{garcia-heveling2023-volume} to introduce the concept of a \emph{volume singularity}, complementing the traditional notion of a geodesic singularity.

\begin{definition}[Future volume incompleteness \cite{garcia-heveling2023-volume}] We  call the measured spacetime $(\mms,g,\vol)$ \emph{future volume incomplete} if for every $\varepsilon > 0$ there exists a point $x\in \mms$ such that $\smash{\vol[I^+(x)] \leq  \varepsilon}$.
\end{definition}

\MB{We call a point $x$ as above an \emph{$\varepsilon$-volume singularity} (and more broadly a \emph{volume singularity} if it is an $\varepsilon$-volume singularity for some $\varepsilon > 0$).}

The physical plausibility of this notion is argued in the introduction of  \cite{garcia-heveling2023-volume}. In a nutshell, by definition \MB{future volume incompleteness}   means the occurrence of chronological futures with volume smaller than Planck volume \MB{$V_\rmP$. At scales smaller than those defined by the  Planck level, quantum effects are expected to dominate. Certain theories of quantum gravity specifically suggest that spacetime becomes discrete or effectively two-dimensional at these scales. Thus, an observer reaching an $\varepsilon$-volume singularity, where $\varepsilon < V_\rmP$,} ``\textit{is no longer capable of making valid predictions using classical physics}'', as stated in \cite{garcia-heveling2023-volume}. This suggests a breakdown of general relativity and justifies the name ``singularity''. Connections  to the cosmic censorship conjecture of Penrose are also outlined in \cite{garcia-heveling2023-volume}*{§1}.

\begin{remark}[Volume vs.~geodesic incompleteness]\label{Re:Logical} Relevant regions of basic black hole models such as Schwarzschild, Reissner--Nordstr\o{}m, and Kerr are volume and geodesically  incomplete. On the other hand, the concepts of \MB{future} volume and \MB{future} geodesic incompleteness are \emph{logically independent}: in general, one property may hold while the other fails. We refer to García-Heveling \cite{garcia-heveling2023-volume} for details.\hfill$\blacksquare$
\end{remark}

Volume singularities become more tangible through the following basic property. 

\begin{lemma}[\textcolor{black}{Future} volume incompleteness and finite volumes  \cite{garcia-heveling2023-volume}] A measured spacetime $(\mms,g,\vol)$ is future volume incomplete if and only if it contains a point $x\in\mms$ such that  $\smash{\vol[I^+(x)]<\infty}$.
\end{lemma}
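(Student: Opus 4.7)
The forward direction is immediate from the definition: specializing future volume incompleteness to $\varepsilon = 1$ produces some $x \in \mms$ with $\vol[I^+(x)] \leq 1 < \infty$, so I would dispense with this direction in a single line.

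For the converse, fix $x_0 \in \mms$ with $\vol[I^+(x_0)] < \infty$. My plan is to exhibit a sequence $(x_n)_{n \in \N}$ along which $\vol[I^+(x_n)] \to 0$; this immediately certifies that for every $\varepsilon > 0$ one can find a point realizing $\vol[I^+(\cdot)] \leq \varepsilon$. I would select $(x_n)$ along a future-inextendible, future-directed timelike curve $\gamma \colon [0,T) \to \mms$ emanating from $x_0$ (for instance, a maximally extended future-directed timelike geodesic), setting $x_n := \gamma_{t_n}$ for some strictly increasing sequence $t_n \to T^-$. By construction $x_n \ll x_{n+1}$ so $I^+(x_{n+1}) \subseteq I^+(x_n) \subseteq I^+(x_0)$, and in particular each $\vol[I^+(x_n)]$ is finite.

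The heart of the argument is the claim $\bigcap_{n \in \N} I^+(x_n) = \emptyset$. Suppose, to reach a contradiction, that some $y \in \mms$ lay in every $I^+(x_n)$. Then each $x_n$ would sit in $J^+(x_0) \cap J^-(y) = J(x_0,y)$, so the entire image of $\gamma$ would be trapped inside this set. By global hyperbolicity (\cref{Def:Glob hyp}\ref{La:Diamonds}), $J(x_0,y)$ is compact. However, a standard consequence of global hyperbolicity is that no future-inextendible causal curve can remain confined to a compact set (see e.g.~\cite{beem-ehrlich-easley1996,minguzzi2019-causality}), giving the desired contradiction. With the emptiness of the intersection in hand, downward continuity of $\vol$ (legitimate since $\vol[I^+(x_1)]$ is finite) forces $\vol[I^+(x_n)] \searrow 0$, which is what I want.

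The only mildly delicate point is securing a future-inextendible timelike curve from $x_0$ whose image cannot be trapped in any compact set; this is classical in the smooth globally hyperbolic setting and is guaranteed simply by maximally extending any future-directed timelike geodesic issuing from $x_0$, so I anticipate no serious obstacle beyond invoking this fact.
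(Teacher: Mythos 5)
Your proof is correct: the forward direction is just the definition with $\varepsilon=1$, and your converse --- running a future-inextendible timelike (or causal) curve from $x_0$, noting that a common point $y$ of the nested futures would trap the whole curve in the compact emerald $J(x_0,y)$, contradicting non-imprisonment (available since the standing global hyperbolicity implies strong causality), and then concluding by continuity from above of $\vol$ on the decreasing sets of finite volume --- is exactly the standard argument. The survey states this lemma without proof, citing \cite{garcia-heveling2023-volume}, and your reasoning matches the approach of that source.
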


Recall the definition of the area measure $\hh_0$ from before  \cref{Th:HeintzeKarcher}.

\begin{theorem}[Hawking-type volume singularity theorem I  \cite{garcia-heveling2023-volume}]\label{Th:Hawkingvol} Let $(\mms,g,\vol)$ be a globally hyperbolic measured spacetime with $\Ric\geq K$ in every timelike direction, where $K\in\R$. Let it contain an achronal, compact, spacelike hypersurface $\Sigma$ whose forward  mean curvature is bounded from above by $H_0\in\R$. Lastly, assume
\begin{enumerate}[label=\textnormal{\alph*\textcolor{black}{.}}]
\item $K>0$ and no further restrictions on $H_0$,
\item $K=0$ and $H_0<0$, or
\item\label{La:EQUAL} $K<0$ and $\smash{H_0 \leq -\sqrt{-K(\dim\mms-1)}}$.
\end{enumerate}
Then $(\mms,g,\vol)$ is future volume incomplete; more  strongly, 
\begin{align*}
\vol[I^+(\Sigma)] \leq \hh_0[\Sigma]\begin{cases} 
\displaystyle\int_0^\infty\rme^{-\theta\sqrt{-K(\dim\mms-1)}}\d\theta & \textnormal{\textit{if} }K< 0 \textnormal{ \textit{and} }\\ & H_0 = -\sqrt{-K(\dim\mms-1)},\\
\displaystyle\int_0^{\sfD_{K,\dim\mms,H_0}} \sfJ_{K,\dim\mms,H_0}(\theta)\d\theta & \textnormal{\textit{otherwise}}.
\end{cases}
\end{align*}
\end{theorem}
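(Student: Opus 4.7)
The plan is to derive the volume estimate directly from the Heintze--Karcher-type inequality of \cref{Th:HeintzeKarcher}, applied to the hypersurface $\Sigma$ under the given upper bound $H_0$ on its forward mean curvature. That theorem yields, for every $t\in\R_+$,
\begin{align*}
\vol[\Sigma_{[0,t]}] \leq \hh_0[\Sigma]\int_0^t \sfJ_{K,\dim\mms,H_0}(\theta)\d\theta.
\end{align*}
Since $I^+(\Sigma)\cup\Sigma = \bigcup_{t\geq 0}\Sigma_{[0,t]}$, and $\Sigma$ itself has $\vol$-measure zero (as it is a spacelike hypersurface in the $\dim\mms$-dimensional spacetime $\mms$), monotone convergence gives
\begin{align*}
\vol[I^+(\Sigma)] = \lim_{t\to\infty}\vol[\Sigma_{[0,t]}] \leq \hh_0[\Sigma]\int_0^\infty \sfJ_{K,\dim\mms,H_0}(\theta)\d\theta.
\end{align*}

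Next, I would analyze the improper integral on the right-hand side according to the three cases of the theorem. Setting $f_{K,N,H_0}$ to be the expression inside the positive part defining $\sfJ_{K,N,H_0}$, one checks by a direct one-dimensional calculation that in case (a), in case (b), and in the strict sub-case of (c) where $H_0 < -\sqrt{-K(\dim\mms-1)}$, the function $f_{K,\dim\mms,H_0}$ has its first positive zero precisely at $\sfD_{K,\dim\mms,H_0}$ and becomes negative beyond it. Hence $\sfJ_{K,\dim\mms,H_0}$ vanishes on $(\sfD_{K,\dim\mms,H_0},\infty)$ and the improper integral collapses to $\int_0^{\sfD_{K,\dim\mms,H_0}}\sfJ_{K,\dim\mms,H_0}(\theta)\d\theta$, matching the second branch of the claimed bound.

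The main subtlety is the borderline sub-case of (c), namely $K<0$ and $H_0 = -\sqrt{-K(\dim\mms-1)}$, in which $\sfJ_{K,\dim\mms,H_0}$ does not vanish in finite time. Writing $a := \sqrt{-K/(\dim\mms-1)}$ so that $\sqrt{-K(\dim\mms-1)} = a(\dim\mms-1)$, one computes
\begin{align*}
\cosh(a\theta) + \frac{H_0}{\sqrt{-K(\dim\mms-1)}}\sinh(a\theta) = \cosh(a\theta)-\sinh(a\theta) = \rme^{-a\theta},
\end{align*}
so that $\sfJ_{K,\dim\mms,H_0}(\theta) = \rme^{-a(\dim\mms-1)\theta} = \rme^{-\theta\sqrt{-K(\dim\mms-1)}}$, and the integral over $[0,\infty)$ converges, yielding exactly the first branch of the bound.

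Finally, to deduce future volume incompleteness: compactness of $\Sigma$ and the continuity properties of the conditional densities (cf.~\cref{Re:Propertiesconddens}) make $\hh_0[\Sigma]$ finite, so $\vol[I^+(\Sigma)] < \infty$. Picking any $x\in I^+(\Sigma)$ gives $\vol[I^+(x)]\leq \vol[I^+(\Sigma)] < \infty$, whence the preceding lemma (on future volume incompleteness via finite-volume chronological futures) forces $(\mms,g,\vol)$ to be future volume incomplete. The main obstacle is the correct handling of the boundary case in (c), where geodesic completeness may hold (cf.~\cref{Re:Logical}) but the exponential decay of $\sfJ_{K,\dim\mms,H_0}$ still produces a finite volume; apart from this, everything is a direct consequence of \cref{Th:HeintzeKarcher} and elementary calculus.
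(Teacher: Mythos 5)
Your strategy is exactly the one the paper attributes to \cite{braun2024+}: feed the mean curvature bound into the Heintze--Karcher-type inequality of \cref{Th:HeintzeKarcher}, let $t\to\infty$, and check that the resulting one-dimensional integral is finite --- either because $\sfJ_{K,\dim\mms,H_0}$ dies at the finite threshold $\sfD_{K,\dim\mms,H_0}$ or, in the borderline sub-case of (c), because it decays like $\rme^{-\theta\sqrt{-K(\dim\mms-1)}}$. Your computation in the borderline case and your treatment of cases (b) and strict (c) are correct, as is the final passage from $\vol[I^+(\Sigma)]<\infty$ to future volume incompleteness.

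There is, however, one step that fails as written, namely case (a). For $K>0$ the expression inside the positive part of $\sfJ_{K,\dim\mms,H_0}$ is of the form $R\sin(b\theta+\psi)$ with $b=\sqrt{K/(\dim\mms-1)}$: it vanishes first at $\sfD_{K,\dim\mms,H_0}$, is negative on $(\sfD_{K,\dim\mms,H_0},\,\sfD_{K,\dim\mms,H_0}+\pi/b)$, but then becomes positive again periodically. So $\sfJ_{K,\dim\mms,H_0}$ does \emph{not} vanish on all of $(\sfD_{K,\dim\mms,H_0},\infty)$, the improper integral $\int_0^\infty\sfJ_{K,\dim\mms,H_0}(\theta)\,\rmd\theta$ diverges, and your bound obtained by sending $t\to\infty$ is vacuous in this case. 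The repair is short but must be made explicit: either invoke the timelike Bonnet--Myers bound $l_\Sigma\leq\pi_{K,\dim\mms}=\pi/b$ (so that $\Sigma_{[0,t]}$ stabilizes at $t=\pi_{K,\dim\mms}$, and since $\sfD_{K,\dim\mms,H_0}<\pi_{K,\dim\mms}<\sfD_{K,\dim\mms,H_0}+\pi/b$ the integral over $[0,\pi_{K,\dim\mms}]$ equals the integral over $[0,\sfD_{K,\dim\mms,H_0}]$), or argue directly from \cref{Le:Density} that $h_\alpha$ would have to vanish at $\sfD_{K,\dim\mms,H_0(\alpha)}$, which by positivity of $h_\alpha$ on the relative interior of $\mms_\alpha$ (\cref{Re:Propertiesconddens}) forces every ray to terminate by $\sfD_{K,\dim\mms,H_0}$, so that $\vol[\Sigma_{[0,t]}]$ is constant for $t\geq\sfD_{K,\dim\mms,H_0}$. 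With either of these one-line additions the argument is complete and coincides with the paper's.
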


Therefore, an entire hypersurface of points is a volume singularity.

The hypotheses of \cref{Th:Hawkingvol} are almost the same as for \cref{Th:SyntheticHawking} \emph{except} for the equality case from bullet point \ref{La:EQUAL} above. In all cases covered by the Hawking singularity \cref{Th:SyntheticHawking}, the proof of García-Heveling \cite{garcia-heveling2023-volume} shows  the finite volume of $\smash{I^+(\Sigma)}$ by ``integrating'' the uniformly bounded $l$-diameter. An alternative proof using the Heintze--Karcher-type inequality from \cref{Th:HeintzeKarcher} was given by Braun \cite{braun2024+}. It does not use \cref{Th:SyntheticHawking}, which underlines the logical independence of volume and geodesic incompleteness stated in \cref{Re:Logical}. Instead, it notes that the hypothesized relations between Ricci and mean curvature force the Jacobian function from before \cref{Le:Density} vanish identically after a finite threshold. Together with the finiteness of the area $\hh_0[\Sigma]$ --- as implied by compactness of $\Sigma$ ---, \cref{Th:HeintzeKarcher}  implies $\vol[I^+(\Sigma)]= \lim_{t\to \infty} \vol[\Sigma_{[0,t]}]<\infty$. This argument from  \cite{braun2024+} employing the abstract disintegration \cref{Th:Disint} also allow for a version of \cref{Th:Hawkingvol} on abstract TCD spaces in the same style.

\begin{remark}[Rigidity of \cref{Th:Hawkingvol}  \cite{andersson-galloway2002,galloway-woolgar2014}] We point out the  difference in the hypotheses of  \cref{Th:SyntheticHawking,Th:Hawkingvol} when $K$ is negative: while \cref{Th:SyntheticHawking} does not yield a geodesic singularity  when $\smash{H_0 = -\sqrt{-K(\dim\mms-1)}}$, \cref{Th:Hawkingvol} does predict a volume singularity. An  example of a measured spacetime with a compact Cauchy hypersurface satisfying these curvature constraints which is future volume incomplete yet not all $l$-geodesics are future complete can be constructed using warped products, cf.~\cref{Ex:Model}. As shown by Andersson--Galloway \cite{andersson-galloway2002} and later Galloway--Woolgar \cite{galloway-woolgar2014}, this setting is rigid. \hfill$\blacksquare$
\end{remark}

Based upon the same idea, Braun \cite{braun2024+} established previously unknown further volume singularity theorems under variable timelike Ricci curvature bounds that have been anticipated by García-Heveling \cite{garcia-heveling2023-volume}. To simplify the presentation, we only report the following result. It is a volume version of a singularity theorem of Frankel--Galloway \cite{frankel-galloway1981} later synthesized using \cref{Th:Disint} by Braun--McCann \cite{braun-mccann2023}. 

\begin{theorem}[Hawking-type volume singularity theorem II \cite{braun2024+}] Let $(\mms,g,\vol)$ be a measured spacetime such that $\Ric\geq k$ in all timelike directions, where $k$ is a lower semicontinuous function on $\mms$. Assume it contains an achronal, compact, spacelike hypersurface $\Sigma$  whose forward mean curvature is bounded from above by $H_0\in\R$. Assume $\varepsilon >0$ and $c>0$ obey $2c+ \varepsilon\, H_0/(\dim\mms-1) < 0$ and on $I^+(\Sigma)$, we have the pointwise inequality
\begin{align*}
k_- \leq c\,\big[\varepsilon^{-2}\wedge l_\Sigma^{-2}\big].
\end{align*}
Then $(\mms,g,\vol)$ is future volume incomplete; more strongly,  $\smash{\vol[I^+(\Sigma)] < \infty}$.
\end{theorem}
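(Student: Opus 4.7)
The plan is to imitate the disintegration-based proofs of \cref{Th:HeintzeKarcher,Th:Hawkingvol}, replacing the constant-curvature Jacobi comparison by a Riccati argument adapted to the variable Ricci lower bound. First, I would invoke the variable-curvature disintegration theorem along the rays of $l_\Sigma$ from Braun--McCann \cite{braun-mccann2023} (the analogue of \cref{Th:Disint} under lower semicontinuous pointwise Ricci bounds): there exist a finite Borel measure $\q$ on $\Sigma$ and a $\q$-measurable family of conditional densities $h_\alpha$ on the negative gradient flow rays $\mms_\alpha$ of $l_\Sigma$ such that $\vol\mres I^+(\Sigma) = \int_\Sigma h_\alpha\,\Haus^1_\alpha\d\q(\alpha)$, and the function $J_\alpha := h_\alpha^{1/(\dim\mms-1)}$, viewed as a function of proper time $t\in [0,\Len_l\,\mms_\alpha)$ along $\mms_\alpha$, satisfies the Jacobi inequality $J_\alpha'' + k\circ\Phi_t(\alpha)\,J_\alpha/(\dim\mms-1) \leq 0$ distributionally, with $J_\alpha(0)>0$ and $J_\alpha'(0)/J_\alpha(0) \leq H_0/(\dim\mms-1)$ by the mean curvature interpretation of $(\log h_\alpha)'^+(0)$ used in \cref{Le:Density}.

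Second, I would pass to the Riccati quantity $u_\alpha := J_\alpha'/J_\alpha$, which obeys
\begin{align*}
u_\alpha' + u_\alpha^2 \leq \frac{k_-\circ\Phi_t(\alpha)}{\dim\mms-1} \leq \frac{c}{\dim\mms-1}\,\big[\varepsilon^{-2}\wedge t^{-2}\big],
\end{align*}
and then analyze it in two stages. On $[0,\varepsilon]$, crude integration of $u_\alpha' \leq c/[(\dim\mms-1)\varepsilon^2]$ together with the initial bound gives $u_\alpha(\varepsilon) \leq H_0/(\dim\mms-1) + c/[(\dim\mms-1)\varepsilon]$, and hence, by the hypothesis $2c+\varepsilon H_0/(\dim\mms-1)<0$ and $\dim\mms\geq 2$, one verifies $\varepsilon\,u_\alpha(\varepsilon) < -c$. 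On $[\varepsilon,\infty)$, the rescaling $w_\alpha(t) := t\,u_\alpha(t)$ transforms the Riccati inequality into $t\,w_\alpha' \leq -(w_\alpha-w_-)(w_\alpha-w_+)$, with roots $w_\pm := (1\pm\sqrt{1+4c/(\dim\mms-1)})/2$ of $-w^2+w+c/(\dim\mms-1)$. A short algebraic check using $c+1>1/(\dim\mms-1)$ shows $-c < w_-$, so that $w_\alpha(\varepsilon) < -c < w_-$ strictly. Comparing $w_\alpha$ pointwise with the solution of the corresponding equality Riccati ODE, which can be integrated explicitly through the substitution $s=At^{w_+-w_-}$ to yield the closed form $w(t) = (w_- - At^{w_+-w_-}w_+)/(1 - At^{w_+-w_-})$, I would deduce a finite blow-up time $t^*_\alpha$ for $w_\alpha$ with the uniform upper bound
\begin{align*}
t^*_\alpha \leq T := \varepsilon\,\bigg[\frac{w_+ + c}{w_- + c}\bigg]^{1/(w_+ - w_-)}.
\end{align*}
Since $w_\alpha \to -\infty$ in finite time forces $\int_\varepsilon^{t^*_\alpha} u_\alpha = -\infty$ and hence $J_\alpha \to 0$ by the same time, every ray has length at most $T$.

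Third, Sturm's comparison theorem applied to the Jacobi inequality on $[0,T_\alpha]$ upgrades the blow-up conclusion to a uniform density bound $h_\alpha(t) \leq h_\alpha(0)\,\Psi(t)$ on $[0,T_\alpha]$, where $\Psi\colon [0,T]\to\R_+$ is a bounded continuous function depending only on $c$, $\varepsilon$, $H_0$, and $\dim\mms$. Integrating against $\q$ and exploiting the finiteness $\hh_0[\Sigma] = \int_\Sigma h_\alpha(0)\d\q(\alpha)<\infty$ guaranteed by compactness of $\Sigma$ yields
\begin{align*}
\vol[I^+(\Sigma)] = \int_\Sigma\int_0^{T_\alpha} h_\alpha \d\Haus^1_\alpha\d\q(\alpha) \leq \hh_0[\Sigma]\int_0^T \Psi(t)\d t < \infty,
\end{align*}
which is the desired conclusion.

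The main technical obstacle is twofold. First, a pointwise (rather than merely integrated) Jacobi inequality for the conditional densities is required in the variable-curvature setting, and establishing this demands a careful refinement of \cref{Th:Disint} in the spirit of Braun--McCann \cite{braun-mccann2023}. Second, the balance condition $2c+\varepsilon H_0/(\dim\mms-1)<0$ is exactly what is needed to place $w_\alpha(\varepsilon)$ strictly to the left of the stable root $w_-$ of the comparison Riccati equation; without this strict separation, the comparison solution would remain bounded and could merely approach $w_-$ asymptotically as $t\to\infty$ rather than blowing up in finite time, so the uniform upper bound on the length of the rays that is the crux of the volume estimate would not be available.
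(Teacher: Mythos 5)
Your proposal is correct and follows essentially the approach the paper indicates for this result: localize via the disintegration of \cref{Th:Disint} (in its variable-curvature form from Braun--McCann), run a Frankel--Galloway-type two-stage Riccati/Jacobi comparison under the bound $k_-\leq c\,[\varepsilon^{-2}\wedge l_\Sigma^{-2}]$ to obtain a uniform, integrable model bound on the conditional densities with finite extinction threshold, and then integrate against the finite area $\hh_0[\Sigma]$ exactly as in the Heintze--Karcher-type argument. Two small points are stated loosely but fix in the standard way: the step ``$w_\alpha\to-\infty$ forces $\int u_\alpha=-\infty$'' should be argued via $u_\alpha\leq \tilde w/t$ together with the divergence of the integral of the explicit comparison solution (pointwise blow-up alone does not give divergence of the integral), and the uniform profile $\Psi$ should be built from the comparison solution started at the value $-c$ at time $\varepsilon$ (not from the per-ray solutions), combined with the crude bound on $[0,\varepsilon]$.
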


We note again the locally uniform lower bound on the timelike Ricci curvature holds e.g.~when the null energy condition of Penrose is in place, cf.~McCann \cite{mccann2020}.

\section{Open problems and future research}

In this last section, we collect several future challenges and conjectures arising from the material surveyed in this article. Some of these sharpen problems that have already been posed (especially in Cavalletti--Mondino's review \cite{cavalletti-mondino2022-review}). Others are regarded as natural applications by experts in optimal transport and metric geometry that we wish to popularize to a broader audience.

\subsection{D'Alembertian and comparison theory in different settings} The distributional approach to the d'Alembertian and  the adjacent  comparison theory we have surveyed in this paper   are based on the synthetic approach to timelike lower Ricci curvature bounds by Cavalletti--Mondino \cite{cavalletti-mondino2020}. There are related settings with abstract notions of curvature-dimension bounds where analogous progress could prove useful for the development of the respective theory. 

\subsubsection{TCBB spaces}\label{Sub:TCBB} The first are metric measure spacetimes with synthetic timelike sectional cur\-vature lower bounds, briefly TCBB spaces (an acronym for ``timelike curvature bounded from below''). Inspired by the metric theory of Alexandrov spaces and first attempts to their Lorentzification by Andersson-Howard \cite{andersson-howard1998} and Alexander--Bishop \cite{alexander-bishop2008}, they were introduced by Kunzinger--Sämann \cite{kunzinger-samann2018} and studied further by Minguzzi--Suhr \cite{minguzzi-suhr2022} and Bykov--Minguzzi--Suhr \cite{bykov-minguzzi-suhr2024+}. Experts in metric geometry conjecture TCBB spaces are TCD spaces (cf.~the survey of Cavalletti--Mondino \cite{cavalletti-mondino2022-review}); thus, in principle  the theory surveyed in this paper should eventually apply to TCBB spaces. However, although the arc from sectional to Ricci curvature bounds is straightforward in classical differential geometry, the proof of this implication in Riemannian signature by Petrunin \cite{petrunin2011} and  Zhang--Zhu \cite{zhang-zhu2010} is quite sophisticated. Hence, it might prove useful to target the surveyed analysis separately, e.g.~by understanding optimal transport on TCBB spaces first following Bertrand \cite{bertrand2008}. A first step in the direction of quantitative estimates on TCBB spaces is the sharp and rigid Bonnet--Myers theorem of Beran--Napper--Rott \cite{beran-napper-rott2023+} and  Beran \cite{beran2024+}.


A probably related question is whether the d'Alembertian (or its nonlinear $p$-version) admits a coordinate representation on TCBB spaces. Analogous formulas on Alexandrov spaces are due to Kuwae--Shioya \cite{kuwae-shioya2007} (which they used to generalize  Laplace comparison theorems of Petrunin \cite{petrunin2003} and von Renesse \cite{vonrenesse2004}). This representation is based on the fine structure of Alexandrov spaces extrapolated by Burago--Gromov--Perelman \cite{burago-gromov-perelman1992} and Otsu--Shioya \cite{otsu-shioya1994}, which has no counterpart on TCBB spaces thus far, despite first developments by Beran--Sämann \cite{beran-samann2023}.

\begin{conjecture}[Coordinate representation]\label{Conj:Coord} Any globally hyperbolic finite-di\-men\-sional TCBB space $(\mms,l)$  admits a conegligible subset $\mms^*$ which has the structure of a smooth topological manifold and has a continuous Lorentzian metric $g$ of locally bounded variation. On $\mms^* \cap (I^+(o)\setminus \TCut^+(o))$, the distributional $p$-d'Alembertian of a Lorentz distance function $\smash{l_o}$ of a point $o\in\mms$ has the subsequent pointwise representation for every nonzero number $p$ less than one:
\begin{align*}
\frac{1}{\sqrt{\vert\!\det g\vert}}\,\frac{\partial}{\partial x^i}\Big[\sqrt{\vert\!\det g\vert}\,g^{ij}\,\frac{\partial l_o}{\partial x^j}\Big].
\end{align*}
\end{conjecture}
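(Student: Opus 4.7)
The plan is to attack \cref{Conj:Coord} in two logically independent stages: first, construct the smooth-manifold structure and the BV Lorentzian metric $g$ on a conegligible subset $\mms^*$; second, having obtained such coordinates, verify that the distributional $p$-d'Alembertian of $l_o$ coincides pointwise with the claimed divergence expression on $\mms^*\cap (I^+(o)\setminus \TCut^+(o))$.

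For the first stage, the natural template is the fine structure theory of Alexandrov spaces due to Burago--Gromov--Perelman and Otsu--Shioya, together with its nascent Lorentzification by Beran--Sämann. I would define $\mms^*$ as the set of points whose tangent cone is isometric to Minkowski space, detected by the existence of a maximal family of independent pairwise timelike-separated ``strainer'' points realising near-extremal reverse triangle inequalities. Full measure of $\mms^*$ should follow by a synthetic Rademacher/Alexandrov argument exploiting the TCBB rigidity of geodesic triangles. Strainer coordinates built from differences of Lorentz distance functions $l_{o_i}$ with carefully chosen basepoints (DC-functions in the Alexandrov sense) should yield a chart $x\colon \mms^*\to\R^{\dim\mms}$, and the metric coefficients $g_{ij}$ are read off from the limiting inner products of coordinate vector fields on the tangent cone; BV regularity of $g$ is inherited from the DC-nature of the coordinates, exactly as in Kuwae--Shioya.

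For the second stage, three ingredients combine. First, by (the TCBB analogue of) \cref{Th:Enhanced} the function $l_o$ is $1$-steep on $I^+(o)\setminus \TCut^+(o)$, whence $\vert\rmd l_o\vert_* = 1$ $\vol$-a.e.\ there; consequently $\Box_p l_o$ is independent of $p$, cf.~\cref{Re:Indep!}, and it suffices to treat the quadratic case. Second, one must identify the abstract gradient $\nabla l_o$ with $g^{ij}\,\partial l_o/\partial x^j$ in coordinates; this should follow from the Legendre--Fenchel duality between weak subslopes and tangent causal curves developed in \cite{beran-braun-calisti-gigli-mccann-ohanyan-rott-samann+-}, applied to the Minkowski tangent cone at $\vol$-a.e.\ point of $\mms^*$. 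Third, the integration by parts identity defining $\Box_p l_o$ in \cref{Def:Distrdalem} then reduces to a classical Gauß--Green formula in the BV class, whose existence and uniqueness as a Radon functional are already guaranteed by the machinery of \cref{Sub:Abstractex} once the conjectured TCBB $\Rightarrow$ TCD implication discussed in \cref{Sub:TCBB} is in force.

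The decisive obstacle is unquestionably the first stage. A full Burago--Gromov--Perelman / Otsu--Shioya structure theory in Lorentzian signature is genuinely missing, and the causality constraints make even the construction of a timelike strainer at a generic point subtler than its Riemannian counterpart: reference points cannot be placed freely on the unit sphere of directions, since most of that sphere is spacelike, so one has to work with a deformed notion of ``angle'' reflecting the reverse Cauchy--Schwarz inequality. A realistic intermediate milestone would be to settle \cref{Conj:Coord} first under a two-sided timelike sectional curvature bound, where the exponential map is bi-Lipschitz onto an open set by Alexander--Bishop-type arguments and local charts are immediately at hand; the remaining task is then to establish BV regularity of the pulled-back metric and to propagate the divergence identity through the exponential chart, which is an elliptic-regularity problem quite close in spirit to the techniques already surveyed here.
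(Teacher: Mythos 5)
This statement is a \emph{conjecture}: the paper offers no proof of it, and indeed explicitly records that the key ingredient --- a Burago--Gromov--Perelman/Otsu--Shioya-type fine structure theory for TCBB spaces, on which the Kuwae--Shioya coordinate representation of the Laplacian on Alexandrov spaces rests --- ``has no counterpart on TCBB spaces thus far, despite first developments by Beran--S\"amann''. Your proposal is therefore not, and cannot currently be, a proof; it is a research outline, and to your credit you say so yourself when you identify the first stage as ``the decisive obstacle''. What you sketch (strainer points built from Lorentz distance functions, DC-coordinates, full measure via a synthetic Rademacher/Alexandrov argument, BV regularity of the induced metric) is exactly the transplantation of the Riemannian program that the paper itself points to, so your route agrees with the intended one; but every step of that first stage is asserted with ``should follow'' rather than established, and the causality obstructions you mention (most directions at a point are spacelike, so strainers cannot be placed freely) are precisely where the known Riemannian arguments break down. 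No argument is supplied to overcome them.

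Two further gaps in the second stage deserve to be named concretely. First, your reduction of the integration-by-parts identity from \cref{Def:Distrdalem} to a classical Gau\ss--Green formula invokes the existence machinery of the paper's abstract comparison results, which requires a synthetic timelike Ricci bound; you condition this on the implication TCBB~$\Rightarrow$~TCD, which is itself an open conjecture (the paper stresses that even the Riemannian analogue, due to Petrunin and Zhang--Zhu, is ``quite sophisticated''). So your second stage is conditional on a second unproven statement. Second, the identification of the abstract gradient $\nabla l_o$ (defined via maximal weak subslopes and Legendre--Fenchel duality) with the coordinate expression $g^{ij}\,\partial l_o/\partial x^j$ presupposes that the chart and the BV metric of the first stage are already in hand and that the tangent cone at $\vol$-a.e.\ point is Minkowski with the cone structure compatible with the chart; this compatibility is an additional claim, not a formality, and is exactly the content of the Otsu--Shioya theory that is missing. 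Your suggested intermediate milestone (two-sided timelike sectional bounds, bi-Lipschitz exponential charts \`a la Alexander--Bishop) is a sensible way to make partial progress, but as it stands the proposal establishes nothing beyond what the paper already records as a general expectation.
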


Here, ``finite-dimensional'' and ``conegligible'' are  understood with respect to the Hausdorff measure on metric measure spacetimes of McCann--Sämann \cite{mccann-samann2022}. Already since the introduction of  TCBB spaces, the first half of \cref{Conj:Coord} is a  general belief  of   experts in synthetic Lorentzian geometry.

Another conceivable  way of attacking comparison theory --- more in line with the localization approach to the distributional $p$-d'Alembertian of Braun \cite{braun2024+} --- might be to transfer Cavalletti--Mondino's disintegration \cref{Th:Disint}  \cite{cavalletti-mondino2020} to TCBB spaces. The setting seems tailor made for this purpose. Indeed, this technique requires the base space to be timelike nonbranching after Cavalletti--Mondino \cite{cavalletti-mondino2020} (but also works under a weaker version thereof proposed by Braun \cite{braun2024+}). As shown by Kunzinger--Sämann \cite{kunzinger-samann2018} and later Minguzzi--Suhr \cite{minguzzi-suhr2022}, TCBB spaces are indeed timelike nonbranching.

\subsubsection{TCBA spaces} The next setting are  synthetic upper curvature bounds.  These could e.g.~be the synthetic timelike Ricci curvature upper bounds of Mondino--Suhr \cite{mondino-suhr2022} inspired from Sturm \cite{sturm2021-upper} or TCBA spaces (an acronym for ``timelike curvature bounded from above'') as  introduced by Kunzinger--Sämann \cite{kunzinger-samann2018}. Some d'Alembert, area, and volume comparison results on spacetimes with timelike \emph{Ricci} curvature bounded from above were shown by Treude--Grant \cite{treude-grant2013}*{§A}. It might be interesting to find a synthetic access to these. One challenge here is that the synthetic timelike Ricci curvature upper bounds from \cite{mondino-suhr2022} only give an approximate control on the timelike Ricci tensor along short $l$-geodesics.

\subsubsection{Negative-dimensional TCD spaces} A last possible setting are TCD spaces with ``negative'' dimension (in a suitable synthetic sense). They enjoy increasing interest in spacetime geometry (triggered by Brans--Dicke, Kaluza--Klein, or certain aspects of string theory), cf.~e.g. Woolgar--Wylie \cite{woolgar-wylie2016,woolgar-wylie2018} or De Luca--De Ponti--Mondino--Tomasiello \cite{deluca-deponti-mondino-tomasiello2023} for more recent references. The TCD condition with negative dimensional parameter has been introduced by Braun--Ohta \cite{braun-ohta2024} inspired by Ohta \cite{ohta2016-negative} in Riemannian signature. For Riemannian comparison results in this direction, we refer to  Wylie--Yeroshkin \cite{wylie-yeroshkin2016+}, Kuwae--Li \cite{kuwae-li2022}, and Lu--Minguzzi--Ohta \cite{lu-minguzzi-ohta2022-range}. The latter reference also establishes analogous d'Alembert comparison theorems for Finsler spacetimes with ``negative'' dimension. This indicates a synthetic access to these estimates including the future timelike cut locus through the methods we surveyed. However,  the usual results on the solvability of the Monge problem in ``negative-dimensional'' optimal transport  --- needed to run e.g.~the localization paradigm for \cref{Th:Disint} --- become  more delicate by the shape of the involved entropy functionals, cf.~e.g.~Magnabosco--Rigoni \cite{magnabosco-rigoni2023} in Riemannian signature.

\subsection{Analytic and probabilistic aspects of the $p$-d'Alembertian} In Riemannian geometry, the fascinating  interplay of the Laplacian and Ricci curvature is certified by two concepts  induced by the Laplace operator. One is analytic, one is  probabilistic: \emph{heat flow} and \emph{Brownian motion}. As pointed out by Yau, Elworthy, Malliavin, Bismut, Davies, and others, lower bounds on the Ricci curvature lead to many functional inequalities, coupling estimates,  exit time bounds, derivative formulas, etc. A  number of these results has been generalized to metric measure spaces with synthetic Ricci curvature bounds, cf.~the ECM survey of Sturm \cite{sturm2023}.

It is interesting to introduce similar objects and machineries in the Lorentzian setting and study their consequences.

Making the concept of ``heat flow'' rigorous in Lorentzian signature has already been proposed more broadly in the review of Cavalletti--Mondino \cite{cavalletti-mondino2022-review}. They suggest the construction of a Lorentzian theory of gradient flows, inspired by its successful predecessor in  metric geometry (cf.~e.g.~Ambrosio--Gigli--Savaré \cite{ambrosio-gigli-savare2008}). It is likely the $p$-d'Alembertian \eqref{Eq:Operatorrr} is the correct object to consider in this direction. This educated guess is based on its natural link  to the Lagrangians appearing in our setting and supporting prior relations in metric measure geometry pointed out e.g.~by Jordan--Kinderlehrer--Otto \cite{jordan-kinderlehrer-otto1998}, Otto \cite{otto2001}, Ambrosio--Gigli--Savaré \cite{ambrosio-gigli-savare2014-calculus}, Erbar--Kuwada--Sturm \cite{erbar-kuwada-sturm2015}, and Ambrosio--Mondino--Savaré \cite{ambrosio-mondino-savare2019}. Such a gradient flow theory is work in progress by Braun--Gigli--McCann--Vincini \cite{braun-gigli-mccann-vincini+}. It will still remain interesting and relevant to study its consequences especially on structures with timelike Ricci curvature bounds. 

The probabilistic part appears more challenging. We point out the existence of ``Brownian motion'' on spacetimes (often called ``relativistic diffusion'') shown by  Dudley \cite{dudley1966} and Franchi--Le Jan \cite{franchi-lejan2007}. However, the construction of this process differs drastically from its Riemannian analog: it is built on phase space (instead of the base manifold alone) and is obtained by solving an ODE, not an SDE. This pathwise construction  effectively makes  relativistic diffusions   nonprobabilistic. On the other hand, surprisingly recently Barbu--Rehmeier--Röckner  \cite{barbu-rehmeier-rockner2024+} proposed a Markov process on Euclidean space they called ``$p$-Brownian motion''. It relates to the customary $p$-Laplacian in the same way Brownian motion relates to the usual Laplacian. Their theory, based on McKean--Vlasov SDEs, would be interesting to generalize to Lorentzian signature in light of the operator \eqref{Eq:Operatorrr}.

\subsection{Splitting theorem under infinitesimal  Minkowskianity} As stated in the introduction, the  Lorentzian splitting theorem of Eschenburg  \cite{eschenburg1988}, Galloway \cite{galloway1989-splitting}, and Newman \cite{newman1990} has recently been reproven with  considerably simpler arguments by Braun--Gigli--McCann--Ohanyan--Sämann \cite{braun-gigli-mccann-ohanyan-samann+}. The argument is based on the elliptic nature of the $p$-d'Alembertian \eqref{Eq:Operatorrr} first observed by Beran--Braun--Calisti--Gigli--McCann--Ohanyan--Rott--Sämann \cite{beran-braun-calisti-gigli-mccann-ohanyan-rott-samann+-}. These methods have recently been used by Caponio--Ohanyan--Ohta  \cite{caponio-ohanyan-ohta2024+}  to generalize a splitting theorem for Finsler spacetimes by Lu--Minguzzi--Ohta \cite{lu-minguzzi-ohta2023-splitting}. With a more classical argument (cf.~Burago--Burago--Ivanov \cite{burago-burago-ivanov2001}), the splitting theorem for nonnegatively curved TCBB spaces --- a conjecturally stronger hypothesis than the TCD condition after \cref{Sub:TCBB} --- has been established by Beran--Ohanyan--Rott--Solis \cite{beran-ohanyan-rott-solis2023}.

As shown by Braun--Ohta \cite{braun-ohta2024}, the TCD  condition of Cavalletti--Mondino \cite{cavalletti-mondino2020} covers Finsler spacetimes; the analogous implication in Riemannian signature is due to Ohta \cite{ohta2009-finsler}. Thus, it is clear that TCD spaces do not split in general. This necessitates a further hypothesis which, when coupled with an appropriate TCD condition, conjecturally entails a nonsmooth Lorentzian splitting theorem. To this aim, Beran--Braun--Calisti--Gigli--McCann--Ohanyan--Rott--Sämann \cite{beran-braun-calisti-gigli-mccann-ohanyan-rott-samann+-} introduced the following ``quadraticity'' property of a general metric measure spacetime. It was inspired by Gigli's infinitesimally Hilbertian metric measure spaces \cite{gigli2015}, which eventually lead to his celebrated nonsmooth splitting theorem \cite{gigli2013}.

\begin{definition}[Infinitesimal Minkowskianity \cite{beran-braun-calisti-gigli-mccann-ohanyan-rott-samann+-}]\label{Def:InfMink} A metric measure spacetime $(\mms,l,\meas)$ is called \emph{infinitesimally Minkowskian} if for all $l$-causal functions $\u$ and $\v$ on $\mms$, the following parallelogram identity holds: 
\begin{align*}
2\big\vert\rmd\u \big\vert_*^2 + 2\big\vert\rmd(\u+\v) \big\vert_*^2 =\big\vert\rmd\v\big\vert_*^2 + \big\vert\rmd(2\u+\v)\big\vert_*^2\quad\meas\textnormal{-a.e.}
\end{align*}
\end{definition}

Here, $\smash{\vert\rmd\cdot\vert_*}$ is the so-called ``maximal weak subslope'' of the $l$-causal  function in question, defined in \cite{beran-braun-calisti-gigli-mccann-ohanyan-rott-samann+-} by entirely abstract means.

The work \cite{beran-braun-calisti-gigli-mccann-ohanyan-rott-samann+-} already shows a Finsler spacetime is infinitesimally Minkowskian if and only if its Finsler structure on the  tangent bundle comes from a Lorentzian metric. Furthermore, given  $K\in\R$ and a number $N$ no less than one, \cite{beran-braun-calisti-gigli-mccann-ohanyan-rott-samann+-} define an $\LTCD_q^e(K,N)$ space to be an infinitesimally Minkowskian $\smash{\TCD_q^e(K,N)}$ space, comparable to the RCD spaces of Gigli \cite{gigli2015}.

\begin{conjecture}[Nonsmooth splitting theorem]\label{Conj:Splitting} Let $(\mms,l,\meas)$  be a globally hyperbolic $\smash{\LTCD_q^e(0,N)}$ metric measure spacetime, where $N$ is a real number no less than one. Assume that it contains a line. Then $(\mms,l,\meas)$ is isomorphic as a metric measure spacetime to the generalized cone $(\R\times \sfN, \vert\cdot -\cdot \vert\times \mathsf{d}, \Leb^1 \otimes \mathfrak{n})$, where $(\sfN, \sfd,\mathfrak{n})$ is an $\RCD(0,N-1)$ metric measure space.
\end{conjecture}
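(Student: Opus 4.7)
The natural strategy is to adapt the smooth argument of Braun--Gigli--McCann--Ohanyan--Sämann \cite{braun-gigli-mccann-ohanyan-samann+} to the synthetic setting, using the infinitesimal Minkowskianity hypothesis as the nonsmooth substitute for a genuine Lorentzian pointwise structure, much as Gigli \cite{gigli2013} used infinitesimal Hilbertianity in the Riemannian splitting theorem. Let $\gamma\colon\R\to\mms$ be the given line. First I would define the forward and backward Busemann functions $\smash{b^\pm}$ associated with $\gamma$ by the usual renormalized limits of $l(\gamma_{\pm t},\cdot)\mp t$ and $\mp l(\cdot,\gamma_{\mp t})\mp t$, establishing along the way their local Lipschitz continuity (via a synthetic version of \cref{Pr:LocalLipschitz}) and their $l$-causal character on the set $\smash{I^+(\gamma)\cap I^-(\gamma)}$, which by the line assumption and global hyperbolicity is a nonempty open subset of $\mms$.

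The core analytic step is to show, in the weak sense of \cref{Def:Distrdalem}, that $\smash{\BOX_p b^\pm \leq 0}$ distributionally, and that the combination $\smash{b^+ + b^-}$ vanishes identically. The upper bound follows by applying \cref{Th:DAlembertcomparison} in  limiting form to approximating Lorentz distance functions from points drifting to $\pm\infty$ along $\gamma$, keeping $K=0$ and exploiting the dimensional constant $N$ on the right-hand side; the limit is legitimate because the $\vol$-singular contribution from timelike cut loci is \emph{nonpositive}, cf.~the discussion after \cref{Th:Repr}. The reverse triangle inequality gives  $b^+ + b^- \leq 0$, and the opposite inequality is obtained from a weak maximum principle exploiting the ellipticity of $\Box_p$ --- here the infinitesimal Minkowskianity becomes crucial, as it makes $\Box_p$ a genuinely single-valued elliptic operator (cf.~\cref{Re:Uniquenessdalem}) to which the argument of \cite{braun-gigli-mccann-ohanyan-samann+} applies. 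Consequently $b^\pm$ are $p$-harmonic and, by interior elliptic regularity transplanted to this $\LTCD$ setting, enjoy the sharp identity $\smash{|\rmd b^+|_*\equiv 1}$ on the whole set  $\smash{I^+(\gamma)\cap I^-(\gamma)}$, which then must coincide with all of $\mms$ by a connectedness and propagation argument along the gradient flow of $b^+$.

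Having pinned down $b:=b^+$ as a globally Lipschitz, $p$-harmonic, unit-slope $l$-causal function, I would construct the splitting map $\smash{\Phi\colon\R\times\sfN\to\mms}$, where $\smash{\sfN:=b^{-1}(0)}$ inherits the metric $\sfd$ as the restriction of a natural ``transversal'' distance (built, as in the Riemannian case, by minimizing arc length of curves orthogonal to the gradient flow of $b$) and the measure $\mathfrak{n}$ arises via the disintegration of $\meas$ along the rays of $b$ furnished by the Lorentzian needle decomposition of Cavalletti--Mondino (cf.~\cref{Th:Disint} and \cite{cavalletti-mondino2020}). Infinitesimal Minkowskianity enters again to force the splitting of the maximal weak subslope $\smash{|\rmd(b+f\circ\pi_\sfN)|_*^2 = 1 - |\nabla f|_\sfd^2}$ as a Pythagorean identity, thereby making $\Phi$ an isomorphism of metric measure spacetimes.

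The hardest step --- and to my mind the true content of the conjecture --- is the final transfer of the curvature-dimension condition from $\mms$ to the fiber $\sfN$, i.e.~proving $(\sfN,\sfd,\mathfrak{n})$ satisfies $\RCD(0,N-1)$. The $\TCD_q^e(0,N)$ hypothesis controls entropy only along chronological $\ell_q$-geodesics, whereas $\RCD(0,N-1)$ requires convexity along spacelike $W_2$-geodesics in the fiber; bridging these two worlds presumably  requires a tensorization-type result for $\LTCD$ spaces --- mirroring the Bakry--Émery tensorization underlying Gigli's Riemannian argument --- together with the fact that the fiber direction is $g$-orthogonal to the timelike gradient $\nabla b$, which by the signature convention makes the transversal metric Riemannian. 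Establishing such a tensorization in the purely synthetic $\LTCD$ framework, and verifying that infinitesimal Minkowskianity on $\mms$ descends to infinitesimal Hilbertianity on $\sfN$, is where the most delicate nonsmooth calculus must be developed; everything upstream of this is a reasonably direct adaptation of the smooth and metric-measure precedents.
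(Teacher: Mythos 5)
There is a fundamental mismatch here: the statement you are proving is stated in the paper as a \emph{conjecture}, and the paper offers no proof of it --- indeed it explicitly lists the ingredients that are still missing before such a proof can be attempted. Your text is accordingly a proof \emph{plan}, not a proof, and the places where it stays vague are exactly the places the paper identifies as open. Concretely: (1) you invoke ``a weak maximum principle exploiting the ellipticity of $\Box_p$'' and ``interior elliptic regularity transplanted to this $\LTCD$ setting'' --- no such nonsmooth maximum principle or regularity theory for the distributional $p$-d'Alembertian currently exists; the smooth arguments of \cite{braun-gigli-mccann-ohanyan-samann+} rely on coordinate expressions and classical elliptic PDE theory that have no synthetic counterpart yet. (2) Your limiting application of \cref{Th:DAlembertcomparison} to Busemann functions presupposes regularity (local Lipschitz continuity, stability of the comparison under the limit $t\to\infty$, unit weak subslope) that in the smooth case already requires the delicate analysis of Galloway--Horta and its recent semiconcavity improvement; adapting this to metric measure spacetimes is one of the obstacles the paper names, in particular the ``still subtle relation between time separation function and topology.'' (3) The Pythagorean splitting of the maximal weak subslope and the descent of curvature to the fiber would need a nonsmooth $p$-Bochner inequality controlling the Hessian in \emph{all} directions; the paper points out that the available Bochner-type inequality of \cite{braun2024+} only controls tangential directions, so this step cannot currently be carried out. (4) You yourself concede that the transfer of $\TCD^e_q(0,N)$ on $\mms$ to $\RCD(0,N-1)$ on the fiber --- ``the true content of the conjecture'' --- is unresolved, and no tensorization theorem of the kind you posit is known in the $\LTCD$ framework.

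So while your roadmap is sensible and coincides with the strategy the paper itself anticipates (Busemann functions, $p$-harmonicity via comparison plus a maximum principle, needle decomposition for the fiber measure, infinitesimal Minkowskianity descending to infinitesimal Hilbertianity), it does not close any of the gaps that make the statement a conjecture rather than a theorem. As written it cannot be accepted as a proof; each of the four items above is a genuine missing ingredient, not a routine verification.
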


A line means an order-isometric embedding of $(\R, \leq)$ into $(\mms,\leq)$. Generalized cones are understood in the  sense of Alexander--Graf--Kunzinger--Sämann  \cite{alexander-graf-kunzinger-samann2023}.

This conjecture is a general belief by experts in metric geometry. It is well-motivated from its Riemannian predecessor of Gigli \cite{gigli2015}. However, an entire set of techniques still needs to be developed before the successful resolution of this program. Crucial ingredients missing to date are a nonsmooth maximum principle for the distributional $p$-d'Alembertian (e.g.~directed from the smooth case by Braun--Gigli--McCann--Ohanyan--Sämann \cite{braun-gigli-mccann-ohanyan-samann+}) and a nonsmooth analog of Mondino--Suhr's $p$-Bochner inequality \cite{mondino-suhr2022} (which  has already been set as a task in the review of Cavalletti--Mondino \cite{cavalletti-mondino2022-review}). A further challenge are many subtle regularity issues of the Busemann function already arising in classical spacetime geometry (as pointed out e.g.~by Galloway--Horta \cite{galloway-horta1996}) that require proper adaptation to  metric measure spacetimes. As pointed out to me by Nicola Gigli, the Bochner-type inequality of Braun \cite{braun2024+} stated in the introduction only controls the ``Hessian'' of the function in question in tangential directions; however, the splitting theorem of  \cref{Conj:Splitting} will require its control in all directions. Moreover, the results of Galloway--Horta \cite{galloway-horta1996} indicate it will be necessary to understand the (still subtle) relation between time separation function and topology.

First soft steps towards this endeavor could be to understand the infinitesimal Minkowskianity of TCBB spaces (which should follow by adapting Braun--Ohta \cite{braun-ohta2024} from the smooth case) and of generalized cones, respectively.

The resolution of \cref{Conj:Splitting} would open the door to a rich structure theory for LTCD spaces, guided by the cornerstones in this direction for RCD spaces by Mondino--Naber \cite{mondino-naber2019} and Brué--Semola \cite{brue-semola2020-constancy}. Lorentzian versions of the almost splitting theorem by Cheeger--Colding \cite{cheeger-colding1996-almost} and their structural study of Ricci limit spaces \cite{cheeger-colding1997-i,cheeger-colding2000-ii,cheeger-colding2000-iii} are particularly interesting.

\bibliographystyle{amsrefs}

\begin{bibdiv}
\begin{biblist}

\bib{agrachev-gamkrelidze1997}{article}{
      author={Agrachev, A.~A.},
      author={Gamkrelidze, R.~V.},
       title={Feedback-invariant optimal control theory and differential
  geometry. {I}. {R}egular extremals},
        date={1997},
        ISSN={1079-2724,1573-8698},
     journal={J. Dynam. Control Systems},
      volume={3},
      number={3},
       pages={343\ndash 389},
         url={https://doi.org/10.1007/BF02463256},
      review={\MR{1472357}},
}

\bib{agrachev2007}{article}{
      author={Agrachev, Andrei~A.},
       title={Curvature and hyperbolicity of {H}amiltonian systems},
        date={2007},
        ISSN={0371-9685,3034-1809},
     journal={Tr. Mat. Inst. Steklova},
      volume={256},
       pages={31\ndash 53},
         url={https://doi.org/10.1134/S0081543807010026},
      review={\MR{2336892}},
}

\bib{agrachev2008}{incollection}{
      author={Agrachev, Andrei~A.},
       title={Geometry of optimal control problems and {H}amiltonian systems},
        date={2008},
   booktitle={Nonlinear and optimal control theory},
      series={Lecture Notes in Math.},
      volume={1932},
   publisher={Springer, Berlin},
       pages={1\ndash 59},
         url={https://doi.org/10.1007/978-3-540-77653-6_1},
      review={\MR{2410710}},
}

\bib{akdemir-colinet-mccann-cavalletti-santarcangelo2021}{article}{
      author={Akdemir, Afiny},
      author={Colinet, Andrew},
      author={McCann, Robert},
      author={Cavalletti, Fabio},
      author={Santarcangelo, Flavia},
       title={Independence of synthetic curvature dimension conditions on
  transport distance exponent},
        date={2021},
        ISSN={0002-9947,1088-6850},
     journal={Trans. Amer. Math. Soc.},
      volume={374},
      number={8},
       pages={5877\ndash 5923},
         url={https://doi.org/10.1090/tran/8413},
      review={\MR{4293791}},
}

\bib{alexander-bishop2008}{article}{
      author={Alexander, Stephanie~B.},
      author={Bishop, Richard~L.},
       title={Lorentz and semi-{R}iemannian spaces with {A}lexandrov curvature
  bounds},
        date={2008},
        ISSN={1019-8385,1944-9992},
     journal={Comm. Anal. Geom.},
      volume={16},
      number={2},
       pages={251\ndash 282},
         url={https://doi.org/10.4310/cag.2008.v16.n2.a1},
      review={\MR{2425468}},
}

\bib{alexander-graf-kunzinger-samann2023}{article}{
      author={Alexander, Stephanie~B.},
      author={Graf, Melanie},
      author={Kunzinger, Michael},
      author={S\"amann, Clemens},
       title={Generalized cones as {L}orentzian length spaces: causality,
  curvature, and singularity theorems},
        date={2023},
        ISSN={1019-8385,1944-9992},
     journal={Comm. Anal. Geom.},
      volume={31},
      number={6},
       pages={1469\ndash 1528},
         url={https://doi.org/10.4310/cag.2023.v31.n6.a5},
      review={\MR{4785565}},
}

\bib{ambrosio2003}{incollection}{
      author={Ambrosio, Luigi},
       title={Lecture notes on optimal transport problems},
        date={2003},
   booktitle={Mathematical aspects of evolving interfaces ({F}unchal, 2000)},
      series={Lecture Notes in Math.},
      volume={1812},
   publisher={Springer, Berlin},
       pages={1\ndash 52},
         url={https://doi.org/10.1007/978-3-540-39189-0_1},
      review={\MR{2011032}},
}

\bib{ambrosio-gigli2011}{incollection}{
      author={Ambrosio, Luigi},
      author={Gigli, Nicola},
       title={A user's guide to optimal transport},
        date={2013},
   booktitle={Modelling and optimisation of flows on networks},
      series={Lecture Notes in Math.},
      volume={2062},
   publisher={Springer, Heidelberg},
       pages={1\ndash 155},
         url={https://doi.org/10.1007/978-3-642-32160-3_1},
      review={\MR{3050280}},
}

\bib{ambrosio-gigli-savare2008}{book}{
      author={Ambrosio, Luigi},
      author={Gigli, Nicola},
      author={Savar\'{e}, Giuseppe},
       title={Gradient flows in metric spaces and in the space of probability
  measures},
     edition={Second edition},
      series={Lectures in Mathematics ETH Z\"{u}rich},
   publisher={Birkh\"{a}user Verlag, Basel},
        date={2008},
        ISBN={978-3-7643-8721-1},
      review={\MR{2401600}},
}

\bib{ambrosio-gigli-savare2014-calculus}{article}{
      author={Ambrosio, Luigi},
      author={Gigli, Nicola},
      author={Savar\'{e}, Giuseppe},
       title={Calculus and heat flow in metric measure spaces and applications
  to spaces with {R}icci bounds from below},
        date={2014},
        ISSN={0020-9910,1432-1297},
     journal={Invent. Math.},
      volume={195},
      number={2},
       pages={289\ndash 391},
         url={https://doi.org/10.1007/s00222-013-0456-1},
      review={\MR{3152751}},
}

\bib{ambrosio-mondino-savare2019}{article}{
      author={Ambrosio, Luigi},
      author={Mondino, Andrea},
      author={Savar\'{e}, Giuseppe},
       title={Nonlinear diffusion equations and curvature conditions in metric
  measure spaces},
        date={2019},
        ISSN={0065-9266,1947-6221},
     journal={Mem. Amer. Math. Soc.},
      volume={262},
      number={1270},
       pages={v+121},
         url={https://doi.org/10.1090/memo/1270},
      review={\MR{4044464}},
}

\bib{andersson-galloway2002}{article}{
      author={Andersson, Lars},
      author={Galloway, Gregory~J.},
       title={d{S}/{CFT} and spacetime topology},
        date={2002},
        ISSN={1095-0761,1095-0753},
     journal={Adv. Theor. Math. Phys.},
      volume={6},
      number={2},
       pages={307\ndash 327},
         url={https://doi.org/10.4310/ATMP.2002.v6.n2.a4},
      review={\MR{1937858}},
}

\bib{andersson-galloway-howard1998}{article}{
      author={Andersson, Lars},
      author={Galloway, Gregory~J.},
      author={Howard, Ralph},
       title={A strong maximum principle for weak solutions of quasi-linear
  elliptic equations with applications to {L}orentzian and {R}iemannian
  geometry},
        date={1998},
        ISSN={0010-3640,1097-0312},
     journal={Comm. Pure Appl. Math.},
      volume={51},
      number={6},
       pages={581\ndash 624},
  url={https://doi.org/10.1002/(SICI)1097-0312(199806)51:6<581::AID-CPA2>3.3.CO;2-E},
      review={\MR{1611140}},
}

\bib{andersson-howard1998}{article}{
      author={Andersson, Lars},
      author={Howard, Ralph},
       title={Comparison and rigidity theorems in semi-{R}iemannian geometry},
        date={1998},
        ISSN={1019-8385,1944-9992},
     journal={Comm. Anal. Geom.},
      volume={6},
      number={4},
       pages={819\ndash 877},
         url={https://doi.org/10.4310/CAG.1998.v6.n4.a8},
      review={\MR{1664893}},
}

\bib{avez1963}{article}{
      author={Avez, Andr\'e},
       title={Essais de g\'eom\'etrie riemannienne hyperbolique globale.
  {A}pplications \`a{} la relativit\'e{} g\'en\'erale},
        date={1963},
        ISSN={0373-0956,1777-5310},
     journal={Ann. Inst. Fourier (Grenoble)},
      volume={13},
       pages={105\ndash 190},
         url={http://www.numdam.org/item?id=AIF_1963__13_2_105_0},
      review={\MR{167940}},
}

\bib{bacher-sturm2010}{article}{
      author={Bacher, Kathrin},
      author={Sturm, Karl-Theodor},
       title={Localization and tensorization properties of the
  curvature-dimension condition for metric measure spaces},
        date={2010},
        ISSN={0022-1236,1096-0783},
     journal={J. Funct. Anal.},
      volume={259},
      number={1},
       pages={28\ndash 56},
         url={https://doi.org/10.1016/j.jfa.2010.03.024},
      review={\MR{2610378}},
}

\bib{bakry-emery1985-diffusions}{incollection}{
      author={Bakry, D.},
      author={\'{E}mery, Michel},
       title={Diffusions hypercontractives},
        date={1985},
   booktitle={S\'{e}minaire de probabilit\'{e}s, {XIX}, 1983/84},
      series={Lecture Notes in Math.},
      volume={1123},
   publisher={Springer, Berlin},
       pages={177\ndash 206},
         url={https://doi.org/10.1007/BFb0075847},
      review={\MR{889476}},
}

\bib{bakry-gentil-ledoux2014}{book}{
      author={Bakry, Dominique},
      author={Gentil, Ivan},
      author={Ledoux, Michel},
       title={Analysis and geometry of {M}arkov diffusion operators},
      series={Grundlehren der mathematischen Wissenschaften [Fundamental
  Principles of Mathematical Sciences]},
   publisher={Springer, Cham},
        date={2014},
      volume={348},
        ISBN={978-3-319-00226-2; 978-3-319-00227-9},
         url={https://doi.org/10.1007/978-3-319-00227-9},
      review={\MR{3155209}},
}

\bib{barbu-rehmeier-rockner2024+}{misc}{
      author={Barbu, Viorel},
      author={Rehmeier, Marco},
      author={R\"ockner, Michael},
       title={$p$-{B}rownian motion and the $p$-{L}aplacian},
        date={Preprint, arXiv:2409.18744},
}

\bib{bayle2004}{thesis}{
      author={Bayle, Vincent},
       title={Propri\'et\'es de concavit\'e du profil isop\'erim\'etrique et
  applications},
        type={Ph.D. Thesis},
        date={2004},
}

\bib{beem-ehrlich-easley1996}{book}{
      author={Beem, John~K.},
      author={Ehrlich, Paul~E.},
      author={Easley, Kevin~L.},
       title={Global {L}orentzian geometry},
     edition={Second},
      series={Monographs and Textbooks in Pure and Applied Mathematics},
   publisher={Marcel Dekker, Inc., New York},
        date={1996},
      volume={202},
        ISBN={0-8247-9324-2},
      review={\MR{1384756}},
}

\bib{beran2024+}{misc}{
      author={Beran, Tobias},
       title={{B}onnet-{M}yers rigidity theorem for globally hyperbolic
  {L}orentzian length spaces},
        date={Preprint, arXiv:2401.17017},
}

\bib{beran-braun-calisti-gigli-mccann-ohanyan-rott-samann+-}{misc}{
      author={Beran, Tobias},
      author={Braun, Mathias},
      author={Calisti, Matteo},
      author={Gigli, Nicola},
      author={McCann, Robert~J.},
      author={Ohanyan, Argam},
      author={Rott, Felix},
      author={S\"amann, Clemens},
       title={{A} nonlinear d'{A}lembert comparison theorem and causal
  differential calculus on metric measure spacetimes},
        date={Preprint, arXiv:2408.15968},
}

\bib{beran-napper-rott2023+}{article}{
      author={Beran, Tobias},
      author={Napper, Lewis},
      author={Rott, Felix},
       title={{A}lexandrov's {P}atchwork and the {B}onnet-{M}yers theorem for
  {L}orentzian length spaces},
        journal={Trans. Amer. Math. Soc.},
        volume={378},
        number={4},
        date={2025},
        pages={2713--2743},
        review={\MR{4880460}},
}

\bib{beran-ohanyan-rott-solis2023}{article}{
      author={Beran, Tobias},
      author={Ohanyan, Argam},
      author={Rott, Felix},
      author={Solis, Didier~A.},
       title={The splitting theorem for globally hyperbolic {L}orentzian length
  spaces with non-negative timelike curvature},
        date={2023},
        ISSN={0377-9017,1573-0530},
     journal={Lett. Math. Phys.},
      volume={113},
      number={2},
       pages={Paper No. 48, 47},
         url={https://doi.org/10.1007/s11005-023-01668-w},
      review={\MR{4579262}},
}

\bib{beran-samann2023}{article}{
      author={Beran, Tobias},
      author={S\"amann, Clemens},
       title={Hyperbolic angles in {L}orentzian length spaces and timelike
  curvature bounds},
        date={2023},
        ISSN={0024-6107,1469-7750},
     journal={J. Lond. Math. Soc. (2)},
      volume={107},
      number={5},
       pages={1823\ndash 1880},
         url={https://doi.org/10.1112/jlms.12726},
      review={\MR{4585303}},
}

\bib{bernal-sanchez2007}{article}{
      author={Bernal, Antonio~N.},
      author={S\'anchez, Miguel},
       title={Globally hyperbolic spacetimes can be defined as `causal' instead
  of `strongly causal'},
        date={2007},
        ISSN={0264-9381,1361-6382},
     journal={Classical Quantum Gravity},
      volume={24},
      number={3},
       pages={745\ndash 749},
         url={https://doi.org/10.1088/0264-9381/24/3/N01},
      review={\MR{2294243}},
}

\bib{bertrand2008}{article}{
      author={Bertrand, J\'er\^ome},
       title={Existence and uniqueness of optimal maps on {A}lexandrov spaces},
        date={2008},
        ISSN={0001-8708,1090-2082},
     journal={Adv. Math.},
      volume={219},
      number={3},
       pages={838\ndash 851},
         url={https://doi.org/10.1016/j.aim.2008.06.008},
      review={\MR{2442054}},
}

\bib{bertrand-pratelli-puel2018}{article}{
      author={Bertrand, Jerome},
      author={Pratelli, Aldo},
      author={Puel, Marjolaine},
       title={Kantorovich potentials and continuity of total cost for
  relativistic cost functions},
        date={2018},
        ISSN={0021-7824,1776-3371},
     journal={J. Math. Pures Appl. (9)},
      volume={110},
       pages={93\ndash 122},
         url={https://doi.org/10.1016/j.matpur.2017.09.005},
      review={\MR{3744921}},
}

\bib{bertrand-puel2013}{article}{
      author={Bertrand, J\'er\^ome},
      author={Puel, Marjolaine},
       title={The optimal mass transport problem for relativistic costs},
        date={2013},
        ISSN={0944-2669,1432-0835},
     journal={Calc. Var. Partial Differential Equations},
      volume={46},
      number={1-2},
       pages={353\ndash 374},
         url={https://doi.org/10.1007/s00526-011-0485-9},
      review={\MR{3016512}},
}

\bib{bianchini-cavalletti2013}{article}{
      author={Bianchini, Stefano},
      author={Cavalletti, Fabio},
       title={The {M}onge problem for distance cost in geodesic spaces},
        date={2013},
        ISSN={0010-3616,1432-0916},
     journal={Comm. Math. Phys.},
      volume={318},
      number={3},
       pages={615\ndash 673},
         url={https://doi.org/10.1007/s00220-013-1663-8},
      review={\MR{3027581}},
}

\bib{billingsley1999}{book}{
      author={Billingsley, Patrick},
       title={Convergence of probability measures},
     edition={Second},
      series={Wiley Series in Probability and Statistics: Probability and
  Statistics},
   publisher={John Wiley \& Sons, Inc., New York},
        date={1999},
        ISBN={0-471-19745-9},
         url={https://doi.org/10.1002/9780470316962},
        note={A Wiley-Interscience Publication},
      review={\MR{1700749}},
}

\bib{bogachev2007}{book}{
      author={Bogachev, V.~I.},
       title={Measure theory. {V}ol. {I}, {II}},
   publisher={Springer-Verlag, Berlin},
        date={2007},
        ISBN={978-3-540-34513-8; 3-540-34513-2},
         url={https://doi.org/10.1007/978-3-540-34514-5},
      review={\MR{2267655}},
}

\bib{braun2023-good}{article}{
      author={Braun, Mathias},
       title={Good geodesics satisfying the timelike curvature-dimension
  condition},
        date={2023},
        ISSN={0362-546X,1873-5215},
     journal={Nonlinear Anal.},
      volume={229},
       pages={Paper No. 113205, 30 pp.},
         url={https://doi.org/10.1016/j.na.2022.113205},
      review={\MR{4528587}},
}

\bib{braun2023-renyi}{article}{
      author={Braun, Mathias},
       title={R\'{e}nyi's entropy on {L}orentzian spaces. {T}imelike
  curvature-dimension conditions},
        date={2023},
        ISSN={0021-7824,1776-3371},
     journal={J. Math. Pures Appl. (9)},
      volume={177},
       pages={46\ndash 128},
         url={https://doi.org/10.1016/j.matpur.2023.06.009},
      review={\MR{4629751}},
}

\bib{braun2024+}{misc}{
      author={Braun, Mathias},
       title={{E}xact d'{A}lembertian for {L}orentz distance functions},
        date={Preprint, arXiv:2408.16525},
}

\bib{braun-gigli-mccann-ohanyan-samann++}{misc}{
      author={Braun, Mathias},
      author={Gigli, Nicola},
      author={McCann, Robert~J.},
      author={Ohanyan, Argam},
      author={S\"amann, Clemens},
        date={In preparation},
}

\bib{braun-gigli-mccann-ohanyan-samann+}{misc}{
      author={Braun, Mathias},
      author={Gigli, Nicola},
      author={McCann, Robert~J.},
      author={Ohanyan, Argam},
      author={S\"amann, Clemens},
       title={{A}n elliptic proof of the splitting theorems from {L}orentzian
  geometry},
        date={Preprint, arXiv:2410.12632},
}

\bib{braun-gigli-mccann-vincini+}{misc}{
      author={Braun, Mathias},
      author={Gigli, Nicola},
      author={McCann, Robert~J.},
      author={Vincini, Simone},
        date={In preparation},
}

\bib{braun-mccann2023}{misc}{
      author={Braun, Mathias},
      author={McCann, Robert~J.},
       title={{C}ausal convergence conditions through variable timelike {R}icci
  curvature bounds},
        date={Preprint, arXiv:2312.17158},
}

\bib{braun-ohta2024}{article}{
      author={Braun, Mathias},
      author={Ohta, Shinichi},
       title={Optimal transport and timelike lower {R}icci curvature bounds on
  {F}insler spacetimes},
        date={2024},
        ISSN={0002-9947,1088-6850},
     journal={Trans. Amer. Math. Soc.},
      volume={377},
      number={5},
       pages={3529\ndash 3576},
         url={https://doi.org/10.1090/tran/9126},
      review={\MR{4744787}},
}

\bib{brenier1989}{article}{
      author={Brenier, Yann},
       title={The least action principle and the related concept of generalized
  flows for incompressible perfect fluids},
        date={1989},
        ISSN={0894-0347,1088-6834},
     journal={J. Amer. Math. Soc.},
      volume={2},
      number={2},
       pages={225\ndash 255},
         url={https://doi.org/10.2307/1990977},
      review={\MR{969419}},
}

\bib{brenier2003}{incollection}{
      author={Brenier, Yann},
       title={Extended {M}onge-{K}antorovich theory},
        date={2003},
   booktitle={Optimal transportation and applications ({M}artina {F}ranca,
  2001)},
      series={Lecture Notes in Math.},
      volume={1813},
   publisher={Springer, Berlin},
       pages={91\ndash 121},
         url={https://doi.org/10.1007/978-3-540-44857-0_4},
      review={\MR{2006306}},
}

\bib{brue-semola2020-constancy}{article}{
      author={Bru\'e, Elia},
      author={Semola, Daniele},
       title={Constancy of the dimension for {${\rm RCD}(K,N)$} spaces via
  regularity of {L}agrangian flows},
        date={2020},
        ISSN={0010-3640,1097-0312},
     journal={Comm. Pure Appl. Math.},
      volume={73},
      number={6},
       pages={1141\ndash 1204},
         url={https://doi.org/10.1002/cpa.21849},
      review={\MR{4156601}},
}

\bib{burago-burago-ivanov2001}{book}{
      author={Burago, Dmitri},
      author={Burago, Yuri},
      author={Ivanov, Sergei},
       title={A course in metric geometry},
      series={Graduate Studies in Mathematics},
   publisher={American Mathematical Society, Providence, RI},
        date={2001},
      volume={33},
        ISBN={0-8218-2129-6},
         url={https://doi.org/10.1090/gsm/033},
      review={\MR{1835418}},
}

\bib{burago-gromov-perelman1992}{article}{
      author={Burago, Yu.},
      author={Gromov, M.},
      author={Perel'man, G.},
       title={A. {D}. {A}leksandrov spaces with curvatures bounded below},
        date={1992},
        ISSN={0042-1316,2305-2872},
     journal={Uspekhi Mat. Nauk},
      volume={47},
      number={2(284)},
       pages={3\ndash 51, 222},
         url={https://doi.org/10.1070/RM1992v047n02ABEH000877},
      review={\MR{1185284}},
}

\bib{burtscher-garcia2024-eyes}{article}{
      author={Burtscher, Annegret},
      author={Garc\'ia-Heveling, Leonardo},
       title={Global hyperbolicity through the eyes of the null distance},
        date={2024},
        ISSN={0010-3616,1432-0916},
     journal={Comm. Math. Phys.},
      volume={405},
      number={4},
       pages={Paper No. 90, 35},
         url={https://doi.org/10.1007/s00220-024-04936-5},
      review={\MR{4719972}},
}

\bib{burtscher-ketterer-mccann-woolgar2020}{article}{
      author={Burtscher, Annegret},
      author={Ketterer, Christian},
      author={McCann, Robert~J.},
      author={Woolgar, Eric},
       title={Inscribed radius bounds for lower {R}icci bounded metric measure
  spaces with mean convex boundary},
        date={2020},
        ISSN={1815-0659},
     journal={SIGMA Symmetry Integrability Geom. Methods Appl.},
      volume={16},
       pages={Paper No. 131, 29},
         url={https://doi.org/10.3842/SIGMA.2020.131},
      review={\MR{4185085}},
}

\bib{bykov-minguzzi-suhr2024+}{misc}{
      author={Bykov, A.},
      author={Minguzzi, E.},
      author={Suhr, S.},
       title={{L}orentzian metric spaces and {GH}-convergence: the unbounded
  case},
        date={Preprint, arXiv:2412.04311},
}

\bib{caffarelli-feldman-mccann2002}{article}{
      author={Caffarelli, Luis~A.},
      author={Feldman, Mikhail},
      author={McCann, Robert~J.},
       title={Constructing optimal maps for {M}onge's transport problem as a
  limit of strictly convex costs},
        date={2002},
        ISSN={0894-0347,1088-6834},
     journal={J. Amer. Math. Soc.},
      volume={15},
      number={1},
       pages={1\ndash 26},
         url={https://doi.org/10.1090/S0894-0347-01-00376-9},
      review={\MR{1862796}},
}

\bib{calabi1958}{article}{
      author={Calabi, E.},
       title={An extension of {E}. {H}opf's maximum principle with an
  application to {R}iemannian geometry},
        date={1958},
        ISSN={0012-7094,1547-7398},
     journal={Duke Math. J.},
      volume={25},
       pages={45\ndash 56},
         url={http://projecteuclid.org/euclid.dmj/1077467776},
      review={\MR{92069}},
}

\bib{caponio-ohanyan-ohta2024+}{misc}{
      author={Caponio, Erasmo},
      author={Ohanyan, Argam},
      author={Ohta, Shinichi},
       title={{S}plitting theorems for weighted {F}insler spacetimes via the
  $p$-d'{A}lembertian: beyond the {B}erwald case},
        date={Preprint, arXiv:2412.20783},
}

\bib{case2010}{article}{
      author={Case, Jeffrey~S.},
       title={Singularity theorems and the {L}orentzian splitting theorem for
  the {B}akry-{E}mery-{R}icci tensor},
        date={2010},
        ISSN={0393-0440,1879-1662},
     journal={J. Geom. Phys.},
      volume={60},
      number={3},
       pages={477\ndash 490},
         url={https://doi.org/10.1016/j.geomphys.2009.11.001},
      review={\MR{2600009}},
}

\bib{cavalletti2012}{article}{
      author={Cavalletti, Fabio},
       title={The {M}onge problem in {W}iener space},
        date={2012},
        ISSN={0944-2669,1432-0835},
     journal={Calc. Var. Partial Differential Equations},
      volume={45},
      number={1-2},
       pages={101\ndash 124},
         url={https://doi.org/10.1007/s00526-011-0452-5},
      review={\MR{2957652}},
}

\bib{cavalletti2014}{article}{
      author={Cavalletti, Fabio},
       title={Monge problem in metric measure spaces with {R}iemannian
  curvature-dimension condition},
        date={2014},
        ISSN={0362-546X,1873-5215},
     journal={Nonlinear Anal.},
      volume={99},
       pages={136\ndash 151},
         url={https://doi.org/10.1016/j.na.2013.12.008},
      review={\MR{3160530}},
}

\bib{cavalletti2014-monge}{article}{
      author={Cavalletti, Fabio},
       title={Monge problem in metric measure spaces with {R}iemannian
  curvature-dimension condition},
        date={2014},
        ISSN={0362-546X,1873-5215},
     journal={Nonlinear Anal.},
      volume={99},
       pages={136\ndash 151},
         url={https://doi.org/10.1016/j.na.2013.12.008},
      review={\MR{3160530}},
}

\bib{cavalletti2017}{incollection}{
      author={Cavalletti, Fabio},
       title={An overview of {$L^1$} optimal transportation on metric measure
  spaces},
        date={2017},
   booktitle={Measure theory in non-smooth spaces},
      series={Partial Differ. Equ. Meas. Theory},
   publisher={De Gruyter Open, Warsaw},
       pages={98\ndash 144},
      review={\MR{3701737}},
}

\bib{cavalletti-milman2021}{article}{
      author={Cavalletti, Fabio},
      author={Milman, Emanuel},
       title={The globalization theorem for the curvature-dimension condition},
        date={2021},
        ISSN={0020-9910,1432-1297},
     journal={Invent. Math.},
      volume={226},
      number={1},
       pages={1\ndash 137},
         url={https://doi.org/10.1007/s00222-021-01040-6},
      review={\MR{4309491}},
}

\bib{cavalletti-mondino2017-isoperimetric}{article}{
      author={Cavalletti, Fabio},
      author={Mondino, Andrea},
       title={Sharp and rigid isoperimetric inequalities in metric-measure
  spaces with lower {R}icci curvature bounds},
        date={2017},
        ISSN={0020-9910,1432-1297},
     journal={Invent. Math.},
      volume={208},
      number={3},
       pages={803\ndash 849},
         url={https://doi.org/10.1007/s00222-016-0700-6},
      review={\MR{3648975}},
}

\bib{cavalletti-mondino2017-geometric}{article}{
      author={Cavalletti, Fabio},
      author={Mondino, Andrea},
       title={Sharp geometric and functional inequalities in metric measure
  spaces with lower {R}icci curvature bounds},
        date={2017},
        ISSN={1465-3060,1364-0380},
     journal={Geom. Topol.},
      volume={21},
      number={1},
       pages={603\ndash 645},
         url={https://doi.org/10.2140/gt.2017.21.603},
      review={\MR{3608721}},
}

\bib{cavalletti-mondino2020-new}{article}{
      author={Cavalletti, Fabio},
      author={Mondino, Andrea},
       title={New formulas for the {L}aplacian of distance functions and
  applications},
        date={2020},
        ISSN={2157-5045,1948-206X},
     journal={Anal. PDE},
      volume={13},
      number={7},
       pages={2091\ndash 2147},
         url={https://doi.org/10.2140/apde.2020.13.2091},
      review={\MR{4175820}},
}

\bib{cavalletti-mondino2022-review}{article}{
      author={Cavalletti, Fabio},
      author={Mondino, Andrea},
       title={A review of {L}orentzian synthetic theory of timelike {R}icci
  curvature bounds},
        date={2022},
        ISSN={0001-7701,1572-9532},
     journal={Gen. Relativity Gravitation},
      volume={54},
      number={11},
       pages={Paper No. 137, 39 pp.},
         url={https://doi.org/10.1007/s10714-022-03004-4},
      review={\MR{4504922}},
}

\bib{cavalletti-mondino2020}{article}{
      author={Cavalletti, Fabio},
      author={Mondino, Andrea},
       title={Optimal transport in {L}orentzian synthetic spaces, synthetic
  timelike {R}icci curvature lower bounds and applications},
        date={2024},
        ISSN={2168-0930,2168-0949},
     journal={Camb. J. Math.},
      volume={12},
      number={2},
       pages={417\ndash 534},
      review={\MR{4779676}},
}

\bib{cavalletti-mondino2024}{misc}{
      author={Cavalletti, Fabio},
      author={Mondino, Andrea},
       title={{A} sharp isoperimetric-type inequality for {L}orentzian spaces
  satisfying timelike {R}icci lower bounds},
        date={Preprint, arXiv:2401.03949},
}

\bib{chavel1984}{book}{
      author={Chavel, Isaac},
       title={Eigenvalues in {R}iemannian geometry},
      series={Pure and Applied Mathematics},
   publisher={Academic Press, Inc., Orlando, FL},
        date={1984},
      volume={115},
        ISBN={0-12-170640-0},
        note={Including a chapter by Burton Randol, With an appendix by Jozef
  Dodziuk},
      review={\MR{768584}},
}

\bib{cheeger-colding1996-almost}{article}{
      author={Cheeger, Jeff},
      author={Colding, Tobias~H.},
       title={Lower bounds on {R}icci curvature and the almost rigidity of
  warped products},
        date={1996},
        ISSN={0003-486X,1939-8980},
     journal={Ann. of Math. (2)},
      volume={144},
      number={1},
       pages={189\ndash 237},
         url={https://doi.org/10.2307/2118589},
      review={\MR{1405949}},
}

\bib{cheeger-colding1997-i}{article}{
      author={Cheeger, Jeff},
      author={Colding, Tobias~H.},
       title={On the structure of spaces with {R}icci curvature bounded below.
  {I}},
        date={1997},
        ISSN={0022-040X,1945-743X},
     journal={J. Differential Geom.},
      volume={46},
      number={3},
       pages={406\ndash 480},
         url={http://projecteuclid.org/euclid.jdg/1214459974},
      review={\MR{1484888}},
}

\bib{cheeger-colding2000-ii}{article}{
      author={Cheeger, Jeff},
      author={Colding, Tobias~H.},
       title={On the structure of spaces with {R}icci curvature bounded below.
  {II}},
        date={2000},
        ISSN={0022-040X,1945-743X},
     journal={J. Differential Geom.},
      volume={54},
      number={1},
       pages={13\ndash 35},
         url={http://projecteuclid.org/euclid.jdg/1214342145},
      review={\MR{1815410}},
}

\bib{cheeger-colding2000-iii}{article}{
      author={Cheeger, Jeff},
      author={Colding, Tobias~H.},
       title={On the structure of spaces with {R}icci curvature bounded below.
  {III}},
        date={2000},
        ISSN={0022-040X,1945-743X},
     journal={J. Differential Geom.},
      volume={54},
      number={1},
       pages={37\ndash 74},
         url={http://projecteuclid.org/euclid.jdg/1214342146},
      review={\MR{1815411}},
}

\bib{cheeger-ebin1975}{book}{
      author={Cheeger, Jeff},
      author={Ebin, David~G.},
       title={Comparison theorems in {R}iemannian geometry},
      series={North-Holland Mathematical Library},
   publisher={North-Holland Publishing Co., Amsterdam-Oxford; American Elsevier
  Publishing Co., Inc., New York},
        date={1975},
      volume={Vol. 9},
      review={\MR{458335}},
}

\bib{cheeger-gromoll1972}{article}{
      author={Cheeger, Jeff},
      author={Gromoll, Detlef},
       title={The splitting theorem for manifolds of nonnegative {R}icci
  curvature},
        date={1971/72},
        ISSN={0022-040X,1945-743X},
     journal={J. Differential Geom.},
      volume={6},
       pages={119\ndash 128},
         url={http://projecteuclid.org/euclid.jdg/1214430220},
      review={\MR{303460}},
}

\bib{choquet-bruhat-geroch1969}{article}{
      author={Choquet-Bruhat, Yvonne},
      author={Geroch, Robert},
       title={Global aspects of the {C}auchy problem in general relativity},
        date={1969},
        ISSN={0010-3616,1432-0916},
     journal={Comm. Math. Phys.},
      volume={14},
       pages={329\ndash 335},
         url={http://projecteuclid.org/euclid.cmp/1103841822},
      review={\MR{250640}},
}

\bib{chrusciel2011}{misc}{
      author={Chru\'sciel, Piotr~T.},
       title={{E}lements of causality theory},
        date={Preprint, arXiv:1110.6706},
}

\bib{chrusciel-grant2012}{article}{
      author={Chru\'{s}ciel, Piotr~T.},
      author={Grant, James D.~E.},
       title={On {L}orentzian causality with continuous metrics},
        date={2012},
        ISSN={0264-9381,1361-6382},
     journal={Classical Quantum Gravity},
      volume={29},
      number={14},
       pages={145001, 32},
         url={https://doi.org/10.1088/0264-9381/29/14/145001},
      review={\MR{2949547}},
}

\bib{cordero-erausquin-mccann-schmuckenschlager2001}{article}{
      author={Cordero-Erausquin, Dario},
      author={McCann, Robert~J.},
      author={Schmuckenschl\"ager, Michael},
       title={A {R}iemannian interpolation inequality \`a{} la {B}orell,
  {B}rascamp and {L}ieb},
        date={2001},
        ISSN={0020-9910,1432-1297},
     journal={Invent. Math.},
      volume={146},
      number={2},
       pages={219\ndash 257},
         url={https://doi.org/10.1007/s002220100160},
      review={\MR{1865396}},
}

\bib{deluca-deponti-mondino-tomasiello2023}{article}{
      author={De~Luca, Giuseppe~Bruno},
      author={De~Ponti, Nicol\`{o}},
      author={Mondino, Andrea},
      author={Tomasiello, Alessandro},
       title={Gravity from thermodynamics: optimal transport and negative
  effective dimensions},
        date={2023},
        ISSN={2542-4653},
     journal={SciPost Phys.},
      volume={15},
      number={2},
       pages={Paper No. 039, 55},
         url={https://doi.org/10.21468/scipostphys.15.2.039},
      review={\MR{4629082}},
}

\bib{dudley1966}{article}{
      author={Dudley, R.~M.},
       title={Lorentz-invariant {M}arkov processes in relativistic phase
  space},
        date={1966},
        ISSN={0004-2080,1871-2487},
     journal={Ark. Mat.},
      volume={6},
       pages={241\ndash 268},
         url={https://doi.org/10.1007/BF02592032},
      review={\MR{198540}},
}

\bib{eckstein-miller2017}{article}{
      author={Eckstein, Micha\l},
      author={Miller, Tomasz},
       title={Causality for nonlocal phenomena},
        date={2017},
        ISSN={1424-0637,1424-0661},
     journal={Ann. Henri Poincar\'{e}},
      volume={18},
      number={9},
       pages={3049\ndash 3096},
         url={https://doi.org/10.1007/s00023-017-0566-1},
      review={\MR{3685983}},
}

\bib{ehrlich-jung-kim1998}{article}{
      author={Ehrlich, Paul~E.},
      author={Jung, Yoon-Tae},
      author={Kim, Seon-Bu},
       title={Volume comparison theorems for {L}orentzian manifolds},
        date={1998},
        ISSN={0046-5755,1572-9168},
     journal={Geom. Dedicata},
      volume={73},
      number={1},
       pages={39\ndash 56},
         url={https://doi.org/10.1023/A:1005096913126},
      review={\MR{1651891}},
}

\bib{ehrlich-sanchez2000}{article}{
      author={Ehrlich, Paul~E.},
      author={S\'anchez, Miguel},
       title={Some semi-{R}iemannian volume comparison theorems},
        date={2000},
        ISSN={0040-8735,2186-585X},
     journal={Tohoku Math. J. (2)},
      volume={52},
      number={3},
       pages={331\ndash 348},
         url={https://doi.org/10.2748/tmj/1178207817},
      review={\MR{1772801}},
}

\bib{erbar-kuwada-sturm2015}{article}{
      author={Erbar, Matthias},
      author={Kuwada, Kazumasa},
      author={Sturm, Karl-Theodor},
       title={On the equivalence of the entropic curvature-dimension condition
  and {B}ochner's inequality on metric measure spaces},
        date={2015},
        ISSN={0020-9910,1432-1297},
     journal={Invent. Math.},
      volume={201},
      number={3},
       pages={993\ndash 1071},
         url={https://doi.org/10.1007/s00222-014-0563-7},
      review={\MR{3385639}},
}

\bib{eschenburg1988}{article}{
      author={Eschenburg, J.-H.},
       title={The splitting theorem for space-times with strong energy
  condition},
        date={1988},
        ISSN={0022-040X,1945-743X},
     journal={J. Differential Geom.},
      volume={27},
      number={3},
       pages={477\ndash 491},
         url={http://projecteuclid.org/euclid.jdg/1214442005},
      review={\MR{940115}},
}

\bib{feldman-mccann2002}{article}{
      author={Feldman, Mikhail},
      author={McCann, Robert~J.},
       title={Monge's transport problem on a {R}iemannian manifold},
        date={2002},
        ISSN={0002-9947,1088-6850},
     journal={Trans. Amer. Math. Soc.},
      volume={354},
      number={4},
       pages={1667\ndash 1697},
         url={https://doi.org/10.1090/S0002-9947-01-02930-0},
      review={\MR{1873023}},
}

\bib{foures-bruhat1952}{article}{
      author={Fourès-Bruhat, Y.},
       title={Th\'{e}or\`eme d'existence pour certains syst\`emes
  d'\'{e}quations aux d\'{e}riv\'{e}es partielles non lin\'{e}aires},
        date={1952},
        ISSN={0001-5962,1871-2509},
     journal={Acta Math.},
      volume={88},
       pages={141\ndash 225},
         url={https://doi.org/10.1007/BF02392131},
      review={\MR{53338}},
}

\bib{franchi-lejan2007}{article}{
      author={Franchi, Jacques},
      author={Le~Jan, Yves},
       title={Relativistic diffusions and {S}chwarzschild geometry},
        date={2007},
        ISSN={0010-3640,1097-0312},
     journal={Comm. Pure Appl. Math.},
      volume={60},
      number={2},
       pages={187\ndash 251},
         url={https://doi.org/10.1002/cpa.20140},
      review={\MR{2275328}},
}

\bib{frankel-galloway1981}{article}{
      author={Frankel, Theodore},
      author={Galloway, Gregory~J.},
       title={Energy density and spatial curvature in general relativity},
        date={1981},
        ISSN={0022-2488,1089-7658},
     journal={J. Math. Phys.},
      volume={22},
      number={4},
       pages={813\ndash 817},
         url={https://doi.org/10.1063/1.524961},
      review={\MR{617327}},
}

\bib{fremlin2006}{book}{
      author={Fremlin, D.~H.},
       title={Measure theory. {V}ol. 4},
   publisher={Torres Fremlin, Colchester},
        date={2006},
        ISBN={0-9538129-4-4},
        note={Topological measure spaces. Part I, II, Corrected second printing
  of the 2003 original},
      review={\MR{2462372}},
}

\bib{galloway1986}{article}{
      author={Galloway, Gregory~J.},
       title={Curvature, causality and completeness in space-times with
  causally complete spacelike slices},
        date={1986},
        ISSN={0305-0041,1469-8064},
     journal={Math. Proc. Cambridge Philos. Soc.},
      volume={99},
      number={2},
       pages={367\ndash 375},
         url={https://doi.org/10.1017/S0305004100064288},
      review={\MR{817678}},
}

\bib{galloway1989-splitting}{article}{
      author={Galloway, Gregory~J.},
       title={The {L}orentzian splitting theorem without the completeness
  assumption},
        date={1989},
        ISSN={0022-040X,1945-743X},
     journal={J. Differential Geom.},
      volume={29},
      number={2},
       pages={373\ndash 387},
         url={http://projecteuclid.org/euclid.jdg/1214442881},
      review={\MR{982181}},
}

\bib{galloway-horta1996}{article}{
      author={Galloway, Gregory~J.},
      author={Horta, Arnaldo},
       title={Regularity of {L}orentzian {B}usemann functions},
        date={1996},
        ISSN={0002-9947,1088-6850},
     journal={Trans. Amer. Math. Soc.},
      volume={348},
      number={5},
       pages={2063\ndash 2084},
         url={https://doi.org/10.1090/S0002-9947-96-01587-5},
      review={\MR{1348150}},
}

\bib{galloway-woolgar2014}{article}{
      author={Galloway, Gregory~J.},
      author={Woolgar, Eric},
       title={Cosmological singularities in {B}akry-\'{E}mery spacetimes},
        date={2014},
        ISSN={0393-0440,1879-1662},
     journal={J. Geom. Phys.},
      volume={86},
       pages={359\ndash 369},
         url={https://doi.org/10.1016/j.geomphys.2014.08.016},
      review={\MR{3282334}},
}

\bib{gangbo-mccann1996}{article}{
      author={Gangbo, Wilfrid},
      author={McCann, Robert~J.},
       title={The geometry of optimal transportation},
        date={1996},
        ISSN={0001-5962,1871-2509},
     journal={Acta Math.},
      volume={177},
      number={2},
       pages={113\ndash 161},
         url={https://doi.org/10.1007/BF02392620},
      review={\MR{1440931}},
}

\bib{garcia-heveling2023-volume}{article}{
      author={Garc\'{\i}a-Heveling, Leonardo},
       title={Volume singularities in general relativity},
        date={2024},
        ISSN={0377-9017,1573-0530},
     journal={Lett. Math. Phys.},
      volume={114},
      number={3},
       pages={Paper No. 71},
         url={https://doi.org/10.1007/s11005-024-01814-y},
      review={\MR{4751750}},
}

\bib{geroch1970-domain}{article}{
      author={Geroch, Robert},
       title={Domain of dependence},
        date={1970},
        ISSN={0022-2488,1089-7658},
     journal={J. Mathematical Phys.},
      volume={11},
       pages={437\ndash 449},
         url={https://doi.org/10.1063/1.1665157},
      review={\MR{270697}},
}

\bib{geroch1970}{incollection}{
      author={Geroch, Robert},
       title={Singularities},
        date={1970},
   booktitle={Relativity ({P}roc. {C}onf. {M}idwest, {C}incinnati, {O}hio,
  1969)},
   publisher={Plenum, New York-London},
       pages={259\ndash 291},
      review={\MR{366342}},
}

\bib{gigli2014-overview}{article}{
      author={Gigli, Nicola},
       title={An overview of the proof of the splitting theorem in spaces with
  non-negative {R}icci curvature},
        date={2014},
        ISSN={2299-3274},
     journal={Anal. Geom. Metr. Spaces},
      volume={2},
      number={1},
       pages={169\ndash 213},
         url={https://doi.org/10.2478/agms-2014-0006},
      review={\MR{3210895}},
}

\bib{gigli2015}{article}{
      author={Gigli, Nicola},
       title={On the differential structure of metric measure spaces and
  applications},
        date={2015},
        ISSN={0065-9266,1947-6221},
     journal={Mem. Amer. Math. Soc.},
      volume={236},
      number={1113},
       pages={vi+91 pp.},
         url={https://doi.org/10.1090/memo/1113},
      review={\MR{3381131}},
}

\bib{gigli+}{misc}{
      author={Gigli, Nicola},
        date={In preparation},
}

\bib{gigli2013}{misc}{
      author={Gigli, Nicola},
       title={{T}he splitting theorem in non-smooth context},
        date={Mem. Amer. Math. Soc., to appear},
}

\bib{gigli-mondino2013}{article}{
  author     = {Gigli, Nicola},
  author     = {Mondino, Andrea},
  journal    = {J. Math. Pures Appl. (9)},
  title      = {A {PDE} approach to nonlinear potential theory in metric measure spaces},
  year       = {2013},
  issn       = {0021-7824,1776-3371},
  number     = {4},
  pages      = {505--534},
  volume     = {100},
  fjournal   = {Journal de Math\'{e}matiques Pures et Appliqu\'{e}es. Neuvi\`eme S\'{e}rie},
  review={\MR{3102164}},
}

\bib{graf2016}{article}{
      author={Graf, Melanie},
       title={Volume comparison for {$\mathcal{C}^{1,1}$}-metrics},
        date={2016},
        ISSN={0232-704X,1572-9060},
     journal={Ann. Global Anal. Geom.},
      volume={50},
      number={3},
       pages={209\ndash 235},
         url={https://doi.org/10.1007/s10455-016-9508-2},
      review={\MR{3554372}},
}

\bib{graf-kontou-ohanyan-schinnerl2022}{article}{
      author={Graf, Melanie},
      author={Kontou, Eleni-Alexandra},
      author={Ohanyan, Argam},
      author={Schinnerl, Benedict},
       title={{H}awking-type singularity theorems for worldvolume energy
  inequalities},
        date={2024},
        ISSN={1424-0661},
     journal={Ann. Henri Poincaré},
         url={http://dx.doi.org/10.1007/s00023-024-01502-6},
}

\bib{graf-sormani2022}{incollection}{
      author={Graf, Melanie},
      author={Sormani, Christina},
       title={Lorentzian area and volume estimates for integral mean curvature
  bounds},
        date={[2022] \copyright 2022},
   booktitle={Developments in {L}orentzian geometry},
      series={Springer Proc. Math. Stat.},
      volume={389},
   publisher={Springer, Cham},
       pages={105\ndash 128},
         url={https://doi.org/10.1007/978-3-031-05379-5_7},
      review={\MR{4539754}},
}

\bib{grant-kunzinger-samann-steinbauer2020}{article}{
      author={Grant, James D.~E.},
      author={Kunzinger, Michael},
      author={S\"amann, Clemens},
      author={Steinbauer, Roland},
       title={The future is not always open},
        date={2020},
        ISSN={0377-9017,1573-0530},
     journal={Lett. Math. Phys.},
      volume={110},
      number={1},
       pages={83\ndash 103},
         url={https://doi.org/10.1007/s11005-019-01213-8},
      review={\MR{4047145}},
}

\bib{grigoryan2009}{book}{
      author={Grigor'yan, Alexander},
       title={Heat kernel and analysis on manifolds},
      series={AMS/IP Studies in Advanced Mathematics},
   publisher={American Mathematical Society, Providence, RI; International
  Press, Boston, MA},
        date={2009},
      volume={47},
        ISBN={978-0-8218-4935-4},
         url={https://doi.org/10.1090/amsip/047},
      review={\MR{2569498}},
}

\bib{gromov-milman1987}{article}{
      author={Gromov, M.},
      author={Milman, V.~D.},
       title={Generalization of the spherical isoperimetric inequality to
  uniformly convex {B}anach spaces},
        date={1987},
        ISSN={0010-437X,1570-5846},
     journal={Compositio Math.},
      volume={62},
      number={3},
       pages={263\ndash 282},
         url={http://www.numdam.org/item?id=CM_1987__62_3_263_0},
      review={\MR{901393}},
}

\bib{halmos1950}{book}{
      author={Halmos, Paul~R.},
       title={Measure {T}heory},
   publisher={D. Van Nostrand Co., Inc., New York},
        date={1950},
      review={\MR{33869}},
}

\bib{hawking-ellis1973}{book}{
      author={Hawking, S.~W.},
      author={Ellis, G. F.~R.},
       title={The large scale structure of space-time},
      series={Cambridge Monographs on Mathematical Physics},
   publisher={Cambridge University Press, London-New York},
        date={1973},
      volume={No. 1},
      review={\MR{424186}},
}

\bib{hawking-penrose1970}{article}{
      author={Hawking, S.~W.},
      author={Penrose, R.},
       title={The singularities of gravitational collapse and cosmology},
        date={1970},
        ISSN={0962-8444,2053-9169},
     journal={Proc. Roy. Soc. London Ser. A},
      volume={314},
       pages={529\ndash 548},
         url={https://doi.org/10.1098/rspa.1970.0021},
      review={\MR{264959}},
}

\bib{hawking1967}{article}{
      author={Hawking, Stephen~W.},
       title={{T}he occurrence of singularities in cosmology. {III}.
  {C}ausality and singularities},
        date={1967},
        ISSN={2053-9169},
     journal={Proc. Roy. Soc. London Ser. A. Math. Phys. Sci.},
      volume={300},
      number={1461},
       pages={187\ndash 201},
         url={http://dx.doi.org/10.1098/rspa.1967.0164},
}

\bib{heintze-karcher1978}{article}{
      author={Heintze, Ernst},
      author={Karcher, Hermann},
       title={A general comparison theorem with applications to volume
  estimates for submanifolds},
        date={1978},
        ISSN={0012-9593},
     journal={Ann. Sci. \'{E}cole Norm. Sup. (4)},
      volume={11},
      number={4},
       pages={451\ndash 470},
         url={http://www.numdam.org/item?id=ASENS_1978_4_11_4_451_0},
      review={\MR{533065}},
}

\bib{hounnonkpe-minguzzi2019}{article}{
      author={Hounnonkpe, R.~A.},
      author={Minguzzi, E.},
       title={Globally hyperbolic spacetimes can be defined without the
  `causal' condition},
        date={2019},
        ISSN={0264-9381,1361-6382},
     journal={Classical Quantum Gravity},
      volume={36},
      number={19},
       pages={197001, 9},
         url={https://doi.org/10.1088/1361-6382/ab3f11},
      review={\MR{4016706}},
}

\bib{jordan-kinderlehrer-otto1998}{article}{
      author={Jordan, Richard},
      author={Kinderlehrer, David},
      author={Otto, Felix},
       title={The variational formulation of the {F}okker-{P}lanck equation},
        date={1998},
        ISSN={0036-1410,1095-7154},
     journal={SIAM J. Math. Anal.},
      volume={29},
      number={1},
       pages={1\ndash 17},
         url={https://doi.org/10.1137/S0036141096303359},
      review={\MR{1617171}},
}

\bib{kannan-lovasz-simonovits1995}{article}{
      author={Kannan, R.},
      author={Lov\'asz, L.},
      author={Simonovits, M.},
       title={Isoperimetric problems for convex bodies and a localization
  lemma},
        date={1995},
        ISSN={0179-5376,1432-0444},
     journal={Discrete Comput. Geom.},
      volume={13},
      number={3-4},
       pages={541\ndash 559},
         url={https://doi.org/10.1007/BF02574061},
      review={\MR{1318794}},
}

\bib{kell-suhr2020}{article}{
      author={Kell, Martin},
      author={Suhr, Stefan},
       title={On the existence of dual solutions for {L}orentzian cost
  functions},
        date={2020},
        ISSN={0294-1449,1873-1430},
     journal={Ann. Inst. H. Poincar\'e{} C Anal. Non Lin\'eaire},
      volume={37},
      number={2},
       pages={343\ndash 372},
         url={https://doi.org/10.1016/j.anihpc.2019.09.005},
      review={\MR{4072806}},
}

\bib{ketterer2020-heintze-karcher}{article}{
      author={Ketterer, Christian},
       title={The {H}eintze-{K}archer inequality for metric measure spaces},
        date={2020},
        ISSN={0002-9939,1088-6826},
     journal={Proc. Amer. Math. Soc.},
      volume={148},
      number={9},
       pages={4041\ndash 4056},
         url={https://doi.org/10.1090/proc/15041},
      review={\MR{4127847}},
}

\bib{klartag2017}{article}{
      author={Klartag, Bo'az},
       title={Needle decompositions in {R}iemannian geometry},
        date={2017},
        ISSN={0065-9266,1947-6221},
     journal={Mem. Amer. Math. Soc.},
      volume={249},
      number={1180},
       pages={v+77},
         url={https://doi.org/10.1090/memo/1180},
      review={\MR{3709716}},
}

\bib{kunzinger-samann2018}{article}{
      author={Kunzinger, Michael},
      author={S\"{a}mann, Clemens},
       title={Lorentzian length spaces},
        date={2018},
        ISSN={0232-704X,1572-9060},
     journal={Ann. Global Anal. Geom.},
      volume={54},
      number={3},
       pages={399\ndash 447},
         url={https://doi.org/10.1007/s10455-018-9633-1},
      review={\MR{3867652}},
}

\bib{kunzinger-steinbauer2022}{article}{
      author={Kunzinger, Michael},
      author={Steinbauer, Roland},
       title={Null distance and convergence of {L}orentzian length spaces},
        date={2022},
        ISSN={1424-0637,1424-0661},
     journal={Ann. Henri Poincar\'{e}},
      volume={23},
      number={12},
       pages={4319\ndash 4342},
         url={https://doi.org/10.1007/s00023-022-01198-6},
      review={\MR{4512238}},
}

\bib{kuwae-li2022}{article}{
      author={Kuwae, Kazuhiro},
      author={Li, Xiang-Dong},
       title={New {L}aplacian comparison theorem and its applications to
  diffusion processes on {R}iemannian manifolds},
        date={2022},
        ISSN={0024-6093,1469-2120},
     journal={Bull. Lond. Math. Soc.},
      volume={54},
      number={2},
       pages={404\ndash 427},
         url={https://doi.org/10.1112/blms.12568},
      review={\MR{4414994}},
}

\bib{kuwae-shioya2007}{misc}{
      author={Kuwae, Kazuhiro},
      author={Shioya, Takashi},
       title={{L}aplacian comparison for {A}lexandrov spaces},
        date={Preprint, arXiv:0709.0788},
}

\bib{leray1953}{book}{
      author={Leray, Jean},
       title={Hyperbolic differential equations},
   publisher={Institute for Advanced Study (IAS), Princeton, NJ},
        date={1953},
      review={\MR{63548}},
}

\bib{lott-villani2009}{article}{
      author={Lott, John},
      author={Villani, C\'{e}dric},
       title={Ricci curvature for metric-measure spaces via optimal transport},
        date={2009},
        ISSN={0003-486X,1939-8980},
     journal={Ann. of Math. (2)},
      volume={169},
      number={3},
       pages={903\ndash 991},
         url={https://doi.org/10.4007/annals.2009.169.903},
      review={\MR{2480619}},
}

\bib{lovasz-simonovits1993}{article}{
      author={Lov\'asz, L.},
      author={Simonovits, M.},
       title={Random walks in a convex body and an improved volume algorithm},
        date={1993},
        ISSN={1042-9832,1098-2418},
     journal={Random Structures Algorithms},
      volume={4},
      number={4},
       pages={359\ndash 412},
         url={https://doi.org/10.1002/rsa.3240040402},
      review={\MR{1238906}},
}

\bib{lu-minguzzi-ohta2022-range}{article}{
      author={Lu, Yufeng},
      author={Minguzzi, Ettore},
      author={Ohta, Shinichi},
       title={Comparison theorems on weighted {F}insler manifolds and
  spacetimes with {$\epsilon$}-range},
        date={2022},
        ISSN={2299-3274},
     journal={Anal. Geom. Metr. Spaces},
      volume={10},
      number={1},
       pages={1\ndash 30},
         url={https://doi.org/10.1515/agms-2020-0131},
      review={\MR{4388774}},
}

\bib{lu-minguzzi-ohta2023-splitting}{article}{
      author={Lu, Yufeng},
      author={Minguzzi, Ettore},
      author={Ohta, Shinichi},
       title={Geometry of weighted {L}orentz-{F}insler manifolds {II}: {A}
  splitting theorem},
        date={2023},
        ISSN={0129-167X,1793-6519},
     journal={Internat. J. Math.},
      volume={34},
      number={1},
       pages={Paper No. 2350002, 29},
         url={https://doi.org/10.1142/S0129167X23500027},
      review={\MR{4552199}},
}

\bib{maeda1978}{article}{
      author={Maeda, Masao},
       title={Volume estimate of submanifolds in compact {R}iemannian
  manifolds},
        date={1978},
        ISSN={0025-5645,1881-1167},
     journal={J. Math. Soc. Japan},
      volume={30},
      number={3},
       pages={533\ndash 551},
         url={https://doi.org/10.2969/jmsj/03030533},
      review={\MR{500722}},
}

\bib{magnabosco-rigoni2023}{article}{
      author={Magnabosco, Mattia},
      author={Rigoni, Chiara},
       title={Optimal maps and local-to-global property in negative dimensional
  spaces with {R}icci curvature bounded from below},
        date={2023},
        ISSN={0040-8735,2186-585X},
     journal={Tohoku Math. J. (2)},
      volume={75},
      number={4},
       pages={483\ndash 507},
         url={https://doi.org/10.2748/tmj.20220420},
      review={\MR{4677752}},
}

\bib{mccann2001}{article}{
      author={McCann, Robert~J.},
       title={Polar factorization of maps on {R}iemannian manifolds},
        date={2001},
        ISSN={1016-443X,1420-8970},
     journal={Geom. Funct. Anal.},
      volume={11},
      number={3},
       pages={589\ndash 608},
         url={https://doi.org/10.1007/PL00001679},
      review={\MR{1844080}},
}

\bib{mccann2020}{article}{
      author={McCann, Robert~J.},
       title={Displacement convexity of {B}oltzmann's entropy characterizes the
  strong energy condition from general relativity},
        date={2020},
        ISSN={2168-0930,2168-0949},
     journal={Camb. J. Math.},
      volume={8},
      number={3},
       pages={609\ndash 681},
         url={https://doi.org/10.4310/CJM.2020.v8.n3.a4},
      review={\MR{4192570}},
}

\bib{mccann2023-null}{article}{
      author={McCann, Robert~J.},
       title={A synthetic null energy condition},
        date={2024},
        ISSN={0010-3616,1432-0916},
     journal={Comm. Math. Phys.},
      volume={405},
      number={2},
       pages={Paper No. 38, 24 pp.},
         url={https://doi.org/10.1007/s00220-023-04908-1},
      review={\MR{4703452}},
}

\bib{mccann+}{misc}{
      author={McCann, Robert~J.},
       title={{T}rading linearity for ellipticity: a nonsmooth approach to
  {E}instein's theory of gravity and the {L}orentzian splitting theorems},
        date={Preprint, arXiv:2501.00702},
}

\bib{mccann-samann2022}{article}{
      author={McCann, Robert~J.},
      author={S\"amann, Clemens},
       title={A {L}orentzian analog for {H}ausdorff dimension and measure},
        date={2022},
        ISSN={2578-5885,2578-5893},
     journal={Pure Appl. Anal.},
      volume={4},
      number={2},
       pages={367\ndash 400},
         url={https://doi.org/10.2140/paa.2022.4.367},
      review={\MR{4496090}},
}

\bib{minguzzi2013-convexity}{article}{
      author={Minguzzi, E.},
       title={Convexity and quasi-uniformizability of closed preordered
  spaces},
        date={2013},
        ISSN={0166-8641,1879-3207},
     journal={Topology Appl.},
      volume={160},
      number={8},
       pages={965\ndash 978},
         url={https://doi.org/10.1016/j.topol.2013.03.008},
      review={\MR{3043127}},
}

\bib{minguzzi-suhr2022}{article}{
      author={Minguzzi, E.},
      author={Suhr, S.},
       title={Lorentzian metric spaces and their {G}romov--{H}ausdorff
  convergence},
        date={2024},
        ISSN={0377-9017,1573-0530},
     journal={Lett. Math. Phys.},
      volume={114},
      number={3},
       pages={Paper No. 73},
         url={https://doi.org/10.1007/s11005-024-01813-z},
      review={\MR{4752400}},
}

\bib{minguzzi2009-characterization}{article}{
      author={Minguzzi, Ettore},
       title={Characterization of some causality conditions through the
  continuity of the {L}orentzian distance},
        date={2009},
        ISSN={0393-0440,1879-1662},
     journal={J. Geom. Phys.},
      volume={59},
      number={7},
       pages={827\ndash 833},
         url={https://doi.org/10.1016/j.geomphys.2009.03.007},
      review={\MR{2536847}},
}

\bib{minguzzi2015-light}{article}{
      author={Minguzzi, Ettore},
       title={Light cones in {F}insler spacetime},
        date={2015},
        ISSN={0010-3616,1432-0916},
     journal={Comm. Math. Phys.},
      volume={334},
      number={3},
       pages={1529\ndash 1551},
         url={https://doi.org/10.1007/s00220-014-2215-6},
      review={\MR{3312442}},
}

\bib{minguzzi2019-causality}{article}{
      author={Minguzzi, Ettore},
       title={Lorentzian causality theory},
        date={2019},
        ISSN={1433-8351},
     journal={Living Reviews in Relativity},
      volume={22},
      number={3},
       pages={202 pp.},
         url={http://dx.doi.org/10.1007/s41114-019-0019-x},
}

\bib{mondino-naber2019}{article}{
      author={Mondino, Andrea},
      author={Naber, Aaron},
       title={Structure theory of metric measure spaces with lower {R}icci
  curvature bounds},
        date={2019},
        ISSN={1435-9855,1435-9863},
     journal={J. Eur. Math. Soc. (JEMS)},
      volume={21},
      number={6},
       pages={1809\ndash 1854},
         url={https://doi.org/10.4171/JEMS/874},
      review={\MR{3945743}},
}

\bib{mondino-suhr2022}{article}{
      author={Mondino, Andrea},
      author={Suhr, Stefan},
       title={An optimal transport formulation of the {E}instein equations of
  general relativity},
        date={2023},
        ISSN={1435-9855,1435-9863},
     journal={J. Eur. Math. Soc. (JEMS)},
      volume={25},
      number={3},
       pages={933\ndash 994},
         url={https://doi.org/10.4171/jems/1188},
      review={\MR{4577957}},
}

\bib{morgan2005}{article}{
      author={Morgan, Frank},
       title={Manifolds with density},
        date={2005},
        ISSN={0002-9920,1088-9477},
     journal={Notices Amer. Math. Soc.},
      volume={52},
      number={8},
       pages={853\ndash 858},
      review={\MR{2161354}},
}

\bib{muller2022}{misc}{
      author={M\"uller, Olaf},
       title={{G}romov-{H}ausdorff metrics and dimensions of {L}orentzian
  length spaces},
        date={Preprint, arXiv:2209.12736},
}

\bib{newman1990}{article}{
      author={Newman, Richard P. A.~C.},
       title={A proof of the splitting conjecture of {S}.-{T}.\ {Y}au},
        date={1990},
        ISSN={0022-040X,1945-743X},
     journal={J. Differential Geom.},
      volume={31},
      number={1},
       pages={163\ndash 184},
         url={http://projecteuclid.org/euclid.jdg/1214444093},
      review={\MR{1030669}},
}

\bib{nomizu-ozeki1961}{article}{
      author={Nomizu, Katsumi},
      author={Ozeki, Hideki},
       title={The existence of complete {R}iemannian metrics},
        date={1961},
        ISSN={0002-9939,1088-6826},
     journal={Proc. Amer. Math. Soc.},
      volume={12},
       pages={889\ndash 891},
         url={https://doi.org/10.2307/2034383},
      review={\MR{133785}},
}

\bib{ohta2009-finsler}{article}{
      author={Ohta, Shinichi},
       title={Finsler interpolation inequalities},
        date={2009},
        ISSN={0944-2669,1432-0835},
     journal={Calc. Var. Partial Differential Equations},
      volume={36},
      number={2},
       pages={211\ndash 249},
         url={https://doi.org/10.1007/s00526-009-0227-4},
      review={\MR{2546027}},
}

\bib{ohta2014}{article}{
      author={Ohta, Shinichi},
       title={On the curvature and heat flow on {H}amiltonian systems},
        date={2014},
        ISSN={2299-3274},
     journal={Anal. Geom. Metr. Spaces},
      volume={2},
      number={1},
       pages={81\ndash 114},
         url={https://doi.org/10.2478/agms-2014-0003},
      review={\MR{3208069}},
}

\bib{ohta2016-negative}{article}{
      author={Ohta, Shinichi},
       title={{$(K,N)$}-convexity and the curvature-dimension condition for
  negative {$N$}},
        date={2016},
        ISSN={1050-6926,1559-002X},
     journal={J. Geom. Anal.},
      volume={26},
      number={3},
       pages={2067\ndash 2096},
         url={https://doi.org/10.1007/s12220-015-9619-1},
      review={\MR{3511469}},
}

\bib{oneill1983}{book}{
      author={O'Neill, Barrett},
       title={Semi-{R}iemannian geometry},
      series={Pure and Applied Mathematics},
   publisher={Academic Press, Inc. [Harcourt Brace Jovanovich, Publishers], New
  York},
        date={1983},
      volume={103},
        ISBN={0-12-526740-1},
        note={With applications to relativity},
      review={\MR{719023}},
}

\bib{otsu-shioya1994}{article}{
      author={Otsu, Yukio},
      author={Shioya, Takashi},
       title={The {R}iemannian structure of {A}lexandrov spaces},
        date={1994},
        ISSN={0022-040X,1945-743X},
     journal={J. Differential Geom.},
      volume={39},
      number={3},
       pages={629\ndash 658},
         url={http://projecteuclid.org/euclid.jdg/1214455075},
      review={\MR{1274133}},
}

\bib{otto-villani2000}{article}{
      author={Otto, F.},
      author={Villani, C.},
       title={Generalization of an inequality by {T}alagrand and links with the
  logarithmic {S}obolev inequality},
        date={2000},
        ISSN={0022-1236,1096-0783},
     journal={J. Funct. Anal.},
      volume={173},
      number={2},
       pages={361\ndash 400},
         url={https://doi.org/10.1006/jfan.1999.3557},
      review={\MR{1760620}},
}

\bib{otto2001}{article}{
      author={Otto, Felix},
       title={The geometry of dissipative evolution equations: the porous
  medium equation},
        date={2001},
        ISSN={0360-5302,1532-4133},
     journal={Comm. Partial Differential Equations},
      volume={26},
      number={1-2},
       pages={101\ndash 174},
         url={https://doi.org/10.1081/PDE-100002243},
      review={\MR{1842429}},
}

\bib{payne-weinberger1960}{article}{
      author={Payne, L.~E.},
      author={Weinberger, H.~F.},
       title={An optimal {P}oincar\'e{} inequality for convex domains},
        date={1960},
        ISSN={0003-9527},
     journal={Arch. Rational Mech. Anal.},
      volume={5},
       pages={286\ndash 292},
         url={https://doi.org/10.1007/BF00252910},
      review={\MR{117419}},
}

\bib{penrose1965}{article}{
      author={Penrose, Roger},
       title={Gravitational collapse and space-time singularities},
        date={1965},
        ISSN={0031-9007},
     journal={Phys. Rev. Lett.},
      volume={14},
       pages={57\ndash 59},
         url={https://doi.org/10.1103/PhysRevLett.14.57},
      review={\MR{172678}},
}

\bib{petrunin2003}{article}{
      author={Petrunin, Anton},
       title={Harmonic functions on {A}lexandrov spaces and their
  applications},
        date={2003},
        ISSN={1079-6762},
     journal={Electron. Res. Announc. Amer. Math. Soc.},
      volume={9},
       pages={135\ndash 141},
         url={https://doi.org/10.1090/S1079-6762-03-00120-3},
      review={\MR{2030174}},
}

\bib{petrunin2011}{article}{
      author={Petrunin, Anton},
       title={Alexandrov meets {L}ott-{V}illani-{S}turm},
        date={2011},
        ISSN={1867-5778,1867-5786},
     journal={M\"{u}nster J. Math.},
      volume={4},
       pages={53\ndash 64},
      review={\MR{2869253}},
}

\bib{rajala-sturm2014}{article}{
      author={Rajala, Tapio},
      author={Sturm, Karl-Theodor},
       title={Non-branching geodesics and optimal maps in strong
  {$CD(K,\infty)$}-spaces},
        date={2014},
        ISSN={0944-2669,1432-0835},
     journal={Calc. Var. Partial Differential Equations},
      volume={50},
      number={3-4},
       pages={831\ndash 846},
         url={https://doi.org/10.1007/s00526-013-0657-x},
      review={\MR{3216835}},
}

\bib{rudin1966}{book}{
      author={Rudin, Walter},
       title={Real and complex analysis},
   publisher={McGraw-Hill Book Co., New York-Toronto-London},
        date={1966},
      review={\MR{210528}},
}

\bib{samann2024+}{article}{
      author={S\"amann, Clemens},
       title={A brief introduction to non-regular spacetime geometry},
        date={2024},
     journal={Internationale Mathematische Nachrichten},
      volume={256},
       pages={1\ndash 17},
         url={https://www.oemg.ac.at/db/IMN},
}

\bib{seifert1967}{article}{
      author={Seifert, Hans-J\"urgen},
       title={Global connectivity by timelike geodesics},
        date={1967},
     journal={Z. Naturforsch.},
      volume={22a},
       pages={1356\ndash 1360},
      review={\MR{225556}},
}

\bib{sormani-vega2016}{article}{
      author={Sormani, Christina},
      author={Vega, Carlos},
       title={Null distance on a spacetime},
        date={2016},
        ISSN={0264-9381,1361-6382},
     journal={Classical Quantum Gravity},
      volume={33},
      number={8},
       pages={085001, 29},
         url={https://doi.org/10.1088/0264-9381/33/7/085001},
      review={\MR{3476515}},
}

\bib{steinbauer2023}{article}{
      author={Steinbauer, Roland},
       title={The singularity theorems of general relativity and their low
  regularity extensions},
        date={2023},
        ISSN={0012-0456,1869-7135},
     journal={Jahresber. Dtsch. Math.-Ver.},
      volume={125},
      number={2},
       pages={73\ndash 119},
         url={https://doi.org/10.1365/s13291-022-00263-7},
      review={\MR{4594980}},
}

\bib{sturm2006-i}{article}{
      author={Sturm, Karl-Theodor},
       title={On the geometry of metric measure spaces. {I}},
        date={2006},
        ISSN={0001-5962,1871-2509},
     journal={Acta Math.},
      volume={196},
      number={1},
       pages={65\ndash 131},
         url={https://doi.org/10.1007/s11511-006-0002-8},
      review={\MR{2237206}},
}

\bib{sturm2006-ii}{article}{
      author={Sturm, Karl-Theodor},
       title={On the geometry of metric measure spaces. {II}},
        date={2006},
        ISSN={0001-5962,1871-2509},
     journal={Acta Math.},
      volume={196},
      number={1},
       pages={133\ndash 177},
         url={https://doi.org/10.1007/s11511-006-0003-7},
      review={\MR{2237207}},
}

\bib{sturm2021-upper}{article}{
      author={Sturm, Karl-Theodor},
       title={Remarks about synthetic upper {R}icci bounds for metric measure
  spaces},
        date={2021},
        ISSN={0040-8735,2186-585X},
     journal={Tohoku Math. J. (2)},
      volume={73},
      number={4},
       pages={539\ndash 564},
         url={https://doi.org/10.2748/tmj.20200717},
      review={\MR{4355059}},
}

\bib{sturm2023}{incollection}{
      author={Sturm, Karl-Theodor},
       title={Metric measure spaces and synthetic {R}icci bounds: fundamental
  concepts and recent developments},
        date={[2023] \copyright 2023},
   booktitle={European {C}ongress of {M}athematics},
   publisher={EMS Press, Berlin},
       pages={125\ndash 159},
      review={\MR{4615741}},
}

\bib{sudakov1979}{article}{
      author={Sudakov, V.~N.},
       title={Geometric problems in the theory of infinite-dimensional
  probability distributions},
        date={1979},
        ISSN={0081-5438},
     journal={Proc. Steklov Inst. Math.},
      number={2},
       pages={i\ndash v, 1\ndash 178},
      review={\MR{530375}},
}

\bib{suhr2018-theory}{article}{
      author={Suhr, Stefan},
       title={Theory of optimal transport for {L}orentzian cost functions},
        date={2018},
        ISSN={1867-5778,1867-5786},
     journal={M\"{u}nster J. Math.},
      volume={11},
      number={1},
       pages={13\ndash 47},
         url={https://doi.org/10.17879/87109580432},
      review={\MR{3873093}},
}

\bib{treude2011}{thesis}{
      author={Treude, Jan-Hendrik},
       title={{R}icci curvature comparison in {R}iemannian and {L}orentzian
  geometry},
        type={Master's Thesis},
        date={2011},
}

\bib{treude-grant2013}{article}{
      author={Treude, Jan-Hendrik},
      author={Grant, James D.~E.},
       title={Volume comparison for hypersurfaces in {L}orentzian manifolds and
  singularity theorems},
        date={2013},
        ISSN={0232-704X,1572-9060},
     journal={Ann. Global Anal. Geom.},
      volume={43},
      number={3},
       pages={233\ndash 251},
         url={https://doi.org/10.1007/s10455-012-9343-z},
      review={\MR{3027611}},
}

\bib{trudinger-wang2001}{article}{
      author={Trudinger, Neil~S.},
      author={Wang, Xu-Jia},
       title={On the {M}onge mass transfer problem},
        date={2001},
        ISSN={0944-2669,1432-0835},
     journal={Calc. Var. Partial Differential Equations},
      volume={13},
      number={1},
       pages={19\ndash 31},
         url={https://doi.org/10.1007/PL00009922},
      review={\MR{1854255}},
}

\bib{villani2009}{book}{
      author={Villani, C\'{e}dric},
       title={Optimal transport},
      series={Grundlehren der mathematischen Wissenschaften [Fundamental
  Principles of Mathematical Sciences]},
   publisher={Springer-Verlag, Berlin},
        date={2009},
      volume={338},
        ISBN={978-3-540-71049-3},
         url={https://doi.org/10.1007/978-3-540-71050-9},
        note={Old and new},
      review={\MR{2459454}},
}

\bib{villani2019}{incollection}{
      author={Villani, C\'edric},
       title={In\'egalit\'es isop\'erim\'etriques dans les espaces m\'etriques
  mesur\'es [d'apr\`es {F}. {C}avalletti \& {A}. {M}ondino]},
        date={2019},
       pages={Exp. No. 1127, 213\ndash 265},
         url={https://doi.org/10.24033/ast},
        note={S\'eminaire Bourbaki. Vol. 2016/2017. Expos\'es 1120--1135},
      review={\MR{3939278}},
}

\bib{vonrenesse2004}{article}{
      author={von Renesse, Max-K.},
       title={Heat kernel comparison on {A}lexandrov spaces with curvature
  bounded below},
        date={2004},
        ISSN={0926-2601,1572-929X},
     journal={Potential Anal.},
      volume={21},
      number={2},
       pages={151\ndash 176},
         url={https://doi.org/10.1023/B:POTA.0000025376.45065.80},
      review={\MR{2058031}},
}

\bib{von-renesse-sturm2005}{article}{
      author={von Renesse, Max-K.},
      author={Sturm, Karl-Theodor},
       title={Transport inequalities, gradient estimates, entropy, and {R}icci
  curvature},
        date={2005},
        ISSN={0010-3640,1097-0312},
     journal={Comm. Pure Appl. Math.},
      volume={58},
      number={7},
       pages={923\ndash 940},
         url={https://doi.org/10.1002/cpa.20060},
      review={\MR{2142879}},
}

\bib{wald1984}{book}{
      author={Wald, Robert~M.},
       title={General relativity},
   publisher={University of Chicago Press, Chicago, IL},
        date={1984},
        ISBN={0-226-87032-4; 0-226-87033-2},
         url={https://doi.org/10.7208/chicago/9780226870373.001.0001},
      review={\MR{757180}},
}

\bib{wang2014-analysis}{book}{
      author={Wang, Feng-Yu},
       title={Analysis for diffusion processes on {R}iemannian manifolds},
      series={Advanced Series on Statistical Science \& Applied Probability},
   publisher={World Scientific Publishing Co. Pte. Ltd., Hackensack, NJ},
        date={2014},
      volume={18},
        ISBN={978-981-4452-64-9},
      review={\MR{3154951}},
}

\bib{woolgar-wylie2016}{article}{
      author={Woolgar, Eric},
      author={Wylie, William},
       title={Cosmological singularity theorems and splitting theorems for
  {$N$}-{B}akry-\'emery spacetimes},
        date={2016},
        ISSN={0022-2488,1089-7658},
     journal={J. Math. Phys.},
      volume={57},
      number={2},
       pages={022504, 12},
         url={https://doi.org/10.1063/1.4940340},
      review={\MR{3453844}},
}

\bib{woolgar-wylie2018}{article}{
      author={Woolgar, Eric},
      author={Wylie, William},
       title={Curvature-dimension bounds for {L}orentzian splitting theorems},
        date={2018},
        ISSN={0393-0440,1879-1662},
     journal={J. Geom. Phys.},
      volume={132},
       pages={131\ndash 145},
         url={https://doi.org/10.1016/j.geomphys.2018.06.001},
      review={\MR{3836773}},
}

\bib{wylie-yeroshkin2016+}{misc}{
      author={Wylie, Willian},
      author={Yeroshkin, Dmytro},
       title={On the geometry of {R}iemannian manifolds with density},
        date={Preprint, arXiv:1602.08000},
}

\bib{zhang-zhu2010}{article}{
      author={Zhang, Hui-Chun},
      author={Zhu, Xi-Ping},
       title={Ricci curvature on {A}lexandrov spaces and rigidity theorems},
        date={2010},
        ISSN={1019-8385,1944-9992},
     journal={Comm. Anal. Geom.},
      volume={18},
      number={3},
       pages={503\ndash 553},
         url={https://doi.org/10.4310/CAG.2010.v18.n3.a4},
      review={\MR{2747437}},
}

\end{biblist}
\end{bibdiv}

\end{document}